\numberwithin{equation}{section}
\newtheorem{prop}{Proposition}[section]
\newtheorem{theo}[prop]{Theorem}
\newtheorem{lemm}[prop]{Lemma}
\newtheorem{coro}[prop]{Corollary}
\newtheorem{defi}[prop]{Definition}
\newtheorem{conj}[prop]{Conjecture}
\newtheorem{quest}[prop]{Question}
\newcommand{\DD}{\mathbb{D}}
\newcommand{\EE}{\mathbb{E}}
\newcommand{\MM}{\mathbb{M}}
\newcommand{\NN}{\mathbb{N}}
\newcommand{\PP}{\mathbb{P}}
\newcommand{\QQ}{\mathbb{Q}}
\newcommand{\RR}{\mathbb{R}}
\newcommand{\ZZ}{\mathbb{Z}}
\newcommand{\cC}{\mathcal C}
\renewcommand{\cD}{\mathcal D}
\newcommand{\cG}{\mathcal G}
\renewcommand{\cH}{\mathcal H}
\newcommand{\cJ}{\mathcal J}
\newcommand{\cP}{\mathcal P}
\newcommand{\cS}{\mathcal S}
\def\fS{\mathfrak{S}}
\newcommand\addlabel[1]{\refstepcounter{equation}\tag{\theequation}\label{#1}}
\DeclareMathOperator{\tr}{tr}
\DeclareMathOperator{\supp}{supp}
\newcommand{\bangle}[1]{\left\langle #1 \right\rangle}
\DeclareMathOperator{\diam}{diam}
\DeclareMathOperator{\Ric}{Ric}
\newcommand{\pa}[2]{\frac{\partial #1}{\partial #2}}
\DeclareMathOperator{\Hess}{Hess}
\DeclareMathOperator{\Ent}{Ent}
\DeclareMathOperator{\Jac}{Jac}
\DeclareMathOperator{\vol}{vol}
\DeclareMathOperator{\proj}{proj}
\newcommand{\cost}{\text{\textrm{cost}}}
\DeclareMathOperator{\Lip}{Lip}
\newcommand{\Leb}{\text{\textnormal{Leb}}}
\newcommand{\Id}{\text{Id}}
\DeclareMathOperator{\Riem}{Riem}
\DeclareMathOperator{\Graph}{Graph}
\DeclareMathOperator{\Var}{Var}
\title{ Optimal Transport and Ricci Curvature: \linebreak Wasserstein space over the Interval}
\author{Otis Chodosh}
\thanks{This document was written in fulfillment of the essay option of Part III of the Mathematical Tripos at Cambridge University for the 2010-2011 academic year. I would like to thank Dr. Cl\'ement Mouhot for agreeing to set and mark this essay, assisting me in learning the material contained within, his extensive editing help, as well as for suggesting the problem of Ricci bounds on $(\cP(X),d^{W})$.  All of the figures are created using the TikZ package and I would like to thank Thomas Trzeciak for his spherical drawing code that I modified to create Figure \ref{fig:spherical-triangle}, which can be found at \url{http://www.latex-community.org/forum/viewtopic.php?f=4&t=2111}. Finally, I would like to thank the Cambridge Gates Trust for their financial support this year.}
\date{\today} 
\begin{document}

%\begin{titlepage}
%\begin{center}
%\vspace*{2in} 
%\textsc{ \LARGE Optimal Transport and Ricci Curvature Bounds}
%
%\vspace{20pt}
%
%Part III Essay Number 100
%\end{center}
%\end{titlepage}

\begin{abstract}
In this essay, we discuss the notion of optimal transport on (geodesic) metric spaces, and the associated ($2$-)Wasserstein distance. We then examine displacement convexity of the entropy functional on $\cP(X)$ and associated synthetic Ricci lower bounds. In particular, we prove that the Lott-Villani-Sturm notion of generalized $\Ric\geq K$ agrees with the classical notion of $\Ric \geq K$ on smooth manifolds and that it is preserved under Gromov--Hausdorff convergence. We examine in detail the space of probability measures on the unit interval $[0,1]$, denoted $\cP_{0}$, equipped with the ($2$-)Wasserstein metric. We show that it is isometric to the space of nonnegative, nondecreasing functions on $[0,1]$ with the metric induced from the $L^{2}$ norm, and using this we show that $(\cP_{0},d^{W})$ has vanishing Alexandrov curvature, and give a direct proof that the entropy functional is displacement convex on $\cP_{0}$. We additionally examine finite dimensional Gromov--Hausdorff aproximations of $\cG_{0}$, and use these to construct the entropic measure on $\cG_{0}$ first considered by Von Renesse and Sturm, denoted $\QQ^{\beta}_{0}$. In addition, we examine properties of $\QQ^{\beta}_{0}$ and explain why one might expect that it has generalized lower Ricci bounds, which we show not to be the case. Finally, we discuss the possibility of finding a measure $\MM \in \cP(\cP_{0})$ such that $(\cP_{0},d^{W},\MM)$ has generalized Ricci lower bounds. 
\end{abstract}

\maketitle 
\tableofcontents

\section{Introduction} Lott and Villani in \cite{LottVillani:RicciViaTspt} and independently Sturm in \cite{Sturm:CurvDimCond, Sturm:MetMeas1, Sturm:MetMeas2} have developed a synthetic notion of lower Ricci bounds on geodesic metric spaces by considering displacement convexity of certain entropy functionals along geodesics in the space of probability measures on the underlying metric space. Their notion agrees with the classical condition of lower Ricci bounds on Riemannian manifolds, and is preserved under Gromov--Hausdorff convergence. In addition, Lott--Villani have used this notion to examine the relation between this displacement convexity and log Sobolev, Poincar\'e, and Talagrand inequalities. In this essay, we give an overview of this notion, discussing Wasserstein distance on the space of probability measures on a metric space, as well as proving the agreement of the synthetic Ricci bounds and classical Ricci bounds on Riemannian manifolds and the preservation of synthetic bounds under Gromov--Hausdorff convergence. We also give a detailed examination of the space of probability measures on the unit interval $\cP_{0} : =\cP([0,1])$, equipped with the Wasserstein metric. In this case, the one dimensionality of $[0,1]$ allows us to give more explicit formulas for the dynamics of optimal transport, and provides some insight into the more general case. We also examine the analytic consequences of a measure on $\cP_{0}$ with lower bounds on Ricci in the Lott--Villani--Sturm sense, and show that the entropic measure constructed by Von Renesse and Sturm in \cite{RenesseSturm:EntMeasDiff} does not admit lower Ricci bounds, even though there are good reasons to suspect that it does. 

The structure of this essay is as follows. In Section \ref{sect:geo-met-sp}, we discuss the notion of \emph{geodesic spaces}, which are metric spaces where the distance between two points is the same as the minimum of the lengths of all continuous paths between the two points. We will call curves achieving this minimum \emph{geodesics}. Then, in Section \ref{sect:opt-trans-was-dist}, we examine \emph{optimal transport} and \emph{Wasserstein distance}. In particular for a compact metric space $(X,d)$, this will allow us to define a metric on $\cP(X)$, the space of probability measures on $X$ by
\[ d^{W}(\mu,\nu)^{2} := \inf_{\pi \in \Pi(\mu,\nu)} \int_{X\times X} d(x,y)^{2}d\pi(x,y)\]
where $\Pi(\mu,\nu)$ is the set of \emph{transport plans}, i.e.\ probability measures on $X\times X$ with first and second marginals $\mu$ and $\nu$, respectively. We  show that this is a metric, turning $\cP(X)$ into a compact metric space, with the weak* topology. Finally, if $(X,d)$ is a geodesic space, then we will show that $(\cP(X),d^{W})$ is as well and we will discuss geodesics in this space. In Section \ref{sect:met-struct-cP0}, we discuss the space of probability measures on the unit interval, $\cP([0,1])$, which we view as a metric space with the Wasserstein metric. We show that this metric space, which we denote $(\cP_{0},d^{W})$ is isometric to $(\cG_{0},d^{L^{2}})$ where $\cG_{0} \subset L^{2}([0,1])$ is the set of nonnegative, nondecreasing right continuous functions on $[0,1]$ with $f(1) = 1$ and $d^{L^{2}}$ is the metric induced by $L^{2}$ distance. This isometry (which we denote $\Psi$) is given by the inverse distribution map in one direction, and the pushforward map applied to Lebesgue measure in the other direction. The isometry $\Psi$ will be very important to our understanding of $\cP_{0}$, because it allows us to work with $\cG_{0}$, which is a totally geodesic, compact subset of a Hilbert space, and thus has reasonably simple properties. For example, we show that geodesics are unique, and in fact are straight lines between their endpoints.

In Section \ref{sect:synth-sect-curv}, we describe a synthetic notion of sectional curvature. We say that a geodesic space has nonnegative sectional curvature in the Alexandrov sense if geodesic triangles in the space are ``fatter'' than corresponding triangles in the plane, $\RR^{2}$. In particular, we state Toponogov's theorem, which says that this is equivalent to nonnegative sectional curvature in the classical sense. Then, we go on to discuss the relation between the sectional curvature of $M$ and Alexandrov curvature $\cP(M)$ for $M$ a Riemannian manifold. As a special case of this consideration, we prove that that $(\cP_{0},d^{W})$ has ``vanishing Alexandrov curvature.'' In Section \ref{sect:synth-ricci-curv}, we define the entropy functional on $\cP(X)$ by 
\[ \Ent(\mu|m) = \begin{cases}
\int_{X} \rho \log \rho\  dm & \text{ for } \mu \ll m \text { and } \mu = \rho m\\
+ \infty & \text{ otherwise}
\end{cases}\]
which allows us to discuss displacement convexity for geodesic measure spaces $(X,d,m)$. We define the notion of \emph{weak a.c. displacement convexity} (cf.\ Definition \ref{defi:k-convex-ent}), which is the condition that for $\mu_{0},\mu_{1} \in \cP(X)$ with $\mu_{i} \ll m$, then there exists some geodesic in $\cP(X)$ between them, $\mu_{t}$, such that 
\[
\Ent(\mu_{t}|m) \leq t \Ent(\mu_{1}|m)  + (1-t) \Ent(\mu_{0}|m) - \frac K2 t (1-t) d^{W}(\mu_{0},\mu_{1})^{2}. 
\]
We then prove that for a compact Riemannian manifold, choosing the reference measure to be the normalized volume measure, i.e.\ $m = \frac{\vol_{M}}{\vol(M)}$, the triple $(M,d,m)$ has lower Ricci bounds $\Ric \geq K$ if and only if $\Ent(\cdot | m)$ is weakly a.c. displacement $K$-convex. In proving this, we discuss particulars about optimal transport on manifolds. Because of this result, we will often refer to weak a.c. displacement $K$-convexity as generalized $\Ric\geq K$. Additionally in this section, we explicitly investigate displacement convexity over the interval, proving directly that
\[
\Ent(\Psi(f) |{\Leb}) = - \int_{0}^{1} \log f'(x) dx.
\] 
for $f \in \cG_{0}$ (i.e.\ a nondecreasing function on $[0,1]$) where $\Psi(f) = f_{*}\Leb$ is the image of $f$ under the isometry $\Psi:(\cG_{0},d^{L^{2}}) \to (\cP_{0},d^{W})$. By concavity of $\log$, this allows us to directly prove that $\Ent(\cdot|\Leb)$ is displacement $0$-convex in this case. 

In Section \ref{sect:GH-conv}, we define the notion of \emph{Gromov--Hausdorff convergence} of metric measure spaces and give an overview of the proof that weak a.c. displacement convexity is preserved under this type of convergence. We say that a sequence of (compact) metric measure spaces $(X_{n},d_{n},\mu_{n})$  converges in the \emph{(measured) Gromov--Hausdorff topology} to $(X,d,\mu)$ if there is a sequence of $\epsilon_{n} >0$, with $\epsilon_{n}\to 0$ and associated maps $f_{n}:X_{n}\to X$ (not necessarily continuous) which are $\epsilon_{n}$-isometries, i.e.  
\begin{enumerate}
\item for $x,x' \in X$, $|d_X(x,x') - d_Y(f_{n}(x),f_{n}(x'))| \leq \epsilon$ 
\item for all $y \in Y$ there is $x\in X$ with $d_Y(y,f_{n}(x)) \leq \epsilon$
\end{enumerate}
and furthermore, that $(f_{n})_{*}\mu_{n} \to \mu$ in the weak* topology. We then show that generalized Ricci lower bounds are preserved under such convergence. We discuss the relation of this with Gromov's compactness theorem, which says that the set of Riemannian manifolds with a fixed dimension, diameter upper bound and Ricci curvature lower bound is precompact in this topology. Finally, we show how to approximate the space $(\cG_{0},d^{L^{2}})$ by finite dimensional metric spaces. Then, in Section \ref{sect:meas-cP0-ricci-curv}, we discuss reference measures on $(\cG_{0},d^{L^{2}})$ (or equivalently on $(\cP_{0},d^{W})$, but because $\cG_{0}$ is a totally geodesic subset of a Hilbert space, it is often easier to work with). First, we discuss the analytic consequences of such a measure having generalized Ricci lower bounds, in particular discussing the resulting log-Sobolev and Poincar\'e inequalities. Then, we will discuss the entropic measure $\QQ^{\beta}_{0}$, on $\cG_{0}$, constructed by Von Renesse and Sturm in \cite{RenesseSturm:EntMeasDiff}, defined by requiring that
 \begin{multline*}
 {\int_{\cG_{0}} u(g(t_{1}),\dots, g(t_{N}) )d \QQ^{\beta}_{0}} \\
 = \frac{\Gamma(\beta)}{\prod_{i=0}^{N} \Gamma(\beta(t_{i+1}-t_{i}))} \int_{\Sigma_{N}} u(x_{1},\dots,x_{N}) \prod_{i=0}^{N} (x_{i+1} - x_{i})^{\beta(t_{i+1}-t_{i})-1} dx_{1}\cdots dx_{N}.
 \end{multline*}
 We give a concrete proof of the existence of $\QQ^{\beta}_{0}$, which does not require the Kolmogorov extension theorem, as used by Von Renesse and Sturm. Then, we discuss why one might expect that $(\cG_{0},d^{L^{2}},\QQ^{\beta}_{0})$ would have generalized $\Ric \geq 0$, or at least bounded below by some real number, including the work by D\"oring and Stannat in \cite{Stannat:logsobWass}, showing that $(\cG_{0},d^{L^{2}},\QQ^{\beta}_{0})$ admits log-Sobolev and Poincar\'e inequalities. However, we go on to prove that this space does not have generalized $\Ric \geq K$ for any real $K$. We show this by using a generalization of the proof of log-concavity of displacement convex measures on a Hilbert space (cf.\ \cite{AGS:GradFlow}, Chapter 9). Finally, we discuss the possibilities of a measure $\MM \in \cP(\cG_{0})$ having generalized Ricci lower bounds, suggest methods for constructing such an $\MM$, as well as necessary conditions that could be used to rule out such a property for a given $\MM$, and give a brief overview of the theory of optimal transport in Hilbert spaces. 
 
 %In Section \ref{sect:kin-eq-P0} FINISH WHEN DO SECTION.
 
We also include Appendix \ref{app:non-smooth-cov}, in which we discuss nonsmooth changes of variables, and formulas for the density of nonsmooth pushforward measures, results which we make use of in various places throughout the text, particularly in Section \ref{sect:synth-ricci-curv}. In Appendix \ref{app:riem-mfld-intro}, we give a very brief introduction to Riemannian manifolds and curvature. Then in Appendix \ref{app:mfd-lower-ric}, we give a brief overview of the theory of manifolds with lower Ricci bounds.
 
 A few words on notation and terminology are in order. We denote the space of probability measures on a topological space $X$ by $\cP(X)$, and for a map $f:X\to Y$ will write the \emph{pushforward} of a (probability) measure $\mu \in \cP(X)$ by $f_{*}\mu \in \cP(Y)$, which is the measure such that for $A \subset Y$, measurable, $f_{*}\mu(A) = \mu(f^{-1}(A))$. For a measure $m$ on $X$, we will write $\mu \ll m$ if $\mu(A) = 0$ for all measurable sets with $m(A) = 0$. If this holds, then we will also say that $\mu$ is \emph{absolutely continuous} with respect to $m$. It is then standard by the Radon--Nikodym theorem that  there is a measurable function $\rho$ on $X$ such that $\mu = \rho m$.  We will also make frequent use of Prokhorov's theorem, which in the case we need simply says that if $X$ is a compact space then so is $\cP(X)$ with the weak* topology (for a proof of a slightly more general version, cf.\ \cite[Theorem~6.5]{Billingsley:ConvProb}). We further remind the reader that convergence $\mu_{k}\to \mu$ in the weak* (also known as weak and narrow convergence) topology on $X$ means that $\int_{X} f \mu_{k} \to\int_{X} f \mu$ for all continuous functions $f$ on $X$. We write $C(X)$ for all continuous functions, and recall that by one version of the Riesz representation theorem, we can identify measures on $X$ with positive linear functionals on $C(X)$.\footnote{N.B.\ in our statements of the Radon-Nikodym theorem, Prokhorov's theorem and the Riesz representation theorem, we have always been assuming that $X$ is a compact metric space and we will always only consider Borel measures.}

We must emphasize that the majority of the material below is not in any way original work of the author, and should be considered as an expository work. We have tried to present the material in the Lott-Villani paper \cite{LottVillani:RicciViaTspt} in a simplified form. In particular, we have only discussed what they call $\infty$-Ricci curvature, and have not discussed convexity of general functionals on $\cP(X)$, just $\Ent(\cdot |m)$. We have also made use of the Sturm papers \cite{Sturm:MetMeas1,Sturm:MetMeas2}, which present much of the same information, but take a slightly different viewpoint, and have the advantage (from the point of view of our exposition) that the first paper only considers ``$\infty$-Ricci curvature,'' which is what we have focused on. We have expanded on the background material   on optimal transport given in the Lott-Villani paper, and following Villani's two books on optimal transport, \cite{Villani:TopOptTspt03,Villani:OptTspt} have given (mostly) self contained proofs of the basic facts about the theory. For metric geometry, we have mainly relied on Gromov's book \cite{Gromov:MetStruct} and Burago--Burago--Ivanov's textbook \cite{BBI:MetGeo}. The reader looking for further information about any of the material presented within will find the above references excellent places to start, and should hopefully be aided by the in-text citations as well. 
 
 Finally, we remark on the parts of the essay which we believe to be original. We believe our proof (but certainly not the result) of Proposition \ref{prop:G0-P0-iso}, showing that $(\cG_{0},d^{L^{2}})$ and $(\cP_{0},d^{W})$ are isometric is original. We have presented it this form, which relies on knowledge of optimal transport on compact Riemannian manifolds, rather than giving a more concrete proof, because we wanted to avoid discussing optimality conditions for transport plans. Secondly, the Gromov--Hausdorff approximations of $(\cG_{0},d^{L^{2}})$ in Proposition \ref{prop:gh-conv-to-g0} are probably original, but we note that similar ideas are examined by Andres and Von Renesse in \cite{AndresRenesse:PartApproxWass} in the context of Mosco- and $\Gamma$- convergence of Dirichlet forms. It seems that our proof (but not the result) of the existence of the entropic measure, $\QQ^{\beta}_{0}$ without relying on the Kolmogorov extension theorem, in Proposition \ref{prop:exist-entrop-meas} is original. We believe that the approximation of $\QQ^{\beta}_{0}$ by measures on finite dimensional spaces in the sense of (measured) Gromov--Hausdorff convergence has not been studied in this exact form before (cf. \cite{AndresRenesse:PartApproxWass} for related ideas, however). Finally, we believe that the results of Theorem \ref{theo:QQbeta-no-ric-geqK}, which says that $(\cG_{0},d^{L^{2}},\QQ^{\beta}_{0})$ does not have generalized Ricci lower bounds has not been shown before.

\section{Geodesic Metric Spaces} \label{sect:geo-met-sp}

For $(X,d)$ a metric space and $\gamma:[a,b]\to X$ a continuous curve, we define the length of $\gamma$ to be
\begin{equation} \label{eq:Ldefi} L(\gamma) = \sup_{a =t_0<t_1<\dots<t_{N+1} = b} \sum_{i=0}^N d(\gamma(t_i),\gamma(t_{i+1})).  \end{equation} 
Certainly this does not have to be finite. Curves $\gamma$ with $L(\gamma) < \infty$ are known as \emph{rectifiable curves}. As an example, recall that if $(X,d)$ is the metric space associated to a Riemannian manifold, then \eqref{eq:Ldefi} is the same as the usual definition of length for smooth curves, so in this case there are clearly plenty of rectifiable curves. Notice that by the triangle inequality applied to each term in the supremum in \eqref{eq:Ldefi}, we have that 
\[\addlabel{eq:dist-less-than-inf-length} d(\gamma(a),\gamma(b)) \leq L(\gamma) . \] 
We say that $(X,d)$ is a \emph{length space} if we can measure the distance between two points from knowledge of the lengths of curves between them, i.e.\ for $p,q \in X$
\[ d(p,q) = \inf_{\gamma} L(\gamma) \] where the infimum is taken over all paths from $p$ to $q$. We say that $(X,d)$ is a \emph{geodesic space} if the infimum is actually attained, that is for all $p,q\in X$ there is a curve $\gamma$ from $p$ to $q$ such that $L(\gamma) = d(p,q)$. We will call such a curve a \emph{geodesic}. It is important to note that our terminology slightly differs from the classical notion of a geodesic on a Riemannian manifold in that we require a geodesic to be globally length minimizing.

For points $p,q \in (X,d)$ a metric space, we call $r$ a \emph{midpoint} of $p$ and $q$ if $d(p,r) = d(q,r) = \frac 12 d(p,q)$. This provides an often convenient characterization of geodesic spaces.

\begin{lemm}\label{lemm:geo-sp-iff-midpts}
A metric space $(X,d)$ is a geodesic space if and only if midpoints exist.
%\footnote{A similar argument shows that $(X,d)$ is a length space if and only if ``approximate midpoints'' exist, i.e. for any $p,q \in X$ and $\epsilon>0$ there is $r \in X$ such that \[|2d(p,r) - d(p,q) \leq \epsilon, |2d(q,r) - d(p,q)| \leq \epsilon\] holds, but we will not make use of this fact.} 
\end{lemm}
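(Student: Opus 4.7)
The plan is to prove each direction separately, noting that the backward direction implicitly uses completeness of $(X,d)$, which is available in the essay's setting (compact metric spaces).

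For the forward direction ($\Rightarrow$), suppose $(X,d)$ is geodesic and let $p,q \in X$ with $p \neq q$. Take a length-minimizing curve $\gamma$ from $p$ to $q$; since $L(\gamma) = d(p,q) < \infty$, we may reparametrize by arclength so that $\gamma:[0,1] \to X$ satisfies $L(\gamma|_{[0,s]}) = s\, d(p,q)$. Set $r = \gamma(1/2)$. Applying \eqref{eq:dist-less-than-inf-length} to each half yields
\[
d(p,r) \leq L(\gamma|_{[0,1/2]}) = \tfrac{1}{2} d(p,q), \qquad d(r,q) \leq L(\gamma|_{[1/2,1]}) = \tfrac{1}{2} d(p,q),
\]
while the triangle inequality $d(p,r) + d(r,q) \geq d(p,q)$ forces both to be equalities, so $r$ is a midpoint.

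For the converse ($\Leftarrow$), assume midpoints of every pair exist and let $p,q \in X$. Write $D \subset [0,1]$ for the dyadic rationals and inductively define $\gamma: D \to X$ by setting $\gamma(0) = p$, $\gamma(1) = q$, and for each $n$ and $0 \leq k < 2^n$, letting $\gamma((2k+1)2^{-(n+1)})$ be a midpoint of $\gamma(k 2^{-n})$ and $\gamma((k+1) 2^{-n})$. An induction on $n$ shows
\[
d\bigl(\gamma(k 2^{-n}), \gamma((k+1)2^{-n})\bigr) = 2^{-n} d(p,q) \qquad \text{for all } 0 \leq k < 2^n.
\]
The midpoint condition provides the upper bound $\leq 2^{-n} d(p,q)$ directly, and then summing the $2^n$ consecutive distances together with the global triangle inequality $d(p,q) \leq \sum_k d(\gamma(k 2^{-n}), \gamma((k+1)2^{-n}))$ forces equality at every step. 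Consequently $d(\gamma(s),\gamma(t)) \leq |s-t|\, d(p,q)$ for all $s,t \in D$, so $\gamma$ is uniformly continuous on $D$ and extends, by completeness, to a $d(p,q)$-Lipschitz curve $\bar\gamma:[0,1]\to X$. The Lipschitz bound gives $L(\bar\gamma) \leq d(p,q)$, while \eqref{eq:dist-less-than-inf-length} gives $L(\bar\gamma) \geq d(\bar\gamma(0),\bar\gamma(1)) = d(p,q)$, so $\bar\gamma$ is a geodesic.

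The main obstacle is in the converse direction: the dyadic midpoint construction yields only a map on $D$, and one must argue both that (i) the distances along the construction are \emph{exactly} $2^{-n} d(p,q)$ rather than merely bounded above — here the subtle point is that the midpoint property alone is local, and one has to globally tie it back to $d(p,q)$ by chaining the triangle inequality — and (ii) that the resulting curve extends continuously to all of $[0,1]$, which needs completeness of $X$. In the essay's setting this is automatic, so no further hypothesis is required.
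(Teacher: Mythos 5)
Your proof is correct and follows essentially the same strategy as the paper: take a length-minimizing curve and halve its length for the forward direction, and build a curve on dyadic rationals via iterated midpoints and extend by (uniform) continuity for the converse. Your explicit remark that the converse needs completeness of $X$ (supplied here by compactness) is a genuine clarification the paper glosses over; one minor quibble is that in your converse the midpoint definition already forces \emph{equality} $d(\gamma(k2^{-n}),\gamma((k+1)2^{-n})) = 2^{-n}d(p,q)$ by induction, so the chaining-with-the-global-triangle-inequality step you describe is not actually needed.
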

\begin{proof}

It is not hard to show that the map $t\mapsto L(\gamma|_{[a,t]})$ is a continuous function of $t$ as long as $\gamma$ is rectifiable (see \cite{BBI:MetGeo} Proposition 2.3.4), so if $\gamma$ from $p$ to $q$ is a geodesic, then  there is $t$ with $L(\gamma|_{[a,t]}) = \frac 12 L(\gamma) = \frac 12 d(p,q)$. Thus, for this $t$, $d(p,\gamma(t)) = d(\gamma(t),q) = \frac 12 d(p,q)$, so $\gamma(t)$ is a ``midpoint'' between $p$ and $q$. 

Conversely supposing that midpoints exist, for points $p,q$, there is a midpoint $x_{1/2}$ between $p$ and $q$. Continuing this, there are midpoints $x_{1/4}$ of $p$ and $x_{1/2}$ and $x_{3/4}$ of $x_{1/2}$ and $q$. Thus, by induction, there are points $x_{k/2^n}$ such that $x_0 = p$, $x_1= q$ and $x_{(2k+1)/2^{n+1}}$ is the midpoint of $x_{k/2^n}$ and $x_{(k+1)/2^n}$. Furthermore induction, it is clear that this gives a map $\gamma$ from the dyadic rationals to $X$ such that \[d(\gamma(k/2^n), \gamma(k'/2^{n'})) = |k/2^n - k'/2^{n'}| d(p,q)\] and because of this, $\gamma$ extends by continuity to a continuous map $\gamma: [0,1] \to X$, with $L(\gamma) = d(p,q)$. 
\end{proof}
\begin{lemm}
Any geodesic $\gamma :[a,b] \to X$ can be continuously reparametrized so that it is a map $\gamma:[0,1] \to X$ and 
\[ d(\gamma(t),\gamma(t')) = |t-t'| d(\gamma(0),\gamma(1)).\]
\end{lemm}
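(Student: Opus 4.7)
My plan is to reparametrize $\gamma$ by (normalized) arc length. Set $L = L(\gamma) = d(\gamma(a),\gamma(b))$ and let $\phi:[a,b]\to[0,1]$ be defined by $\phi(t) = L(\gamma|_{[a,t]})/L$, which is continuous and nondecreasing by the result from \cite{BBI:MetGeo} cited in the previous lemma. I then want to define $\tilde\gamma:[0,1]\to X$ by $\tilde\gamma(\phi(t)) = \gamma(t)$ and verify that it is continuous and satisfies the stated length identity.

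The crucial preliminary step is to establish that for any $a\le s\le t\le b$, we have $d(\gamma(s),\gamma(t)) = L(\gamma|_{[s,t]})$; in other words, that the restriction of a geodesic to any subinterval is itself a geodesic. This follows by inserting $s$ and $t$ into the chain
\[
L = L(\gamma|_{[a,s]}) + L(\gamma|_{[s,t]}) + L(\gamma|_{[t,b]}) \ge d(\gamma(a),\gamma(s)) + d(\gamma(s),\gamma(t)) + d(\gamma(t),\gamma(b)) \ge d(\gamma(a),\gamma(b)) = L,
\]
where the first inequality uses \eqref{eq:dist-less-than-inf-length} on each piece and the second is the triangle inequality. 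Since the chain collapses, equality holds everywhere, and in particular $d(\gamma(s),\gamma(t)) = L(\gamma|_{[s,t]}) = L\,(\phi(t)-\phi(s))$.

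With this identity in hand, the well-definedness of $\tilde\gamma$ is easy: if $\phi(t_1)=\phi(t_2)$ with $t_1\le t_2$, then $L(\gamma|_{[t_1,t_2]})=0$, so $d(\gamma(t_1),\gamma(t_2))=0$ and $\gamma(t_1)=\gamma(t_2)$. Thus $\tilde\gamma(s) := \gamma(t)$ for any $t\in\phi^{-1}(s)$ is unambiguous, and by construction
\[
d(\tilde\gamma(s),\tilde\gamma(s')) = L\,|s-s'| = d(\gamma(a),\gamma(b))\,|s-s'|.
\]
This identity automatically gives continuity of $\tilde\gamma$, so no separate continuity argument is needed.

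The main (minor) obstacle is the possibility that $\gamma$ is non-injective or has subintervals on which $\phi$ is constant, which is why the reparametrization has to be defined as a map out of the quotient rather than by literal inversion of $\phi$; the key length-subadditivity identity above dissolves this issue cleanly.
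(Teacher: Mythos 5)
The paper does not actually prove this lemma---it states it and immediately defers to \cite[Proposition 2.5.9]{BBI:MetGeo}. Your proof is therefore doing genuine work where the paper just cites, and the argument you give is correct and is the standard one: reparametrize by normalized arc length, show that a geodesic restricts to a geodesic on every subinterval via the collapsing chain of inequalities, use that to see $\phi$ identifies points at distance zero (so $\tilde\gamma$ is well defined on the quotient), and read off the Lipschitz identity which gives continuity for free. The chain $L = \sum L(\gamma|_{[\cdot,\cdot]}) \ge \sum d(\cdot,\cdot) \ge d(\gamma(a),\gamma(b)) = L$ is exactly the right observation, and using additivity of $L$ over subintervals together with \eqref{eq:dist-less-than-inf-length} and the triangle inequality is clean.

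One minor gap worth noting: you divide by $L = L(\gamma)$, which silently assumes $\gamma(a)\neq\gamma(b)$. In the degenerate case $L=0$ the key identity forces $\gamma$ to be constant, and the reparametrization is trivial (the constant map on $[0,1]$ satisfies the stated identity with both sides zero); a one-line remark disposing of this case would make the proof complete. You also implicitly use that $\phi$ is surjective onto $[0,1]$; this follows from continuity plus $\phi(a)=0$, $\phi(b)=1$ and is worth stating so the assignment $\tilde\gamma(s)=\gamma(t)$ for $t\in\phi^{-1}(s)$ is visibly defined for every $s\in[0,1]$. Neither of these is a real obstruction---the proposal is sound.
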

Further details and a proof of this can be found in \cite[Proposition 2.5.9]{BBI:MetGeo}. From now on, we will assume that all geodesics are so parametrized.

We will often use the term \emph{induced length metric} or \emph{intrinsic length metric} in the following situation. Suppose that we have a compact geodesic space $(X,d)$ and a (path connected) closed subset $Y\subset X$. We can define a metric $\tilde d$ on $Y$ by 
\[ \tilde d(y,y') = \inf_{\gamma \subset Y} L(\gamma), \] where the infimum is taken over all paths $\gamma$ from $y$ to $y'$ which are contained entirely inside $Y$ but $L(\gamma)$ is the length as measured in $X$ with the metric $d$. It is possible to show that under our assumptions on $X$ and $Y$, $(Y,\tilde d)$ is a geodesic space; cf.\ \cite[Chapter 2]{BBI:MetGeo}. As such, we call $\tilde d$ the \emph{intrinsic/induced metric}. 

\section{Optimal Transport and Wasserstein Distance} \label{sect:opt-trans-was-dist}
Let $(X,d)$ be a compact geodesic space. We will denote the space of probability measures on $X$ by $\cP(X)$. For $k=1,2$, we define $\proj_{k}:X\times X \to X$ to be projection onto the $k$-th factor. For two probability measures $\mu,\nu \in \cP(X)$, we define the set of \emph{admissible transport plans} to be 
\begin{equation} \Pi(\mu,\nu) =\{ \pi \in \cP(X\times X) : (\proj_1)_*\pi = \mu,(\proj_2)_*\pi = \nu\}\end{equation}
where for $k=1,2$, $(\proj_{k})_{*} \pi \in \cP(X)$ is the $k$-th marginal, $(\proj_{k})_{*}(A) = \pi[(\proj_{k}^{-1}(A)]$. This is certainly nonempty, because the measure $\mu\otimes \nu$ which is defined by \[ \int_{X\times X} f(x,y) d\mu\otimes \nu(x,y) : = \int _{X}\int_{ X} f(x,y) d\mu(x)  d\nu(y)\] for continuous $f\in C(X\times X)$, is clearly in $\Pi(\mu,\nu)$. Loosely speaking, a transport plan is a proposal for how to move the mass of $\mu$ around so as to assemble the distribution of mass prescribed by $\nu$. Given the above definition, we define the $2$-\emph{Wasserstein distance} between $\mu$ and $\nu$ to be 
\begin{equation}\label{eq:wass-dist} d^{W}(\mu,\nu)^{2} := \inf_{\pi\in\Pi(\mu,\nu)}\int_{X\times X} d(x,y)^{2} d\pi(x,y). \end{equation}
\begin{lemm}\label{lemm:exist-opt-trans}
The infimum in \eqref{eq:wass-dist} is always achieved.
\end{lemm}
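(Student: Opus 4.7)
The plan is to prove existence via the direct method in the calculus of variations: take a minimizing sequence of transport plans, extract a weakly convergent subsequence using compactness, and pass to the limit. Since $X$ is a compact metric space, so is $X \times X$, and hence by Prokhorov's theorem (as stated in the introduction) the space $\cP(X \times X)$ is compact in the weak* topology.

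First I would fix a minimizing sequence $\pi_n \in \Pi(\mu,\nu)$ with $\int_{X \times X} d(x,y)^2 \, d\pi_n(x,y) \to d^W(\mu,\nu)^2$. By the compactness of $\cP(X \times X)$, after passing to a subsequence we may assume $\pi_n \to \pi$ weakly* for some $\pi \in \cP(X \times X)$. Next I would check that $\pi \in \Pi(\mu,\nu)$, i.e.\ that the marginal constraints are preserved in the limit. For any $f \in C(X)$, the function $f \circ \proj_1 \in C(X \times X)$, so weak* convergence gives
\[
\int_{X} f \, d((\proj_1)_* \pi) = \int_{X \times X} f \circ \proj_1 \, d\pi = \lim_{n \to \infty} \int_{X \times X} f \circ \proj_1 \, d\pi_n = \int_X f \, d\mu,
\]
since each $\pi_n$ has first marginal $\mu$. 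By the Riesz representation theorem this forces $(\proj_1)_* \pi = \mu$, and the identical argument with $\proj_2$ gives $(\proj_2)_* \pi = \nu$. Thus $\Pi(\mu,\nu)$ is weak* closed in $\cP(X \times X)$.

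Finally I would pass to the limit in the cost. Since $X$ is compact, $d: X \times X \to \RR$ is continuous and bounded, hence $(x,y) \mapsto d(x,y)^2$ is a bounded continuous function on $X \times X$. Weak* convergence $\pi_n \to \pi$ therefore yields
\[
\int_{X \times X} d(x,y)^2 \, d\pi(x,y) = \lim_{n \to \infty} \int_{X \times X} d(x,y)^2 \, d\pi_n(x,y) = d^W(\mu,\nu)^2,
\]
so $\pi$ realizes the infimum. There is no real obstacle here: because $X$ is compact the cost is a bounded continuous function, so we get actual continuity rather than merely lower semicontinuity of the functional, and the mild point to be careful about is simply verifying that marginals are preserved under weak* convergence, which follows from testing against the pullbacks of continuous functions under the coordinate projections.
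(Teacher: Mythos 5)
Your proof is correct and follows essentially the same route as the paper's: extract a weak* convergent subsequence of a minimizing sequence via Prokhorov's theorem, verify the marginal constraints pass to the limit by testing against pullbacks of continuous functions, and use continuity of the bounded cost $d(x,y)^2$ to pass to the limit in the integral. The only cosmetic difference is that the paper tests against functions of the form $f(x)+g(y)$ and specializes, whereas you test against $f\circ\proj_1$ and $f\circ\proj_2$ directly; the ideas are identical.
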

\begin{proof}
Letting $\pi_{k} \in \Pi(\mu,\nu)$ be a minimizing sequence,
\[ \lim_{k\to\infty} \int_{X\times X} d(x,y)^{2}d\pi_{k} = d^{W}(\mu,\nu)^{2}.\]
Because $X\times X$ is compact, by Prokhorov's theorem, extracting a subsequence, we can assume that there is some $\pi \in \cP(X\times X)$ such that $\pi_{k}\to \pi$ in the weak* topology. Because $d(x,y)^{2}$ is continuous on $X\times X$, we thus have that 
\[ \int_{X\times X}d(x,y)^{2}d\pi = d^{W}(\mu,\nu)^{2}.\]
It remains to show that $\pi \in \Pi(\mu,\nu)$. For continuous functions $f,g\in C(X)$ 
\begin{align*}
 \int_{X\times X}( f(x) + g(y)) d\pi(x,y) &  = \lim_{k\to\infty} \int_{X\times X} (f(x) + g(y)) d\pi_{k}(x,y)\\
 & = \lim_{k\to \infty} \left( \int_{X} f(x) d\mu (x)+ \int_{X} g(y) d\nu(y)\right)\\
 & =  \int_{X} f(x) d\mu(x) + \int_{X} g(y) d\nu(y)
 \end{align*}
 Taking $g$ zero, because $f$ was arbitrary, this shows that $(\proj_{1})_{*}\pi = \mu$ and similarly setting $f$ to zero gives $(\proj_{2})_{*}\pi = \nu$, so we have that $\pi \in \Pi(\mu,\nu)$.
\end{proof}

We call such a minimizer an \emph{optimal transport plan}. We remark that for a fixed $\mu,\nu\in \cP(X)$ it could be true that there is more than one optimal transport plan.

\subsection{Metric and Topological Properties of Wasserstein Distance} In the following proof, we give a simplified version of the exposition in \cite[Chapter 7]{Villani:TopOptTspt03}. 
\begin{prop}
The Wasserstein distance, $d^{W}$, defined by \eqref{eq:wass-dist} is a metric on $\cP(X)$. 
\end{prop}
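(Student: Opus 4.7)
The plan is to verify the four axioms of a metric in turn: non-negativity and finiteness, symmetry, the coincidence axiom, and the triangle inequality. Non-negativity is immediate from the definition, while finiteness follows from compactness of $X$ (so $d$ is bounded) and the existence of the product plan $\mu \otimes \nu \in \Pi(\mu,\nu)$. Symmetry is obtained by observing that if $S:X\times X \to X \times X$ swaps the two factors, then $\pi \mapsto S_*\pi$ is a bijection $\Pi(\mu,\nu) \to \Pi(\nu,\mu)$ that preserves the cost integrand $d(x,y)^2 = d(y,x)^2$.

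For the coincidence axiom, one direction is proved by taking $\pi = (\id,\id)_*\mu$, the pushforward of $\mu$ under the diagonal embedding $x \mapsto (x,x)$; this lies in $\Pi(\mu,\mu)$ and gives zero cost, so $d^W(\mu,\mu) = 0$. Conversely, if $d^W(\mu,\nu)=0$, then by Lemma \ref{lemm:exist-opt-trans} there is an optimal $\pi \in \Pi(\mu,\nu)$ with $\int d(x,y)^2 d\pi = 0$. Since $d$ is continuous and non-negative, $\pi$ is supported on the diagonal $\{(x,x) : x \in X\}$. Then for any $f \in C(X)$,
\[
\int_X f\,d\mu = \int_{X\times X} f(x)\,d\pi(x,y) = \int_{X\times X} f(y)\,d\pi(x,y) = \int_X f\,d\nu,
\]
so $\mu = \nu$.

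The main obstacle, and the step that does real work, is the triangle inequality $d^W(\mu,\nu) \leq d^W(\mu,\lambda) + d^W(\lambda,\nu)$. I would prove this using a gluing construction: given optimal plans $\pi_{12} \in \Pi(\mu,\lambda)$ and $\pi_{23} \in \Pi(\lambda,\nu)$, disintegrate them with respect to the common marginal $\lambda$, writing $d\pi_{12}(x,y) = d\pi_{12}^y(x)\,d\lambda(y)$ and $d\pi_{23}(y,z) = d\pi_{23}^y(z)\,d\lambda(y)$, and then define a measure $\pi_{123}$ on $X\times X \times X$ by
\[
d\pi_{123}(x,y,z) := d\pi_{12}^y(x)\,d\pi_{23}^y(z)\,d\lambda(y).
\]
By construction, the $(1,2)$- and $(2,3)$-marginals of $\pi_{123}$ recover $\pi_{12}$ and $\pi_{23}$, so $\pi := (\proj_{1,3})_*\pi_{123}$ lies in $\Pi(\mu,\nu)$.

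To conclude, I would combine the triangle inequality for $d$ on $X$ with Minkowski's inequality in $L^2(X^3, d\pi_{123})$:
\[
d^W(\mu,\nu) \leq \left( \int d(x,z)^2\,d\pi \right)^{1/2} \leq \left( \int (d(x,y) + d(y,z))^2\,d\pi_{123} \right)^{1/2},
\]
which is bounded above by
\[
\left( \int d(x,y)^2\,d\pi_{123} \right)^{1/2} + \left( \int d(y,z)^2\,d\pi_{123} \right)^{1/2} = d^W(\mu,\lambda) + d^W(\lambda,\nu).
\]
The delicate point to justify carefully is the existence of the disintegrations $\{\pi_{12}^y\}$ and $\{\pi_{23}^y\}$, which relies on the disintegration theorem for probability measures on compact metric spaces; once that is in hand, the rest of the argument is essentially a bookkeeping exercise on marginals together with one application of Minkowski.
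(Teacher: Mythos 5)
Your proof is correct, and the overall skeleton (non-negativity/finiteness, symmetry, coincidence axiom via concentration on the diagonal, triangle inequality via a gluing construction followed by an $L^2$-type estimate) matches the paper's. The genuine divergence is in how you establish the gluing lemma. You construct $\pi_{123}$ explicitly by disintegrating $\pi_{12}$ and $\pi_{23}$ with respect to the common marginal $\lambda$ and forming the fiberwise product $d\pi_{12}^y(x)\,d\pi_{23}^y(z)\,d\lambda(y)$; this is the standard textbook route (as in Ambrosio--Gigli--Savar\'e or Villani), and it has the virtue of producing the glued measure concretely, but it rests on the disintegration theorem, which is itself a nontrivial measure-theoretic fact on Polish spaces. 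The paper instead proceeds functional-analytically: it defines a linear functional $G$ on the subspace $V \subset C(X\times X\times X)$ of functions of the form $\varphi_{12}(x_1,x_2)+\varphi_{23}(x_2,x_3)$, checks well-definedness by integrating out the common $x_2$-marginal, extends $G$ to a positive functional on $C(X^3)$ by (a positive version of) Hahn--Banach, and invokes Riesz representation to obtain $\pi$. That route avoids disintegration at the cost of a less explicit construction and a slightly delicate point about positivity of the Hahn--Banach extension. Your final step invokes Minkowski's inequality in $L^2(\pi_{123})$ directly, whereas the paper expands $(d(x_1,x_2)+d(x_2,x_3))^2$ and applies Cauchy--Schwarz to the cross term; these are the same computation, just packaged differently. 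You correctly flag the disintegration theorem as the point needing care; if you go this route in a write-up, that is indeed the one external input you should cite precisely.
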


\begin{proof}
Clearly $d^{W}$ is symmetric, nonnegative and finite on $\cP(X)$. Furthermore, $d^{W}(\mu,\mu) = 0$ for all $\mu \in \cP(X)$. If $d^{W}(\mu,\nu) = 0$, then by Lemma \ref{lemm:exist-opt-trans}, there exists an optimal plan $\pi \in \Pi(\mu,\nu)$ with 
\begin{equation*}\label{eq:opt-plan-zero-int}
\int_{X\times X} d(x,y)^{2} d\pi(x,y) = 0.
\end{equation*}
This shows that $\pi$ must be concentrated on the diagonal $\Delta_{X} \subset X\times X$, and thus is invariant under $x\leftrightarrow y$. Thus, for $u\in C(X)$
\[
\int_{X} u(x) d\mu(x) = \int_{X\times X} u(x) d\pi(x,y) = \int_{X\times X} u(y) d\pi(x,y) = \int_{X} u(y) d\nu(y).
\]
which shows thate $\mu = \nu$. Thus, it remains to prove the triangle inequality. This requires 
\begin{lemm}[Gluing Lemma]\label{lemm:gluing}
For probability measures $\mu_{1},\mu_{2},\mu_{3}\in \cP(X)$ and associated transport plans $\pi_{12}\in \Pi(\mu_{1},\mu_{2}), \pi_{23} \in \Pi(\mu_{2},\mu_{3})$, there exists a probability measure $\pi \in \cP(X\times X\times X)$ with marginal $\pi_{12}$ on the first two factors and $\pi_{23}$ on the second two factors. 
\end{lemm}

We will prove this after we show how it establishes the triangle inequality. For $\mu_{1},\mu_{2},\mu_{3} \in \cP(X)$, let $\pi_{12}\in \Pi(\mu_{1},\mu_{2})$ and $\pi_{23}\in \Pi(\mu_{2},\mu_{3})$ be optimal transport plans. The gluing lemma says that there is $\pi \in \cP(X\times X\times X)$ with marginals $\pi_{12}$ and $\pi_{23}$. We let $\pi_{13}$ be the marginal on the first and third factors of $X$. It is clear that this is a transport plan between $\mu_{1}$ and $\mu_{3}$ (it is not necessarily optimal), giving
\begin{align*}
d^{W}(\mu_{1},\mu_{3})^{2} & \leq \int_{X\times X} d(x_{1},x_{3})^{2} d\pi_{13}(x_{1},x_{3}) \\
& =\int_{X\times X\times X} d(x_{1},x_{3})^{2} d\pi(x_{1},x_{2},x_{3})\\
& \leq \int_{X\times X\times X} \left[ d(x_{1},x_{2}) + d(x_{2},x_{3})\right]^{2} d\pi(x_{1},x_{2},x_{3})\\
& = \int_{X\times X\times X} \left[d(x_{1},x_{2})^{2}    +  d(x_{2},x_{3})^{2}  + 2d(x_{1},x_{2}) d(x_{2},x_{3}) \right] d\pi(x_{1},x_{2},x_{3})\\
& = \int_{X\times X} d(x_{1},x_{2})^{2} d\pi_{12}(x_{1},x_{2})  +  \int_{X\times X} d(x_{2},x_{3})^{2} d\pi_{23}(x_{2},x_{3})  \\
 & \qquad \qquad+  2 \int_{X\times X\times X}  d(x_{1},x_{2}) d(x_{2},x_{3}) d\pi(x_{1},x_{2},x_{3}) \\
 & = d^{W}(\mu_{1},\mu_{2})^{2} + d^{W}(\mu_{2},\mu_{3})^{2} +  2 \int_{X\times X\times X}  d(x_{1},x_{2}) d(x_{2},x_{3}) d\pi(x_{1},x_{2},x_{3})\\
  &  \leq 2 \left( \int_{X\times X\times X}  d(x_{1},x_{2})^{2 }d\pi(x_{1},x_{2},x_{3})\right)^{1/2} \left( \int_{X\times X\times X}d(x_{2},x_{3})^{2} d\pi(x_{1},x_{2},x_{3})\right)^{1/2}\\
 &    \qquad \qquad + d^{W}(\mu_{1},\mu_{2})^{2} + d^{W}(\mu_{2},\mu_{3})^{2}\\
 & = d^{W}(\mu_{1},\mu_{2})^{2} + 2 d^{W}(\mu_{1},\mu_{2})d^{W}(\mu_{2},\mu_{3}) + d(\mu_{2},\mu_{3})^{2}\\
 & = \left[ d^{W}(\mu_{1},\mu_{2}) + d^{W}(\mu_{2},\mu_{3})\right]^{2}
\end{align*}
proving the triangle inequality. Thus, it remains to prove the gluing lemma (Lemma \ref{lemm:gluing}). 
\begin{proof}[Proof of the Gluing Lemma]
Let $V \subset C(X\times X\times X)$ be the vector subspace
\[ V: = \{ \varphi_{12}(x_{1},x_{2}) + \varphi_{23}(x_{2},x_{3}) : \varphi_{12},\varphi_{23} \in C(X\times X) \}\] and define a functional $G: V\to \RR$ by 
\[ G(\varphi_{12} + \varphi_{23} ) : = \int_{X\times X} \varphi_{12}(x_{1},x_{2})d\pi_{12}(x_{1},x_{2}) + \int_{X\times X} \varphi_{23} (x_{2},x_{3}) d\pi_{23}(x_{2},x_{3}).\] It is not a priori clear that this is well defined. If $\varphi_{12} + \varphi_{23} = \tilde \varphi_{12} + \tilde \varphi_{23}$ as functions, then 
\[ \varphi_{12}(x_{1},x_{2}) -\tilde \varphi_{12}(x_{1},x_{2}) = \tilde\varphi_{23}(x_{2},x_{3}) - \varphi_{23}(x_{2},x_{3}),\] 
which shows that both sides are functions of $x_{2}$ only. Thus
\begin{align*}
 \int_{X\times X} [\varphi_{12}(x_{1},x_{2}) - \tilde\varphi_{12}(x_{1},x_{2}) ] d\pi_{12}(x_{1},x_{2}) & = \int_{X} [ \varphi_{12}(x_{1},x_{2}) - \tilde\varphi_{12}(x_{1},x_{2})] d\mu_{2}(x_{2})\\
 &= \int_{X} [ \tilde \varphi_{23}(x_{2},x_{3}) - \varphi_{23}(x_{2},x_{3})] d\mu_{2}(x_{2})\\ 
 &= \int_{X\times X} [ \tilde \varphi_{23}(x_{2},x_{3}) - \varphi_{23}(x_{2},x_{3})] d\pi_{23}(x_{2},x_{3})
 \end{align*}
 and rearranging the terms shows that 
 \[ G(\varphi_{12} + \varphi_{23}) = G(\tilde \varphi_{12} + \tilde\varphi_{23}) \] 
 so $V$ is in fact well defined. Furthermore, $G$ is clearly bounded and linear. Thus Hahn-Banach tells us that there exists an extension to a positive functional\footnote{The usual version of Hahn--Banach does not tell us that the extension need be positive. The reader could try to modify the proof of Hahn--Banach to show this (some caution must be taken here: the fact that there is always an element in $V$ dominating any element in $C(X\times X\times X)$ (e.g.\ a constant function) should be used in the proof). Alternatively, cf.\ \cite[ Section 8.11]{AliprantisBorder:InfDimAnalysis}. } $\hat G: C(X\times X\times X) \to \RR$, and by the Riesz representation theorem, $\hat G$ is represented by some (Borel) measure $\pi \in \cP(X\times X\times X)$. Because 
 \begin{align*}
  \int_{X\times X\times X} [\varphi_{12}(x_{1},x_{2}) + \varphi_{23}(x_{2},x_{3})] d\pi(x_{1},x_{2},x_{3}) & = \hat G(\varphi_{12}+ \varphi_{23}) \\
  & = G(\varphi_{12}+ \varphi_{23})\\
  & = \int_{X\times X} \varphi_{12}(x_{1},x_{2})d\pi_{12}(x_{1},x_{2})\\
  & \qquad  + \int_{X\times X} \varphi_{23} (x_{2},x_{3}) d\pi_{23}(x_{2},x_{3}) 
  \end{align*}
  it is clear that $\pi$ has marginals $\pi_{12}$ and $\pi_{23}$, as desired. 
\end{proof}
This completes the proof that $d^{W}$ satisfies the triangle inequality, and thus the proof that it is a metric on $\cP(X)$. 
\end{proof}

The next proof is a simplification of the proof of \cite[Theorem 6.9]{Villani:OptTspt}, and we will rely on a result characterizing the support of optimal plans known as $c$-monotonicity, references for which we give below. 
\begin{prop}
Equipped with the Wasserstein distance, $(\cP(X),d^{W})$ is a compact metric space with the same topology as that on $\cP(X)$ defined by weak* convergence. 
\end{prop}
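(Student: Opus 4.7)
The plan is to prove that $d^{W}$ metrizes the weak* topology on $\cP(X)$: once that is established, the weak* compactness of $\cP(X)$ (Prokhorov's theorem applied to the compact space $X$, as recalled in the introduction) transfers automatically to give compactness of $(\cP(X),d^{W})$, and the topological identification is then immediate.

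First, I would check that $d^{W}$-convergence implies weak* convergence by a uniform-continuity estimate. Suppose $d^{W}(\mu_{k},\mu)\to 0$ and let $f\in C(X)$; since $X$ is compact, $f$ is uniformly continuous with some modulus $\omega$. Choose optimal plans $\pi_{k}\in\Pi(\mu_{k},\mu)$ via Lemma \ref{lemm:exist-opt-trans}. Using the marginal property,
\[
\left|\int f\,d\mu_{k}-\int f\,d\mu\right|\leq \int_{X\times X}|f(x)-f(y)|\,d\pi_{k}(x,y),
\]
and I would split the integration according to whether $d(x,y)<\delta$ or $d(x,y)\geq\delta$: the first piece is bounded by $\omega(\delta)$, and the second by $2\|f\|_{\infty}\cdot\delta^{-2}d^{W}(\mu_{k},\mu)^{2}$ via Markov's inequality. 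Sending $k\to\infty$ and then $\delta\to 0$ yields $\int f\,d\mu_{k}\to\int f\,d\mu$.

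The main obstacle is the reverse implication: if $\mu_{k}\to\mu$ weak*, then $d^{W}(\mu_{k},\mu)\to 0$, since the naive product coupling $\mu_{k}\otimes\mu$ is useless. My plan is to discretize. Given $\epsilon>0$, partition $X$ into finitely many Borel sets $A_{1},\ldots,A_{N}$ of diameter less than $\epsilon$ and whose boundaries satisfy $\mu(\partial A_{i})=0$ (achievable by taking a finite cover by balls and perturbing the radii to avoid the at most countable set of bad values), pick $x_{i}\in A_{i}$, and form
\[
\mu_{k}^{\epsilon}:=\sum_{i=1}^{N}\mu_{k}(A_{i})\,\delta_{x_{i}},\qquad \mu^{\epsilon}:=\sum_{i=1}^{N}\mu(A_{i})\,\delta_{x_{i}}.
\]
Shipping each $\mu_{k}|_{A_{i}}$ to the point mass at $x_{i}$ furnishes a transport plan with cost at most $\epsilon^{2}$, giving $d^{W}(\mu_{k},\mu_{k}^{\epsilon})\leq\epsilon$ and likewise $d^{W}(\mu,\mu^{\epsilon})\leq\epsilon$. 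Because $\mu(\partial A_{i})=0$, the portmanteau theorem yields $\mu_{k}(A_{i})\to\mu(A_{i})$ for every $i$, so $\mu_{k}^{\epsilon}\to\mu^{\epsilon}$ in $d^{W}$ (on the finite set $\{x_{1},\ldots,x_{N}\}$, $d^{W}$-convergence is just convergence of the coefficient vectors in $\RR^{N}$). The triangle inequality then produces $\limsup_{k}d^{W}(\mu_{k},\mu)\leq 2\epsilon$, and letting $\epsilon\to 0$ completes this step.

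Combining the two implications shows that $d^{W}$ and the weak* topology induce the same convergent sequences on $\cP(X)$; as both topologies are metrizable (weak* on $\cP(X)$ is metrizable because $X$ is a compact metric space), they coincide. Compactness of $(\cP(X),d^{W})$ is then inherited from Prokhorov. The technical heart is the second step, and in particular the choice of a partition with $\mu$-null cell boundaries, which is what converts weak* convergence into control on the masses $\mu_{k}(A_{i})$ via the portmanteau theorem.
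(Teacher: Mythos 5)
Your argument is correct, and it is genuinely different from the paper's. For the direction $d^{W}\to 0 \Rightarrow$ weak*, you give a direct estimate using uniform continuity of $f$ together with Markov's inequality applied to $\pi_{k}(\{d\geq\delta\})$; the paper instead proves the converse direction first (weak* $\Rightarrow$ $d^{W}\to 0$) and then obtains this one by a soft subsequence-and-uniqueness-of-limits argument. For the harder direction, weak* $\Rightarrow$ $d^{W}\to 0$, the paper extracts a weak* limit $\pi$ of the optimal plans $\pi_k$, checks that $\pi\in\Pi(\mu,\mu)$, and then crucially appeals to the stability of optimality under weak* limits, which it outsources to the theory of $c$-cyclical monotonicity via \cite[Theorem 5.20]{Villani:OptTspt}. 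Your discretization argument — a finite Borel partition into cells of diameter $<\epsilon$ with $\mu$-null boundaries so that portmanteau gives convergence of the cell masses, followed by a triangle inequality through the discretized measures — avoids any optimality theory entirely and is genuinely self-contained at the cost of being longer on the page. One small point worth tightening: when you ``perturb radii,'' you want to start from a finite $\epsilon/4$-net and then, for each center, choose a radius in a nondegenerate interval like $(\epsilon/4,\epsilon/2)$ avoiding the countably many radii with positive $\mu$-mass on the corresponding sphere; this guarantees the balls still cover $X$ after perturbation, which is the step that needs a word of justification. With that clarification, the argument stands, and the conclusion that the two metrizable topologies with the same convergent sequences coincide (and compactness then follows from Prokhorov) is exactly right.
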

\begin{proof}
It is enough to show that $\mu_{k}\to \mu$ in the weak* topology if and only if we have that $d^{W}(\mu_{k},\mu) \to 0$, because then we have compactness by Prokhorov's theorem. To begin with, suppose that we have the convergence $\mu_{k}\to \mu$ in the weak* topology. Let $\pi_{k} \in \Pi(\mu_{k},\mu)$ be optimal transport plans. By compactness of $\cP(X\times X)$, we may extract a subsequence $\pi_{k'}$ such that $\pi_{k'} \to \pi$ in the weak* sense. First, we claim that $\pi$ is a transport plan, i.e.\ $\pi \in \Pi(\mu,\mu)$. To see this, notice that 
\begin{align*}
\int_{X\times X} [\varphi(x) + \psi(y)] d\pi (x,y)& = \lim_{k'\to\infty} \int_{X\times X} [\varphi(x) + \psi(y)] d\pi_{k'}(x,y) \\
& = \lim_{k'\to\infty} \int_{X} \varphi(x) d\mu_{k'}(x) + \int_{X}\psi(y) d\mu(y)\\
& =\int_{X} \varphi(x) d\mu(x) + \int_{X} \psi(y) d\mu(y),
\end{align*}
showing that $\pi \in \Pi(\mu,\mu)$. We now rely on \cite[Theorem 5.20]{Villani:OptTspt}, which says that the weak* limit of optimal plans is optimal (which follows from the discussion of $c$-monotonicity of supports of optimal plans, which can be found in various places, including the cited theorem, or \cite[Theorem 6.1.4]{AGS:GradFlow}). Thus,
\begin{equation*}
\lim_{k'\to \infty} d^{W}(\mu_{k'},\mu)^{2}  = \lim_{k'\to\infty } \int_{X\times X} d(x,y)^{2} d\pi_{k'}(x,y) = \int_{X\times X} d(x,y)^{2} d\pi(x,y)  = 0,
\end{equation*}
where the last equality follows from the fact that $\pi \in \Pi(\mu,\mu)$ is optimal, so its cost must be zero. 

Conversely, suppose that $d^{W}(\mu_{k},\mu)\to 0$. Because $\cP(X)$ is compact, extracting a subsequence there is some $\tilde \mu \in\cP(X)$ with $\mu_{k'}\to \tilde\mu$ in the weak* topology. The above part of the proof shows that $d^{W}(\mu_{k'}, \tilde \mu) \to 0$. By uniqueness of limits in metric spaces, this shows that $\mu = \tilde \mu$, and because we can repeat this argument for any subsequence, we have that $\mu_{k}\to \mu$ in the weak* topology. 
\end{proof}

\subsection{Dynamical Transport Plans and Geodesics} From now on, we will assume that $(X,d)$ is a (compact) geodesic space. We define $\Lip([0,1];X)$ to be Lipschitz continuous maps from $[0,1] \to X$ with the uniform topology, and let \[\Gamma \subset \Lip([0,1];X)\] be the (compact by Arzel\'a-Ascoli) subset of unit speed parametrized geodesics. For $t \in [0,1]$, the evaluation maps $e_{t}:\Gamma \to X$, given by $e_{t}:\gamma \mapsto \gamma(t)$ are clearly continuous. We call a Borel measure $\Theta$ on $\Gamma$ a \emph{dynamical transport plan} between $\mu,\nu \in \cP(X)$ if $(e_{0},e_{1})_{*} \Theta \in \Pi(\mu,\nu)$. Intuitively, $\Theta$ is a choice of which geodesic to transport the mass along, instead of just a initial and final endpoint. If $(e_{0},e_{1})_{*}\Theta$ is an optimal transport plan then, reasonably enough, we call $\Theta$ an \emph{optimal dynamical transport plan}. Again, even for a fixed optimal transport plan $\pi$, there might be multiple dynamic plans $\Theta$ with $(e_{0},e_{1})_{*}\Theta = \pi$. For example, if \[\addlabel{eq:defnX-non-unique-dynamic-plans}X = ([-2,-1]\times\{0\}) \cup \{(x,y) : x^{2} + y^{2} = 1\} \cup ([1,2]\times\{0\}) \subset \RR^{2}\] 
is equipped with the ``induced length metric,'' (i.e. the distance between two points is the minimum of the lengths of curves between the two points, where length is measured in the usual way for a curve in $\RR^{2}$), and $\mu = \delta_{(-2,0)}$ and $\nu = \delta_{(2,0)}$, it is clear that $\pi = \delta_{(-2,0)\times (2,0)}$ is the unique optimal transport plan, but there are two geodesics between $(-2,0)$ and $(2,0)$, say $\gamma_{1}$ which goes along the upper hemisphere of the circle and $\gamma_{2}$ which goes along the lower hemisphere, as in Figure \ref{fig:non-unique-dynamical-tspt}. Then, for any $s\in[0,1]$, defining \[ \Theta_{s} : = s \delta_{\gamma_{1}} + (1-s) \delta_{\gamma_{2}}\] we clearly have $(e_{0},e_{1})_{*}\Theta_{s}=\pi$. 
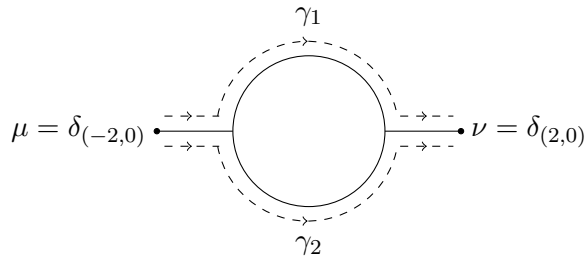
\begin{figure}[hb]
\usetikzlibrary{decorations.markings}
\begin{tikzpicture}
\begin{scope}[decoration={
    markings,
    mark=at position 0.5 with {\arrow{>}}}]
\draw (0,0) circle (1);
\draw (-2,0) -- (-1,0);
\draw (1,0) -- (2,0);
\filldraw (-2,0) circle (1pt) node [left] {$\mu = \delta_{(-2,0)}$};
\filldraw (2,0) circle (1pt) node [right] {$\nu = \delta_{(2,0)}$};
\draw [dashed,postaction={decorate}] (-1.2,.2) arc (170:10:1.2) node [midway,above=36] {$\gamma_{1}$};
\draw [dashed,postaction={decorate}] (1.2,.2) -- (1.9,.2);
\draw[dashed,postaction={decorate}]  (-1.9,.2) -- (-1.18,.2);
\draw [dashed,postaction={decorate}] (-1.2,-.2) arc (190:350:1.2) node [midway,below=36] {$\gamma_{2}$};
\draw [dashed,postaction={decorate}] (1.2,-.2) -- (1.9,-.2);
\draw [dashed,postaction={decorate}] (-1.9,-.2) -- (-1.18,-.2);
\end{scope}
\end{tikzpicture}
\caption{An example of a unique optimal transport plan on the space $X$ defined in \eqref{eq:defnX-non-unique-dynamic-plans} (transporting all of the mass at $(-2,0)$ to $(2,0)$ is the \emph{only} transport plan, and is thus the optimal one) with multiple associated dynamical transport plans.}
\label{fig:non-unique-dynamical-tspt}
\end{figure}

\begin{prop}\label{prop:dynamic-trans-plan}
Assuming, as above, that $(X,d)$ is a compact geodesic space, for any $\mu,\nu \in \cP(X)$, and any optimal transport plan $\pi$, there exists an optimal dynamical transport plan between $\mu$ and $\nu$, $\Theta$ with $(e_{0},e_{1})_{*}\Theta = \pi$. 
\end{prop}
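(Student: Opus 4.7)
The plan is to lift the given optimal plan $\pi$ through the endpoint map $E := (e_{0},e_{1}) : \Gamma \to X\times X$. Since $(X,d)$ is a geodesic space, $E$ is continuous and surjective; moreover, by Arzel\`a--Ascoli (applied to the equicontinuous family of unit-speed, hence $\diam(X)$-Lipschitz, paths into the compact space $X$) $\Gamma$ is compact, so each fiber $E^{-1}(x,y)$ is a nonempty compact subset of $\Gamma$. It therefore suffices to produce a Borel section $S : X\times X \to \Gamma$ of $E$, i.e.\ a Borel map with $E\circ S = \id_{X\times X}$: for then $\Theta := S_{*}\pi$ is automatically a Borel probability measure on $\Gamma$ with $E_{*}\Theta = (E\circ S)_{*}\pi = \pi$, and since $\pi$ is optimal by hypothesis, $\Theta$ is an optimal dynamical transport plan by the very definition given above.

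The existence of the section $S$ is a standard application of a measurable selection theorem (e.g.\ Kuratowski--Ryll-Nardzewski). The set-valued map $F(x,y) := E^{-1}(x,y)$ has nonempty compact values and closed graph in the compact metric space $(X\times X)\times \Gamma$, because $E$ is continuous. The only mild verification needed is the weak measurability condition: for every open $U \subset \Gamma$, the set $\{(x,y) : F(x,y)\cap U \ne \emptyset\}$ equals $E(U)$, and since open subsets of the compact metric space $\Gamma$ are $\sigma$-compact, $E(U)$ is a countable union of compact subsets of $X\times X$ and hence Borel. Kuratowski--Ryll-Nardzewski then delivers a Borel selector $S$ of $F$, which is exactly the section required.

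Setting $\Theta := S_{*}\pi$ finishes the argument: $S$ Borel and $\pi$ Borel make $\Theta$ a well-defined Borel probability measure on $\Gamma$, and the identity $E_{*}\Theta = \pi$ is built into $S$. The only real obstacle is thus the measurable selection step; everything else is formal. An elementary alternative that avoids selection theorems would be to approximate $\mu,\nu$ in the weak* sense by finitely supported measures $\mu_{k},\nu_{k}$, with associated optimal transport plans $\pi_{k}\to\pi$; for each $\pi_{k}$ a dynamical lift $\Theta_{k}$ can be constructed by hand (assign a single chosen geodesic to each of the finitely many pairs of atoms and weight it by the mass $\pi_{k}$ places there), and by compactness of $\cP(\Gamma)$ one extracts a weak* subsequential limit $\Theta$, with $E_{*}\Theta = \pi$ by continuity of $E$ and optimality preserved via the stability of optimal plans under weak* limits cited earlier.
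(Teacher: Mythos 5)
Your argument is essentially the paper's: construct a Borel right inverse $S$ of the endpoint map $(e_{0},e_{1})$ via a measurable selection theorem and set $\Theta = S_{*}\pi$. The only difference is cosmetic — the paper cites a selection theorem from Zimmer/Kallman as a black box, whereas you invoke Kuratowski--Ryll-Nardzewski and explicitly verify the weak measurability hypothesis (correctly, using $\sigma$-compactness of open sets in the compact space $\Gamma$ to see that $E(U)$ is $F_{\sigma}$); your sketched approximation alternative is a reasonable additional route but is not what the paper does.
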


\begin{proof}
If $(e_{0},e_{1})$ admits a Borel right inverse $S: X\times X \to \Gamma$ with \[(e_{0},e_{1}) \circ S = \Id_{X\times X},\] then we are finished, because this implies that
\[ \pi = \Id_{*} \pi = (e_{0},e_{1})_{*}S_{*} \pi\] so we can define $\Theta = S_{*}\pi$. The map $(e_{0},e_{1}): \Gamma \to X\times X$ is certainly surjective, as $X$ is a geodesic space, so a right inverse exists by the axiom of choice. However, there is no reason that it should be measurable, and we must turn to a technical result known as a measurable selection theorem, one version of which says that because $X\times X$ and $\Gamma$ are separable, compact, and metrizable, there is a Borel measurable right inverse to $(e_{0},e_{1})$. In \cite[Corollary A.6]{Zimmer:ErgoSemiSimp}, this version of measurable selection is shown to follow from \cite[Proposition 3.1]{Kallman:QuotCountSep3}. 
\end{proof}

\begin{coro}
If $(X,d)$ is a geodesic space, then so is $(\cP(X),d^{W})$.
\end{coro}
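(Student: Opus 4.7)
The plan is to invoke Lemma \ref{lemm:geo-sp-iff-midpts} and produce midpoints between arbitrary $\mu,\nu\in\cP(X)$ by pushing forward a dynamical optimal transport plan under the evaluation map at time $1/2$. The setup is as follows: given $\mu,\nu\in\cP(X)$, use Lemma \ref{lemm:exist-opt-trans} to obtain an optimal transport plan $\pi\in\Pi(\mu,\nu)$, and then apply Proposition \ref{prop:dynamic-trans-plan} to lift $\pi$ to an optimal dynamical transport plan $\Theta$ on $\Gamma$ with $(e_0,e_1)_*\Theta=\pi$. I will then define the candidate midpoint
\[
\rho := (e_{1/2})_*\Theta \in \cP(X).
\]

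Next I would verify that $\rho$ is indeed a midpoint. The key observation is that $(e_0,e_{1/2})_*\Theta$ and $(e_{1/2},e_1)_*\Theta$ are admissible transport plans between $\mu,\rho$ and between $\rho,\nu$, respectively. Using that elements of $\Gamma$ are unit-speed parametrized geodesics, so $d(\gamma(0),\gamma(1/2))=d(\gamma(1/2),\gamma(1))=\tfrac12 d(\gamma(0),\gamma(1))$, a direct change-of-variables computation against $\Theta$ gives
\[
d^{W}(\mu,\rho)^2 \leq \int_{\Gamma} d(\gamma(0),\gamma(1/2))^2\, d\Theta(\gamma) = \tfrac14 \int_{\Gamma} d(\gamma(0),\gamma(1))^2\, d\Theta(\gamma) = \tfrac14 d^{W}(\mu,\nu)^2,
\]
and similarly $d^W(\rho,\nu)\leq \tfrac12 d^W(\mu,\nu)$.

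Finally, I apply the triangle inequality in $(\cP(X),d^{W})$, already established in the previous proposition: $d^W(\mu,\nu)\leq d^W(\mu,\rho)+d^W(\rho,\nu)\leq \tfrac12 d^W(\mu,\nu)+\tfrac12 d^W(\mu,\nu)=d^W(\mu,\nu)$. Equality forces both preceding inequalities to be equalities, yielding $d^W(\mu,\rho)=d^W(\rho,\nu)=\tfrac12 d^W(\mu,\nu)$, so $\rho$ is a midpoint. Together with Lemma \ref{lemm:geo-sp-iff-midpts} this concludes that $(\cP(X),d^{W})$ is a geodesic space.

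There is essentially no obstacle here beyond invoking Proposition \ref{prop:dynamic-trans-plan}; the only point requiring a little care is making sure $(e_0,e_{1/2})_*\Theta$ really has marginals $\mu$ and $\rho$ (immediate from the definitions of $\mu=(e_0)_*\Theta$ and $\rho=(e_{1/2})_*\Theta$) and that the unit-speed parametrization of elements of $\Gamma$ is being used to produce the factor $\tfrac14$. One could alternatively prove the stronger statement that $t\mapsto(e_t)_*\Theta$ is itself a constant-speed geodesic in $(\cP(X),d^W)$ by the same argument applied at general times $s<t$, but the midpoint version is all that Lemma \ref{lemm:geo-sp-iff-midpts} requires.
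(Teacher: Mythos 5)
Your proof is correct and uses exactly the same key ingredients as the paper: existence of an optimal plan (Lemma \ref{lemm:exist-opt-trans}), its lift to a dynamical plan $\Theta$ (Proposition \ref{prop:dynamic-trans-plan}), and the change-of-variables computation showing that evaluation-map pushforwards of $\Theta$ have the right Wasserstein distances. The only difference is cosmetic: the paper shows directly that $t\mapsto(e_t)_*\Theta$ is a geodesic via \eqref{eq:dist-less-than-inf-length}, whereas you establish only the midpoint and invoke Lemma \ref{lemm:geo-sp-iff-midpts} — a variation you yourself flag at the end — so this is essentially the paper's argument.
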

\begin{proof}
For $\mu,\nu \in \cP(X)$, by Lemma \ref{lemm:exist-opt-trans}, there exists an optimal transport plan $\pi \in \Pi(\mu,\nu)$, and by Proposition \ref{prop:dynamic-trans-plan} there is a corresponding optimal dynamical transport plan $\Theta$. We claim that $\mu_{t} : = (e_{t})_{*}\Theta$, where $e_{t}:\Gamma \to X, \gamma \mapsto \gamma(t)$ is a geodesic between $\mu = \mu_{0}$ and $\nu=\mu_{1}$. To see this, we calculate
\begin{align*}
d^{W}(\mu_{t},\mu_{t'})^{2} & \leq \int_{X\times X} d(x,y)^{2} d[(e_{t},e_{t'})_{*}\Theta](x,y)\\
& = \int_{\Gamma} d(\gamma(t),\gamma(t'))^{2} d\Theta(\gamma)\\
& = |t-t'|^{2} \int_{\Gamma} d(\gamma(0),\gamma(1))^{2} d\Theta(\gamma)\\
& = |t-t'|^{2 }\int_{X\times X} d(x,y)^{2} d[(e_{0},e_{1})_{*}\Theta](x,y)\\
& = |t-t'|^{2}\int_{X\times X} d(x,y) d\pi(x,y)\\
& = |t-t'|^{2} d^{W}(\mu_{0},\mu_{1})^{2}.
\end{align*}
This implies that $\mu_{t}$ is continuous and moreover that $L(\mu_{t}) \leq d^{W}(\mu_{0},\mu_{1})$. Thus by \eqref{eq:dist-less-than-inf-length}, $L(\mu_{t}) = d^{W}(\mu_{0},\mu_{1})$, so $\mu_{t}$ is a geodesic between $\mu$ and $\nu$, because we must have equality in the first line above. 
\end{proof}

\begin{prop}
The map $i : X \to \cP(X)$ defined by $x\mapsto \delta_{x}$ is an isometric embedding.
\end{prop}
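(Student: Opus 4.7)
The plan is to show the equality $d^W(\delta_x, \delta_y) = d(x,y)$ directly by observing that the set $\Pi(\delta_x, \delta_y)$ of admissible transport plans consists of a single element, namely $\delta_{(x,y)} \in \cP(X \times X)$. Since $i$ is then automatically injective (two Dirac masses at distinct points are distinct measures) and distance-preserving, this suffices to establish that $i$ is an isometric embedding.

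First I would verify that $\delta_{(x,y)} \in \Pi(\delta_x, \delta_y)$: for any continuous $f \in C(X)$, we have
\[
\int_{X \times X} f(u)\, d\delta_{(x,y)}(u,v) = f(x) = \int_X f(u)\, d\delta_x(u),
\]
and analogously for the second projection, so both marginals are correct. Next I would show uniqueness: suppose $\pi \in \Pi(\delta_x, \delta_y)$. Then for any Borel set $A \subset X$, we have $\pi(A \times X) = \delta_x(A)$, which equals $1$ if $x \in A$ and $0$ otherwise. Taking $A$ to be the complement of $\{x\}$ (or more carefully, approximating via a decreasing sequence of open neighborhoods of $x$), we conclude that $\pi$ is supported on $\{x\} \times X$. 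Applying the symmetric argument to the second marginal, $\pi$ is supported on $X \times \{y\}$, hence on the single point $\{(x,y)\}$. Combined with $\pi$ being a probability measure, this forces $\pi = \delta_{(x,y)}$.

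With uniqueness in hand, the Wasserstein distance is immediate:
\[
d^W(\delta_x, \delta_y)^2 = \int_{X \times X} d(u,v)^2\, d\delta_{(x,y)}(u,v) = d(x,y)^2,
\]
so $d^W(\delta_x, \delta_y) = d(x,y)$. I do not foresee any genuine obstacle; the only mildly subtle point is the support argument for uniqueness of the transport plan, which just uses that Borel probability measures on a compact metric space are determined by their values on Borel sets and that $\{x\}$ is closed.
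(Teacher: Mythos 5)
Your proof is correct and follows exactly the same route as the paper: observe that $\Pi(\delta_x,\delta_y)=\{\delta_{(x,y)}\}$ and compute the integral directly. You simply spell out the verification of the marginals and the uniqueness of the plan, which the paper takes as immediate.
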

\begin{proof}
For $x,y\in X$, notice that $\Pi(\delta_{x},\delta_{y}) = \{ \delta_{(x,y)} \}$, so 
\[ d^{W}(i(x),i(y))^{2} = \int_{X\times X} d(\tilde x,\tilde y)^{2} \delta_{(x,y)}(\tilde x,\tilde y) = d(x,y)^{2},\] showing that $i$ is an isometric embedding, as claimed. 
\end{proof}

Notice that $i(X)$ need not be a totally geodesic subset of $\cP(X)$. Certainly for $x,y \in X$, if $\gamma(t)$ is a geodesic between $x$ and $y$, it is easy to see that $\delta_{\gamma(t)}$ is a geodesic between $\delta_{x}$ and $\delta_{y}$ that lies entirely in $i(X)$. However, if there is more than one geodesic between $x$ and $y$, an (infinite) convex combination of these geodesics will also be a geodesic, which is disjoint from $i(X)$ except for at its endpoints. For example, on $(S^{2},d_{S^{2}})$, if we take $x$ to be the north pole and $y$ the south pole, then clearly if $\gamma(t) : [0,1] \to S^{2}$ is the constant speed geodesic along any great circle from $x$ to $y$ then $\delta_{\gamma(t)}$ is a geodesic in $\cP(S^{2})$ from $\delta_{x}$ to $\delta_{y}$ and lies entirely in $i(S^{2})$, but for example in coordinates $(\phi,\theta)$ on the sphere, where $\phi$ is the angle from the $z$-axis and $\theta$ is the azimuthal angle of the projection of the point onto the $xy$-plane in polar coordinates, then if we define \[A_{t} : = \{(\pi t,\theta): \theta \in[0,2\pi)\}\] to be the circle at $\phi=\pi t$, then if we write $\cH^{1}$ for the $1$-dimensional Hausdorff measure on $S^{2}$, it is not hard to see that 
\[ \mu_{t} := 
\begin{cases}  
(2 \pi \sin \pi t)^{-1} \cH^{1}|_{A_{t}}  & \text{ for } t \in (0,1)\\
\delta_{x} & \text{ for } t = 0\\
\delta_{y} & \text{ for } t = 1
\end{cases}  \]
is a geodesic between $\delta_{x}$ and $\delta_{y}$ in $\cP(S^{2})$ which is most certainly not in $i(S^{2})$. This is illustrated in Figure \ref{fig:nonuniquedispinterpol-sphere}.
\begin{figure}[h]
\centering
\begin{tikzpicture}[scale = 2.8]
\draw (0,0) circle (1);
%\begin{scope}
%	\clip (-1,0) rectangle (1,2);
%	\draw [dashed] (0,0) ellipse (1 and .5);
%\end{scope}
%\begin{scope}
%	\clip (-1,0) rectangle (1,-2);
%	\draw (0,0) ellipse (1 and .5);
%\end{scope}
\begin{scope}[line width = 1.4]
	\begin{scope}
		\clip (-1,-.5) rectangle (1,1);
		\draw [dashed] (0,-.5) ellipse (.86 and .2);
	\end{scope}
	\begin{scope}
		\clip (-1,-.5) rectangle (1,-1);
		\draw (0,-.5) ellipse (.86 and .2);
	\end{scope}
\end{scope}

\draw (.5,-.55) node {$A_{t}$};

\draw (0,1) to (0,-1);
\draw (0,0) to (.85,-.51);
\draw (0,.1) arc (90:-30:.1) node [above = 12, right = 1] {$\phi = \pi t$};

\filldraw (0,1) circle (.5pt) node [above=2] {${x}$}; 
\filldraw (0,-1) circle (.5pt) node [below=2] {${y}$}; 

\end{tikzpicture}
\caption{This figure shows that the image of $X$ in $\cP(X)$ under the map $i: x \mapsto \delta_{x}$ is not totally geodesic. There is a geodesic between $\delta_{x}$ and $\delta_{y}$ which at time $t$ is supported on $A_{t}$, as described in the text, and is thus clearly not in $i(X)$.}
\label{fig:nonuniquedispinterpol-sphere}
\end{figure}
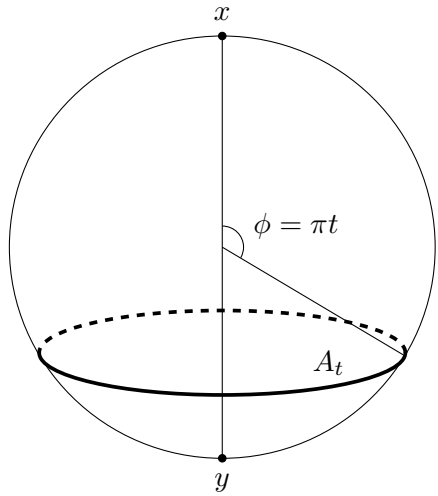

\section{Metric Structure of $(\cP_{0},d^{W})$} \label{sect:met-struct-cP0}
We now specialize to studying $(\cP([0,1]),d^{W})$, which will often write $(\cP_{0},d^{W})$.

\begin{prop}\label{prop:G0-P0-iso}
Letting $\cG_{0} \subset L^{2}([0,1])$ be the subset of right continuous, nondecreasing maps $g:[0,1]\to [0,1]$, and $d^{L^{2}}$ the metric induced on $\cG_{0}$ from $L^{2}$, then the map \[ \addlabel{eq:psi-defi} \Psi : (\cG_{0},d^{L^{2}}) \to (\cP_{0},d^{W})\] 
\[ g \mapsto g_{*}\Leb \] is an isometry. The inverse $\Psi^{-1}$ is given by 
\[ \addlabel{eq:psi-inverse} \Psi^{-1} : \mu \mapsto g_{\mu}\] where $g_{\mu}$ is the inverse distribution function defined
\[ \addlabel{eq:inverse-dist-defi} g_{\mu}(t) : = \inf\{s \in[0,1]: \mu([0,s] )> t\}\] with the convention that $\inf \emptyset :=1$.
\end{prop}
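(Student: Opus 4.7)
The plan is to separate the claim into two pieces: first, that $\Psi$ is a well-defined bijection with the stated inverse (essentially bookkeeping), and second, that $\Psi$ preserves distances (the real content). The isometry step is where I expect the main obstacle to lie.

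For the bijection I would proceed by direct computation. Standard manipulations with infima show that $g_\mu$ is right-continuous, nondecreasing, and takes values in $[0,1]$, so $\Psi^{-1}(\mu) \in \cG_0$. The key input is the sandwich
\[
\{t : t < \mu([0,s])\} \;\subseteq\; \{t : g_\mu(t) \leq s\} \;\subseteq\; \{t : t \leq \mu([0,s])\},
\]
valid for every $s \in [0,1]$, which follows from the definition of $g_\mu$ combined with right-continuity of $r \mapsto \mu([0,r])$; the two outer sets have the same Lebesgue measure, so
\[
\Psi(\Psi^{-1}(\mu))([0,s]) = \Leb\{t : g_\mu(t) \leq s\} = \mu([0,s]),
\]
proving $\Psi \circ \Psi^{-1} = \id$. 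The reverse composition follows similarly, using that for right-continuous nondecreasing $g$ the sublevel set $\{r : g(r) \leq s\}$ is an initial segment of $[0,1]$ whose length reads off $g$ pointwise via the same infimum recipe.

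For the isometry I would first produce an obvious candidate plan: define $T:[0,1]\to [0,1]^2$ by $T(t) := (g(t), h(t))$ and set $\pi := T_*\Leb$. Its marginals are $\Psi(g)$ and $\Psi(h)$ by construction, and a direct integration gives
\[
\int_{[0,1]^2} (x-y)^2 \, d\pi(x,y) = \int_0^1 (g(t)-h(t))^2 \, dt = \|g-h\|_{L^2}^2,
\]
so $d^W(\Psi(g),\Psi(h)) \leq \|g-h\|_{L^2}$. The matching lower bound is precisely the assertion that $\pi$ is optimal, and here I would invoke the theory of optimal transport on (compact) Riemannian manifolds alluded to in the introduction: for absolutely continuous $\mu,\nu$ with smooth positive densities on $[0,1]$, this theory supplies a unique optimal transport map which, in dimension one, is the derivative of a convex function and hence nondecreasing. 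Uniqueness forces it to coincide (after the identification $\Psi^{-1}$) with the monotone rearrangement built from the inverse distribution functions, and a straightforward change of variables identifies the optimal cost with $\|g_\mu - g_\nu\|_{L^2}^2$.

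The remaining step, which I expect to be the main obstacle, is passing from this smooth, absolutely continuous case to arbitrary $g, h \in \cG_0$ (where $\Psi(g)$ could even be a Dirac mass and no transport map in the classical sense need exist). For this I would use a density argument: both sides of the proposed identity are continuous in $(g,h) \in L^2 \times L^2$, since $L^2$-convergence $g_n \to g$ forces a.e.\ convergence along a subsequence and therefore weak-$*$ convergence of the pushforwards $(g_n)_*\Leb$, while $d^W$ metrizes the weak-$*$ topology on $\cP_0$ by the previous section. Since smooth, strictly increasing functions are $L^2$-dense in $\cG_0$, the isometry relation extends from this dense smooth class to all of $\cG_0$.
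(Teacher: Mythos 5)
Your proposal is correct and follows essentially the same route as the paper: both obtain the upper bound $d^W(\Psi(g),\Psi(h)) \leq \|g-h\|_{L^2}$ from the product plan $(g,h)_*\Leb$, both invoke the Riemannian Monge/Brenier theory to identify the monotone rearrangement as the optimal map for nice enough measures, and both close by density of continuous strictly increasing functions in $(\cG_0, d^{L^2})$. Your extra care on the bijection (the sandwich of preimages) and on the continuity of both sides of the identity in the density step is welcome detail that the paper leaves implicit.

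One technical point you gloss over: $[0,1]$ is a manifold \emph{with boundary}, so the cited theorem on polar factorization for compact Riemannian manifolds (Theorem~\ref{theo:polar-fact-riem-mflds}) does not literally apply to it. The paper handles this by isometrically embedding $[0,1]$ into the circle $S^1$ of circumference $3$, applying the circle's optimal transport theory (where the Monge map $\exp_x(-\nabla\phi)$ lifts to a nondecreasing map $\RR \to \RR$), and then arguing that the restriction of the circle's optimal map back to $[0,1]$ remains optimal there. You should either adopt this embedding device, or sidestep it by invoking the Euclidean Brenier theorem on $\RR$ directly (viewing $\mu,\nu$ as compactly supported measures on the line, for which the optimal map is indeed the gradient of a convex function). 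Either fix is short, but as written, the phrase ``the theory of optimal transport on compact Riemannian manifolds'' applied to $[0,1]$ is not quite right.
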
 

\begin{proof}
It is clear that $\Psi$ as defined is a bijection with stated inverse. Thus, it remains to show that it is an isometry. For $f,g \in \cG_{0}$, notice that $(f,g)_{*}\Leb_{[0,1]} \in \Pi(\Psi(f),\Psi(g))$, and that 
\[ \int_{[0,1]\times[0,1]} |x-y|^{2} d[(f,g)_{*}\Leb](x,y) =\int_{0}^{1} |f(s) -g(s)|^{2} d\Leb(s) .\] Thus\[ d^{W}(\Psi(f),\Psi(g)) \leq d^{L^{2}}(f,g).\]

There are various ways to show the opposite inequality. One way is to use the Knott--Smith optimality condition, which says that a plan $\pi$ between two measures on $\RR^{n}$ is optimal if and only if there is a convex, lower semi-continuous function $\varphi: \RR^{n}\to \RR$ with $\supp(\pi) \subset \Graph(\underline{\partial} \varphi)$ (where $\underline{\partial}\varphi$ is the subdifferential of $\varphi$). This approach is taken in the proof of \cite[Theorem 2.18]{Villani:TopOptTspt03}(which is stated for $\cP(\RR)$, but the same proof works without change for $\cP_{0}$). Alternatively, a more probabilistic proof, using the Hoeffding--Fr\'echet theorem which relates the cumulative distribution function of a measure on $\RR^{2}$ to the cumulative distributions of its marginals, is given in \cite{Rachev:MassTsptProblems1}. However, in order to avoid distracting ourselves with these notions, we will rely on Theorem \ref{theo:polar-fact-riem-mflds}. All that we will need is that on a compact manifold $(M,g)$, if $\mu,\nu \in \cP(M)$ with $\mu,\nu \ll m = ({\vol(M)})^{-1}{\vol_{M}}$ then there is an optimal transport plan $\pi$ of the form $(\Id,F)_{*}\mu$ for some measurable $F:M\to M$. Such a plan is called a \emph{Monge transport plan}. Furthermore, we use that $F(x) = \exp_{x}(-\nabla\phi)$ for a $\frac{d^{2}}{2}$-concave function $\phi$, but all we will really need is the special case of the circle $M= S^{1}$, where this implies that $F$ is ``nondecreasing,'' in the sense that it lifts to a nondecreasing function $\RR\to \RR$. 

Given this fact, we will finish the proof by isometrically embedding $[0,1]$ into $(S^{1},g)$, the standard circle of circumference $3$, $\RR/3\ZZ$. For $f,g \in \cG_{0}$ with $f,g$ continuous and strictly increasing, defining $\mu= \Psi(f),\nu=\Psi(g) \in \cP_{0}$, it is clear that our assumptions imply that $\mu,\nu \ll \Leb$ and $\supp(\mu) = \supp(\nu) = [0,1]$. Furthermore, they give rise to measures $\tilde \mu,\tilde \nu \in \cP(S^{1})$ supported on the image of $[0,1]$, and that $\tilde\mu,\tilde\nu \ll m$, so by Theorem \ref{theo:polar-fact-riem-mflds}, there is an optimal Monge plan $\tilde \pi = (\Id,\tilde F)_{*}\tilde \mu$ between $\tilde\mu$ and $\tilde\nu$. This implies that $\tilde F_{*}\tilde \mu = \tilde \nu$, so $\tilde F(\supp ( \tilde \mu)) \subset \supp( \tilde \nu)$, so defining a new function $F: [0,1] \to [0,1]$ by  $F := \tilde F|_{[0,1]}$, we claim that $\pi:= (\Id,F)_{*} \mu$, this is an optimal Monge transport plan. It is clearly a transport plan, and if it were not optimal, by considering  $[0,1]\times[0,1]$ as a subset of $S^{1}\times S^{1}$ we could use an optimal plan to contradict $\tilde\pi$'s optimality. Because $\tilde F$ is nondecreasing, it is clear that $F$ is nondecreasing. Furthermore, because $\nu \ll \Leb$, we may assume without loss of generality that $F$ is right continuous, because this is only a modification on a set of $\mu$-measure zero. 

Now, combining all of this, we see that $g_{*} \Leb =  F_{*}f_{*}\Leb = (F\circ f)_{*}\Leb$, and because of our above observations, $F\circ f \in \cG_{0}$, so bijectivity of $\Psi$ gives that $g = F\circ f$. Thus, we see that \[\pi = (\Id, F)_{*} \mu = (Id,F)_{*} f_{*} \Leb = (f, F\circ f)_{*} \Leb = (f,g)_{*}\Leb\] is optimal, so for $f,g\in \cG_{0}$ continuous and strictly increasing
\begin{equation}
d^{W}(\Psi(f),\Psi(g)) = d^{L^{2}}(f,g).
\end{equation}
To finish the proof, it remains to notice that continuous, strictly increasing functions are dense in $(\cG_{0},d^{L^{2}})$. To show this, for example, for an arbitrary $h \in \cG_{0}$, define $\tilde h_{n}$ to be the linear interpolation between the points $\{(i/n, f(i/n)\}_{i=0}^{n}$, and notice that 
\begin{align*} \int_{0}^{1} |h(x) - \tilde h_{n}(x)|^{2} dx & = \sum_{i=0}^{n-1} \int_{i/n}^{(i+1)/n} | h(x) - \tilde h_{n}(x) |^{2} dx\\
&\leq \frac {1}{n}\sum_{i=0}^{n-1}\left | h\left( \frac{i+1}{n} \right)- h\left( \frac i n\right) \right|^{2}  \\
& \leq \frac {1}{n}\sum_{i=0}^{n-1}\left ( h\left( \frac{i+1}{n} \right)- h\left( \frac i n\right) \right)\\
& = \frac 1 n \left( h (1) - h(0) \right) \\
& \leq \frac 1n.
\end{align*} 
We're not quite finished; $\tilde h_{n}$ is certainly nondecreasing, but might not be strictly increasing. To fix this, define 
\[ h_{n}(x) := \frac{n-1}{n} \tilde h_{n}(x) +\frac x n \]
and notice that 
\[ \Vert h_{n} - \tilde h_{n} \Vert_{L^{2}}  \leq \frac 1 n \left( \Vert \tilde h_{n} \|_{L^{2}} +  \| x\|_{L^{2}} \right) \to 0\]
so, we have that $h_{n}$ is a sequence of continuous, strictly increasing functions converging to $h$ in $\cG_{0}$, finishing the proof. 
\end{proof}

We must remark that as far as we know, Lemma \ref{prop:G0-P0-iso} is highly dependent on the one dimensionality of $[0,1]$. A similar statement holds for  the circle $\cS^{1}$, but it is not clear how to embed $\cP(X)$ into a Hilbert space in some reasonable manner. In fact, as we will see in the next lemma, the image of the embedding is totally convex, allowing us to give a very concrete description of the geodesics in $\cP_{0}$.

\begin{lemm}\label{lemm-g0-geo}
The space $\cG_{0}$ is a totally convex subset of $L^{2}([0,1])$ (that is, any geodesic between two elements in $\cG_{0}$ lies entirely in $\cG_{0}$). In fact, for $f,g\in \cG_{0}$, the unique geodesic between them is given by the linear combination
\[ \gamma(t) := (1-t) f + t g .\]
\end{lemm}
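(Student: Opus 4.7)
The plan is to reduce everything to the fact that $L^{2}([0,1])$ is a Hilbert space, in which the unique geodesic between any two points is the straight line segment. There are really three things to verify: first, that $\cG_{0}$ contains the straight-line segment between any two of its points (convexity); second, that this segment has the right length to be a geodesic in $(\cG_{0},d^{L^{2}})$; and third, that no other geodesic exists.

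First I would check convexity of $\cG_{0}$ as a subset of $L^{2}([0,1])$. Given $f,g\in\cG_{0}$ and $t\in[0,1]$, the function $h_{t}:=(1-t)f+tg$ is obviously a positive linear combination of nondecreasing, right-continuous functions with values in $[0,1]$, so it is itself nondecreasing, right-continuous and takes values in $[0,1]$; hence $h_{t}\in\cG_{0}$. This is the step that uses the specific structure of $\cG_{0}$ in an essential way, but it is very short.

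Next, for the fact that $\gamma(t)=(1-t)f+tg$ is a geodesic, I would simply compute
\[
\|\gamma(t)-\gamma(t')\|_{L^{2}} = \|(t-t')(g-f)\|_{L^{2}} = |t-t'|\,\|f-g\|_{L^{2}} = |t-t'|\,d^{L^{2}}(f,g),
\]
so $L(\gamma)=d^{L^{2}}(f,g)$ and $\gamma$ is a (constant-speed) geodesic in $\cG_{0}$ with the induced $L^{2}$ metric. Note that $d^{L^{2}}$ on $\cG_{0}$ is the restriction of the ambient $L^{2}$ distance, not an intrinsic length metric; convexity of $\cG_{0}$ ensures there is no gap between the two notions along this particular curve.

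The one genuinely nontrivial point is uniqueness, and this I expect to be the main obstacle, though only mildly so. Suppose $\tilde{\gamma}:[0,1]\to\cG_{0}$ is any other geodesic from $f$ to $g$. Then $\tilde{\gamma}$ is also a curve in the ambient Hilbert space $L^{2}([0,1])$ with $L(\tilde{\gamma})=\|f-g\|_{L^{2}}$, so it is a minimizing geodesic in $L^{2}$ itself. By Lemma \ref{lemm:geo-sp-iff-midpts} it suffices to show the midpoint of $f$ and $g$ in $L^{2}$ is unique: if $h\in L^{2}$ satisfies $\|h-f\|_{L^{2}}=\|h-g\|_{L^{2}}=\tfrac{1}{2}\|f-g\|_{L^{2}}$, then the parallelogram identity
\[
\bigl\|h-\tfrac{f+g}{2}\bigr\|_{L^{2}}^{2}+\tfrac{1}{4}\|f-g\|_{L^{2}}^{2}=\tfrac{1}{2}\bigl(\|h-f\|_{L^{2}}^{2}+\|h-g\|_{L^{2}}^{2}\bigr)=\tfrac{1}{4}\|f-g\|_{L^{2}}^{2}
\]
forces $h=(f+g)/2$. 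Iterating this midpoint argument dyadically pins down $\tilde{\gamma}$ on a dense set, and continuity forces $\tilde{\gamma}=\gamma$. This completes both total convexity (the unique $L^{2}$ geodesic happens to lie in $\cG_{0}$) and the explicit formula for geodesics.
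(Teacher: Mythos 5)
Your proof is correct, and the first two steps (convexity of $\cG_{0}$ under affine combinations, and the computation showing $\gamma$ has constant speed equal to $d^{L^{2}}(f,g)$) are exactly what the paper does, except that the paper elides the explicit convexity check that you spell out. Where you diverge is in the uniqueness argument. The paper shows uniqueness by a single direct estimate: for any $h$ off the segment, it expands $(\|f-h\|_{L^{2}}+\|h-g\|_{L^{2}})^{2}$ and applies Cauchy--Schwarz to the cross term $\bangle{f-h,h-g}$, obtaining $\geq \|f-g\|_{L^{2}}^{2}$ with equality forcing $f-h=\lambda(h-g)$ with $\lambda \geq 0$, i.e.\ $h$ on the segment; thus any path through a point off the segment is strictly longer. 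You instead prove the uniqueness of midpoints via the parallelogram identity and then extend dyadically. Both are standard Hilbert-space arguments with essentially the same content; your version isolates the single geometric identity (parallelogram law $\Rightarrow$ unique midpoints) that characterizes strictly convex norms, at the cost of an extra dyadic-density step, while the paper's Cauchy--Schwarz argument gets the full strict triangle inequality in one shot. One small quibble: your appeal to Lemma~\ref{lemm:geo-sp-iff-midpts} isn't quite what you need — that lemma asserts the equivalence of ``geodesic space'' and ``midpoints exist,'' not that unique midpoints determine a geodesic uniquely on dyadic rationals; what you actually use is the (correct) dyadic-iteration idea from the \emph{proof} of that lemma, which you could simply state directly.
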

\begin{proof}
For $f,g \in L^2$, we claim that the convex combination $\gamma(t) = tf + (1-t)g$ is a $L^{2}$-geodesic from $f$ to $g$. Notice that
\[ \Vert \gamma(t) - \gamma(s) \Vert_{L^2} = \Vert (t-s) f + (s-t) g\Vert_{L^2} = |t-s| \Vert f-g\Vert_{L^2}\] so it is thus a geodesic. Now we claim that these are the unique geodesics. Assume that for $f\neq g$, there is a $h \in L^2$ which is not in $\gamma([0,1])$. This gives 
\begin{align*} (\Vert f-h\Vert_{L^2} + \Vert h-g\Vert_{L^2})^2 
& = \Vert f \Vert_{L^2}^2 + \Vert g \Vert_{L^2}^2 + 2 \Vert h \Vert_{L^2}^2 - 2\bangle{f,h} - 2\bangle{g,h}\\ & \qquad + 2 \Vert f-h\Vert_{L^2} \Vert g-h\Vert_{L^2}\\
& \geq \Vert f \Vert_{L^2}^2 + \Vert g \Vert_{L^2}^2 + 2 \Vert h \Vert_{L^2}^2 - 2\bangle{f+g,h} + 2\bangle{ f-h,  h-g}\\
& = \Vert f \Vert_{L^2}^2 + \Vert g \Vert_{L^2}^2 + 2 \Vert h \Vert_{L^2}^2 - 2\bangle{f+g,h} - 2\bangle{ f,g} + 2\bangle{ h,g} \\
&\qquad + 2\bangle{ f,h} - 2 \Vert h \Vert_{L^2}^2\\
& = \Vert f \Vert_{L^2}^2 + \Vert g \Vert_{L^2}^2- 2\bangle{f+g,h} - 2\bangle{ f,g} + 2\bangle{ h,g} + 2\bangle{ f,h} \\
&= \Vert f \Vert_{L^2}^2 + \Vert g \Vert_{L^2}^2- 2\bangle{ f,g}\\
& = \Vert f - g\Vert_{L^2} ^2
\end{align*}
with equality in the application of the Cauchy--Schwartz inequality if and only if there is $\lambda \in \RR$ with $f-h = \lambda(h-g)$ (because neither term can be zero). This implies that $(\lambda + 1) h = f + \lambda g$. Thus if $\lambda \neq  -1$, this implies that 
\[ h = \frac{1}{1+\lambda} f + \frac{\lambda}{1+\lambda} g \in \gamma([0,1]).\] Furthermore, we cannot have that $\lambda = -1$, because that would imply that $f=g$. Thus, we see that for any $h \not\in \gamma([0,1])$ a path from $f$ to $g$ going through $h$ is strictly longer than $\gamma$, so we see that geodesics in $L^2$ are unique, as claimed, which clearly implies that $\cG_0$ is a totally convex subset of $L^2$, with the claimed geodesics.
\end{proof}

\section{Metric Sectional Curvature}\label{sect:synth-sect-curv}

\subsection{Toponogov's Theorem and Alexandrov Curvature} One of the oldest geometric notions which has been generalized to geodesic metric spaces is that of sectional curvature bounds. Because it is relatively simple (and we hope instructive) we sketch the relevant ideas in this section, and then discuss the relationship of sectional curvature of a Riemannian manifold $(M,g)$ with the geometry of $(\cP(M),d^{W})$. 

Suppose that $(M,g)$ is a complete Riemannian manifold (we give a brief introduction to Riemannian geometry, along with many references in Appendix \ref{app:riem-mfld-intro}) with nonnegative sectional curvature, which we write $K(M) \geq 0$. We will see that we can characterize non-negative curvature, $K(M) \geq 0$, by comparing the shape of ``geodesic triangles'' in $M$ to appropriate ones in euclidean space, and this will turn out to be a condition that is only given in terms of the distance metric on $M$ and not the Riemannian metric.\footnote{In fact, for a general $K \in \RR$, a similar ``metric'' version of $K(M) \geq K$ and $K(M) \leq K$ can be given by comparing to the simply connected spaces of constant curvature $K$ in a similar fashion to that described in this section. This more general Toponogov theorem is described in many places, for example in \cite[Chapter 11]{Petersen:RiemGeo} or \cite[Chapter 2]{CheegerEbin:Comp}. The resulting condition on metric spaces leads to the notion of Alexandrov spaces in the case of $K(M) \geq K$ and Hadamard spaces for $K(M) \leq K$. Further details can be found in  \cite{BBI:MetGeo}. } If $p,q,r \in M$ are three points, and $\gamma_{pq},\gamma_{qr},\gamma_{pr}$ are length minimizing geodesics between $p$ and $q$, $q$ and $r$ and $p$ and $r$, respectively, then we will call this a \emph{geodesic triangle}. Clearly for any three points there is at least one such triangle with vertices at those points, but as the example of the sphere shows, there can be more than one such geodesic triangle with vertices at a give triple of points. In spite of this ambiguity, we will usually denote such a geodesic triangle by $\triangle pqr$.

\begin{figure}[h]
\centering
\begin{tikzpicture}[scale =3]
\draw (0,0) arc (260:290:2);
\draw (0,0) arc (150:120:2);
\draw (.73,.73) arc (45:3:1);
\draw (-.1,-.05) node {$p$};
\draw (1.1,0) node {$r$};
\draw (.75,.85) node {$q$};
\draw (0,.6) node {$\triangle pqr$};

\draw (2,0) -- (3.1,.1);
\draw (2,0) -- (2.7,.8);
\draw (3.1,.1) -- (2.7,.8);
\draw (1.9,-.05) node {$\overline p$};
\draw (2.7,.9) node {$\overline q$};
\draw (3.2,.1) node {$\overline r$};
\draw (3.3,.6) node {$\overline{\triangle pqr}$};
\end{tikzpicture}
\caption{A geodesic triangle and corresponding comparison triangle.  }
\label{fig:comptri}
\end{figure}
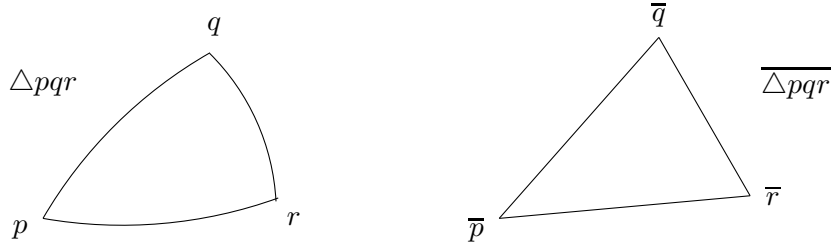

Given a geodesic triangle $\triangle pqr$ in $M$, we consider a \emph{comparison triangle} in $\RR^2$, which is a set of points $\overline p ,\overline q, \overline r$ of the same distance apart, i.e. \[|\overline p \overline q| = d_M(p,q) = L(\gamma_{pq}) , |\overline p \overline r| = d_M(p,r) = L(\gamma_{pr}) \text{ and } |\overline q \overline r| = d_M(q,r) = L(\gamma_{qr}).\] We will denote this comparison triangle by $\overline{\triangle pqr}$. These definitions are illustrated in Figure \ref{fig:comptri}. By the triangle inequality for $d_M$, such a triangle always exists and from basic euclidean geometry such a triangle is unique up to rigid isometry once the side lengths are fixed. Now, let us consider the example of a standard sphere, $(S^2,d_{S^2})$. Here, as illustrated in Figure \ref{fig:spherical-triangle}, a geodesic triangle is ``fatter'' than the corresponding comparison triangle. 
\begin{figure}[h!]
\centering

\newcommand\pgfmathsinandcos[3]{%
  \pgfmathsetmacro#1{sin(#3)}%
  \pgfmathsetmacro#2{cos(#3)}%
}
\newcommand\LongitudePlane[3][current plane]{%
  \pgfmathsinandcos\sinEl\cosEl{#2} % elevation
  \pgfmathsinandcos\sint\cost{#3} % azimuth
  \tikzset{#1/.estyle={cm={\cost,\sint*\sinEl,0,\cosEl,(0,0)}}}
}
\newcommand\LatitudePlane[3][current plane]{%
  \pgfmathsinandcos\sinEl\cosEl{#2} % elevation
  \pgfmathsinandcos\sint\cost{#3} % latitude
  \pgfmathsetmacro\yshift{\cosEl*\sint}
  \tikzset{#1/.estyle={cm={\cost,0,0,\cost*\sinEl,(0,\yshift)}}} %
}
\newcommand\DrawLongitudeCircle[2][1]{
  \LongitudePlane{\angEl}{#2}
  \tikzset{current plane/.prefix style={scale=#1}}
   % angle of "visibility"
  \pgfmathsetmacro\angVis{atan(sin(#2)*cos(\angEl)/sin(\angEl))} %
  \draw[current plane] (\angVis:1) arc (\angVis:\angVis+180:1);
  \draw[current plane,dashed] (\angVis-180:1) arc (\angVis-180:\angVis:1);
}
\newcommand\DrawLatitudeCircle[2][1]{
  \LatitudePlane{\angEl}{#2}
  \tikzset{current plane/.prefix style={scale=#1}}
  \pgfmathsetmacro\sinVis{sin(#2)/cos(#2)*sin(\angEl)/cos(\angEl)}
  % angle of "visibility"
  \pgfmathsetmacro\angVis{asin(min(1,max(\sinVis,-1)))}
  \draw[current plane] (\angVis:1) arc (\angVis:-\angVis-180:1);
  \draw[current plane,dashed] (180-\angVis:1) arc (180-\angVis:\angVis:1);
}

%% document-wide tikz options and styles

\tikzset{%
  >=latex, % option for nice arrows
  inner sep=0pt,%
  outer sep=2pt,%
  mark coordinate/.style={inner sep=0pt,outer sep=0pt,minimum size=3pt,
    fill=black,circle}%
}

\begin{tikzpicture}[scale=.8] % MERC

%% some definitions

\def\R{3} % sphere radius
\def\angEl{25} % elevation angle
\def\angAz{-100} % azimuth angle
\def\angPhiOne{-30} % longitude of point P
\def\angPhiTwo{-110} % longitude of point Q
\def\angBeta{33} % latitude of point P and Q

%% working planes

\pgfmathsetmacro\H{\R*cos(\angEl)} % distance to north pole
\LongitudePlane[xzplane]{\angEl}{\angAz}
\LongitudePlane[pzplane]{\angEl}{\angPhiOne}
\LongitudePlane[qzplane]{\angEl}{\angPhiTwo}
\LatitudePlane[equator]{\angEl}{0}

%% draw background sphere

%\fill[ball color=white] (0,0) circle (\R); % 3D lighting effect
\fill[white] (0,0) circle (\R); % just a white circle
\draw (0,0) circle (\R);

%% characteristic points

\coordinate (O) at (0,0);
\coordinate (N) at (0,\H);
%\coordinate[mark coordinate] (S) at (0,-\H);
\path[xzplane] (\R,0) coordinate (XE);
\path[pzplane] (\angBeta:\R) coordinate (P);
\path[pzplane] (\R,0) coordinate (PE);
\path[qzplane] (\angBeta:\R) coordinate (Q);
\path[qzplane] (\R,0) coordinate (QE);

%% meridians and latitude circles

\DrawLatitudeCircle[\R]{0} % equator

%% draw lines and put labels
\draw[pzplane,thick] (0,\R) arc (90:0:\R) node [ below = 1] {$r$};
\draw[qzplane,thick] (0,\R) arc (90:0:\R) node [ below = 1] {$q$};
\node [left = 2] at (N) {$p$};
\end{tikzpicture}
\caption{A spherical geodesic triangle. }\label{fig:spherical-triangle}
\end{figure}
By this, we mean that, for example a point on $\gamma_{qr}$ is farther (as measured by $d_{S^2}$) from $p$ than the corresponding point on $\overline{qr}$ in $\RR^2$.

It turns out that this ``fatness'' property holds in all nonnegatively curved manifolds, as proved by Alexandrov for surfaces, and Toponogov for general manifolds
\begin{theo}[Toponogov]\label{theo:toponogov}
Given a complete Riemannian manifold $(M,g)$ with $K(M)\geq0$, and a geodesic triangle $\triangle pqr \subset M$, let $\overline {\triangle pqr} \subset \RR^2$ be a corresponding comparison triangle. For a point $x \in \gamma_{qr}$, there clearly is a unique $\overline x \in \overline{qr}$ with $|\overline {qx}| = d_M(q,x)$ and $|\overline{xr}| = d_M(x,r)$. For this $\overline x$, we have that 
\begin{equation} \label{eq:top-comp-tri} d_M(p, x) \geq |\overline{px}|.\end{equation}

If $(M,g)$ is an arbitrary Riemannian manifold such that every geodesic triangle $\triangle pqr$ satisfies the above comparison inequality, then $K(M) \geq 0$. 
\end{theo}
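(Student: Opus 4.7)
The plan is to split into the forward implication and its converse, using Rauch's Jacobi-field comparison for the former and a Taylor expansion of the squared distance function for the latter.

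For the forward direction, the key input is Rauch's comparison theorem: if $J$ is a Jacobi field along a geodesic $\gamma$ in $M$ with $J(0) = 0$ and $\gamma$ has no conjugate points on $[0,T]$, then under the hypothesis $K(M) \geq 0$ the norm $\|J(t)\|$ grows no faster than the corresponding quantity in $\RR^{n}$ (which is simply $t\|J'(0)\|$). Applied to the one-parameter family of geodesics from $p$ to the point $\gamma_{qr}(s)$ as $s$ varies, this infinitesimal statement integrates, inside a convex normal neighbourhood, into a local version of the claimed inequality: for sufficiently short geodesic triangles the desired $d_M(p,x) \geq |\overline{px}|$ holds. To pass from local to global I would fix the geodesic $\gamma_{qr}$, subdivide it into many short subarcs $\{q_i q_{i+1}\}_{i=0}^{N-1}$, apply the local comparison to each sub-triangle $\triangle p q_i q_{i+1}$, and then combine the pieces via a purely Euclidean ``hinge gluing'' lemma. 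It is cleanest to pass through the equivalent angle formulation (each interior angle of the geodesic triangle is at least as large as the corresponding angle of the comparison triangle), which behaves monotonically under subdivision; the distance inequality then follows from the Euclidean law of cosines applied at the vertex $q$.

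For the converse, fix $p \in M$ and two orthonormal vectors $u, v \in T_p M$ spanning a plane $\Pi$, and form the infinitesimal triangle with vertices $p$, $q_\epsilon := \exp_p(\epsilon u)$, $r_\epsilon := \exp_p(\epsilon v)$. The standard fourth-order Taylor expansion of the squared Riemannian distance (derived via Jacobi fields along the short geodesic between $q_\epsilon$ and $r_\epsilon$) reads
\begin{equation*}
d_M(q_\epsilon, r_\epsilon)^2 = 2\epsilon^2 - \tfrac{1}{3} \epsilon^4 \langle R(u,v)v, u\rangle + O(\epsilon^5),
\end{equation*}
where $R$ denotes the Riemann curvature tensor. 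Taking $x_\epsilon$ to be the midpoint of $\gamma_{q_\epsilon r_\epsilon}$, expanding $d_M(p, x_\epsilon)^2$ to fourth order in $\epsilon$ by the same technique, and computing $|\overline{p}\,\overline{x}_\epsilon|^2$ in the isoceles Euclidean comparison triangle via the parallelogram identity, the hypothesis $d_M(p, x_\epsilon) \geq |\overline{p}\,\overline{x}_\epsilon|$ isolates the nonnegativity $\langle R(u,v)v, u\rangle \geq 0$ of the sectional curvature of $\Pi$. Since $p$ and $\Pi$ were arbitrary, this gives $K(M) \geq 0$.

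The main obstacle is the local-to-global step of the forward direction: Rauch yields the infinitesimal statement almost immediately, and the converse is a bookkeeping exercise in Taylor expansion, but propagating the triangle comparison across long geodesics, past possible conjugate and cut loci, requires the careful subdivision and angle-gluing argument indicated above, and this is where the most care is needed. Detailed proofs may be found in \cite[Chapter 11]{Petersen:RiemGeo} or, in the additional generality of synthetic curvature bounds on metric spaces, in \cite[Chapter 4]{BBI:MetGeo}.
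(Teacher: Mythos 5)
The paper does not prove Toponogov's theorem: immediately after stating it, it says ``We do not give proof, as it would take us too far astray (a proof can be found, for example, in \cite[Chapter 11]{Petersen:RiemGeo}).'' So there is no in-paper argument to compare against; what you have written is a sketch of the standard textbook proof, and it is the correct roadmap. For the forward direction, Rauch for the infinitesimal/local statement followed by subdivision and the monotone angle version with a Euclidean hinge lemma is exactly the Cheeger--Ebin/Petersen strategy; the genuinely hard point is, as you note, the local-to-global passage, since Rauch by itself controls things only up to the first conjugate point while Toponogov must hold on arbitrarily long geodesics, and this is where essentially all of the length of the chapter you cite goes. For the converse, your setup checks out quantitatively: using normal coordinates at $p$, one finds
\[
d_M(p,x_\epsilon)^2 = \tfrac{\epsilon^2}{2} + \tfrac{\epsilon^4}{6}\langle R(u,v)v,u\rangle + O(\epsilon^6),
\qquad
|\overline{p}\,\overline{x}_\epsilon|^2 = \tfrac{\epsilon^2}{2} + \tfrac{\epsilon^4}{12}\langle R(u,v)v,u\rangle + O(\epsilon^5),
\]
so the comparison inequality forces $\tfrac{1}{12}\langle R(u,v)v,u\rangle \geq 0$ at fourth order, which is exactly the conclusion you want. (It is worth noting explicitly that the two fourth-order coefficients $\tfrac16$ and $\tfrac1{12}$ are different; if they had coincided, the midpoint comparison would have been vacuous at this order and you would have needed a different test triangle.) In short: your proposal is a faithful sketch of the standard proof the paper itself defers to.
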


See Figure \ref{fig:comptriPosCurv} for an illustration of such a triangle. We do not give proof, as it would take us too far astray (a proof can be found, for example, in \cite[Chapter 11]{Petersen:RiemGeo}). Notice that Toponogov's Theorem tells us that if we know the (metric) distance between any four points of $M$, we can tell whether or not $K(M) \geq 0$ (in fact, all we really need is the corresponding distances for any three points $p,q,r$ and a fourth $x$ lying between $q$ and $r$ in the sense that $d(q,x) + d(x,r) = d(q,r)$). Thus, we have a characterization of a manifold $(M,g)$ having $K(M) \geq 0$ \emph{purely} in terms of the length metric $d_M$!

\begin{figure}[h!]
\centering
\begin{tikzpicture}[scale =3]
\draw (0,0) arc (260:290:2);
\draw (0,0) arc (150:120:2);
\draw (.73,.73) arc (45:3:1);
\draw (-.1,-.05) node {$p$};
\draw (1.1,0) node {$r$};
\draw (.75,.85) node {$q$};
\draw (0,.6) node {$\triangle pqr$};

\draw [style = dashed] (0,0) .. controls (.4,.4) .. (.85,.6);
\draw (.95,.65) node {$x$};

\draw (2,0) -- (3.1,.1);
\draw (2,0) -- (2.7,.8);
\draw (3.1,.1) -- (2.7,.8);
\draw (1.9,-.05) node {$\overline p$};
\draw (2.7,.9) node {$\overline q$};
\draw (3.2,.1) node {$\overline r$};
\draw (3.3,.6) node {$\overline{\triangle pqr}$};

\draw [style = dashed] (2,0) -- (2.82,.63);
\draw (2.9,.67) node {$\overline x$};
\end{tikzpicture}
\caption{Comparing the lengths $d_M(p,x)$ and $|\overline {px}|$ as in Toponogov's Theorem. Here $d_M(p,x) \geq |\overline{px}|$, so this triangle satisfies the conclusion of the theorem.}
\label{fig:comptriPosCurv}
\end{figure}
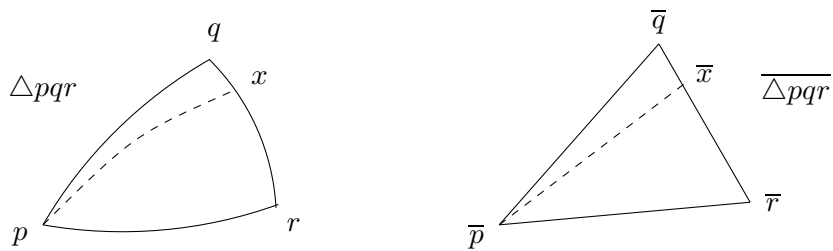

As indicated above, this allows us to generalize $K(M) \geq 0$ to some metric spaces. If $(X,d)$ is a geodesic space it is clear that the notion of geodesic triangles and comparison triangles still makes perfect sense, and we can turn Theorem \ref{theo:toponogov} into a definition, generalizing the notion of nonnegative sectional curvature in such a way that the new definition is equivalent to the standard one when $(X,d)$ is secretly a Riemannian manifold in disguise as a metric space. As such, we will say that a metric space $(X,d)$ has \emph{nonnegative curvature (in the Alexandrov sense)} in this case. 

\subsection{Alexandrov Sectional Curvature of Wasserstein Spaces}
It turns out that the sectional curvature of the base space is related to the Alexandrov curvature of the associated Wasserstein space
\begin{theo} \label{theo:nneg-sect-curv-wass-sp}
For $(M,g)$ a smooth compact connected Riemannian manifold, $M$ has nonnegative sectional curvature if and only if $\cP(M)$ has nonnegative curvature in the Alexandrov sense.\footnote{It is important to note that even though many of the results stated in this essay are simplified (for example, we could easily generalize Toponogov's theorem to sectional curvature bounded from below by $K\in \RR$), the reasonable looking generalization of Theorem \ref{theo:nneg-sect-curv-wass-sp} to say that if $M$ has sectional curvature bounded below by $k$ then $(\cP(M),d^{W})$ has curvature bounded below by $k$ in the Alexandrov sense is \emph{false}. See, for example, \cite[Proposition 2.10]{Sturm:MetMeas1}. }
\end{theo}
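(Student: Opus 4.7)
The plan is to prove both directions by reformulating nonnegative Alexandrov curvature as the ``convexity of squared distance'' inequality
\[
d(z_t,w)^2 \;\geq\; (1-t)\, d(z_0,w)^2 + t\, d(z_1,w)^2 - t(1-t)\, d(z_0,z_1)^2
\]
for every geodesic $(z_t)$ and external point $w$; this is equivalent to the fat-triangle condition of Alexandrov, because it is exactly the Euclidean law of cosines in the comparison triangle of Figure \ref{fig:comptri}. The two implications will then require very different ingredients: the isometric embedding $i:x\mapsto\delta_x$ for one direction and the dynamical-transport picture of Proposition \ref{prop:dynamic-trans-plan} for the other.

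For the direction \emph{$(\cP(M),d^W)$ nonneg. Alexandrov $\Rightarrow K(M)\geq 0$}, I would use $i:M\to\cP(M)$. Given $p,q,r\in M$ and a constant-speed geodesic $\gamma$ from $q$ to $r$, the curve $t\mapsto\delta_{\gamma(t)}$ is a geodesic in $\cP(M)$ of the same length, since $d^W(\delta_{\gamma(t)},\delta_{\gamma(s)})=d(\gamma(t),\gamma(s))=|t-s|d(q,r)$. The Alexandrov comparison in $\cP(M)$ applied to this geodesic with external point $\delta_p$ pulls back under the isometry $i$ to Toponogov's comparison inequality for $\triangle pqr$ in $M$. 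Since this holds for every geodesic triangle in $M$, Theorem \ref{theo:toponogov} yields $K(M)\geq 0$.

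The converse \emph{$K(M)\geq 0 \Rightarrow (\cP(M),d^W)$ nonneg. Alexandrov} is the substantive direction. Given $\mu_0,\mu_1,\nu\in\cP(M)$, I would first produce a geodesic $\mu_t=(e_t)_*\Theta$ from an optimal dynamical plan $\Theta$ on $\Gamma$ (Proposition \ref{prop:dynamic-trans-plan}), then pick an optimal coupling $\pi_t\in\Pi(\mu_t,\nu)$ and disintegrate it along its first marginal as $\pi_t=\int_M \delta_x\otimes \nu_x\, d\mu_t(x)$. Gluing $\Theta$ and $\pi_t$ over their common marginal $\mu_t$ produces a measure $\Lambda\in\cP(\Gamma\times M)$ defined by
\[
\int F(\gamma,y)\, d\Lambda(\gamma,y) \;:=\; \int_\Gamma \int_M F(\gamma,y)\, d\nu_{e_t(\gamma)}(y)\, d\Theta(\gamma).
\]
Its pushforward under $(\gamma,y)\mapsto(\gamma(s),y)$ is a (not necessarily optimal) transport plan in $\Pi(\mu_s,\nu)$ for $s\in\{0,1\}$, while the same recipe at $s=t$ reconstitutes $\pi_t$. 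Consequently
\begin{align*}
d^W(\mu_s,\nu)^2 &\leq \int d(\gamma(s),y)^2\, d\Lambda,\quad s\in\{0,1\},\\
d^W(\mu_t,\nu)^2 &= \int d(\gamma(t),y)^2\, d\Lambda,\\
d^W(\mu_0,\mu_1)^2 &= \int d(\gamma(0),\gamma(1))^2\, d\Lambda.
\end{align*}
Applying Toponogov to $M$ gives the pointwise bound $d(\gamma(t),y)^2\geq (1-t)d(\gamma(0),y)^2 + t\, d(\gamma(1),y)^2 - t(1-t) d(\gamma(0),\gamma(1))^2$ for each $\gamma\in\Gamma$ and $y\in M$; integrating this inequality against $\Lambda$ and chaining with the three displayed (in)equalities yields the required Alexandrov inequality in $\cP(M)$.

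The main obstacle will be making the gluing construction rigorous: one must verify that the disintegration $\{\nu_x\}_{x\in M}$ can be composed with the continuous evaluation $e_t:\Gamma\to M$ to give a Borel-measurable assignment $\gamma\mapsto \nu_{e_t(\gamma)}$, so that $\Lambda$ is a well-defined Borel probability measure whose marginals behave as claimed. This will rely on the disintegration theorem on the compact metric spaces $\Gamma$ and $M$, combined with a gluing argument in the style of Lemma \ref{lemm:gluing}. A secondary subtlety is that Alexandrov comparison must hold along \emph{every} geodesic in $\cP(M)$; closing the argument requires checking that every geodesic between $\mu_0$ and $\mu_1$ is of the form $(e_t)_*\Theta$ for some optimal dynamical plan $\Theta$, so that the above comparison can be invoked for it.
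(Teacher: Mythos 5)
The paper does not give its own proof of Theorem \ref{theo:nneg-sect-curv-wass-sp}; it defers to \cite[Theorem A.8]{LottVillani:RicciViaTspt} and \cite[Proposition 2.10]{Sturm:MetMeas1}. Your outline reproduces essentially the standard argument from those references, and it is correct. The reformulation of nonnegative Alexandrov curvature as the quadratic inequality $d(z_t,w)^2\geq (1-t)d(z_0,w)^2+t\,d(z_1,w)^2-t(1-t)d(z_0,z_1)^2$ is exactly the law-of-cosines computation the paper carries out inside the proof of Proposition \ref{prop:P0-vanish-alex-curv}, so that is legitimate. The easy direction via $i:x\mapsto\delta_x$ is fine: the Alexandrov inequality for the geodesic $t\mapsto\delta_{\gamma(t)}$ and point $\delta_p$ pulls back through the isometric embedding to the Toponogov comparison, and the converse part of Theorem \ref{theo:toponogov} finishes it. The substantive direction is also correct as outlined: the gluing measure $\Lambda$ has first $\Gamma$-marginal $\Theta$, the pushforward $(\gamma,y)\mapsto(\gamma(s),y)$ lands in $\Pi(\mu_s,\nu)$ for $s\in\{0,1\}$ and reproduces $\pi_t$ at $s=t$, and integrating the pointwise Toponogov bound against $\Lambda$ chains exactly as you describe (the coefficients $(1-t),t$ are nonnegative, so the two suboptimal-coupling inequalities point the right way).

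Of the two gaps you flag, both are real but both are standard. The measurability of $\gamma\mapsto\nu_{e_t(\gamma)}$ follows from Rokhlin disintegration on Polish spaces (giving a Borel family $x\mapsto\nu_x$) composed with the continuous $e_t$; $\Lambda$ is then well-defined on product rectangles and extends by Carath\'eodory, in the same spirit as Lemma \ref{lemm:gluing}. The second and more serious point --- that \emph{every} Wasserstein geodesic from $\mu_0$ to $\mu_1$ arises as $(e_t)_*\Theta$ for some optimal dynamical plan $\Theta$ --- is a theorem (Lisini; see \cite[Corollary 7.22]{Villani:OptTspt} or \cite[Lemma 2.4]{LottVillani:RicciViaTspt}). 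Citing it closes the argument; without it you would only obtain the comparison along \emph{some} geodesic, which is not what the Alexandrov condition demands. With those two references supplied, your proof is complete.
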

See, for example \cite[Theorem A.8]{LottVillani:RicciViaTspt} or \cite[Proposition 2.10]{Sturm:MetMeas1}. The following is the same as \cite[Proposition 4.1]{Kloeckner:GeoWassSpEuc}, except that the given proposition is for the interval, instead of the real line.
\begin{prop}\label{prop:P0-vanish-alex-curv}
The space $(\cP_{0},d^{W})$ has ``vanishing Alexandrov cuvature'' in the sense that equality always holds in \eqref{eq:top-comp-tri} when comparing a geodesic triangle to a comparison triangle.
\end{prop}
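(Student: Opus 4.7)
The plan is to reduce the statement to a calculation inside a Hilbert space via the isometry $\Psi:(\cG_{0},d^{L^{2}})\to(\cP_{0},d^{W})$ from Proposition \ref{prop:G0-P0-iso}. Since an isometry sends geodesic triangles to geodesic triangles and preserves all pairwise distances, it suffices to establish vanishing Alexandrov curvature for $(\cG_{0},d^{L^{2}})$. By Lemma \ref{lemm-g0-geo}, $\cG_{0}$ is totally convex in $L^{2}([0,1])$ and the unique geodesic joining any two of its points is the straight-line segment. Thus every geodesic triangle in $\cG_{0}$ is a genuine triangle with straight sides inside the Hilbert space $L^{2}([0,1])$, and the claim reduces to the following flat-space statement: in any real Hilbert space, every geodesic triangle saturates the Toponogov inequality \eqref{eq:top-comp-tri}.

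To verify this flat-space fact, fix $f,g,h\in\cG_{0}$ and consider the geodesic triangle with vertices $f,g,h$ and sides parametrized by $s\mapsto (1-s)g+sh$ etc. A point $x$ on the side from $g$ to $h$ with $d^{L^{2}}(g,x)=s\,\|g-h\|_{L^{2}}$ and $d^{L^{2}}(x,h)=(1-s)\,\|g-h\|_{L^{2}}$ is precisely $x=(1-s)g+sh$. In the Euclidean comparison triangle $\overline{\triangle fgh}\subset\RR^{2}$ the analogous point $\bar x\in\overline{\bar g\bar r}$ is $\bar x=(1-s)\bar g+s\bar h$, by the same affine parametrization. Expanding in the Hilbert inner product,
\[
\|f-x\|_{L^{2}}^{2}=(1-s)^{2}\|f-g\|^{2}_{L^{2}}+2s(1-s)\langle f-g,f-h\rangle+s^{2}\|f-h\|^{2}_{L^{2}},
\]
and the polarization identity gives
\[
2\langle f-g,f-h\rangle=\|f-g\|^{2}_{L^{2}}+\|f-h\|^{2}_{L^{2}}-\|g-h\|^{2}_{L^{2}}.
\]
Hence $\|f-x\|_{L^{2}}^{2}$ is a universal function of the three side lengths $\|f-g\|_{L^{2}},\|f-h\|_{L^{2}},\|g-h\|_{L^{2}}$ and of $s$. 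The exact same identity holds in $\RR^{2}$ for $|\bar f-\bar x|^{2}$ in terms of $|\bar f\bar g|,|\bar f\bar h|,|\bar g\bar h|$ and $s$. Since by definition of the comparison triangle these three side lengths match, we conclude
\[
d^{L^{2}}(f,x)=\|f-x\|_{L^{2}}=|\bar f\bar x|,
\]
which is equality in \eqref{eq:top-comp-tri}. Transporting back through $\Psi$ yields the proposition.

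There is no real obstacle here: the heart of the argument is simply that a Hilbert space is flat in the Alexandrov sense, and the only nontrivial input is the prior fact (Proposition \ref{prop:G0-P0-iso} together with Lemma \ref{lemm-g0-geo}) that $(\cP_{0},d^{W})$ is isometric to a totally convex subset of $L^{2}([0,1])$ whose geodesics are straight lines. The only point requiring a sentence of care is to observe that the parameter $s$ that picks out $x$ on the geodesic from $g$ to $h$ by arc-length also picks out $\bar x$ on $\overline{\bar g\bar h}$ by the same proportional arc-length, which is automatic since both geodesics are affinely parametrized at unit speed (after rescaling) and the total lengths agree.
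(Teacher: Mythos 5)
Your proof is correct and follows essentially the same route as the paper: reduce via the isometry $\Psi$ of Proposition \ref{prop:G0-P0-iso} and the linearity of geodesics from Lemma \ref{lemm-g0-geo} to a computation in the Hilbert space $L^{2}([0,1])$, then verify that the squared distance from a vertex to a point on the opposite side is a universal function of the three side lengths and the arc-length parameter, which forces equality with the Euclidean comparison triangle. The only cosmetic difference is that the paper makes the Euclidean side of the comparison explicit with a law-of-cosines calculation and then checks the resulting identity \eqref{eq:vanishing-alex-curv} by expanding $\|f-\gamma(t)\|_{L^{2}}^{2}$ directly, whereas you package the same content via the polarization identity and the observation that it holds verbatim in $\RR^{2}$.
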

\begin{proof}
Vanishing Alexandrov curvature is equivalent to the following property: for $\mu,\nu,\eta \in \cP_{0}$, denoting by $\gamma:[0,1]  \to \cP_{0}$ the unique geodesic between $\nu$ and $\eta$, we have that 
\begin{equation}\label{eq:vanishing-alex-curv} d^{W}(\mu,\gamma(t))^{2} = (1-t) d^{W}(\mu,\gamma(0))^{2} + t d^{W}(\mu,\gamma(1))^{2} -t(1-t) d^{W}(\gamma(0),\gamma(1))^{2}.\end{equation} To see this, we take a comparison triangle $\overline{\triangle pqr}$ as in Figure \ref{fig:van-curv-comp-tri}. We choose the triangle so that $\overline p$ corresponds to $\mu$, $\overline q$ corresponds to $\nu$, and $\overline r$ corresponds to $\eta$. As in Figure \ref{fig:van-curv-comp-tri}, we label the angle $\angle \overline pqr$ by $\theta$. 
\begin{figure}[h!]
\centering
\begin{tikzpicture}[scale =3]
\draw (0,0) -- (.7,1);
\draw (0,0) -- (1,0);
\draw (.7,1) -- (1,0);
\draw (-.1,-.01) node {$\overline q$};
\draw (.7,1.1) node {$\overline p$};
\draw (1.1,0) node {$\overline r$};
\draw [dashed] (.7,1) -- (.6,0);
\draw (.6,-.08) node {$\overline x$};
\draw (.1,0) arc (0:54:.1);
\draw (.18,.1) node {$\theta$};
\end{tikzpicture}
\caption{Example comparison triangle $\overline {\triangle pqr}$ for the calculation of $|\overline{px}|$ in order to show that \eqref{eq:vanishing-alex-curv} is equivalent to $|\overline {px}|$ being \emph{equal} to the corresponding length in $\cP_{0}$.}
\label{fig:van-curv-comp-tri}
\end{figure}
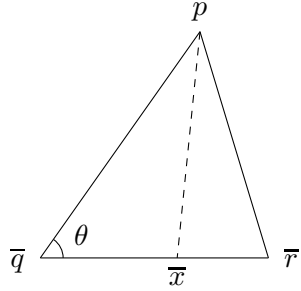
If we choose coordinates so that $\overline q = (0,0)$, $\overline r = (|\overline{qr}|,0)$, then we can take $\overline p = (|\overline{pq}|\cos \theta,|\overline{pq}|\sin \theta)$. Then, $x = (t|\overline{qr}|,0)$, so \[ |\overline {px}|^{2} = (|\overline{pq}|\cos \theta - t|\overline {qr}|)^{2} + (|\overline{pq}|\sin \theta)^{2} =  |\overline{pq}|^{2} + t^{2} |\overline{qr}|^{2 } - 2 t |\overline{pq}| | \overline{qr}| \cos \theta.\]
However, by the law of cosines, we have that 
\[ 2 |\overline{pq}| |\overline{qr}| \cos \theta = - |\overline{pr}|^{2} + |\overline{pq}|^{2} + |\overline{qr}|^{2},\] 
so combining these two gives 
\[ |\overline {px}|^{2} = (1-t) |\overline {pq}|^{2} + t|\overline{qr}|^{2} - t(1-t)|\overline{pr}|^{2}.\] Because $\overline{\triangle pqr}$ is a comparison triangle, it is immediate that $|\overline{px}| = d^{W}(\mu,\gamma(t))$ if and only if \eqref{eq:vanishing-alex-curv} holds. 

To show that \eqref{eq:vanishing-alex-curv} holds, we will show the equivalent identity in $\cG_{0}$, which follows easily from the fact that $L^{2}([0,1])$ is a Hilbert space. For $f,g,h \in \cG_{0}$, by Lemma \ref{lemm-g0-geo} $\gamma(t) : = (1-t) g + t h$ is the unique geodesic between $g$ and $h$. Thus, it is equivalent to show that 
\[ \| f - \gamma(t)\|_{L^{2}}^{2} = (1-t) \| f - g \|_{L^{2}}^{2} + t\| f - h\|_{L^{2}}^{2}  - t(1-t) \| g-h\|_{L^{2}}^{2}. \] To do this, we compute
\begin{align*}
(1-t) &  \|f-g\|_{L^{2}}^{2}  + t\|f-h\|_{L^{2}}^{2} -t(1-t)\|g-h\|_{L^{2}}^{2} \\
&  = \|f\|_{L^{2}}^{2}  - 2(1-t) \bangle{f,g}_{L^{2}} + (1-t)^{2}\|g\|_{L^{2}}^{2} + t^{2} \|h\|_{L^{2}}^{2}  - 2 t \bangle{f,h}_{L^{2}} - 2t(1-t) \bangle{g,h}_{L^{2}}\\
& = \| f - (1-t) g\|_{L^{2}}^{2}  - 2t \bangle{f+(1-t)h,h}_{L^{2}} + t^{2}\|h\|_{L^{2}}^{2}\\
& = \| f- (1-t)g - t h\|_{L^{2}}^{2}\\
& = \|f-\gamma(t)\|_{L^{2}}^{2},
\end{align*}
showing that $(\cP_{0},d^{W})$ has vanishing Alexandrov curvature, as desired. 
\end{proof}

%SHOW FAILS FOR EG \cP([0,1]^{2}, also remark could have proved by saying span_{RR}(g-f,g-h} is isometric to RR^{2}

\section{Metric Ricci Curvature} \label{sect:synth-ricci-curv}

Now that we have seen the notion of Alexandrov curvature, we turn to a metric notion of Ricci curvature. It will turn out that convexity of certain functionals along geodesics in $(\cP(X),d^{W})$ provides a good generalization of lower bounds of Ricci curvature. To do this, we fix a reference Borel probability measure $m \in \cP(X)$, and we will consider the geodesic measure triple $(X, d, m)$. We define the entropy functional\footnote{N.B., what we call ``entropy''  is actually Boltzmann's H-functional.} $ \Ent(\cdot| m) : \cP(X) \to \overline \RR = \RR \cup \{+ \infty\} $ by  \[ \mu \mapsto \begin{cases}
\int_{X} \rho \log \rho\  dm & \text{ for } \mu \ll m \text { and } \mu = \rho m\\
+ \infty & \text{ otherwise}
\end{cases}\]
Intuitively, $\Ent(\mu|m)$ measures the nonuniformity of $\mu$ with respect to $m$.  In some sense, we can think of it as a kind of distance (without symmetry or a triangle inequality), which is ``extensive'' in the sense that if $\mu,\mu',m\in \cP(X)$ then $\Ent(\mu\otimes\mu'|m\otimes m) = \Ent(\mu|m) + \Ent(\mu'|m)$.

%By Jensen's inequality applied to the function $x\log x$, if $\mu = \rho m$
%\[ \Ent(\mu|m) = \int_{X} \rho \log \rho\ dm  \geq \left( \int_{X} \rho\  dm \right) \log\left( \int_{X} \rho dm \right) = 0 = \Ent(m|m)\]
%equality?
\begin{defi}\label{defi:k-convex-ent}
We say that for the geodesic measure space $(X,d,m)$, $\Ent(\cdot | m)$ is weakly a.c.\ $K$-displacement convex if for any probability measures $\mu_{0},\mu_{1}\ll m$, there exists a geodesic in $\cP(X)$, $\mu_{t}$ from $\mu_{0}$ to $\mu_{1}$ so that $\Ent(\mu_{t}|m)$ is $K$-convex, in the sense that \begin{equation}\label{eq:k-disp-covex}
\Ent(\mu_{t}|m) \leq t \Ent(\mu_{1}|m)  + (1-t) \Ent(\mu_{0}|m) - \frac K2 t (1-t) d^{W}(\mu_{0},\mu_{1})^{2}. 
\end{equation}
\end{defi}
The ``a.c.''\ corresponds to requiring $\mu_{0},\mu_{1}$ absolutely continuous with respect to $m$ and the ``weakly'' corresponds to requiring \eqref{eq:k-disp-covex} to hold along only one geodesic, not all of them. By \cite[Proposition 3.21]{LottVillani:RicciViaTspt}, we could actually drop the a.c.\ requirement, but it will be more convenient to keep it. Furthermore, on Riemannian manifolds, it turns out (cf.\ \cite[Lemma 3.25]{LottVillani:RicciViaTspt}) that the ``weakly'' condition is unnecessary, but for general geodesic measure spaces it will prove to be important when we consider stability under Gromov--Hausdorff convergence in the next section. One indication this is a good definition is given by
\begin{theo}\label{theo:riem-mfld-equiv}
For a compact Riemannian manifold $(M,g)$, regarding it as a geodesic measure space $\left(M, d, m\right)$, with $m=({\vol(M)})^{-1}{\vol_{M}}$, we have that $\Ent( \cdot | m)$ is weakly a.c.\ $K$-convex if and only if $\Ric \geq K$ on $M$.\footnote{By this, we mean as a bilinear form, or in other words $\Ric \geq K$ if and only if $\Ric(\xi,\xi') \geq Kg(\xi,\xi')$ for all $p\in M$ and $\xi,\xi' \in T_{p}M$. See Appendix \ref{app:riem-mfld-intro}.} 
\end{theo}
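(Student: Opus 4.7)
The plan is to prove both directions by reducing displacement convexity of the entropy along a Wasserstein geodesic to a pointwise Riccati/Jacobi analysis along the underlying geodesics in $M$. Throughout, I rely on the polar factorization on Riemannian manifolds (the result called $\mathrm{Theorem~polar\text{-}fact\text{-}riem\text{-}mflds}$ in the excerpt): for $\mu_0, \mu_1 \ll m$ there is a unique optimal plan of Monge form $\pi = (\Id, T)_{*}\mu_0$ with $T(x) = \exp_x(-\nabla\phi(x))$ for a $d^2/2$-concave function $\phi$, and the corresponding displacement interpolation $F_t(x) := \exp_x(-t\nabla\phi(x))$ produces a Wasserstein geodesic $\mu_t := (F_t)_{*}\mu_0$ (this is the dynamical plan constructed via Proposition \ref{prop:dynamic-trans-plan}). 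The proof of both implications reduces to analyzing the Jacobian $Y(t,x) := \Jac F_t(x)$ along the geodesic $\gamma_x(t) := F_t(x)$ with initial velocity $v_x := -\nabla\phi(x)$, for which the nonsmooth change-of-variables formula from Appendix \ref{app:non-smooth-cov} gives $\rho_t(F_t(x))\, Y(t,x) = \rho_0(x)$ for $\mu_0$-a.e.\ $x$, and therefore
\begin{equation*}
\Ent(\mu_t|m) \;=\; \int_M \log \rho_t(F_t(x))\, \rho_0(x)\, dm(x) \;=\; \int_M \bigl[\log \rho_0(x) - \log Y(t,x)\bigr]\rho_0(x)\,dm(x).
\end{equation*}

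For the forward implication ($\Ric \geq K \Rightarrow$ weak a.c.\ $K$-displacement convexity), I would fix $x$ and regard $y(t) := \log Y(t,x)$ as a function of $t$. Standard Jacobi field calculus writes $Y(t) = \det A(t)$ where $A(t)$ is built from Jacobi fields along $\gamma_x$, and the shape operator $U(t) := A'(t)A(t)^{-1}$ satisfies the matrix Riccati equation $U' + U^2 + R_{\dot\gamma_x} = 0$. Taking the trace and using the Cauchy--Schwarz bound $\tr(U^2) \geq (\tr U)^2/n$ together with $(\log Y)' = \tr U$, one gets $y''(t) \leq -\Ric(\dot\gamma_x,\dot\gamma_x) \leq -K|v_x|^2$, so that $-y(t,x)$ is $K|v_x|^2$-convex in $t$. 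Integrating this pointwise convexity against $\rho_0\,dm$ and using $\int_M |v_x|^2 \rho_0(x)\,dm(x) = d^W(\mu_0,\mu_1)^2$ yields precisely inequality \eqref{eq:k-disp-covex} along the displacement interpolation $\mu_t$. One has to be slightly careful where $\phi$ fails to be twice differentiable (a set of measure zero by Alexandrov's theorem), using the nonsmooth tools of Appendix \ref{app:non-smooth-cov} and a regularization of the $d^2/2$-concave potential.

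For the converse, I argue by contradiction: suppose $\Ric_p(v,v) < K|v|^2$ for some $p\in M$ and $v \in T_pM$. The idea is to engineer a smooth pair $(\mu_0^\epsilon, \mu_1^\epsilon)$ concentrated on a small ball near $p$ whose optimal transport is driven by a potential $\phi$ with $\nabla\phi(p) = v$ and $\Hess\phi(p) = 0$ (for instance, an affine function in a normal chart around $p$, cut off smoothly). Differentiating the identity above twice in $t$ at $t=0$ and using the Riccati analysis in reverse gives
\begin{equation*}
\frac{d^2}{dt^2}\bigg|_{t=0}\Ent(\mu_t^\epsilon|m) \;=\; \int_M \bigl[\Ric(\nabla\phi,\nabla\phi) + \|\Hess\phi\|_{HS}^2\bigr]\,\rho_0^\epsilon\,dm.
\end{equation*}
By localizing $\rho_0^\epsilon$ near $p$ and letting $\epsilon \to 0$, the right-hand side converges to $\Ric_p(v,v)$ while $K d^W(\mu_0^\epsilon,\mu_1^\epsilon)^2 / \bigl(\int |\nabla\phi|^2\rho_0^\epsilon\,dm\bigr)$ tends to $K|v|^2$, so the $K$-convexity inequality must fail for small $\epsilon$. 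This uses that on a Riemannian manifold the inequality \eqref{eq:k-disp-covex} holds along every displacement interpolation (no ``weakly'' needed), as invoked from \cite[Lemma 3.25]{LottVillani:RicciViaTspt}, so differentiating it at $t = 0$ is legitimate.

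The main obstacle will be handling the regularity of the optimal transport map in the forward direction: $\phi$ is only $d^2/2$-concave, so the Jacobian $Y(t,x)$ and the second-derivative estimates $y''(t) \leq -K|v_x|^2$ must be justified at merely Alexandrov-a.e.\ points, with careful use of the change-of-variables machinery of Appendix \ref{app:non-smooth-cov}. Second, translating the pointwise second-derivative estimate into the finite-difference $K$-convexity inequality requires an integrated version valid under minimal smoothness; this is handled by approximating $\phi$ by smooth potentials or by noting that $K$-convexity is equivalent to the distributional inequality $y''(t) + K|v_x|^2 \leq 0$, which is stable under the relevant limits. The converse is technically cleaner once the forward computation is in place, because one needs only the second derivative at $t=0$ for a \emph{specifically constructed} smooth $\phi$, for which all estimates are classical.
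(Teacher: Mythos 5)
Your proposal is correct and follows essentially the same route as the paper's proof: polar factorization to reduce to $F_t(x)=\exp_x(-t\nabla\phi)$, the nonsmooth change of variables to express $\Ent(\mu_t|m)$ in terms of $\log\det dF_t$, the Jacobi/Riccati analysis with the trace Cauchy--Schwarz bound to get $\ddot{\,}\log\det dF_t \leq -\Ric(\dot F_t,\dot F_t)$ for the forward direction, and a localized smooth potential near $p$ with second-derivative at $t=0$ for the converse. One small point worth noting: in the converse the paper asks only for $\Delta\phi(x)=0$ and then asserts the quadratic term $\|\Hess\phi\|^2$ drops out; as you correctly observe, what is actually needed (and what the paper's affine-in-normal-coordinates construction delivers) is the stronger $\Hess\phi(p)=0$, so your phrasing of that step is cleaner than the paper's.
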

As such, we will often refer to a geodesic measure space $(X,d,\mu)$ on which $\Ent(\cdot|\mu)$ is weakly a.c.\ $K$-convex as a space with generalized $\Ric \geq K$.

\subsection{Displacement convexity in $\cP_{0}$} Before proving the above theorem, we will first deal with our special case, $M = [0,1]$, which does not quite fit into the statement of Theorem \ref{theo:riem-mfld-equiv}, because we will not discuss manifolds with boundaries. We could deduce Corollary \ref{coro:ricci-curv-01} from Theorem \ref{theo:riem-mfld-equiv}, by considering $[0,1]$ as a small segment in $S^{1}$ and then showing that a convex subset inherits Ricci bounds. However, it is more instructive to do things explicitly in this case, which is what we will do. 
\begin{prop}\label{prop:ent-01}
We have the following formula for the entropy functional on $\cP_{0}$. For $f \in \cG_{0}$, we have that 
\begin{equation}\label{eq:G0-ent-formula}
\Ent(\Psi(f) |{\Leb}) = - \int_{0}^{1} \log f'(x) dx.
\end{equation}
\end{prop}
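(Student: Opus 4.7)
The identity is an instance of the general change-of-variables formula for entropy: whenever $\mu = T_*\Leb$ with $T$ admitting a (nonsmooth) Jacobian $J_T$, one has $\rho(T(x))\,J_T(x)=1$ on the support of $\mu$, and therefore
\[
\Ent(\mu|\Leb) = \int \rho \log \rho\, d\Leb = \int \rho(T(x))\log \rho(T(x))\, J_T(x)\, dx = -\int \log J_T(x)\, dx.
\]
The plan is to apply this with $T=f$ and $J_T=f'$ (which exists $\Leb$-a.e. because $f$ is monotone), splitting into two cases according to whether $\mu:=\Psi(f)=f_*\Leb$ is absolutely continuous with respect to $\Leb$ or not.

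In the absolutely continuous case, write $\mu = \rho\,\Leb$. By the nonsmooth change-of-variables results collected in Appendix \ref{app:non-smooth-cov}, we have $\rho(f(x))\,f'(x)=1$ for $\Leb$-a.e.\ $x$ with $f'(x)>0$, and moreover $\rho = 0$ on $[0,1]\setminus f([0,1])$ (the ``gap'' regions created by jumps of $f$). Using the convention $0\log 0 = 0$, the entropy integral then restricts to $f([0,1])$, and the substitution $y=f(x)$, $dy = f'(x)\,dx$ (again justified by Appendix \ref{app:non-smooth-cov}) yields
\[
\Ent(\mu|\Leb) = \int_{f([0,1])}\!\rho(y)\log\rho(y)\, dy = \int_0^1 \frac{1}{f'(x)}\log\frac{1}{f'(x)}\,f'(x)\, dx = -\int_0^1 \log f'(x)\, dx,
\]
which is the claimed identity.

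In the singular case, $\Ent(\mu|\Leb) = +\infty$ by definition, so the goal reduces to showing that $f'$ vanishes on a set of positive Lebesgue measure. If $\mu$ has an atom at some $y_0$ of mass $m>0$, then $f^{-1}(\{y_0\})$ is an interval of length $m$ on which $f$ is constant, forcing $f'\equiv 0$ there. If $\mu$ has a nontrivial singular continuous part, then $f=g_\mu$ is the inverse distribution function of a (partially) singular measure, and by the classical theorem on a.e.\ derivatives of monotone singular functions (cf.\ Appendix \ref{app:non-smooth-cov}) $f'$ vanishes on a set of positive measure. Either way, $\log f'(x) = -\infty$ on a set of positive measure, so $-\int_0^1 \log f'(x)\,dx = +\infty$, matching the entropy side. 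The main obstacle throughout is the change of variables for merely monotone right-continuous $f$ and the connection between singularity of the pushforward and vanishing of $f'$; both are precisely what Appendix \ref{app:non-smooth-cov} is designed to supply, so the proof amounts to carefully organizing the computation around those results.
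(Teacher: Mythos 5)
Your argument mirrors the paper's proof almost exactly: it splits on whether $\Psi(f)=f_*\Leb$ is absolutely continuous (equivalent to the paper's split on whether $f$ is injective with $f'>0$ a.e.\ on a full-measure set), applies the nonsmooth change-of-variables results of Appendix \ref{app:non-smooth-cov} to compute the density in the regular case, and shows both sides equal $+\infty$ in the singular case; your explicit observation that $\rho=0$ on the gaps $[0,1]\setminus f([0,1])$ is a helpful clarification that the paper leaves implicit. The one imprecision is in your treatment of the singular-continuous case: the ``classical theorem on a.e.\ derivatives of monotone singular functions'' applies to a function whose Lebesgue--Stieltjes measure is singular, and here that function is the distribution function $F=f^{-1}$ (since $\mu=dF$), not $f=g_\mu$ itself. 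Indeed, for $\mu=\tfrac12\Leb+\tfrac12\,(\text{Cantor measure})$, the inverse distribution function $f$ is Lipschitz (hence absolutely continuous, hence not singular as a function), yet $f'=0$ on a set of positive measure. The fact you actually need --- that $\mu\not\ll\Leb$ together with $\mu$ atomless (so $f$ is essentially injective) forces $f'=0$ on a positive-measure set --- is exactly the ``if and only if'' content of Lemma \ref{lemm:change-of-var}, which is what the paper invokes directly. With that citation corrected, your argument is complete and equivalent to the paper's.
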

By Proposition \ref{prop:G0-P0-iso} and Lemma \ref{lemm-g0-geo}, which relate geodesics in $\cP_{0}$ to those in $\cG_{0}$ (which are simply linear interpolation) this allows us show that 
\begin{coro}\label{coro:ricci-curv-01}
For $\mu_{0},\mu_{1}\in \cP_{0}$, letting $\mu_{t}$ be the unique geodesic between them, $\Ent(\mu_{t}|\emph{\Leb})$ is a convex function (possibly taking the value $+\infty$). Thus, $\Ent(\cdot | \Leb)$ is a.c.\ displacement $0$-convex on $[0,1]$. 
\end{coro}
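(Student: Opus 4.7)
The plan is to exploit the isometry $\Psi : (\cG_{0}, d^{L^{2}}) \to (\cP_{0}, d^{W})$ of Proposition~\ref{prop:G0-P0-iso} together with the explicit description of geodesics in $\cG_{0}$ as linear interpolations (Lemma~\ref{lemm-g0-geo}). Writing $\mu_{0} = \Psi(f_{0})$ and $\mu_{1} = \Psi(f_{1})$ for uniquely determined $f_{0}, f_{1} \in \cG_{0}$, the unique geodesic in $(\cP_{0}, d^{W})$ from $\mu_{0}$ to $\mu_{1}$ is $\mu_{t} = \Psi(f_{t})$ with $f_{t} := (1-t) f_{0} + t f_{1} \in \cG_{0}$. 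Proposition~\ref{prop:ent-01} then produces the closed-form expression
\[
\Ent(\mu_{t}|\Leb) = -\int_{0}^{1} \log f_{t}'(x) \, dx = -\int_{0}^{1} \log\bigl( (1-t) f_{0}'(x) + t f_{1}'(x) \bigr) \, dx,
\]
where the second equality uses linearity of differentiation at points where both $f_{0}$ and $f_{1}$ are differentiable, a full-measure subset of $[0,1]$ since monotone functions on $[0,1]$ are a.e.\ differentiable.

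From here the corollary reduces to a pointwise application of the concavity of $\log$: for any $a, b \geq 0$,
\[
-\log\bigl((1-t) a + t b\bigr) \leq -(1-t) \log a - t \log b,
\]
with the standard convention $\log 0 = -\infty$. Applying this pointwise with $a = f_{0}'(x)$, $b = f_{1}'(x)$ and integrating in $x$ over $[0,1]$ yields
\[
\Ent(\mu_{t}|\Leb) \leq (1-t)\, \Ent(\mu_{0}|\Leb) + t\, \Ent(\mu_{1}|\Leb),
\]
which is precisely the convexity of $t \mapsto \Ent(\mu_{t}|\Leb)$; restricted to $\mu_{0}, \mu_{1} \ll \Leb$, this is the $K=0$ instance of inequality~\eqref{eq:k-disp-covex}, that is, weak a.c.\ displacement $0$-convexity.

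The only bookkeeping issue is that the entropies may take the value $+\infty$ (a flat part of $f_{i}$ gives an atom in $\mu_{i}$, and a singular continuous part makes $f_{i}' = 0$ a.e.\ on a positive-measure set); in these cases the inequality is to be read with the conventions $c \leq +\infty$ and $+\infty \leq +\infty$ and is trivial whenever $\Ent(\mu_{0}|\Leb)$ or $\Ent(\mu_{1}|\Leb)$ is infinite. There is no substantive obstacle in this corollary: the heavy lifting is entirely carried by Proposition~\ref{prop:G0-P0-iso} (the isometry), Lemma~\ref{lemm-g0-geo} (the linearity of geodesics), and Proposition~\ref{prop:ent-01} (the entropy formula), after which the corollary is essentially a one-line consequence of Jensen's inequality for the concave function $\log$.
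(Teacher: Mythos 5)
Your proof is correct and takes the same route as the paper: combine the isometry $\Psi$ with the linearity of geodesics in $\cG_{0}$ and the entropy formula of Proposition~\ref{prop:ent-01}, then apply concavity of $\log$ pointwise. Your additional remarks on a.e.\ differentiability of monotone functions and the $+\infty$ cases are fine but not material; the paper passes over them silently since the formula in Proposition~\ref{prop:ent-01} is stated so as to be valid in the extended reals.
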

\begin{proof}[Proof of Proposition \ref{prop:ent-01}]
Because $f$ is monotonically increasing, it is differentiable almost everywhere. First of all, suppose that on some set of full measure, $A \subset [0,1]$, we have that $f|_{A}$ is injective and for all $x\in A$, $f$ is differentiable at $x$ with $f'(x) > 0$. In this case, we can see that by a nonsmooth change of variables (Lemma \ref{lemm:change-of-var}) that $f_{*}\Leb$ is absolutely continuous with respect to $\Leb$, and furthermore $f_{*}\Leb = \rho \Leb$ with 
\[ \rho(f(x)) f'(x) = 1\] for a.e. $x \in[0,1]$. Thus, this gives that 
\[ \Ent(\Psi(f)|\Leb) = \int_{0}^{1} \rho(y) \log \rho(y) dy = \int_{0}^{1} \rho(f(x)) \log\rho(f(x)) f'(x) dx = - \int_{0}^{1} \log f'(x) dx\] as desired (the nonsmooth change of variables $y= f(x)$ is justified by the area formula \eqref{eq:area-formula}).

On the other hand, suppose that such an $A$ does not exist. We will show that both sides of \eqref{eq:G0-ent-formula} are equal to $+\infty$. If $f$ is constant on some interval, it is not hard to see this, because implies that $f_{*}\Leb$ is not absolutely continuous and $f'(x) = 0$ on a set of nonzero measure. On the other hand, if there is $A$ of full measure such that $f|_{A}$ is injective, but it does not hold that $f'(x) >0$ for a.e. $x$, then by Lemma \ref{lemm:change-of-var} we again have that $f_{*}\Leb$ is not absolutely continuous with respect to Lebesgue measure, so again both sides of \eqref{eq:G0-ent-formula} are $+\infty$.  \end{proof}
\begin{proof}[Proof of Corollary \ref{coro:ricci-curv-01}]
For $f,g \in \cG_{0}$, the unique geodesic between them is given by
\[ h_{t}:= (1-t) f + t g,\] 
by Lemma \ref{lemm-g0-geo}. Thus, 
\begin{align*} 
\Ent(\Psi(h_{t})| \Leb ) &  = -\int_{0}^{1} \log( (1-t) f'(x) + tg'(x) ) dx \\
& \leq - \int_{0}^{1} ((1-t) \log (f'(x)) + t \log(g'(x)) ) dx\\
& = (1-t) \Ent(\Psi(f)|\Leb) + t \Ent(\Psi(g)| \Leb) 
\end{align*}
where we have used the concavity of $\log$ in the second step.
\end{proof}

\subsection{Optimal Transport and Displacement Convexity in Riemannian Manifolds} To prove Theorem \ref{theo:riem-mfld-equiv}, we must first understand optimal transport on Riemannian manifolds. With this in mind, we fix a smooth compact  Riemannian manifold $(M,g)$, and write $d$ for the induced length metric.

\begin{defi}[$d^{2}/2$-concavity]
A function $\phi:M\to \RR$ is called \emph{${d^{2}}/{2}$-concave} if there exists a function $\psi: M\to \RR$ so that 
\[ \phi(y) =  \inf_{x\in M}\left[ \frac 12 d(x,y)^{2} - \psi(x) \right] \] for all $y \in M$. 
For a function $\psi: M\to \RR$, we define its \emph{${d^{2}}/{2}$-transform} $\phi^{\frac{d^{2}}{2}}$ by the same formula
\[ \phi^{\frac{d^{2}}{2}}(y) : = \inf_{x\in M} [\frac 12 d(x,y)^{2} - \phi(x)] .\]
\end{defi}

We will need the following properties of ${d^{2}}/{2}$-concave functions
\begin{prop}\label{prop:c-conc-funct-prop}
A ${d^{2}}/{2}$-concave function $\phi$ is Lipschitz (and thus its gradient exists almost everywhere) and has an almost everywhere defined Hessian in the sense of Alexandrov. 

Here, we say that $\phi$ admits a Hessian at $x$ in the sense of Alexandrov if $\phi$ is differentiable at $x$ and there is a self-adjoint operator $H: T_{x}M \to T_{x}M$ satisfying 
%\begin{equation}\label{eq:def-hess-alex} \sup_{v\in \overline\partial \phi(\exp_{x} u)} | T_{x,u} v - \nabla\phi(x) - Hu| = o(|u|)\end{equation} 
%for small $u\in T_{x}M$. Here, $T_{x,u} : T_{\exp_{x}u} M\to T_{x}M$ denotes parallel translation to $x$ along $t\mapsto \exp_{x}(tu)$ and $\overline\partial \phi(y)$ denotes the set of supergradients of $\phi$ at $y$, i.e. $v \in T_{y}M$ such that 
%\[ \phi(\exp_{y}w) \leq \phi(y) + g(v,w) + o(|w|) \] 
%for small $w \in T_{y}M$. In this and the above formula, $o(\lambda)/\lambda \to 0$ as $\lambda \to 0$. It turns out that for a point $x\in M$, a Hessian for $\phi$ exists at $x$ in the sense of \eqref{eq:def-hess-alex} if and only if 
%\[ \phi(\exp_{x} u ) = \phi(x) + g(\nabla \phi(x) , u) + \frac 12 g(Hu,u) + o(|u|^{2}). \] Furthermore, this is also equivalent to 
the property that for all $v \in T_{x}M$ 
\[ \nabla_{v}\nabla\phi|_{x} = Hv |_{x} \] 
where $\nabla_{v}$ denotes covariant derivation in the $v$ direction. 
\end{prop}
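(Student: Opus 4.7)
The plan is to establish the two assertions separately, with the Lipschitz property following from a simple envelope argument and the Alexandrov Hessian statement following from local semiconcavity combined with the classical Alexandrov theorem in $\RR^{n}$.

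For the Lipschitz property, first I would note that because $M$ is compact with finite diameter $D := \diam(M)$, for any fixed $x \in M$ the function $y \mapsto \tfrac{1}{2} d(x,y)^{2}$ is globally $D$-Lipschitz on $M$, since
\[
\left| \tfrac{1}{2} d(x,y)^{2} - \tfrac{1}{2} d(x,y')^{2} \right| \leq \tfrac{1}{2}(d(x,y) + d(x,y'))\, d(y,y') \leq D\, d(y,y').
\]
Hence $\phi(y) = \inf_{x \in M}\!\left[ \tfrac{1}{2} d(x,y)^{2} - \psi(x)\right]$ is a pointwise infimum of a family of uniformly $D$-Lipschitz functions in $y$, and the infimum is finite by compactness (and the assumption that $\phi$ is real-valued). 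Therefore $\phi$ itself is $D$-Lipschitz. Existence of $\nabla\phi$ almost everywhere then follows from Rademacher's theorem applied in coordinate charts together with the fact that the Riemannian volume measure is mutually absolutely continuous with Lebesgue measure in any chart.

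The second assertion, existence of an Alexandrov Hessian a.e., hinges on showing that $\phi$ is \emph{locally semiconcave}, i.e. for each $p_{0} \in M$ there is a normal coordinate chart $U$ around $p_{0}$ and a constant $C = C(U)$ such that $y \mapsto \phi(y) - \tfrac{C}{2} |y|^{2}$ is concave in those coordinates. To prove this, I would fix a slightly larger neighbourhood and use a curvature/diameter bound to show that for every $x \in M$ and every $y$ in the chart, the Euclidean Hessian (in normal coordinates around $p_{0}$) of $y \mapsto \tfrac{1}{2} d(x,y)^{2}$ is bounded above by $C \cdot I$ in the distributional/viscosity sense. Away from the cut locus of $x$ this is the standard Hessian comparison coming from the Rauch comparison theorem; at the cut locus the upper bound is preserved in the sense of barriers since $d^{2}/2$ is a minimum of smooth functions locally. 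Consequently each function $y \mapsto \tfrac{1}{2} d(x,y)^{2} - \psi(x) - \tfrac{C}{2} |y|^{2}$ is concave in the chart, and taking the pointwise infimum over $x$ preserves concavity, giving the claimed semiconcavity.

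Once local semiconcavity is in hand, I would apply Alexandrov's classical theorem, which says that a concave function on an open subset of $\RR^{n}$ admits a second-order Taylor expansion at Lebesgue-a.e. point, with a symmetric second differential. Adding back the smooth quadratic $\tfrac{C}{2}|y|^{2}$ yields a Euclidean Hessian of $\phi$ at almost every point of the chart. Translating this into the Riemannian setting in normal coordinates around a point $x$ of differentiability is routine: the Christoffel symbols vanish at the origin of the normal chart, so the covariant second derivative $\nabla_{v} \nabla \phi$ at $x$ agrees with the Euclidean second derivative, and one obtains a self-adjoint endomorphism $H : T_{x}M \to T_{x}M$ with $\nabla_{v}\nabla\phi|_{x} = Hv|_{x}$ for all $v \in T_{x}M$. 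Symmetry of $H$ is inherited from the symmetry of the Euclidean second differential provided by Alexandrov's theorem, and a countable cover of $M$ by normal coordinate charts combined with the fact that a countable union of nullsets is null establishes a.e.\ existence globally.

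The main obstacle I expect is the uniform semiconcavity of $y \mapsto \tfrac{1}{2} d(x,y)^{2}$ as $x$ varies over $M$, since the squared distance function fails to be smooth on the cut locus of $x$ and the cut locus depends on $x$. The resolution is to observe that the required upper Hessian bound is a pointwise, one-sided condition that persists through the minimum operation defining $d^{2}/2$ near cut points, so semiconcavity holds uniformly in $x$ with a constant controlled by the sectional curvature and diameter of $(M,g)$; this is the content of the standard semiconcavity of the squared distance on a compact Riemannian manifold (cf.\ Villani's treatment in \cite{Villani:OptTspt}), and once it is invoked the rest of the argument is a direct application of Alexandrov's theorem in Euclidean charts.
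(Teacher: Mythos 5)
The paper does not prove this proposition; it simply defers to \cite{CMS:RiemInterpolationIneq} and \cite[Theorem 14.1]{Villani:OptTspt}, so there is no ``paper proof'' to compare against. Your sketch reproduces the standard argument found in those references, and it is correct: the envelope of uniformly $D$-Lipschitz functions is $D$-Lipschitz; $y\mapsto \tfrac12 d(x,y)^2$ is uniformly locally semiconcave (the constant controlled by the diameter and a \emph{lower} bound on sectional curvature, via Hessian/Rauch comparison, with the cut locus handled by the barrier/``inf of smooth functions'' observation you make); the infimum over $x$ inherits the same semiconcavity constant; and Alexandrov's theorem in a normal chart then gives the a.e.\ Hessian, with the Christoffel symbols vanishing at the chart's center so that the Euclidean second differential \emph{is} the covariant Hessian at that point. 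The one step you leave implicit that is worth spelling out: the proposition's definition of the Alexandrov Hessian is a first-order expansion of the a.e.\ gradient ($\nabla_v\nabla\phi|_x = Hv$), whereas Alexandrov's theorem is usually stated as a second-order Taylor expansion of the function itself; for semiconcave functions these hold simultaneously at a.e.\ point with the same symmetric $H$, but that equivalence (e.g.\ from the Evans--Gariepy form of the theorem) should be cited or noted rather than assumed. With that caveat, the route is sound.
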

This is proven in \cite{CMS:RiemInterpolationIneq} or in \cite[Theorem 14.1]{Villani:OptTspt}. The authors of \cite{CMS:RiemInterpolationIneq} go on to prove in Corollary 5.2 the following characterization of Wasserstein geodesics between absolutely continuous measures on Riemannian manifolds.
\begin{theo}\label{theo:polar-fact-riem-mflds}
For $\mu,\nu \in \cP(M)$ with $\mu \ll \vol_{M}$, there is a ${d^{2}}/{2}$-concave function $\phi:M\to \RR$ so that the map \[ F_{t} (x) := \exp_{x}(-t\nabla\phi)\] gives $\mu_{t} : = (F_{t})_{*}\mu : [0,1] \to \cP(M)$, which is the unique geodesic between $\mu$ and $\nu$. Furthermore, $(\Id,F_{1})_{*}\mu_{0}$ is an optimal transport plan between $\mu_{0}$ and $\mu_{1}$. For $t \in[0,1)$ $\mu_{t}\ll \vol_{M}$ and if, in addition, $\nu \ll \vol_{M}$, then we have that for all $t\in[0,1]$, $\mu_{t}\ll \vol_{M}$. 
\end{theo}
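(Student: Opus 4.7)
The plan is to invoke Kantorovich duality for the cost $c(x,y) = d(x,y)^2/2$ and extract a $d^2/2$-concave potential $\phi : M \to \RR$ whose gradient, via the Riemannian exponential, implements the optimal transport map. The regularity needed is supplied by Proposition \ref{prop:c-conc-funct-prop}: $\phi$ is Lipschitz (hence differentiable $\vol_M$-a.e.\ by Rademacher on manifolds) and admits an almost-everywhere Alexandrov Hessian, which will control the Jacobian of the map.

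First I would set up Kantorovich duality: the infimum of $\int c \, d\pi$ over $\pi \in \Pi(\mu,\nu)$ equals the supremum of $\int \phi \, d\mu + \int \psi \, d\nu$ over admissible pairs with $\phi(x) + \psi(y) \le c(x,y)$. Replacing an admissible pair by $(\phi, \phi^{d^2/2})$ only improves the dual functional, so a maximizer can be taken with $\phi$ being $d^2/2$-concave in the sense defined above. If $\pi$ is optimal, $\phi(x) + \psi(y) = c(x,y)$ on $\supp(\pi)$, so at a point $(x_0, y_0) \in \supp(\pi)$ with $\phi$ differentiable at $x_0$, the function $x \mapsto c(x, y_0) - \phi(x)$ attains its minimum at $x_0$, giving $\nabla_x c(x_0, y_0) = \nabla \phi(x_0)$. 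The first variation formula $\nabla_x \frac{1}{2}d(x,y_0)^2\big|_{x=x_0} = -\exp_{x_0}^{-1}(y_0)$ (valid when $y_0$ is not in the cut locus of $x_0$) then forces $y_0 = \exp_{x_0}(-\nabla\phi(x_0)) =: F_1(x_0)$. Since $\mu \ll \vol_M$, $\phi$ is differentiable $\mu$-a.e., so $\pi$ is concentrated on $\Graph(F_1)$; in particular $(\Id, F_1)_* \mu$ is an optimal plan and every optimal plan has this form.

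Next I would construct the interpolation. Define $F_t(x) = \exp_x(-t\nabla\phi(x))$ on the full-measure set where $\phi$ is differentiable. Here $t \mapsto F_t(x)$ is the constant-speed geodesic from $x$ to $F_1(x)$, giving $d(F_s(x), F_t(x)) = |t-s|\, d(x, F_1(x))$ for $\mu$-a.e.\ $x$. Consequently, for $0 \le s \le t \le 1$,
\begin{align*}
d^W(\mu_s,\mu_t)^2 &\le \int d(F_s(x),F_t(x))^2 \, d\mu(x)\\
&= (t-s)^2 \int d(x,F_1(x))^2 \, d\mu(x) = (t-s)^2 d^W(\mu_0,\mu_1)^2,
\end{align*}
so the path $t \mapsto \mu_t = (F_t)_* \mu$ has length $\le d^W(\mu_0,\mu_1)$, and by the triangle inequality this is an equality, exhibiting $\mu_t$ as a Wasserstein geodesic. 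Uniqueness follows because any Wasserstein geodesic arises from a dynamical transport plan (Proposition \ref{prop:dynamic-trans-plan}) whose endpoint distribution is an optimal coupling; the argument above pins the endpoints to $(x, F_1(x))$ for $\mu$-a.e.\ $x$, and the minimizing geodesic joining them in $M$ is itself unique $\mu$-a.e. (again using that $-\nabla\phi(x)$ avoids the cut locus $\mu$-a.e., a fact that is a consequence of the $d^2/2$-concavity of $\phi$).

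Finally, for absolute continuity I would study the Jacobian of $F_t$. On the set of full $\mu$-measure where $\phi$ admits an Alexandrov Hessian $\Hess\phi$, the differential $dF_t$ is computable via Jacobi fields along $s \mapsto \exp_x(-s\nabla\phi(x))$, giving a Jacobian matrix that, schematically, interpolates between $\Id$ at $t=0$ and the Jacobian of $F_1$ at $t=1$, and which is strictly positive whenever the geodesics $s \mapsto F_s(x)$ do not cross for $s \in [0,t]$. The no-crossing property for $t < 1$ is a direct consequence of the $c$-monotonicity/$d^2/2$-concavity of $\phi$; the nonsmooth change of variables from Appendix \ref{app:non-smooth-cov} then pushes $\mu \ll \vol_M$ forward to $\mu_t \ll \vol_M$. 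If additionally $\nu \ll \vol_M$, one applies the same argument to the reversed geodesic (equivalently, to the $d^2/2$-concave potential associated with transporting $\nu$ back to $\mu$) to extend the absolute continuity to $t=1$. The main obstacle throughout is the cut-locus bookkeeping: one must verify that the first variation formula, the existence of a well-defined $F_t$, and the positivity of the Jacobian all hold on a set of full $\mu$-measure, and this is precisely where $c$-concavity of $\phi$ combined with the a.e.\ Alexandrov Hessian of Proposition \ref{prop:c-conc-funct-prop} does the heavy lifting.
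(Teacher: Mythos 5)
The paper does not actually prove Theorem \ref{theo:polar-fact-riem-mflds}; it records the statement and cites McCann \cite{McCann:PolarFactor}, Cordero-Erausquin--McCann--Schmuckenschl\"ager \cite[Corollary 5.2]{CMS:RiemInterpolationIneq}, and \cite[Theorem 14.1]{Villani:OptTspt}. So there is no internal proof to compare against; your outline does trace the standard route those references take: Kantorovich duality produces a $d^2/2$-concave $\phi$, the first variation of $d(\cdot,y)^2/2$ identifies the optimal coupling as $(\Id,F_1)_*\mu$, the interpolation $F_t$ yields a constant-speed Wasserstein geodesic, and a Jacobian estimate gives the absolute continuity.

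As an architecture the sketch is right, but several of your one-line appeals \emph{are} the theorem's technical content. Existence of a maximizing Kantorovich pair with a $d^2/2$-concave first coordinate needs the Arzel\`a--Ascoli compactness of $c$-transforms (and strictly one should double-transform, $(\phi,\psi)\mapsto((\phi^{d^2/2})^{d^2/2},\phi^{d^2/2})$: a single transform makes the $\psi$-coordinate, not $\phi$, $d^2/2$-concave). The assertion that $\exp_x(-\nabla\phi(x))$ lies outside the cut locus of $x$ for $\mu$-a.e.\ $x$ is one of McCann's central lemmas --- it is what makes the first-variation formula valid and what gives the $\mu$-a.e.\ uniqueness of the intermediate geodesics --- and it is not a short consequence of $d^2/2$-concavity; flagging it as ``bookkeeping'' understates it. Finally, the nonsingularity of $dF_t$ for $t<1$ is precisely \cite[Theorem 4.2]{CMS:RiemInterpolationIneq}, and the proof there feeds the one-sided Alexandrov Hessian bound coming from $d^2/2$-concavity into a Jacobi-field computation (the same mechanism as Lemma \ref{lemm:diff-opt-map}), rather than resting on an abstract ``no-crossing'' remark. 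With those three points made precise, you have a proof; as written you have a correct blueprint.
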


This was originally proven by McCann in \cite{McCann:PolarFactor}. In particular, if we let $\cP^{\text{ac}}(M) \subset\cP(M)$ denote the subset of probability measures which are absolutely continuous with respect to $\vol_{M}$, then  $\cP^{\text{ac}}(M)$ is a dense, totally convex subset of $\cP(M)$. 

% first part is lott-villani + villani old new, second part is lott-villani

\begin{proof}[Proof of Theorem \ref{theo:riem-mfld-equiv}] \emph{Assuming $\Ric \geq K$:} 
\begin{lemm}\label{lemm:diff-opt-map}
For $\phi : M \to \RR$ a ${d^{2}}/{2}$-concave function, define 
\[
F_{t}(x) : = \exp_{x}(-t\nabla\phi )
\]
 and assume that we have chosen $y \in M$ such that 
\begin{enumerate}
\item $\phi$ admits a Hessian at $y$ (in the sense of Alexandrov)
\item $F_{t}$ is differentiable at $y$ for all $t \in[0,1)$
\item $dF_{t}(y)$ is nonsingular for all $t\in[0,1)$.
\end{enumerate}
then $\cD(t) : = \det(dF_{t}(y))^{1/n}$ satisfies
\[ 
\frac{\ddot \cD(t)}{\cD(t)} \leq - \frac 1n \Ric(\dot F_{t}(y),\dot F_{t}(y)).
\]
\end{lemm}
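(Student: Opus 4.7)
The approach is to reduce this to the classical Raychaudhuri/Riccati analysis for Jacobi fields along the geodesic $t\mapsto F_t(y)$ and then invoke symmetry coming from $\phi$'s gradient structure. First I would fix an orthonormal frame at $y$ and parallel transport it along the geodesic $t\mapsto F_t(y)$ (which is a genuine geodesic of $M$ since $F_t(y) = \exp_y(-t\nabla\phi(y))$ has constant-speed by construction). Writing $dF_t(y)$ in this frame gives an $n\times n$ matrix $A(t)$ whose columns are precisely the Jacobi fields $J_v(t)$ obtained by varying the initial condition of the geodesic in the direction $v\in T_yM$. Thus $\cD(t)=(\det A(t))^{1/n}$ and, since each column satisfies the Jacobi equation $\ddot J + R(J,\dot\gamma)\dot\gamma=0$, the matrix $A$ satisfies $\ddot A + \mathcal{R}(t) A = 0$, where $\mathcal{R}(t)$ is the symmetric operator $v\mapsto R(v,\dot F_t(y))\dot F_t(y)$ (symmetric by the standard symmetries of the Riemann tensor).

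Next I would set $B(t) := \dot A(t)\,A(t)^{-1}$, valid by hypothesis (3) so long as $t<1$, and derive the matrix Riccati equation $\dot B + B^2 + \mathcal{R} = 0$. Taking the trace gives
\[
\frac{d}{dt}\operatorname{tr}(B) + \operatorname{tr}(B^2) + \Ric(\dot F_t(y),\dot F_t(y)) = 0,
\]
and using Jacobi's formula $\operatorname{tr}(B) = \tfrac{d}{dt}\log\det A = n\dot\cD/\cD$, this rewrites as
\[
n\frac{\ddot\cD}{\cD} - n\frac{\dot\cD^2}{\cD^2} = -\operatorname{tr}(B^2) - \Ric(\dot F_t,\dot F_t).
\]
To conclude I would apply the Cauchy--Schwarz inequality in the form $(\operatorname{tr} B)^2 \le n\operatorname{tr}(B^2)$, which is valid \emph{provided} $B(t)$ is symmetric. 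Rearranging yields exactly $\ddot\cD/\cD \le -\tfrac{1}{n}\Ric(\dot F_t,\dot F_t)$.

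The main obstacle is the symmetry of $B(t)$, which is where the $d^2/2$-concavity of $\phi$ enters. The plan is to verify this at time $t=0$ and then propagate by the Riccati ODE. At $t=0$ one has $A(0)=\Id$, and a direct computation of $dF_t(y)$ via the exponential map shows $\dot A(0) = -H$, where $H$ is the Alexandrov Hessian of $\phi$ at $y$; by the hypothesis that $\phi$ admits an Alexandrov Hessian at $y$ (Proposition \ref{prop:c-conc-funct-prop}), $H$ is self-adjoint, so $B(0) = -H$ is symmetric. Since $\mathcal{R}(t)$ is symmetric, the Riccati equation preserves symmetry: if $B$ is written as $B = B_s + B_a$ with $B_a$ antisymmetric, then $B_a$ satisfies $\dot B_a = -(B_sB_a + B_aB_s)$, a linear ODE with $B_a(0)=0$, forcing $B_a\equiv 0$. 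A secondary technical subtlety is that Proposition \ref{prop:c-conc-funct-prop} only guarantees a Hessian in the Alexandrov sense almost everywhere; verifying that the Jacobi-field calculation and the Riccati equation genuinely make sense at such a point (in particular that $F_t$ is twice-differentiable along the geodesic at $y$) is where I would expect to spend the most care, likely by approximating $\phi$ or by working within the regular set where all three hypotheses (1)--(3) hold simultaneously, as assumed.
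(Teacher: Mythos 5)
Your proposal is correct and takes essentially the same route as the paper: identify $dF_t(y)$ with a matrix of Jacobi fields along $t\mapsto F_t(y)$, pass to the Riccati equation for $B=\dot A A^{-1}$, take traces, and close via $\tr(B^2)\ge\tfrac1n(\tr B)^2$ for symmetric $B$, with symmetry seeded at $t=0$ by the Alexandrov Hessian and propagated by uniqueness for the (symmetric-coefficient) Riccati flow. The only cosmetic differences are that the paper argues symmetry by observing that $U$ and $U^T$ solve the same ODE from the same initial data rather than decomposing into symmetric/antisymmetric parts, and that it routes the computation through $\cJ=\det J$ before passing to $\cD=\cJ^{1/n}$; both are equivalent to what you wrote.
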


It will be important for later to notice that by Proposition \ref{prop:c-conc-funct-prop} and \cite[Theorem 4.2]{CMS:RiemInterpolationIneq}\footnote{This theorem provides the a.e. nonsingularity of $dF_{t}$, i.e.\ assumption (3)} , assumptions (1), (2), and (3) are satisfied for a.e. $y \in M$.

\begin{proof}
We fix a $y$ with the above properties, and define a geodesic $\gamma:[0,1]\to M$ by
\[ \gamma(t) := \exp_{y}(-t\nabla\phi(y)).\]
Choosing an orthonormal basis $e_{1},\dots,e_{n}$ for $T_{y}M$ with $e_{1} = \dot\gamma(0)/|\dot\gamma(0)| = - \nabla\phi(y)/|\nabla \phi(y)|$, we parallel transport the basis to a orthonormal frame $e_{1}(t),\dots,e_{n}(t)$ along $\gamma(t)$. 

Defining \[ J_{i}(t) := (dF_{t})_{y}(e_{i}) = \frac{d}{d\delta} \Big|_{\delta=0} \exp_{y + \exp_{y}(\delta e_{i}) }(-t\nabla\phi(y+\exp_{y}(\delta e_{i})), \] this is a vector field along $\gamma(t)$ coming from a variation through geodesics, and is thus a Jacobi field.\footnote{A classical reference for Jacobi fields is Milnor's \cite{Milnor:MorseTheory}, but \cite[Chapter 5]{Petersen:RiemGeo} and \cite[Section 3.C]{GHL:RiemGeo} also contain useful introductions to the topic. We give a brief overview of Riemannian geometry, including Jacobi fields in Appendix \ref{app:riem-mfld-intro}.} Thus, for $t\in[0,1)$, we can define a matrix $J_{ij} (t) := \bangle{J_{i}(t),e_{j}(t)}$ and because the $J_{i}$ are Jacobi fields, we have that by the Jacobi equation, \eqref{eq:jacobi-eqns}
%\[ \frac{D}{dt} \frac{D}{dt} J_{i}(t) + \Riem(J_{i}(t),\dot \gamma(t))\dot\gamma(t) = 0 .\]
%Taking the inner product with $e_{j}$ and using the symmetries of the Riemann tensor, this implies that 
%
%\[ \ddot J_{ij}(t) + \bangle{\Riem(e_{j}(t) ,\dot \gamma(t)) J_{i}(t), \dot\gamma(t)}= 0 .\]
%
%Expanding $J_{i}(t)$ in the basis $e_{j}(t)$, we thus have that 
%\[ \frac{D}{dt} \frac{D}{dt} (J_{ij}(t)e_{j}(t)) +  J_{ik}(t) \bangle{ \Riem(e_{k}(t),\dot \gamma(t))\dot\gamma(t), e_{j}} = 0 .\]
%Taking the inner product with $e_{j}$ gives the equation
%\[ \ddot J_{ij}(t) \]
\[ \ddot J(t) + R(t) J(t) = 0 \]
where \[ R_{kj}(t) : = \bangle{\Riem(e_{k}(t),\dot\gamma(t) ) \dot\gamma(t), e_{j}(t)}. \]
%=  \bangle{\Riem(\dot\gamma(t), e_{j}(t) )e_{k}(t), \dot\gamma(t)}.\]
%
Furthermore, notice that 
\[ J_{i}(0) = \frac{d}{d\delta}\Big|_{\delta=0} F_{0}(y+\exp_{y}(\delta e_{i})),\] so 
\[ J(0) = \Id \]
and 
\begin{align*}
 \dot J_{i}(0) &  = \frac{d}{dt} \Big|_{t=0}  \frac{d}{d\delta}\Big|_{\delta=0}  F_{t}(y+\exp_{y}(\delta e_{i}))\\
 &  =  \frac{d}{d\delta}\Big|_{\delta=0}  \frac{d}{dt} \Big|_{t=0}  F_{t}(y+\exp_{y}(\delta e_{i}))  \\
 & =  -  \frac{d}{d\delta}\Big|_{\delta=0} \nabla \phi(y+\exp_{y}(\delta e_{i})) \\
 & = -\nabla_{e_{i}} \nabla\phi|_{y}\\
 & =- \Hess ( \phi)|_{y} e_{i}.
 \end{align*}
 Here we have written $\nabla_{e_{i}}$ for the covariant derivative in the $i$-th direction. Thus, 
 \[\dot J_{ij}(0) = \frac{d}{dt}\Big|_{t=0} \bangle{J_{i}(t), e_{j}(t)} = \bangle{\dot J_{i}(t) , e_{j}(t)} = -  \bangle{\nabla_{e_{i}}(\nabla\phi),e_{j}} \] so we can write
 \[ \dot J(0) = - \nabla(\nabla \phi). \]
 
 Letting $U (t) := \dot J(t) J(t)^{-1}$, we thus have that
 \begin{equation} \label{eq:U-diffeq}\dot U(t) = \ddot J(t) J(t)^{-1} - \dot J(t)J(t)^{-1} \dot J (t)J(t) ^{-1} = -R(t) - U(t)^{2} . \end{equation}
 Taking traces, we thus have 
 \begin{equation}\label{eq:trU-diffeq} \frac {d}{dt} \tr (U(t)) + \tr( U(t)^{2} ) + \tr R(t) = 0\end{equation} and it is clear that 
 \[\addlabel{eq:trRisRicci} \tr R(t) = \Ric_{\gamma(t)} (\dot\gamma(t),\dot\gamma(t)). \]
 
We claim that $U(t)$ is symmetric. To see this, notice that $U(0) =\dot J(0) J(0)^{-1} = - \nabla(\nabla \phi) $, which is a symmetric matrix, by definition, and both $U$ and its transpose satisfy the first order system of ODE's \eqref{eq:U-diffeq}, with the same initial conditions, so they must be equal. This allows us to note that
\[ \tr (U(t)^{2}) = \sum_{i,j=1}^{n} U_{ij}(t) U_{ji}(t) = \sum_{i,j=1}^{n} (U_{ij}(t))^{2} \geq \frac 1n \left( \sum_{i=1}^{n} U_{ii}(t)\right)^{2} = \frac 1 n \tr(U(t))^{2}\] so, combined with \eqref{eq:trU-diffeq}, this gives a differential inequality for $\tr U$
 \begin{equation}\label{eq:trU-diff-rel} \frac {d}{dt} \tr (U(t)) + \frac 1n \tr( U(t) )^{2} + \tr R(t) \leq 0.\end{equation} 
 
 Now, letting \[ \cJ(t) : = \det J(t), \] we have that by assumption (3) in the statement of the lemma, $J$ is invertible for all $t$, so it is a standard fact that 
 \[\addlabel{eq:diff-cJ-U} \dot \cJ(t) = \cJ(t) \tr (\dot J(t) J(t)^{-1}) = \cJ(t) \tr U(t),\] so 
 \[  \frac{d}{dt} \tr(U(t)) = \frac{d}{dt} \left( \frac{\dot \cJ(t)}{\cJ(t)}\right) = \frac{ \cJ(t) \ddot \cJ(t) - (\dot \cJ(t))^{2} }{(\cJ(t))^{2}}\]
 and 
 \[ \frac 1n (\tr (U(t))^{2} = \frac{(\dot \cJ(t))^{2}}{n (\cJ(t))^{2}}\]
 so these combine with \eqref{eq:trRisRicci} and \eqref{eq:trU-diff-rel} to give
 \begin{equation} \label{eq:diff-rel-cJ} \frac{\ddot \cJ(t)}{\cJ(t)}-\left( 1 - \frac 1n\right) \frac{(\dot \cJ(t))^{2}}{ (\cJ(t))^{2}}  +  \Ric_{\gamma(t)} (\dot\gamma(t),\dot\gamma(t)) \leq 0.  \end{equation}
 
 Now, to finish the proof, notice that $\cD(t) = \cJ(t)^{1/n}$, so 
 \[ \addlabel{eq:cD-cJ-reln} n \frac{\dot \cD(t)}{\cD(t)} = \frac{\dot \cJ(t)}{\cJ(t)}\]
 \[ n \frac{\cD(t) \ddot \cD(t)  - (\dot \cD(t))^{2}}{(\cD(t))^{2}} = \frac{\cJ(t) \ddot \cJ(t) - (\dot \cJ(t))^{2}}{(\cJ(t))^{2}} .  \] This implies that 
 \[ n  \frac{\ddot \cD(t)}{\cD(t) } = \frac{\ddot \cJ(t)}{\cJ(t)}-\left( 1 - \frac 1n\right) \frac{(\dot \cJ(t))^{2}}{ (\cJ(t))^{2}}  \leq -   \Ric_{\gamma(t)} (\dot\gamma(t),\dot\gamma(t)) \] and noticing that $\dot \gamma(t) = \dot F_{t}(y)$, this finishes the proof. 
\end{proof}

For $\mu_{0},\mu_{1} \in \cP(M)$ with $\mu_{0},\mu_{1} \ll m = ({\vol(M)})^{-1}{\vol_{M}}$, let $\phi$ be a $\frac{d^{2}}{2}$-concave function such that the map \[ F_{t} (x) := \exp_{x}(-t\nabla\phi)\]  gives $\mu_{t} : = (F_{t})_{*}\mu_{0} : [0,1] \to M$, which is the unique geodesic between $\mu_{0}$ and $\mu_{1}$. By Theorem \ref{theo:polar-fact-riem-mflds}, $\mu_{t} \ll m$, so we can define $\eta_{t}$ by $\mu_{t} = \eta_{t} m$. Then, by the nonsmooth change of variables, Lemma \ref{lemm:change-of-var-mfld}, we have that the densities obey\footnote{The cited lemma requires almost everywhere injectivity of $F_{t}$, which is established in \cite[Lemma 5.3]{CMS:RiemInterpolationIneq}. }
\begin{equation}\label{eq:chang-var-density} \eta_{0}(x) = \eta_{t}(F_{t}(x)) \det(dF_{t}(x)) . \end{equation}
Using a nonsmooth change of variables again, combined with \eqref{eq:chang-var-density} gives
\begin{align*}
 \Ent(\mu_{t}|m) & = \int_{M} \eta_{t}(x) \log \eta_{t}(x) dm(x) \\
 & = \int_{M}  \log(\eta_{t}(F_{t}(x))) \eta_{t}(F_{t}(x)) \det(dF_{t}(x)) dm(x)\\
  & = \int_{M} \log \left( \frac{\eta_{0}(x)}{\det(dF_{t}(x))} \right) \eta_{0}(x) dm(x).
  \end{align*}
Thus, defining 
\[ \cC(x,t) := \log\det(dF_{t})(x) = n  \log \cD(t), \]
we can rewrite the previous equation as
\begin{equation}\label{eq:ent-C-eqn} \Ent(\mu_{t}|m) = \int_{M} \left( \log (\eta_{0}(x)) - C(x,t)\right) \eta_{0}(x) dm(x) .\end{equation}
Because we've assumed that $\Ric \geq K$, for almost every $x \in M$ we have that
\[\ddot \cC(x,t) = n  \frac{\ddot \cD(t) \cD(t) - \dot \cD(t)^{2} }{\cD(t)^{2}} \leq   n  \frac{\ddot\cD(t)}{\cD(t)} \leq - \Ric(\dot F_{t}(x),\dot F_{t}(x)) \leq - K |\dot F_{t}(x)|^{2} = -K |\nabla\phi|^{2}(x) \]
 which implies that 
 \[ \left( C(x,t) + \frac{t^{2}}{2}K|\nabla\phi|^{2}(x) \right)''  \leq 0,\] so it is concave. Thus
 \[ \cC(x,t) +  \frac{t^{2}}{2}K|\nabla\phi|^{2}(x)  \geq t \left( \cC(x,1) + \frac{1}{2}K|\nabla\phi|^{2}(x) \right) + (1-t) \left( \cC(x,0)  \right) \]
 \[ \cC(x,t) \geq t\cC(x,1) + (1-t) \cC(x,0)+  \frac K 2 t(1-t) |\nabla\phi|^{2}(x).\]
Inserting this into \eqref{eq:ent-C-eqn} gives
 \[ \Ent(\mu_{t}|m) \leq t \Ent(\mu_{1}|m) +(1-t)\Ent(\mu_{0}|m) - \frac K 2 t(1-t) \int_{M}|\nabla \phi|^{2}(x) \eta_{0}dm.\]
 Furthermore, as $d(x,F_{1}(x)) = |\nabla \phi|(x)$ and because the transport plan $(\Id, F_{1})_{*}\mu_{0}$ is optimal by Theorem \ref{theo:polar-fact-riem-mflds}, we have that 
 \[ \addlabel{eq:dW-intermsof-phi}d^{W}(\mu_{0},\mu_{1})^{2}= \int_{M} d(x,F_{1}(x)) d\mu_{0}(x) = \int_{M} |\nabla \phi|^{2}(x) d\mu_{0}(x)= \int_{M} |\nabla \phi|^{2}(x)\eta_{0}(x) dm(x) \]  and thus 
  \[ \Ent(\mu_{t}|m) \leq t \Ent(\mu_{1}|m) +(1-t)\Ent(\mu_{0}|m) - \frac K 2 t(1-t) d^{W}(\mu_{0},\mu_{1})^{2},\] as desired.

\emph{Assuming that $\Ent(\cdot|m)$ is weakly a.c. displacement $K$-convex:} Fix $x \in M$ and $v \in T_{x}M$. We will show that $\Ric(v,v) \geq K |v|_{x}$. Choose a smooth positive function $\eta_{0}$ with $\eta_{0} > 0$ in $B_{\delta}(x)$ and supported in $B_{2\delta}(x)$ for a $\delta>0$ which we will fix later. Furthermore, we will assume $\int_{M} \eta_{0}(x) dm = 1$. Thus, letting $\mu_{0} := \eta_{0} m$, this gives $\mu_{0}\in \cP(M)$ and $\mu_{0} \ll m$. Now, take a smooth function $\phi: M \to\RR$ with $\nabla \phi (x) = -v$ and $\Delta \phi (x) = 0$ (for example, in normal coordinates around $x$ such that $v = \partial_{1}$, multiplying $-x_{1}$ by a smooth cutoff function that vanishes away from $x$ satisfies this requirement). For $\epsilon>0$, small enough $\epsilon \phi$ is $\frac{d^{2}}{2}$-concave by \cite[Theorem 13.5]{Villani:OptTspt}. This implies that if we define $F_{t}^{\epsilon}(x) : = \exp_{x}(-\epsilon\nabla \phi(x))$, Theorem \ref{theo:polar-fact-riem-mflds} gives that $\mu_{t} : = (F_{t}^{\epsilon})_{*}\mu_{0}$ is the unique optimal transport between $\mu_{0}$ and $\mu_{1} := (F_{1}^{\epsilon})_{*} \mu_{0}$. By making $\epsilon$ smaller if necessary, we have that $\mu_{1} \ll m$ (this follows from Theorem \ref{theo:polar-fact-riem-mflds}, because we know that the original $\mu_{t} \ll m$ for $t\in [0,1)$). 
%Thus, because \eqref{eq:ent-C-eqn} still holds, and the integrand is smooth, we can differentiate under the integral, giving
%\[ (\Ent(\mu_{t}|m))'' = - \int_{M} \ddot \cC(y,t) \eta_{0}(y) dm(y).\] But, b
Because we have assumed that $\Ent(\mu_{t}|m)$ is $K$-convex, it is not hard to see that this implies that 
\[
\left (\Ent(\mu_{t}|m) + \frac {t^{2}}{2} K d^{W}(\mu_{0},\mu_{1})^{2} \right)'' \geq 0
\] if it is differentiable in $t$ (which it is, in this case). Using \eqref{eq:dW-intermsof-phi}, we have that thus
\[  \frac{d^{2}}{dt^{2}} \left[ \int_{M} \left(\frac{t^{2}}{2} K \epsilon^{2} |\nabla\phi|^{2}(x) - \cC(x,t)\right) \eta_{0}(x) dm(x) \right]'' \geq 0 \] Because everything is smooth, we can differentiate under the integral and then letting $\delta$ and then $\epsilon$ tend to zero, this implies that 
\[\addlabel{eq:ddotC-compare-K} - \ddot \cC(x,0) \geq \epsilon^{2} K |\nabla \phi|^{2}(x) .\]
However, for any fixed $\epsilon,\delta$, we have that, as in the above proof
\[ \ddot \cC(x,0) = n \frac{\ddot \cD(0) \cD(0) - \dot\cD(0)^{2}}{\cD(t)^{2}} .\]
Now, reminding ourselves of the definition of $\cD$ and $U$, we see that, by \eqref{eq:diff-cJ-U} and \eqref{eq:cD-cJ-reln} 
\[ n \frac{\dot \cD(0)}{\cD(0)} = \frac{\dot \cJ(0)}{\cJ(0)} = \tr U(0) = - \tr \nabla(\nabla \phi))(x) = -\Delta \phi(x) = 0. \]
This, combined with the fact that we have equality in \eqref{eq:trU-diff-rel} at $t=0$ (again because $\tr U(0) = 0$) gives that 
\[\ddot C(x,0) = -\epsilon^{2}\Ric_{x}(-\nabla \phi,-\nabla\phi) \]
so 
\[ - \ddot C(x,0) = \epsilon^{2}\Ric_{x}(v,v) \] 
and this, combined with \eqref{eq:ddotC-compare-K}, yields the desired
\[ \Ric_{x}(v,v) \geq K |v|^{2} . \qedhere\] 

\end{proof}

\section{Gromov--Hausdorff Convergence}\label{sect:GH-conv}

\subsection{Definition and Basic Properties} One of the interesting properties of our new notion of Ricci curvature bounds for metric spaces, as in Definition \ref{defi:k-convex-ent} is that it is preserved under a reasonably weak notion of convergence of metric measure spaces, called \emph{Gromov--Hausdorff convergence}.

\begin{defi}
For $X,Y$ compact metric spaces and $\epsilon >0$, we say that a (not necessarily continuous) map $f:X \to Y$ is an \emph{$\epsilon$-isometry} if the following properties hold \begin{enumerate}
\item for $x,x' \in X$, $|d_X(x,x') - d_Y(f(x),f(x'))| \leq \epsilon$ 
\item for all $y \in Y$ there is $x\in X$ with $d_Y(y,f(x)) \leq \epsilon$. 
\end{enumerate}
\end{defi}
\begin{defi}
A sequence of compact metric measure spaces $(X_n,d_{n},\mu_{n})$ \emph{converges in the (measured) Gromov--Hausdorff topology} to a compact metric space $(X,d,\mu)$, written \[(X_{n},d_{n},\mu_{n}) \xrightarrow{GH} (X,d,\mu)\] if there is a sequence $\epsilon_n \to 0$ such that there are $\epsilon_{n}$-isometries, $f_n: X_{n} \to X$ such that $(f_{n})_{*}\mu_{n}\to \mu$ (in the weak* topology on $\cP(X)$). 
\end{defi}

We will say that $(X_{n},d_{n}) \to (X,d)$ in the \emph{(measureless) Gromov--Hausdorff topology} if the above holds without the condition that the $\epsilon_{n}$ isometries push any measures forward. We note that if $(X_{n},d_{n})$ are (compact) geodesic spaces, then so is $(X,d)$. To see this, recall that by Lemma \ref{lemm:geo-sp-iff-midpts}, a necessary and sufficient condition for a space to be a geodesic space is that midpoints exist. Taking $x,x'\in X$, there are $x_{n},x_{n}' \in X_{n}$ such that 
\[ d_{} (f_{n}(x_{n}), x) , d_{}(f_{n}(x_{n}'),x') \leq \epsilon_{n} .\] 
Because $(X_{n},d_{n})$ is a geodesic space, by assumption, Lemma \ref{lemm:geo-sp-iff-midpts} shows that we can find a midpoint $z_{n} \in X_{n}$ such that $d_{n}(z_{n},x_{n}) = d_{n}(z_{n},x_{n}') = \frac 12 d_{n}(x_{n},x_{n}')$. Because $f_{n}$ is an $\epsilon_{n}$ isometry, we then have that 
\[ |d_{}(f_{n}(x_{n}),f_{n}(z_{n})) - d_{{n} } (x_{n},z_{n}) | \leq \epsilon_{n}\] Now, by extracting a subsequence we may assume that $f_{n}(z_{n})$ converges to $z\in X$ (because $X$ is compact), and thus, passing to the limit in the above gives 
\[ d_{}(x,z) = \lim_{n\to\infty} d_{{n}} (x_{n},z_{n}) = \lim_{n\to\infty} \frac 12 d_{n}(x_{n},x_{n}') = \frac 12 d(x,x') .\]
A similar consideration for $z$ and $x'$ shows that $z$ is a midpoint for $x$ and $x'$, so $(X,d)$ is a geodesic space.

We quickly give a few examples of Gromov--Hausdorff convergence (without proof as most of these examples are geometrically believable but actually proving them would be quite tedious and not particularly instructive). For $(X,d)$ a metric space of bounded diameter, we have the convergence \[(X,\lambda d) \xrightarrow{GH} (\{*\},d_{*})\] as $\lambda\to 0$ (where $(\{*\},d_{*})$ is the metric space with one point). 
\begin{figure}[h!]
\centering
\begin{tikzpicture}[scale = 1.2]
%\draw [step=1,thin,gray!80] (-4,-4) grid (4,4);
%\draw [step=.5,thin,gray!30] (-4,-4) grid (4,4);
\clip (-5.5,-.7) rectangle (6,2);

\begin{scope}[shift = {(-4,0)}]
	\begin{scope}
		\clip (-1,0) rectangle (1,0);
		\draw (-.98,.01) -- (0,2);
		\draw (.98,.01) -- (0,2);
	\end{scope}
	\draw (.98,0) arc (0:180:.98);
	\begin{scope}
		\clip (-1,0) rectangle (1,2);
		\draw [dashed] (0,-.04) ellipse (.98 and .5);
	\end{scope}
	\begin{scope}
		\clip (-1,0) rectangle (1,-2);
		\draw (0,-.04) ellipse (.98 and .5);
	\end{scope}
\end{scope}

\begin{scope}[shift = {(-1.5,0)}]
	\begin{scope}
		\clip (-1,1) rectangle (1,0);
		\draw (-.98,.01) -- (0,2);
		\draw (.98,.01) -- (0,2);
	\end{scope}
	\draw (.49,1) arc (30:150:.565);
	
%	\begin{scope}[shift = {(0,.6)}, scale = .7]
%		\begin{scope}
%			\clip (-1,0) rectangle (1,2);
%			\draw [dashed] (0,-.04) ellipse (.98 and .5);
%		\end{scope}
%		\begin{scope}
%			\clip (-1,0) rectangle (1,-2);
%			\draw (0,-.04) ellipse (.98 and .5);
%		\end{scope}
%	\end{scope}
	
	\begin{scope}
		\clip (-1,0) rectangle (1,2);
		\draw [dashed] (0,-.04) ellipse (.98 and .5);
	\end{scope}
	\begin{scope}
		\clip (-1,0) rectangle (1,-2);
		\draw (0,-.04) ellipse (.98 and .5);
	\end{scope}
\end{scope}

\begin{scope}[shift = {(1,0)}]
	\begin{scope}
		\clip (-1,1.5) rectangle (1,0);
		\draw (-.98,.01) -- (0,2);
		\draw (.98,.01) -- (0,2);
	\end{scope}
	\draw (.245,1.5) arc (30:150:.283);

	\begin{scope}
		\clip (-1,0) rectangle (1,2);
		\draw [dashed] (0,-.04) ellipse (.98 and .5);
	\end{scope}
	\begin{scope}
		\clip (-1,0) rectangle (1,-2);
		\draw (0,-.04) ellipse (.98 and .5);
	\end{scope}
\end{scope}

\draw (2.7,.4) node {\scalebox{1.2}{$\xrightarrow{   \ \  GH \ \ }$}};

\begin{scope}[shift = {(4.5,0)}]
	\draw (-.98,.01) -- (0,2);
	\draw (.98,.01) -- (0,2);
	\begin{scope}
		\clip (-1,0) rectangle (1,2);
		\draw [dashed] (0,-.04) ellipse (.98 and .5);
	\end{scope}
	\begin{scope}
		\clip (-1,0) rectangle (1,-2);
		\draw (0,-.04) ellipse (.98 and .5);
	\end{scope}
\end{scope}
\end{tikzpicture}
\caption{ One of the simplest examples of a singularity developing under Gromov--Hausdorff convergence. The metrics on the spaces are all the induced length metric from the ambient $\RR^{3}$ length structure, i.e.\ the distance between two points is the infimum of the lengths of curves in the surfaces between two points (as measured in $\RR^{3}$ with the standard metric). }
\label{fig:gh-conv-to-cone}
\end{figure}
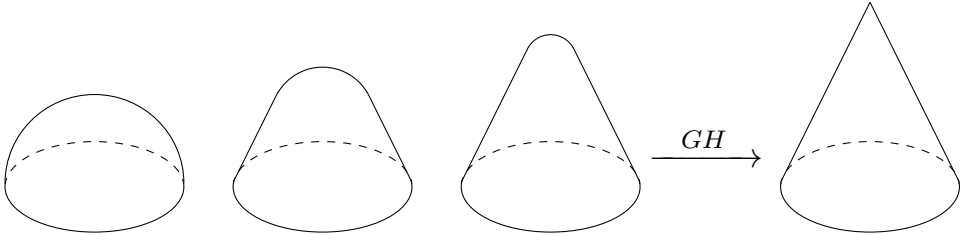
A slightly more exciting example is given in Figure \ref{fig:gh-conv-to-cone}, in which we see hemispheres embedded in $\RR^{3}$ developing a conical singularity under (measureless) Gromov--Hausdorff convergence. It also turns out that topology need not be preserved under Gromov--Hausdorff convergence. Figure \ref{fig:gh-conv-small-handle} gives an example of a small handle shrinking away on a sphere. All of the original spaces are topologically tori, but the limit space is not. Finally, we mention an example in which the dimension decreases in the limit. Recall that the Hopf fibration can be thought of a Riemannian submersion, $S^{3}(1) \to S^{2}(\frac 12)$ (where $S^{2}(\frac 12)$ is the standard 2-sphere of radius $\frac 12$. For $\epsilon >0$, we define a new metric on $S^{3}$ by declaring that (what were originally) unit vectors tangent to the Hopf fibers have length $\epsilon$, and unit vectors perpendicular to the Hopf fiber have length $1$. This results in a $1$-parameter family of metrics on $S^{3}$, giving rise to what is known as Berger spheres; cf.\ \cite[Example 11]{Petersen:RiemGeo}. It turns out that as $\epsilon \to 0$, $(S^{3},g_{\epsilon}) \xrightarrow{GH} S^{2}(\frac 12)$, so we see that the dimension can decrease under convergence.\footnote{A simpler example of decreasing dimension under Gromov--Hausdorff convergence is given, for example, by the squares $[0,1] \times [0,\epsilon] \subset \RR^{2}$ with the induced length metric converging to the interval $[0,1]$ as $\epsilon\to 0$. Dimension can also increase under convergence, as is illustrated by a finer and finer grid in $[0,1]^{2}$ (with the induced length metric) converging to $[0,1]^{2}$. However, with control over the Ricci curvature of the sequence of spaces, the dimension of the limiting space cannot increase (this is discussed in \cite{CheegerColding:Ricc1}). }

\begin{figure}[h!]
\centering
\begin{tikzpicture}[scale = 1.6]

%
%\draw [step=1,thin,gray!80] (-4,-4) grid (4,4);
%\draw [step=.5,thin,gray!30] (-4,-4) grid (4,4);
\clip (-3.6,-1.3) rectangle (5,1.5);

\begin{scope}[shift = {(-2.5,0)} ]
	\draw (0,0) circle (1);
	\filldraw [white, rotate around = {-20 : (.18,.98)} ] (.175,1.01) rectangle (.27,.96);
	\filldraw [white, rotate around = {50 : (.65,.76)} ] (.64,.76) rectangle (.66,.68);

	\begin{scope}
		\clip (-1,0) rectangle (1,2);
		\draw [dashed] (0,0) ellipse (1 and .5);
	\end{scope}
	\begin{scope}
		\clip (-1,0) rectangle (1,-2);
		\draw (0,0) ellipse (1 and .5);
	\end{scope}	
	\draw (.1,.75)
			to [out=80, in = 180] (.5,1.2)
			to [out = 0, in = 90] (.8,.9)
			to [out = 270, in =30 ] (.45,.55);
	\draw (.18,.72)
			to [out=80, in = 180] (.5,1.1)
			to [out = 0, in = 90] (.7,.9)
			to [out = 270, in =30 ] (.4,.6);	
\end{scope}

%
%\draw [step=1,thin,gray!80] (-4,-4) grid (4,4);
%\draw [step=.5,thin,gray!30] (-4,-4) grid (4,4);

\begin{scope}[]
	\draw (0,0) circle (1);

	\begin{scope}
		\clip (-1,0) rectangle (1,2);
		\draw [dashed] (0,0) ellipse (1 and .5);
	\end{scope}
	\begin{scope}
			\clip (-1,0) rectangle (1,-2);
			\draw (0,0) ellipse (1 and .5);
		\end{scope}	
		
		\draw (.1,.75)
				to [out=90, in = 180] (.3,.9)
				to [out = 0, in = 90] (.5,.7)
				to [out = 270, in =40 ] (.45,.55);
			\draw (.18,.72)
				to [out=90, in = 180] (.3,.84)
				%to [out = 0, in = 90] (.44,.6)
				to [out = 0, in =40 ] (.4,.58);
\end{scope}

\draw (1.7,.2) node {\scalebox{1.2}{$\xrightarrow{   \ \  GH \ \ }$}};
\begin{scope}[shift ={(3.4,0)}]
	\draw (0,0) circle (1);
	\begin{scope}
		\clip (-1,0) rectangle (1,2);
		\draw [dashed] (0,0) ellipse (1 and .5);
	\end{scope}
	\begin{scope}
		\clip (-1,0) rectangle (1,-2);
		\draw (0,0) ellipse (1 and .5);
	\end{scope}	
\end{scope}
\end{tikzpicture}
\caption{Spheres with a small handle converging to a sphere with no handle in the Gromov--Hausdorff topology. This shows that the topologies of the limit space may be different from the spaces in the sequence. }
\label{fig:gh-conv-small-handle}
\end{figure}
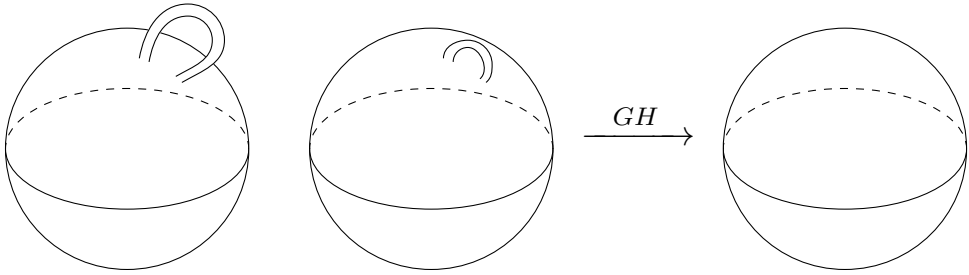

We will not dwell on further properties or examples of Gromov--Hausdorff convergence. A quick introduction to Gromov--Hausdorff convergence can be found in \cite{Cristina:GHNotes} and a more detailed exposition can be the found in the book by Burago, Burago, Ivanov \cite{BBI:MetGeo}. Gromov's book, \cite{Gromov:MetStruct} is a more difficult read, but contains numerous fascinating examples and remarks and we recommend it highly. 

\subsection{Stability of Weak Displacement Convexity}

One important property of our new definition of lower Ricci curvature bounds is that they are preserved under Gromov--Hausdorff convergence. We will give an overview of the proof, relying on several results from \cite{LottVillani:RicciViaTspt}, those concerning Gromov--Hausdorff convergence.

\begin{theo}\label{theo:stab-gh-conv}
If $(X_{n},d_{n},\mu_{n}) \xrightarrow{GH} (X,d,\mu)$ and $(X_{n},d_{n},\mu_{n})$ all have $\Ent(\cdot|\mu_{n})$ weakly a.c. $K$-convex, then the same holds for $(X,d,\mu)$. 
\end{theo}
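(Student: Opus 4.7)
The plan is to transfer the $K$-convexity inequality from $X_n$ to $X$ by approximating absolutely continuous measures on $X$ by pushforwards of suitable measures on $X_n$, applying the hypothesis on $X_n$, and then passing to the limit using compactness together with lower semicontinuity of entropy. Fix $\nu_0,\nu_1 \in \cP(X)$ with $\nu_i \ll \mu$ and $\Ent(\nu_i|\mu)<\infty$; we must produce a geodesic $\nu_t$ in $\cP(X)$ satisfying the $K$-convexity inequality.

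First, I would construct a \emph{recovery sequence}: for each $n$, build $\nu_0^n,\nu_1^n \in \cP(X_n)$ with $\nu_i^n \ll \mu_n$, such that $(f_n)_*\nu_i^n \xrightarrow{d^W} \nu_i$ and $\Ent(\nu_i^n|\mu_n) \to \Ent(\nu_i|\mu)$. The natural way is to discretize: partition $X$ into small Borel sets $\{A_j\}$ of diameter less than some $\delta_n \downarrow 0$ (with $\delta_n \gg \epsilon_n$), approximate $\nu_i$ by the piecewise-constant-density measure $\nu_i^\delta$ that puts mass $\nu_i(A_j)$ uniformly distributed w.r.t. $\mu$ on $A_j$, and then transfer this to $X_n$ using the $\epsilon_n$-isometry: pull each $A_j$ back to an approximate preimage $A_j^n \subset X_n$ (using the weak convergence $(f_n)_*\mu_n \to \mu$ to match masses up to a correction going to zero) and place mass $\nu_i(A_j)$ uniformly w.r.t. $\mu_n$ on $A_j^n$. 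A diagonal choice of $\delta_n \to 0$ then yields $\nu_i^n$ with the required properties, with the entropy convergence following from the Jensen-type identity that $\Ent(\nu_i^\delta|\mu) = -\sum_j \nu_i(A_j)\log(\mu(A_j)/\nu_i(A_j))$ (up to a discretization error) and the analogous formula on $X_n$.

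Second, by the hypothesis on $(X_n,d_n,\mu_n)$, for each $n$ there exists a Wasserstein geodesic $t \mapsto \nu_t^n$ in $\cP(X_n)$ with
\[
\Ent(\nu_t^n|\mu_n) \leq (1-t)\Ent(\nu_0^n|\mu_n) + t\Ent(\nu_1^n|\mu_n) - \tfrac{K}{2}t(1-t)d^W(\nu_0^n,\nu_1^n)^2.
\]
Push these forward: set $\tilde\nu_t^n := (f_n)_*\nu_t^n \in \cP(X)$. Since $f_n$ is an $\epsilon_n$-isometry, one checks directly from the definition of $d^W$ (using plans pushed forward through $(f_n,f_n)$) that $|d^W(\tilde\nu_s^n,\tilde\nu_t^n) - d^W(\nu_s^n,\nu_t^n)| \leq C\epsilon_n$, so the curves $t \mapsto \tilde\nu_t^n$ are uniformly Lipschitz (with constant $d^W(\nu_0^n,\nu_1^n)+C\epsilon_n$, which is bounded since $\cP(X)$ has bounded diameter). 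By Arzel\`a--Ascoli applied in the compact space $C([0,1],\cP(X))$, pass to a subsequence with $\tilde\nu_t^n \to \nu_t$ uniformly in $t$.

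Third, verify the conclusion for $\nu_t$. The endpoints $\nu_0,\nu_1$ are correct by Step 1. The limit curve is a geodesic because each $t \mapsto \tilde\nu_t^n$ has length at most $d^W(\tilde\nu_0^n,\tilde\nu_1^n) \to d^W(\nu_0,\nu_1)$, and length is lower semicontinuous under uniform convergence, so $\nu_t$ achieves the minimum. To pass the displacement convexity inequality to the limit, use lower semicontinuity of the entropy functional under joint weak$^\ast$ convergence of the measure and its reference (which specializes to $\Ent(\nu_t|\mu) \leq \liminf_n \Ent(\tilde\nu_t^n|(f_n)_*\mu_n)$, combined with $\Ent(\tilde\nu_t^n|(f_n)_*\mu_n) \leq \Ent(\nu_t^n|\mu_n)$ since $f_n$-pushforward does not increase relative entropy), together with $\Ent(\nu_i^n|\mu_n) \to \Ent(\nu_i|\mu)$ and $d^W(\nu_0^n,\nu_1^n) \to d^W(\nu_0,\nu_1)$. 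This is exactly the reason the definition demanded only \emph{some} geodesic: the limiting geodesic $\nu_t$ produced here may differ from any a priori chosen one.

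The hardest step will be the recovery sequence in the first paragraph — producing absolutely continuous approximations on $X_n$ whose entropy converges to that of the target, using only that $f_n$ is a (discontinuous) $\epsilon_n$-isometry with $(f_n)_*\mu_n \to \mu$. Everything afterwards is compactness plus semicontinuity; the genuine content is the entropy convergence of the discretized transfer, which relies on carefully matching $\mu$-masses of Borel partitions with $\mu_n$-masses of their approximate preimages.
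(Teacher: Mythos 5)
Your overall architecture matches the paper's: construct absolutely continuous approximants on $X_n$ with the right entropy, invoke the hypothesis on $X_n$ to obtain geodesics, push forward by $f_n$, extract a uniform limit via an Arzel\`a--Ascoli argument, and close with lower semicontinuity of entropy. The substantive divergence is in how the recovery sequence $\nu_i^n$ is built. The paper first reduces, via the approximation lemma \cite[Lemma 3.24]{LottVillani:RicciViaTspt}, to the case where the densities $\rho_i$ are \emph{continuous}, and then simply takes
\[
\tilde\rho_i^{(n)} := \frac{\rho_i\circ f_n}{\int_{X_n}\rho_i\circ f_n\,d\mu_n},\qquad \tilde\nu_i^{(n)} := \tilde\rho_i^{(n)}\mu_n;
\]
continuity of $\rho_i$ together with $(f_n)_*\mu_n\to\mu$ makes both the weak$^\ast$ convergence and the entropy convergence immediate, with no partitions, mass-matching, or diagonalization. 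Your discretization construction avoids the appeal to that external lemma, which is a point in its favor if one wants the argument self-contained, but it costs genuine extra work: the cells must be chosen as $\mu$-continuity sets (otherwise weak$^\ast$ convergence of $(f_n)_*\mu_n$ need not give convergence of cell masses), one must control the entropy of the piecewise-constant approximation on both $X$ and $X_n$, and one must diagonalize $\delta_n$ against $n$. You correctly flag this as the hardest step and do not carry it out; the paper's reduction to continuous densities is exactly why it does not have to. The remaining ingredients---that $(f_n)_*$ is a $\tilde\epsilon_n$-isometry on Wasserstein spaces (Lemma~\ref{lemm:ghX-means-ghPX}), the compactness extraction, the fact that pushforward does not increase relative entropy, and joint lower semicontinuity of $\Ent(\cdot|\cdot)$---all coincide with the paper's, except that the paper proves the latter two from a Legendre dual representation of the entropy rather than invoking them as known facts.
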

We remark that we have already seen this phenomena in our above examples. For example, in Figure \ref{fig:gh-conv-to-cone}, the sequence all has nonnegative Ricci, showing that the cone has generalized nonnegative Ricci. Similarly (cf.\ \cite[Section 4.3]{Petersen:RiemGeo}), one can show that for $K < 4$ there is $\epsilon$ small enough so that $\Ric_{(S^{3},g_{\epsilon})} \geq K$ and clearly $S^{2}(\frac 12)$ has $\Ric \geq 4$. 
\begin{proof}
Let $f_{n}:X_{n}\to X$ be $\epsilon_{n}$-isometries with $\epsilon_{n}\to 0$.  Take $\nu_{0},\nu_{1} \in \cP(X)$ with $\nu_{0},\nu_{1} \ll \mu$. We can write $\nu_{i} = \rho_{i}(x) \mu$. Without loss of generality, by \cite[Lemma 3.24]{LottVillani:RicciViaTspt}, we may use an approximation argument to assume that the $\rho_{i}$ are continuous. Thus, we have that 
\[ 1 = \int_{X} \rho_{i}(x) d\mu(x) = \lim_{n\to\infty} \int_{X} \rho_{i}(x) d[ (f_{n})_{*}\mu_{n}](x) = \lim_{n\to\infty} \int_{X_{n}} \rho_{i}(f_{n}(x)) d\mu_{n}(x).\]
This implies that for large enough $n$, defining functions
\[ \tilde \rho_{i}^{(n)} : = \frac{ \rho_{i}(f_{n}(x)) }{\int_{X_{n}} \rho_{i}(f_{n}(x)) d\mu_{n} (x)} ,\]
gives $\tilde \nu_{i}^{(n)} := \tilde \rho_{i}^{(n)}(x) \mu_{n} \in \cP(X_{n})$. Thus, by assumption, there is a geodesic $\tilde \nu_{t}^{(n)}$ in $\cP(X_{n})$ between $\tilde\nu_{0}^{(n)}$ and $\tilde\nu_{1}^{(n)}$ such that 
\[\addlabel{eq:pre-limit-disp-convexity} \Ent(\tilde \nu_{t}^{(n)} | \mu_{n} ) \leq t \Ent( \tilde\nu_{1}^{(n)} | \mu_{n}) + (1 -t) \Ent( \tilde\nu_{0}^{(n)} | \mu_{n}) + \frac K 2 t(1-t) d^{W}(\tilde\nu_{0}^{(n)},\tilde\nu_{1}^{(n)})^{2}.  \] Now, to complete the proof, we will pass to the limit as $n\to\infty$. First, we need that $(\cP(X_{n}),d^{W}) \xrightarrow{GH} (\cP(X) ,d^{W})$, which allows us to find a geodesic in $\cP(X)$ which is the ``limit'' if $\tilde\nu_{t}^{(n)}$ in the appropriate sense. 
\begin{lemm}\label{lemm:ghX-means-ghPX}
The induced maps $(f_{n})_{*}:\cP(X_{n}) \to \cP(X)$ are $\tilde\epsilon_{n}$ isometries where
\[ \tilde \epsilon_{n} : = 4\epsilon_{n} + \sqrt{ 3\epsilon_{n} (2\diam X) + 3\epsilon_{n}}.\]
\end{lemm}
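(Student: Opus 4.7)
The plan is to verify both defining properties of an $\tilde\epsilon_n$-isometry for the pushforward map $(f_n)_*\colon \cP(X_n) \to \cP(X)$, working entirely at the level of transport plans.

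For the approximate isometry property (1), the key observation is that the $\epsilon_n$-isometry property of $f_n$ gives $|d_X(f_n(x),f_n(y)) - d_{X_n}(x,y)| \le \epsilon_n$, hence $d_X(f_n(x),f_n(y))^2 \le d_{X_n}(x,y)^2 + 2\epsilon_n\, d_{X_n}(x,y) + \epsilon_n^2$. Given an optimal plan $\pi \in \Pi(\nu,\nu')$, pushing it forward as $(f_n,f_n)_*\pi \in \Pi((f_n)_*\nu,(f_n)_*\nu')$ and integrating the above pointwise inequality yields
\[
d^W((f_n)_*\nu,(f_n)_*\nu')^2 \;\le\; d^W(\nu,\nu')^2 + 2\epsilon_n\,\diam(X_n) + \epsilon_n^2,
\]
and $\diam(X_n) \le \diam(X) + \epsilon_n$ because $f_n$ is an $\epsilon_n$-isometry. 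Taking square roots and using $\sqrt{a+b} \le \sqrt a + \sqrt b$ bounds the increase in $d^W$ by a term of order $\sqrt{\epsilon_n}$.

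For the reverse direction, I would construct a Borel \emph{pseudo-inverse} $h_n\colon X \to X_n$ with $d_X(f_n(h_n(y)),y) \le \epsilon_n$ for all $y \in X$, invoking the same measurable selection theorem used in the proof of Proposition \ref{prop:dynamic-trans-plan}. Applying property (1) of $f_n$ twice then shows that $d_{X_n}(h_n(f_n(x)), x) \le 2\epsilon_n$ and that $h_n$ itself is a $3\epsilon_n$-isometry of $X$ into $X_n$ in the metric sense. Consequently $d^W(\nu,(h_n)_*(f_n)_*\nu) \le 2\epsilon_n$ via the plan $(\mathrm{Id}, h_n \circ f_n)_*\nu$, and applying the already-established forward bound to $h_n$ instead of $f_n$ gives
\[
d^W((h_n)_*(f_n)_*\nu,(h_n)_*(f_n)_*\nu')^2 \;\le\; d^W((f_n)_*\nu,(f_n)_*\nu')^2 + 6\epsilon_n\,\diam(X) + 3\epsilon_n.
\]
Combining this with two applications of the triangle inequality for $d^W$ yields the bound $d^W(\nu,\nu') \le d^W((f_n)_*\nu,(f_n)_*\nu') + 4\epsilon_n + \sqrt{3\epsilon_n(2\diam X) + 3\epsilon_n}$, which is exactly $\tilde\epsilon_n$.

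Property (2), approximate surjectivity, is the easy part: given $\mu \in \cP(X)$, take $\nu := (h_n)_*\mu \in \cP(X_n)$. Then $(f_n)_*\nu = (f_n\circ h_n)_*\mu$, and the plan $(f_n\circ h_n,\mathrm{Id})_*\mu$ shows $d^W((f_n)_*\nu,\mu) \le \epsilon_n \le \tilde\epsilon_n$. The only real obstacle I anticipate is the Borel measurability of $h_n$, since $f_n$ is not assumed continuous; but this is precisely the measurable selection input already invoked earlier in the paper, so it comes for free. Everything else is bookkeeping with squared Wasserstein distances and the elementary inequality $\sqrt{a+b} \le \sqrt{a}+\sqrt{b}$.
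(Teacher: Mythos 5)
The paper does not supply its own proof of this lemma; it defers to \cite[Proposition~4.1]{LottVillani:RicciViaTspt}. Your strategy---pushing an optimal plan forward through $(f_n,f_n)$ for the one-sided estimate, then building a Borel pseudo-inverse $h_n$ of $f_n$ to run the argument backwards and to get approximate surjectivity---is the standard one and matches the cited argument in spirit. The bookkeeping also works out: the forward push gives $d^{W}((f_n)_*\nu,(f_n)_*\nu')\le d^{W}(\nu,\nu')+\sqrt{2\epsilon_n\diam X+3\epsilon_n^2}$, while the chain through $(h_n)_*(f_n)_*$ gives $d^{W}(\nu,\nu')\le d^{W}((f_n)_*\nu,(f_n)_*\nu')+4\epsilon_n+\sqrt{6\epsilon_n\diam X+9\epsilon_n^2}$; both are $\le\tilde\epsilon_n$ once one records the harmless normalization $\epsilon_n\le 1/3$ (so that $9\epsilon_n^2\le 3\epsilon_n$), which costs nothing since $\epsilon_n\to 0$, but you should say it rather than silently replacing $9\epsilon_n^2$ by $3\epsilon_n$.

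The one genuine gap is the appeal to ``the same measurable selection theorem used in the proof of Proposition~\ref{prop:dynamic-trans-plan}.'' That theorem (via \cite{Zimmer:ErgoSemiSimp,Kallman:QuotCountSep3}) is invoked there for a \emph{continuous} surjection between compact metrizable spaces, whereas $f_n$ is merely Borel and, in general, neither continuous nor surjective; in particular the sets $\{x\in X_n:d_X(y,f_n(x))\le\epsilon_n\}$ need not be closed, so the selection theorem as quoted does not apply. Fortunately the pseudo-inverse is cheap to build by hand and no selection theorem is needed: for any $\delta>0$, the open sets $U_x:=\{y\in X:d_X(y,f_n(x))<\epsilon_n+\delta\}$, $x\in X_n$, cover $X$ by property (2) of $f_n$; compactness of $X$ gives a finite subcover $U_{x_1},\dots,U_{x_m}$, and setting $h_n\equiv x_i$ on the Borel set $U_{x_i}\setminus(U_{x_1}\cup\dots\cup U_{x_{i-1}})$ yields a Borel map with $d_X(f_n(h_n(y)),y)\le\epsilon_n+\delta$ for all $y$. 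Running your estimates with $\epsilon_n+\delta$ and sending $\delta\to 0$ recovers $\tilde\epsilon_n$ exactly. With that replacement your proof is complete and correct.
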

This is proven in \cite[Proposition 4.1]{LottVillani:RicciViaTspt}. As a result of this, up to extracting a subsequence, the (possibly discontinuous) maps $t\mapsto (f_{n})_{*}\tilde\nu_{t}^{(n)}$ tend uniformly (this notion makes sense even if the maps are not continuous) to a geodesic $\nu_{t}\in \cP(X)$ between $\nu_{0}$ and $\nu_{1}$. The proof of this is basically the same as Arzel\'a Ascoli, and is a generalization of the proof of the ``isometry lemma'' in \cite{Gromov:PolyGrow} on page 66. Notice that this is exactly why have assumed \emph{weak} displacement convexity. It is not clear that we can find geodesics in $\cP(X_{n})$ which converge to \emph{any} geodesic between $\nu_{0}$ and $\nu_{1}$ as above. There are examples of Gromov--Hausdorff convergence of length spaces where not all geodesics in the limiting space come are limits of geodesics in the original spaces.\footnote{A standard example of Gromov--Hausdorff convergence in which not all of the geodesics in the limit space are limits of geodesics in the converging spaces is as follows (this is \cite[Example 27.17]{Villani:OptTspt}, but I do not know of its real origin). Consider the metric spaces $(\RR^{n},d^{\ell^{p}})$, where $d^{\ell^{p}}$ is induced by the norm $\|x\|_{\ell^{p}} =\left( \sum_{i=1}^{n} |x|^{p}\right)^{1/p}$. It is not hard to see that as $p\to\infty$ $(\RR^{n},d^{\ell^{p}}) \xrightarrow{GH} (\RR^{n},d^{\ell^{\infty}})$ where $d^{\ell^{\infty}}$ is induced by $\|x\| = \sup_{1\leq i \leq n} |x_{i}|$. When $p<\infty$, geodesics are simply straight lines between the two endpoints, but for $p=\infty$ ``taxi cab'' paths, i.e.\ straight lines in one coordinate and then another and so on are \emph{also} geodesics and it is not hard to see that the majority of geodesics for $p=\infty$ are not limits of $p<\infty$ geodesics. I do not know an explicit example where we have the case we are worried about, i.e.\ $(X_{i},d_{i}) \xrightarrow{GH} (X,d)$ but there are geodesics in $\cP(X)$ that are not limits of geodesics in $\cP(X_{i})$, but it seems likely that with some work the previous example could be used to give such an example. } 

Thus, given that $(f_{n})_{*}\tilde \nu_{t}^{(n)} \to \nu_{t}$ uniformly, we would like to take the limit in \eqref{eq:pre-limit-disp-convexity}. By Lemma \ref{lemm:ghX-means-ghPX}, $d^{W}(\tilde \nu_{0}^{(n)}, \tilde\nu_{1}^{(n)}) \to d^{W}(\nu_{0},\nu_{1})$. Furthermore, for $i=0,1$
\begin{align*} \Ent(\tilde \nu_{i}^{(n)}| \mu_{n}) & = \int_{X_{n}} \tilde \rho_{i}^{(n)}(x) \log\tilde\rho_{i}^{(n)}(x) d\mu_{n}(x)\\
& = \int_{X_{n}} \frac{\rho_{i}(f_{n}(x))}{\int_{X_{n}} \rho_{i}(f_{n}(x)) d\mu_{n} (x)}  \log\left( \frac{\rho_{i}(f_{n}(x))}{\int_{X_{n}} \rho_{i}(f_{n}(x)) d\mu_{n} (x)} \right) d\mu_{n}(x)\\
& =  \int_{X} \frac{\rho_{i}(x)}{\int_{X_{n}} \rho_{i}(f_{n}(x)) d\mu_{n} (x)}  \log\left( \frac{\rho_{i}(x)}{\int_{X_{n}} \rho_{i}(f_{n}(x)) d\mu_{n} (x)} \right) d[(f_{n})_{*}\mu_{n}](x)\\
& \xrightarrow{n\to\infty}   \int_{X} \frac{\rho_{i}(x)}{1}  \log\left( \frac{\rho_{i}(x)}{1} \right) d\mu(x)\\
& = \Ent(\nu_{i}| \mu)
 \end{align*}
 This, combined with \eqref{eq:pre-limit-disp-convexity} shows that 
 \[ 
 \liminf_{n\to\infty } \Ent(\tilde \nu_{t}^{(n)} |\mu_{n}) \leq t \Ent(\nu_{1}|\mu) + (1-t) \Ent(\nu_{0}|\mu) + \frac K 2 t (1-t) d^{W}(\nu_{0},\nu_{1})^{2}.
 \]
 Finally, to conclude the desired inequality, it is enough to show that 
 \[ \addlabel{eq:ent-lsc-push}\Ent(\nu_{t} |\mu) \leq \liminf_{n\to\infty} \Ent(\tilde \nu_{t}^{(n)}|\mu_{n}) \] by general functional analysis arguments combined with the Legendre transform representation of the entropy functional. We give a quick overview of this argument, which is a vastly simplified version of the proof of \cite[Theorem B.33]{LottVillani:RicciViaTspt}. The proof is somewhat obtuse and could certainly be skipped without much harm. For a compact metric space $(Y,d)$ with probability measure $m$, we claim that for any measure $\rho m$ which is absolutely continuous with respect to $m$, we have the representation
 \[ \addlabel{eq:ent-alt-rep} \Ent(\rho m | m) = \sup_{ \varphi \in C(Y)}\left( \int_{Y} (\rho\varphi - e^{\varphi-1} ) dm\right) . \]
 To see this, first, notice that 
 \[ a\ln a \geq ab - e^{b-1} \] for $a,b\in \RR$, which can be seen by maximizing the right hand side in $b$, so letting $a = \rho(x)$ and $b = \varphi(x)$ and integrating with respect to $m$ gives that  
  \[ \Ent(\rho m | m) \geq \sup_{\varphi\in C(Y)} \left( \int_{Y} (\rho\varphi - e^{\varphi-1} ) dm  \right).\]
  On the other hand, define, for $M \in \NN$
  \[ \rho_{M}(y) := \begin{cases} M & \rho(x) > M \\
  \rho(y) & \rho\in\left[\frac 1M, M\right]\\
  \frac 1 M & \rho(y) < \frac  1 M \end{cases}\]
  and let
  \[ \overline\varphi_{M}(x) = \log \rho_{M} + 1.\] 
%  it is clear that 
%  \[ \rho_{M} \log \rho_{M} = \overline\varphi_{M} \rho_{M} - e^{\overline \varphi_{M} - 1}. \]
%  Furthermore, notice that for a fixed $y \in Y$, if $\rho(y)> 0 $, then, for large enough $M$, $\rho_{M} (y) = \rho(y)$, and thus $\overline\varphi_{M}(y) = \log \rho(y) + 1$.  
  Finally, it is standard that there are continuous $\varphi_{M,k} \in C(X)$ which have 
 % \[ 1-\log M \leq \inf_{Y} \overline \varphi_{M} \leq \varphi_{M,k} \leq \sup_{Y}\overline \varphi_{M} \leq \log M + 1\] and 
  $\varphi_{M,k} (y) \to \log \rho_{M}(y) + 1$ for $m$-a.e. $y \in Y$. Thus, by dominated convergence
  \[ \lim_{M\to\infty }\lim_{k\to\infty} \int_{Y} (\rho \varphi_{M,k} - e^{\varphi_{M,k} -1}) dm = \lim_{M\to\infty} \int_{Y} (\rho(\log \rho_{M} + 1) - \rho_{M}) dm = \int_{Y} \rho \log \rho dm,\] for $\epsilon > 0$, we see that we can take large enough $M$ and then $k$ such that 
  \[  \int_{Y} (\rho \varphi_{M,k} - e^{\varphi_{M,k} -1}) dm  \geq \Ent(\rho m|m) - \epsilon. \]
  This establishes \eqref{eq:ent-alt-rep}. This seems like a rather useless representation, but in fact it allows us to show \eqref{eq:ent-lsc-push}. To do so, we must first make three observations about this new representation. First of all, notice that we can rewrite \eqref{eq:ent-alt-rep} as
  \[ \Ent(\beta | m) = \sup_{\varphi\in C(Y)} \left( \int_{Y}  \varphi d\mu  -\int_{Y} e^{\varphi-1}  dm\right) \] and if $\beta$ is not absolutely continuous with respect to $m$, then both sides are infinite (take $\varphi$ tending to $1+ \ln M\  \chi_{A}$, where $\beta(A) > 0$ but $m(A) = 0$ and then let $M\to \infty$). Secondly, notice that we also can take the supremum over $L^{\infty}$
  \[ \Ent(\beta|m) =  \sup_{\varphi\in L^{\infty}(Y)} \left( \int_{Y}  \varphi d\mu  -\int_{Y} e^{\varphi-1}  dm\right) .\] Finally, for a fixed $\varphi \in C(Y)$, the map 
  \[ (\beta,m) \mapsto  \int_{Y}  \varphi d\beta  -\int_{Y} e^{\varphi-1}  dm  \] is a continuous functional on $C(Y)^{*}\oplus C(Y)^{*}$, so it is standard that the supremum is lower semicontinuous in both variables. Thus, we can now establish \eqref{eq:ent-lsc-push}, because this shows that
\begin{align*}
 \Ent(\nu_{t}|\mu) & \leq \liminf_{n\to\infty} \Ent((f_{n})_{*}\tilde\nu_{t}^{(n)} | (f_{n})_{*}\mu_{n})\\
  & = \liminf_{n\to\infty} \sup_{\varphi \in C(X_{})}\left( \int_{X_{}} \varphi d(f_{n})_{*} \tilde\nu_{t}^{(n)} - \int_{X_{}} e^{\varphi-1} d(f_{n})_{*}\mu_{n} \right)\\
  & = \liminf_{n\to\infty}\sup_{\varphi \in L^{\infty}(X_{})}\left( \int_{X_{}} \varphi d(f_{n})_{*} \tilde\nu_{t}^{(n)} - \int_{X_{}} e^{\varphi-1} d(f_{n})_{*}\mu_{n} \right)\\
    & = \liminf_{n\to\infty}\sup_{\varphi \in L^{\infty}(X_{})}\left( \int_{X_{n}} \varphi\circ f_{n} d \tilde\nu_{t}^{(n)} - \int_{X_{n}} e^{\varphi\circ f_{n}-1} d\mu_{n} \right)\\
    & \leq  \liminf_{n\to\infty}\sup_{\varphi \in L^{\infty}(X_{n})}\left( \int_{X_{n}} \varphi d \tilde\nu_{t}^{(n)} - \int_{X_{n}} e^{\varphi-1} d\mu_{n} \right)\\
    & = \Ent(\tilde\nu_{t}^{(n)}| \mu_{n}).
    \end{align*}
    In the first inequality, we used the fact that we have shown that $\Ent(\cdot|\cdot)$ is lower semicontinuous in both variables, and in the second inequality that $\{\varphi \circ f_{n} : \varphi \in L^{\infty} (X) \} \subset L^{\infty}(X_{n})$. This completes the proof. 
\end{proof}

Thus, because we see that generalized Ricci lower bounds are preserved under Gromov--Hausdorff convergence, this theory gives a partial answer to the natural question posed by the following theorem of Gromov, which says that a set of Riemannian manifolds of a fixed dimension, with a uniform diameter upper bounds and Ricci curvature lower bounds is precompact in the Gromov--Hausdorff topology. 
\begin{theo}[Gromov's Compactness Theorem]\label{theo:gro-cpt-ricci}
If $(M_{k},d_{k},\mu_{k})$ is a sequence of compact Riemannian manifolds (where $d_{k}$ is the induced length metric and $\mu_{k} = ({\vol(M_{k})})^{-1}{\vol_{M_{k}}}$ is the normalized volume measure) with $\dim M_{k} = n$, $\Ric M_{k} \geq K$ and $\diam M_{k}\leq D$, for a fixed $n$, $K\in \RR$, $D>0$, then there is a subsequence $(M_{k_{j}},d_{k_{j}},\mu_{k_{j}})$ and a metric measure space $(X,d,\mu)$ such that $(M_{k_{j}},d_{k_{j}},\mu_{k_{j}}) \xrightarrow{GH} (X,d,\mu)$. 
\end{theo}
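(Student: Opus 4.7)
The plan is to prove the theorem in two stages: first establish the measureless Gromov--Hausdorff precompactness of the underlying metric spaces $(M_k, d_k)$, then upgrade to measured Gromov--Hausdorff convergence via Prokhorov's theorem on the common limit.

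For the measureless part, the main geometric input I would use is the Bishop--Gromov volume comparison theorem (an essential tool on manifolds with lower Ricci bounds, so presumably available from the appendix on manifolds with lower Ricci bounds): for any manifold with $\Ric \geq K$ of dimension $n$, the ratio $\vol(B_r(x))/v_{n,K}(r)$ is nonincreasing in $r$, where $v_{n,K}(r)$ is the volume of an $r$-ball in the model space of constant sectional curvature $K/(n-1)$. From this, a standard packing argument shows that any maximal $\epsilon$-separated net in $M_k$ has cardinality at most some $N(n,K,D,\epsilon)$ independent of $k$: if $\{x_i\}$ are $\epsilon$-separated, then the balls $B_{\epsilon/2}(x_i)$ are disjoint and contained in a ball of radius $D+\epsilon/2$, so Bishop--Gromov bounds their number. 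This uniform total boundedness, combined with $\diam M_k \leq D$, is exactly the hypothesis of Gromov's abstract precompactness theorem.

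Given this uniform covering bound, I would then construct the limit space by a diagonal argument. For each $m \in \NN$ fix an $\frac{1}{m}$-net $N_m^k \subset M_k$ of cardinality at most $N(m) := N(n,K,D,1/m)$. Diagonalizing in $m$ and passing to a subsequence (still called $M_k$), I can arrange that the finite metric spaces $(N_m^k, d_k)$ converge in Gromov--Hausdorff distance as $k\to\infty$ to a finite metric space $(N_m^\infty, d_\infty^m)$ for every $m$, consistently as $m$ increases (by choosing nested nets $N_1^k \subset N_2^k \subset \cdots$ and matching points under the subsequential limit). Defining $X$ as the metric completion of $\bigcup_m N_m^\infty$ with the naturally induced metric $d$, and assembling $\epsilon_k$-isometries $f_k : M_k \to X$ by sending each $x \in M_k$ to the limit of a nearest point in $N_{m(k)}^k$, one verifies $(M_k, d_k) \xrightarrow{GH} (X,d)$ with $\epsilon_k \to 0$.

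To upgrade to measured Gromov--Hausdorff convergence, I push forward: the sequence of probability measures $(f_k)_* \mu_k$ lives on the single compact metric space $X$. By Prokhorov's theorem (as quoted in the introduction), $\cP(X)$ is weak* compact, so a further subsequence satisfies $(f_{k_j})_* \mu_{k_j} \to \mu$ in the weak* topology for some $\mu \in \cP(X)$. Together with the already-established GH convergence on the subsequence, this is exactly the definition of measured Gromov--Hausdorff convergence $(M_{k_j}, d_{k_j}, \mu_{k_j}) \xrightarrow{GH} (X,d,\mu)$.

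The main obstacle, and the only place the Ricci hypothesis really enters, is the Bishop--Gromov-based packing estimate in step one; the rest is essentially abstract metric-space and measure-theoretic compactness. A subtle technical point in the diagonal construction is ensuring that the nets can be chosen \emph{nested} across scales $m$ while still satisfying the uniform size bound, but this is easily arranged by starting with a coarse net and successively refining it while keeping the separation property (paying only a factor-of-two loss in the separation constant, which is harmless).
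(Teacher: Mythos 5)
The paper does not actually give a proof of this theorem: it cites \cite[Corollary 30]{Petersen:RiemGeo} and leaves it at that. However, the route you take is exactly the one the paper sketches elsewhere. In Appendix~\ref{app:mfd-lower-ric}, immediately after stating the Bishop--Gromov inequality, the author remarks that it ``plays a crucial role in the proof of Theorem~\ref{theo:gro-cpt-ricci} \dots because it allows us to uniformly bound the number of disjoint $\epsilon$-balls \dots which it turns out is a sufficient condition of precompactness, by a Theorem proven by Gromov'' --- this is your packing-estimate-plus-abstract-precompactness step. And your Prokhorov upgrade from measureless to measured convergence is essentially the footnote in Section~\ref{subsect:approx-cP0}, which observes that Prokhorov's theorem automatically produces a weak* limit of the pushed-forward measures once measureless GH convergence is in hand. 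So your proposal is correct and is the proof the paper has in mind; the only thing worth tightening is the diagonal/nesting construction of the limit space, which you acknowledge is delicate and which the paper defers to \cite[Theorem~7.4.15]{BBI:MetGeo}. (One small point you gloss over: the approximate isometries $f_k$ must be chosen Borel measurable for $(f_k)_*\mu_k$ to make sense, but this is automatic for the net-based maps you describe, since they are piecewise constant.)
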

This is proven in, \cite[Corollary 30]{Petersen:RiemGeo}. This theorem leads to the natural question of what properties are inherited by a geodesic measure space in the closure of the set of such manifolds. We have just shown that the limit spaces have the same lower bounds for Ricci (in the weak a.c. displacement convexity sense we have developed above). Further properties of these limit manifolds can be found in Section \ref{subsect:GH-lim-bd-ricci}. 

\subsection{Approximations of $\cP_{0}$} \label{subsect:approx-cP0} The notion of Gromov--Hausdorff convergence allows us to make finite dimensional approximations of $(\cP_{0},d^{W})$. In this section, we will work with measureless Gromov--Hausdorff convergence (i.e., drop the the associated measures, along with the requirement that $(f_{n})_{*}\mu_{n}\to \mu$), and then later we will use our results to discuss measured Gromov--Hausdorff convergence.\footnote{We do not sacrifice much generality by considering measureless Gromov--Hausdorff convergence first, as Prokhorov's theorem guarantees that for any sequence of $\mu_{n}$ associated to the $(X_{n},d_{n})$, some subsequence of $(f_{n})_{*} \mu_{n}$ converges.}

We construct a sequence of metric spaces converging to $(\cG_{0},d^{L^{2}})$ by associating to each partition \[\{t_i\}_{i=1}^N  = \{0=t_0<t_1<\dots<t_N<t_{N+1}=1\}\] a metric space $(X(\{t_i \}),d_{t_i})$ and a map \[f^{\{t_i\}} : (X(\{t_i\}),d_{t_i}) \to (\cG_0,d^{L^{2}})\] which is an isometry onto its image. To do this, we define the set $X(\{t_i\}_{i=1}^N)$ to be\footnote{Notice that $X(\{t_{i}\})$ does not actually depend on the actual partition, but instead just on the number of elements in the partition. However, we label it this way because it will rapidly get confusing when we have different partitions in one expression and hopefully this should make things slightly simpler.} \begin{equation}\label{eq:def-sigma-N}\Sigma_N := \{ (x_1,\dots,x_N)\in [0,1]^N : 0 = x_0 < x_1 < \dots < x_N < x_{N+1} = 1\},\end{equation} and then define a map 
\begin{equation}\label{eq:defi-eps-iso-f} f^{\{t_i\}}(x_1,\dots,x_N) = \sum_{i=1}^N x_i \chi_{(t_i,t_{i+1}]} \in \cG_0 \end{equation} Notice that
\[ \Vert f^{\{t_i\}}(x_1,\dots,x_N) - f^{\{t_i\}}(y_1,\dots,y_N) \Vert_{L^2}^2 = \sum_{i=1}^N |x_i-y_i|^2|t_{i+1}-t_i|,\] so because of this we define 
\[ d^{\{t_i\}}((x),(y))^{2} :={\sum_{i=1}^N |x_i-y_i|^2|t_{i+1}-t_i|}\] and clearly this turns $f^{t_i}$ into an isometry onto its image. 

\begin{prop}\label{prop:gh-conv-to-g0}
For any sequence of partitions of $[0,1]$, $\{t_i^{(k)}\}_{i=1}^{N_k}$ such that the longest interval in the $k$-th partition shrinks to zero \[\delta_k : = \sup_{0\leq i \leq N_{k}} (t_{i+1}^{(k)}-t_i^{(k)}) \to 0,\] we have the Gromov--Hausdorff convergence 
\[ (X(\{t_i^{(k)}\}_{i=1}^{N_k}, d_{t_i^{(k)}}) \xrightarrow{GH} (\cG_0,d^{L^2}) .\] 
\end{prop}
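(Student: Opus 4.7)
The plan is to verify the two conditions defining a $\delta_k'$-isometry for the maps $f^{\{t_i^{(k)}\}} : (X(\{t_i^{(k)}\}), d^{\{t_i^{(k)}\}}) \to (\cG_0, d^{L^2})$ with $\delta_k' \to 0$. Condition (1) requires no work at all: the computation displayed immediately before \eqref{eq:defi-eps-iso-f} shows that $f^{\{t_i^{(k)}\}}$ is already a global isometric embedding onto its image, so distances are preserved exactly. The whole content of the proposition is therefore condition (2), i.e., showing that every $g \in \cG_0$ lies within $L^2$-distance $\delta_k'$ of some step function of the form $f^{\{t_i^{(k)}\}}(x_1,\dots,x_{N_k})$.

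First I would treat the natural candidate $x_i := g(t_i^{(k)})$, ignoring for a moment the strict-inequality constraint in the definition \eqref{eq:def-sigma-N} of $\Sigma_{N_k}$, and set $\tilde g := \sum_{i=1}^{N_k} g(t_i^{(k)}) \chi_{(t_i^{(k)}, t_{i+1}^{(k)}]}$. On each subinterval $(t_i^{(k)}, t_{i+1}^{(k)}]$ monotonicity of $g$ gives $|\tilde g(s) - g(s)| \leq g(t_{i+1}^{(k)}) - g(t_i^{(k)})$; squaring, integrating and using that each such increment is at most $1$ yields the telescoping estimate
\[
\|\tilde g - g\|_{L^2}^2 \leq \delta_k \sum_{i=0}^{N_k} \bigl(g(t_{i+1}^{(k)}) - g(t_i^{(k)})\bigr) = \delta_k \bigl(g(1) - g(0)\bigr) \leq \delta_k,
\]
so $\|\tilde g - g\|_{L^2} \leq \sqrt{\delta_k}$.

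Next I would deal with the strict inequality constraint $0 < x_1 < \dots < x_{N_k} < 1$ by replacing $x_i$ with the perturbed values $x_i' := (1-\eta_k)\,g(t_i^{(k)}) + i\,\eta_k/(N_k+1)$ for a small parameter $\eta_k > 0$; these are automatically strictly increasing and lie in the open interval $(0,1)$, so $(x_1',\dots,x_{N_k}') \in \Sigma_{N_k}$. Since the corresponding step function differs from $\tilde g$ pointwise by at most $\eta_k$ (in fact by at most $\eta_k$ on each subinterval), we get $\|f^{\{t_i^{(k)}\}}(x') - \tilde g\|_{L^2} \leq \eta_k$. Choosing $\eta_k \to 0$ (say $\eta_k = \sqrt{\delta_k}$) and combining with the triangle inequality yields $\|f^{\{t_i^{(k)}\}}(x') - g\|_{L^2} \leq 2\sqrt{\delta_k} =: \delta_k'$, which tends to zero by hypothesis.

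The main technical nuisance here is purely the strict inequality constraint in $\Sigma_{N_k}$, and it is dispatched by the explicit perturbation above; the genuine content of the argument is the monotone-function telescoping bound, which exploits the single most important feature of elements of $\cG_0$ (namely, their total variation is at most $1$). No results beyond the elementary facts about $L^2$ norms of characteristic functions are required.
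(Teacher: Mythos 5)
Your proof is correct and follows essentially the same approach as the paper: condition (1) of the $\epsilon$-isometry definition is automatic because $f^{\{t_i^{(k)}\}}$ is an isometric embedding onto its image, and condition (2) is established via the telescoping bound
\[
\|\tilde g - g\|_{L^2}^2 \leq \delta_k \sum_{i=0}^{N_k}\bigl(g(t_{i+1}^{(k)}) - g(t_i^{(k)})\bigr) \leq \delta_k,
\]
which is precisely the computation the paper makes. In fact your argument is \emph{more} careful than the paper's in one respect: the paper simply plugs in the tuple $(g(t_1^{(k)}),\dots,g(t_{N_k}^{(k)}))$ as the required approximant, but for a general $g \in \cG_0$ this tuple need not lie in $\Sigma_{N_k}$, since the strict inequalities $0 < x_1 < \dots < x_{N_k} < 1$ in \eqref{eq:def-sigma-N} can fail (e.g.\ if $g$ is constant on a subinterval, or takes the value $0$ or $1$ at an interior partition point). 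Your explicit perturbation $x_i' := (1-\eta_k)\,g(t_i^{(k)}) + i\,\eta_k/(N_k+1)$ guarantees strict monotonicity and strict containment in $(0,1)$ at a cost of an extra $\eta_k$ in $L^2$, which can be absorbed by taking $\eta_k \to 0$. This closes a small gap that the paper glosses over, at the expense of a slightly worse (but still vanishing) constant $\delta_k'$.
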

\begin{proof}
To show Gromov--Hausdorff convergence, it is sufficient to show that there are $\epsilon_k\to 0$ such that the $f^{t_i^{(k)}}$ are $\epsilon_k$ isometries, i.e.\ for all $(x),(y) \in X(\{t_i^{(k)}\})$,
\[ |\  \Vert f^{t_i^{(k)}}((x)) - f^{t_i^{(k)}}((y))\Vert_{L^2} - d^{t^{(k)}_i}((x),(y)) | < \epsilon_k \]
and
for any $g \in \cG_0$ there is $(x) \in X(\{t_i^{(k)}\})$ such that
\[ \Vert g - f^{t_i^{(k)}}((x))\Vert_{L^2} < \epsilon_k.\]  
The first property is clearly satisfied because $f^{t_i^{(k)}}$ is an isometry onto its image, so we will establish the second property. For any $g \in \cG_0$, we have that
\begin{align*}
\Vert g - f^{t_i^{(k)}}(g_{t_1},\dots,g_{t_N})\Vert_{L^2}^2 & = \sum_{i=0}^N \int_{t_i^{(k)}}^{t_{i+1}^{(k)}} (g(s) - g(t_i^{(k)}))^2 ds \\
& \leq \sum_{i=0}^N \int_{t_i^{(k)}}^{t_{i+1}^{(k)}} (g(t_{i+1}^{(k)}) - g(t_i^{(k)}))^2 ds\\
& = \sum_{i=0}^N (t_{i+1}^{(k)}-t_i^{(k)})  (g(t_{i+1}^{(k)}) - g(t_i^{(k)}))^2\\
& \leq \left( \sup_{0 \leq i \leq N} (t_{i+1}^{(k)}-t_i^{(k)}) \right) \sum_{i=0}^N (g(t_{i+1}^{(k)})-g(t_i^{(k)}))^2\\
& \leq \left( \sup_{0 \leq i \leq N} (t_{i+1}^{(k)}-t_i^{(k)}) \right) \sum_{i=0}^N (g(t_{i+1}^{(k)})-g(t_i^{(k)})) \\
&= \left( \sup_{0 \leq i \leq N} (t_{i+1}^{(k)}-t_i^{(k)}) \right)\\
 & = \delta_k
\end{align*}
Thus, $\epsilon_k = 2 \sqrt{\delta_k}$ satisfies the above requirements. 
\end{proof}

We will often refer to the ``standard'' partition, defined by
\begin{equation} \label{eq:std-part-defi} T^{\text{std}}_{N} = \left\{ 0= t_0 < \frac{1}{2^N} < \frac{2}{2^N}<\dots<\frac{2^{N}-1}{2^N}<t_{2^N} = 1\right\}.\end{equation} Writing $(X_{N}^{\text{std}},d^{N})$ for the spaces associated with $T_{N}$ as in the above, Proposition \ref{prop:gh-conv-to-g0} shows that $(X_N^{\text{std}},d^N) \xrightarrow{GH} (\cG_0,L^2)$.

It is instructive to relate the form of the $\epsilon_{k}$-isometries in Proposition \ref{prop:gh-conv-to-g0}, i.e.\ the maps \[f^{t_{i}^{(k)}}:X(\{t_{i}^{(k)}\}) \to \cG_{0}\] to the composition \[\Psi \circ f^{t_{i}^{(k)}} : X(\{t_{i}^{(k)}\}) \to  \cP_{0}\] where $\Psi$ is the map coming from Proposition \ref{prop:G0-P0-iso}, because we clearly have that $\Psi\circ f^{t_{i}^{(k)}}$ are $\epsilon_{k}$-isometries. It is not hard to see that
\[ \Psi \circ f^{t_{i}^{(k)}}(x_{1},\dots,x_{N}) = \sum_{i=0}^{N}(t_{i+1}-t_{i}) \delta_{x_{i}}. \] 

This somewhat motivates the following proposition, which we will not prove, as we will not make use of it later.
\begin{prop}
Letting $[0,1]^{n}/\fS_{n}$ denote $n$-tuples of points in $[0,1]$, quotiented out by permutations and $d_{n}$ the quotient metric coming from the euclidean metric on $[0,1]^{n}$, then the maps 
\[ f_{n}: ([0,1]^{n}/\fS_{n},d_{n}) \to( \cP_{0},d^{W}) \] \[ \{x_{1},\dots,x_{n}\} \mapsto \frac{1}{n} \sum_{i=1}^{n} \delta_{x_{i}}\] are $\epsilon_{n}$-isometries with $\epsilon_{n}\to 0$, so
\[ ([0,1]^{n}/\fS_{n},d_{n}) \xrightarrow{GH} ( \cP_{0},d^{W}). \]
\end{prop}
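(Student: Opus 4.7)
The plan is to verify the two defining properties of an $\epsilon_n$-isometry for a sequence $\epsilon_n \to 0$: approximate preservation of distances, and $\epsilon_n$-denseness of the image in $(\cP_0, d^W)$. The strategy is to compute both sides exactly by passing through the isometry $\Psi^{-1}: \cP_0 \to \cG_0$ from Proposition \ref{prop:G0-P0-iso}, reducing everything to an $L^2$ calculation on step functions.

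First, I would obtain workable formulas for both metrics. Identify $[0,1]^n/\fS_n$ with the sorted simplex $\{x_1 \leq x_2 \leq \dots \leq x_n\}$, each $\fS_n$-orbit having a unique sorted representative. The quotient metric is then $d_n(\{x_i\},\{y_i\}) = \min_{\sigma \in \fS_n} \bigl(\sum_i (x_i - y_{\sigma(i)})^2\bigr)^{1/2}$, and I would invoke the standard rearrangement inequality (the sum-of-squares Hardy--Littlewood--P\'olya inequality) to conclude that for sorted $x,y$ the identity permutation is optimal, i.e.\ $d_n = (\sum_i (x_i - y_i)^2)^{1/2}$. On the Wasserstein side, I would use $\Psi^{-1}$ to check that the inverse distribution function of $\frac{1}{n}\sum_i \delta_{x_i}$ (sorted) is the step function $g_x(t) = x_k$ on $t \in [(k-1)/n, k/n)$, so that
\[ d^W(f_n(\{x\}), f_n(\{y\}))^2 = \|g_x - g_y\|_{L^2}^2 = \tfrac{1}{n}\sum_i (x_i-y_i)^2. \]
Thus $d^W \circ f_n = \tfrac{1}{\sqrt n}\, d_n$; with the natural normalization of the quotient metric (which I interpret $d_n$ to include, see below) the map $f_n$ exactly preserves distances, so property (1) holds with slack $0$.

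Second, for $\epsilon_n$-denseness: given any $\mu \in \cP_0$, write $g_\mu = \Psi^{-1}(\mu) \in \cG_0$ and set $x_k := g_\mu((k-1)/n)$ (any sample point in $[(k-1)/n, k/n)$ would work). Then $g_x \in \cG_0$ agrees with $g_\mu$ at the left endpoint of each $I_k = [(k-1)/n, k/n)$, and monotonicity of $g_\mu$ gives $|g_\mu(t) - x_k| \leq g_\mu(k/n) - g_\mu((k-1)/n)$ on $I_k$. Exactly as in the proof of Proposition \ref{prop:gh-conv-to-g0},
\[ \|g_\mu - g_x\|_{L^2}^2 \leq \tfrac{1}{n}\sum_{k=1}^n \bigl(g_\mu(k/n) - g_\mu((k-1)/n)\bigr) \leq \tfrac{1}{n}, \]
so $d^W(\mu, f_n(\{x_i\})) \leq 1/\sqrt n$ and property (2) holds with $\epsilon_n = 1/\sqrt n \to 0$.

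The main subtlety, and what one has to be careful about, is the normalization of $d_n$: the bare Euclidean quotient metric differs from $\sqrt{n}\,d^W\circ f_n$ by a factor of $\sqrt n$, so to read the proposition as stated one must understand $d_n$ as the \emph{normalized} quotient metric $\sqrt{n^{-1}\min_\sigma \sum_i(x_i - y_{\sigma(i)})^2}$ (equivalently, the empirical $L^2$ matching cost). With this understood, the rearrangement-inequality step is the only substantive input; the Wasserstein computation and the density bound are direct applications of facts already established for $(\cG_0, d^{L^2})$.
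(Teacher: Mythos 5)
Your proof is correct, and note that the paper does not actually supply an argument here (``which we will not prove, as we will not make use of it later''), so there is nothing to compare against. Your route---pushing everything through $\Psi^{-1}$ to $\cG_0$, identifying the inverse distribution function of $\tfrac1n\sum_i\delta_{x_i}$ with the sorted step function $g_x$, invoking the rearrangement inequality to get the optimal permutation, and then reusing the density estimate from the proof of Proposition \ref{prop:gh-conv-to-g0}---is precisely what the surrounding discussion suggests, and each step is sound.

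The normalization point you flag is the substantive one, and you are right to be uneasy: with $d_n$ read as the bare Euclidean quotient metric, the proposition is in fact \emph{false}. Taking the classes of $x=(0,\dots,0)$ and $y=(1,\dots,1)$ gives $d_n(\{x\},\{y\})=\sqrt n$ while $d^W(f_n(\{x\}),f_n(\{y\}))=d^W(\delta_0,\delta_1)=1$, so $|d_n - d^W\circ f_n|=\sqrt n-1\to\infty$ and condition (1) of an $\epsilon_n$-isometry cannot hold for any $\epsilon_n\to 0$. One must take $d_n$ to be the $\tfrac1{\sqrt n}$-normalized quotient metric, i.e.\ the metric $d^{\{t_i\}}$ from Section \ref{subsect:approx-cP0} for the uniform partition transported to the quotient, under which $f_n$ is an isometry onto its image and the only remaining content is your $\epsilon_n=1/\sqrt n$ denseness bound. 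In short, your proof is complete and you have in addition located an imprecision in the proposition's statement.
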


Work relating this notion and ideas discussed in the next section can be found in \cite{AndresRenesse:PartApproxWass}.

\section{Measures on $\cP_{0}$ and Associated Ricci Curvature Bounds} \label{sect:meas-cP0-ricci-curv}

From Proposition \ref{prop:P0-vanish-alex-curv}, we know that $(\cP_{0},d^{W})$ (or equivalently $(\cG_{0},d^{L^{2}})$) has nonnegative (actually vanishing) sectional curvature (in the generalized sense). By analogy with the classical Riemannian case, we expect that this implies that it has nonnegative Ricci curvature in our new generalized notion. However, to make sense of this, we must choose a reference measure, and there is not really an obviously good choice. For example, Petrunin has shown in \cite{Petrunin:AlexMeetsLVS} that if a space with nonnegative sectional curvature in the sense of Alexandrov has finite Hausdorff dimension, then the space with the appropriately normalized Hausdorff measure has nonnegative Ricci curvature.\footnote{Actually, in this paper, Petrunin proves a stronger result, saying that if the space is $m$-dimensional with nonnegative sectional curvature, then the space has nonnegative generalized $m$-dimensional Ricci curvature, a notion which is discussed both in \cite{LottVillani:RicciViaTspt} and in \cite{Sturm:MetMeas2}. In particular, the version we give in the text follows from the ``Main Theorem'' in \cite{Petrunin:AlexMeetsLVS} and \cite[Proposition 1.6]{Sturm:MetMeas2}.} However, it seems that there is no useful notion of infinite dimensional Hausdorff measure, to try to generalize this result to the $(\cG_{0},d^{L^{2}})$ case directly. On the other hand, von Renesse and Sturm, in \cite{RenesseSturm:EntMeasDiff}, have constructed a reasonable candidate measure, which we will refer to as the entropic measure. However, as we will see, it suprisingly does not admit lower Ricci bounds at all.

\subsection{Analytic Consequences} We are not just interested in a measure with lower Ricci bounds on $(\cG_{0},d^{L^{2}})$ for its own sake (although, it is certainly a natural question that we feel is worth studying for its own merit). One reason why we might be interested in this is that there are useful analytic consequences of having lower Ricci bounds, including log Sobolev, Talagrand and Poincar\'e inequalities. We will briefly discuss some of these inequalities. The following theorems are proven \cite{LottVillani:RicciViaTspt} in the setting of generalized lower Ricci bounds.
\begin{theo}[\cite{LottVillani:RicciViaTspt} Corollary 6.12]\label{theo:log-sob}
For $(X,d,\nu)$ with generalized $\Ric \geq K$, if $f$, a Lipschitz function on $X$ satisfies $\int_{X}f^{2}d\nu = 1$, then 
\begin{enumerate}
\item If $K>0$ then
\begin{equation}\label{eq:Kpos-log-sob}
\int_{X} f^{2}\log(f^{2}) d\nu \leq \frac 2 K \int_{X} |\nabla f|^{2}d\nu.
\end{equation}
\item  If $K \leq 0$ then 
\begin{equation}
\int_{X} f^{2}\log(f^{2}) d\nu \leq 2 \diam(X) \sqrt{ \int_{X} |\nabla f|^{2} d\nu} - \frac 12 K \diam(X)^{2}.
\end{equation}
\end{enumerate}
Note that in the above, we have defined \[ |\nabla f|^{2}(x) := \limsup_{y\to x} \frac{|f(x) - f(y)|}{d(x,y)} \] which clearly agrees with the standard notion of the norm of the gradient on a manifold and for differentiable $f$.
\end{theo}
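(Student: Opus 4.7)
The plan is to apply weak a.c.\ displacement $K$-convexity to a specific pair of absolutely continuous probability measures, take a one-sided derivative at $t=0$ to produce an HWI-type inequality, and then optimize in the transport distance. I would first set $\mu_{0} := f^{2}\nu$ and $\mu_{1} := \nu$. The hypothesis $\int_{X} f^{2}\,d\nu = 1$ makes $\mu_{0}$ a probability measure absolutely continuous with respect to $\nu$, while $\mu_{1} = \nu$ trivially is. Note that $\Ent(\mu_{0}|\nu) = \int_{X} f^{2}\log(f^{2})\,d\nu$ is exactly the quantity to be bounded, and $\Ent(\mu_{1}|\nu) = 0$.

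By the assumed generalized $\Ric \geq K$, there is a Wasserstein geodesic $(\mu_{t})_{t\in[0,1]}$ from $\mu_{0}$ to $\mu_{1}$ along which
\[
\Ent(\mu_{t}|\nu) \leq (1-t)\,\Ent(\mu_{0}|\nu) - \frac{K}{2}\,t(1-t)\,d^{W}(\mu_{0},\nu)^{2}.
\]
Rearranging and dividing by $t > 0$ gives
\[
\frac{\Ent(\mu_{0}|\nu) - \Ent(\mu_{t}|\nu)}{t} \geq \Ent(\mu_{0}|\nu) + \frac{K}{2}(1-t)\,d^{W}(\mu_{0},\nu)^{2},
\]
and I would take $\limsup$ as $t \to 0^{+}$. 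The key technical step is to show that the right-hand side of the resulting descending-slope inequality is controlled by $2\,d^{W}(\mu_{0},\nu)\sqrt{\int_{X}|\nabla f|^{2}\,d\nu}$. Heuristically (and rigorously in the smooth case via the chain rule $|\nabla\log f^{2}|^{2}f^{2} = 4|\nabla f|^{2}$ together with Cauchy--Schwarz along the displacement interpolation), the Fisher information of $\mu_{0}$ with respect to $\nu$ equals $4\int|\nabla f|^{2}\,d\nu$, and its square root bounds the descending slope of $\Ent(\cdot|\nu)$ at $\mu_{0}$ times the speed $d^{W}(\mu_{0},\nu)$ of the geodesic. In the metric-measure setting this slope bound is obtained by approximating $\log f^{2}$ by Lipschitz functions and using the Hopf--Lax semigroup to compute the descent, following \cite{LottVillani:RicciViaTspt}. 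Combining these steps yields the HWI-type inequality
\[
\Ent(\mu_{0}|\nu) \leq 2\,d^{W}(\mu_{0},\nu)\sqrt{\int_{X}|\nabla f|^{2}\,d\nu} - \frac{K}{2}\,d^{W}(\mu_{0},\nu)^{2}.
\]

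Finally I would optimize the right-hand side over $W := d^{W}(\mu_{0},\nu) \in [0,\diam(X)]$. When $K > 0$, the unconstrained maximum of $W \mapsto 2W\sqrt{A} - \frac{K}{2}W^{2}$ with $A := \int_{X}|\nabla f|^{2}\,d\nu$ occurs at $W^{\ast} = 2\sqrt{A}/K$, yielding $\Ent(\mu_{0}|\nu) \leq \frac{2}{K}\int_{X}|\nabla f|^{2}\,d\nu$, which is case (1). When $K \leq 0$, the quadratic is monotonically increasing in $W$, so using $W \leq \diam(X)$ yields case (2) directly. The hardest step is the slope estimate from the second paragraph: producing, in a purely metric setting, the chain-rule-and-Cauchy--Schwarz bound that is automatic in the smooth case. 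This is where the Lipschitz regularity of $f$ and the compactness of $X$ are genuinely used, so that $f^{2}$ is bounded and $|\nabla f|$ makes sense as a pointwise limit.
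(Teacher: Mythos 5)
Your proposal is essentially correct, and --- an important point --- it is \emph{not} being compared against a proof in this paper, because the paper does not prove this theorem: it states it as a cited result from Lott--Villani and moves on. Your argument reproduces the route that Lott--Villani actually take (their HWI inequality, Theorem 6.8, specialized to $\mu_{1}=\nu$, followed by optimization in the Wasserstein distance), so you have in effect supplied the proof that the paper delegates to the reference.

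The mechanics of your sketch are sound. Setting $\mu_{0}=f^{2}\nu$, $\mu_{1}=\nu$, invoking weak a.c.\ displacement $K$-convexity to get a geodesic along which the entropy is $K$-convex, dividing the rearranged inequality by $t$, and sending $t\to 0^{+}$ correctly yields
\[
\Ent(\mu_{0}|\nu)+\tfrac{K}{2}\,d^{W}(\mu_{0},\nu)^{2}\ \leq\ |\nabla^{-}\Ent|(\mu_{0})\cdot d^{W}(\mu_{0},\nu),
\]
since $d^{W}(\mu_{0},\mu_{t})=t\,d^{W}(\mu_{0},\nu)$ along the geodesic. The optimization at the end is also correct (and note that for $K>0$ you do not even need to worry about whether $W^{*}=2\sqrt{A}/K$ lies in $[0,\diam X]$, since the unconstrained maximum always dominates; for $K\leq 0$ the bound $d^{W}(\mu_{0},\nu)\leq\diam X$ follows immediately from $d(x,y)\leq\diam X$ in any coupling).

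You are right that the genuine content lies in the slope estimate $|\nabla^{-}\Ent|(f^{2}\nu)\leq 2\bigl(\int_{X}|\nabla f|^{2}\,d\nu\bigr)^{1/2}$. In Lott--Villani this is the chain $|\nabla^{-}\Ent|(\mu)^{2}\leq I_{\infty}(\mu)$ (descending slope bounded by generalized Fisher information) together with the metric chain rule $I_{\infty}(f^{2}\nu)\leq 4\int_{X}|\nabla f|^{2}\,d\nu$ for Lipschitz $f$; the latter uses exactly the pointwise identity $|\nabla(f^{2})|\leq 2|f|\,|\nabla f|$ for the local Lipschitz slope, and compactness of $X$ to keep everything finite. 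If you want to turn your sketch into a complete proof you would need to import or reprove those two lemmas; the Hopf--Lax machinery you mention is one way in, but Lott--Villani's own route is via the generalized Fisher information directly. Within the scope of this essay, which cites rather than proves the statement, your level of detail is the appropriate one, and your identification of where the real work lies is accurate.
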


As a consequence of \eqref{eq:Kpos-log-sob}, it is possible to derive a Poincar\'e inequality for geodesic measure spaces with generalized positive Ricci curvature.
\begin{theo}[\cite{LottVillani:RicciViaTspt} Theorem 6.18]\label{theo:poinc}
If $(X,d,\nu)$ has $\Ric \geq K$ for some $K > 0$, for all Lipschitz $f$ on $X$ with $\int_{X} f d\nu = 0$ we have that 
\begin{equation}
\int_{X} h^{2}d\nu \leq \frac 1 K \int_{X} |\nabla h|^{2} d\nu.
\end{equation}
\end{theo}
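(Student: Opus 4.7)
The plan is to deduce the Poincaré inequality from the log-Sobolev inequality of Theorem \ref{theo:log-sob}(i) by a standard linearization argument. The idea is that log-Sobolev is a nonlinear strengthening of Poincaré, and Poincaré falls out as the second-order Taylor expansion of log-Sobolev applied to a function close to the constant $1$.

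More precisely, I would proceed as follows. Fix Lipschitz $f$ with $\int_X f\, d\nu = 0$, and without loss of generality (rescaling) assume $f$ is bounded (which is automatic since $X$ is compact and $f$ is Lipschitz). Set $A := \int_X f^2\, d\nu$; I may assume $A>0$ or there is nothing to prove. For small $\epsilon>0$ define
\[
g_\epsilon := \frac{1+\epsilon f}{\sqrt{1+\epsilon^2 A}},
\]
so that $\int_X g_\epsilon^2\, d\nu = 1$ (using $\int f\, d\nu = 0$), and $g_\epsilon$ is Lipschitz and bounded away from $0$ for $\epsilon$ small. Apply \eqref{eq:Kpos-log-sob} to $g_\epsilon$:
\[
\int_X g_\epsilon^2\log(g_\epsilon^2)\, d\nu \;\leq\; \frac{2}{K}\int_X |\nabla g_\epsilon|^2\, d\nu.
\]

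Next I would Taylor expand both sides in $\epsilon$. For the right-hand side, $|\nabla g_\epsilon|^2 = \epsilon^2 |\nabla f|^2/(1+\epsilon^2 A)$, giving
\[
\frac{2}{K}\int_X |\nabla g_\epsilon|^2\, d\nu \;=\; \frac{2\epsilon^2}{K}\int_X |\nabla f|^2\, d\nu \;+\; O(\epsilon^4).
\]
For the left-hand side, write $u_\epsilon := g_\epsilon^2 - 1 = 2\epsilon f + \epsilon^2(f^2-A) + O(\epsilon^3)$ (uniformly in $x$, since $f$ is bounded). Using the elementary expansion $t\log t = (t-1) + \tfrac{1}{2}(t-1)^2 + O((t-1)^3)$ near $t=1$, and the fact that $\int_X(g_\epsilon^2 - 1)\, d\nu = 0$, the first-order term integrates to zero, so
\[
\int_X g_\epsilon^2\log g_\epsilon^2\, d\nu \;=\; \tfrac{1}{2}\int_X u_\epsilon^2\, d\nu + O(\epsilon^3) \;=\; 2\epsilon^2\int_X f^2\, d\nu + O(\epsilon^3).
\]

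Combining these two expansions, dividing by $2\epsilon^2$, and letting $\epsilon\to 0$ yields the Poincaré inequality $\int_X f^2\, d\nu \leq \tfrac{1}{K}\int_X |\nabla f|^2\, d\nu$. The only mildly delicate step is justifying that the error terms are genuinely $O(\epsilon^3)$ uniformly after integration; this is where boundedness of $f$ (guaranteed by compactness of $X$ and the Lipschitz assumption) is essential, since it gives uniform bounds on $u_\epsilon$ and hence control over the remainder in the expansion of $t\mapsto t\log t$. This is really the only obstacle, and it is purely a matter of bookkeeping rather than a substantive difficulty; everything else is a direct linearization of Theorem \ref{theo:log-sob}(i).
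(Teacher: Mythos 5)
Your proof is correct, and it is precisely the route the paper indicates: the text introduces Theorem \ref{theo:poinc} with the remark that it is ``a consequence of \eqref{eq:Kpos-log-sob}'' and then cites \cite{LottVillani:RicciViaTspt} Theorem 6.18 without reproducing the argument, which is exactly this standard linearization of the log-Sobolev inequality. One small point worth making explicit, since the gradient here is the metric slope $|\nabla f|(x) = \limsup_{y\to x}|f(x)-f(y)|/d(x,y)$ rather than a Riemannian gradient: the identity $|\nabla g_\epsilon| = \frac{\epsilon}{\sqrt{1+\epsilon^2 A}}|\nabla f|$ still holds, because the slope is unchanged by adding a constant and scales linearly under multiplication by a positive scalar, so your computation of the right-hand side is legitimate in this generality.
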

Log-Sobolev and Poincar\'e inequalities have proven useful in a wide variety of applications. For example, \cite{Villani:TopOptTspt03} describes applications of log-Sobolev inequalities to functional analysis on manifolds, information theory, hypercontractivity estimates, concentration of measure, and entropy production in dissipative equations. Furthermore, Poincar\'e inequalities have become replacements for ``spectral gap'' inequalities when there is no Laplacian available (\cite[Chapter 30]{Villani:OptTspt}). This gives some indication that lower Ricci bounds would have interesting implications for analysis on $(X,d,\nu)$.

\subsection{Failure of Ricci Bounds for the Entropic Measure}

 Von Renesse and Sturm construct a measure $\QQ^{\beta}_{0}$ on $\cG_{0}$ in \cite[Proposition 3.4]{RenesseSturm:EntMeasDiff}. First, we will describe the construction and then explain why one might expect that $(\cG_{0},d^{L^{2}},\QQ^{\beta}_{0})$ has nonnegative generalized Ricci curvature (or at least $\Ric \geq K$ for some $K \in \RR$), which we will then show fails to be true.

 \begin{prop}\label{prop:exist-entrop-meas}
 For $\beta >0$ there is a (unique) probability measure $\QQ^{\beta}_{0} \in \cP(\cG_{0})$ which we will call the \emph{entropic measure} (but is also known as the \emph{Dirichlet process}) such that for each partition of $[0,1]$ \[ 0=t_{0}< t_{1}<\dots<t_{N}<t_{N+1} = 1\] and for all bounded measurable functions $u:[0,1]^{N} \to \RR$, we have that 
 \begin{multline}\label{eq:def-ent-meas}
 {\int_{\cG_{0}} u(g(t_{1}),\dots, g(t_{N}) )d \QQ^{\beta}_{0}}(g) \\
 = \frac{\Gamma(\beta)}{\prod_{i=0}^{N} \Gamma(\beta(t_{i+1}-t_{i}))} \int_{\Sigma_{N}} u(x_{1},\dots,x_{N}) \prod_{i=0}^{N} (x_{i+1} - x_{i})^{\beta(t_{i+1}-t_{i})-1} dx_{1}\cdots dx_{N}
 \end{multline}
 where we define $\Sigma_{N}$ as in \eqref{eq:def-sigma-N} by
 \begin{equation*}\Sigma_N := \{ (x_1,\dots,x_N)\in [0,1]^N : 0 = x_0 < x_1 < \dots < x_N < x_{N+1} = 1\},\end{equation*} and $\Gamma(s) = \int_{0}^{\infty} t^{s-1}e^{-t}dt$ is the Gamma function. 
 
 Furthermore, for each partition of $[0,1]$, $\{t_{i}\}_{i=1}^{N}$, defining a measure $\mu_{\{t_{i}\}}$ on $(X(\{t_{i}\},d^{\{t^{i}\}})$ (these spaces are defined in Section \ref{subsect:approx-cP0}) by requiring that for $u \in C(X(\{t_{i}\}))$
 \begin{multline}
 \int_{X(\{t_i\})} u(x_1,\dots, x_N) d\mu_{\{t_i\}}\\ = \frac{\Gamma(\beta) }{\prod_{i=0}^N\Gamma(\beta(t_{i+1} - t_i)) }\int_{\Sigma_N} u(x_1,\dots,x_N) \prod_{i=0}^N (x_{i+1}-x_i)^{\beta(t_{i+1}-t_i) -1} dx_1\cdots dx_N
 \end{multline}
 and writing $\mu_{N}$ as the measure associated to the standard partition, as defined in \eqref{eq:std-part-defi}, we have the measured Gromov--Hausdorff convergence
 \begin{equation}
 (X_{N}^{\text{std}},d^{N},\mu_{N}) \xrightarrow{GH} (\cG_{0},d^{L^{2}},\QQ^{\beta}_{0}).
 \end{equation}
 \end{prop}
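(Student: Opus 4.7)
The plan is to construct $\QQ^{\beta}_0$ as a weak$^*$ limit of the finite-dimensional Dirichlet measures pushed forward to $\cG_0$, and then to verify the defining cylindrical integral identity by combining the aggregation property of the Dirichlet distribution with the $\epsilon_N$-isometries from Proposition \ref{prop:gh-conv-to-g0}. First I would check that each $\mu_{\{t_i\}}$ is genuinely a probability measure on $\Sigma_N$: the prefactor $\Gamma(\beta)/\prod_i \Gamma(\beta(t_{i+1}-t_i))$ is exactly the normalization of the Dirichlet integral on the simplex. I would then prove the key \emph{aggregation property}: whenever $\{s_j\}_{j=1}^M \subset \{t_i\}_{i=1}^N$, the pushforward of $\mu_{\{t_i\}}$ under the coordinate projection $\Sigma_N \to \Sigma_M$ equals $\mu_{\{s_j\}}$. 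This is a direct computation integrating out the extra coordinates via one-dimensional Beta integrals, and it is the consistency property that replaces Kolmogorov's extension theorem in this construction.

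With these in place I would form $\nu_N := (f^{T_N^{\text{std}}})_* \mu_N \in \cP(\cG_0)$. Since $\cG_0$ is isometric to the compact space $\cP_0$ via $\Psi$ (Proposition \ref{prop:G0-P0-iso}), Prokhorov's theorem makes $\cP(\cG_0)$ weak$^*$-compact, so there is a subsequence $\nu_{N_k} \rightharpoonup \QQ^{\beta}_0$. The main task is then to verify \eqref{eq:def-ent-meas} for this candidate, and the principal obstacle is that the evaluation map $g \mapsto g(t)$ is \emph{not} continuous on $\cG_0$ in the $L^2$ topology, so weak$^*$ convergence does not directly transfer identities involving $g(t_i)$.

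I would handle this in two stages. For a partition $\{t_i\}$ whose points all lie in $T_N^{\text{std}}$ for all large $N$, the aggregation property yields the \emph{exact} identity
\[ \int_{\cG_0} u(g(t_1),\ldots,g(t_N))\, d\nu_N(g) = \int_{\Sigma_N} u(x_1,\ldots,x_N)\, d\mu_{\{t_i\}}(x), \]
because each step function $g$ in $\supp \nu_N$ takes the value $x_i$ on $(t_i, t_{i+1}]$. To pass to the limit, I would use that the one-dimensional marginals of $\mu_{\{t_i\}}$ are Beta distributions, hence absolutely continuous, so each fixed $t_i$ is a point of $\QQ^{\beta}_0$-almost sure continuity of $g$. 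A Skorokhod representation of the weak$^*$ convergence $\nu_{N_k} \rightharpoonup \QQ^{\beta}_0$ then turns this almost sure continuity into convergence of the cylindrical integrals. For arbitrary $\{t_i\}$ I would approximate each $t_i$ by a dyadic $s_i^{(m)} \to t_i$: continuous dependence of the Dirichlet density on the parameters $\beta(t_{i+1}-t_i)$ gives $\mu_{\{s_i^{(m)}\}} \to \mu_{\{t_i\}}$ weakly on $\Sigma_N$, while $\QQ^{\beta}_0$-almost sure continuity of $g$ at each $t_i$ upgrades this to the desired identity.

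Uniqueness of $\QQ^{\beta}_0$ then follows because the cylindrical integrals determine any measure on $\cG_0$: a right-continuous nondecreasing function is determined by its values on any countable dense set, so the algebra of cylindrical functions separates points of $\cG_0$, and a Stone--Weierstrass plus monotone class argument identifies the measure. Uniqueness upgrades the subsequential convergence to full convergence $\nu_N \rightharpoonup \QQ^{\beta}_0$, which combined with the measureless statement in Proposition \ref{prop:gh-conv-to-g0} gives the measured Gromov--Hausdorff convergence $(X_N^{\text{std}}, d^N, \mu_N) \xrightarrow{GH} (\cG_0, d^{L^2}, \QQ^{\beta}_0)$. The crux, and the step I expect to require the most care, is reconciling the discontinuity of point evaluations with weak convergence through the absolute continuity of the Beta marginals.
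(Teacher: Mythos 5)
Your overall architecture — aggregation via Euler's beta integral, Prokhorov for a subsequential weak$^{*}$ limit, Skorokhod representation to handle the non-continuity of point evaluations, then uniqueness to upgrade to full convergence — is a genuinely different route from the paper, and it has the right flavor. The paper instead shows that $(f^{N})_{*}\mu_{N}$ is a Cauchy sequence in $d^{W}$ on $\cP(\cG_{0})$ by constructing explicit couplings (so no subsequence extraction is needed), then verifies \eqref{eq:def-ent-meas} by comparing the standard partition pushforwards against pushforwards from a modified partition $R_{N}$ that contains the target points $\{t_{i}\}$, again via explicit Wasserstein couplings. The paper's uniqueness lemma works by noticing that $g\mapsto f^{N}(g(1/2^{N}),\dots,g((2^{N}-1)/2^{N}))$ converges to the identity uniformly in $L^{2}$, so $\int u\,d\QQ^{\beta}_{0}=\lim_{N}\int u\circ(\text{step approx.})\,d\QQ^{\beta}_{0}$ for every $u\in C(\cG_{0})$, and the right-hand side is forced by \eqref{eq:def-ent-meas}. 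You both have to confront the same obstruction you identified — that $g\mapsto g(t)$ is not $L^{2}$-continuous — but the paper's explicit $L^{2}$ estimates carry you uniformly through the limit, whereas your route leans on a probabilistic almost-sure continuity statement.

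That is where your proposal has a genuine gap. You assert that because the one-dimensional Beta marginals of $\mu_{\{t_{i}\}}$ are absolutely continuous, ``each fixed $t_{i}$ is a point of $\QQ^{\beta}_{0}$-almost sure continuity of $g$.'' Absolute continuity of the law of $g(t_{i})$ does not by itself say anything about the jump $g(t_{i})-g(t_{i}^{-})$; to control that you need the \emph{joint} structure (e.g.\ that $g(t)-g(s)$ has a $\mathrm{Beta}(\beta(t-s),\cdot)$ law with parameter tending to zero as $s\uparrow t$, so the increment vanishes in probability). Worse, there is a circularity: you are invoking this property of the limit measure $\QQ^{\beta}_{0}$ precisely in the step that is supposed to establish the finite-dimensional marginals of $\QQ^{\beta}_{0}$, so you would need to break the circle by first proving the no-fixed-atom property through quantities that \emph{are} $L^{2}$-continuous (for instance $\int_{s}^{t}g\,dx$, which sandwiches $(t-s)g(s)$ and $(t-s)g(t)$). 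The second gap is in the uniqueness step: Stone--Weierstrass requires an algebra of \emph{continuous} functions, and cylindrical functions $g\mapsto u(g(t_{1}),\dots,g(t_{N}))$ are not $L^{2}$-continuous on $\cG_{0}$, so ``separates points $+$ Stone--Weierstrass $+$ monotone class'' does not run as stated. The fix is precisely the paper's observation that cylindrical functions built on the dyadic step approximation converge uniformly to any $u\in C(\cG_{0})$; replace your Stone--Weierstrass argument with that, and the uniqueness step closes. With these two repairs your approach should go through, but they are nontrivial, and the paper's Wasserstein-coupling strategy avoids both of them.
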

\begin{proof}
Equation \eqref{eq:def-ent-meas} certainly does not define how to integrate against all continuous functions on $\cG_{0}$. To show that $\QQ^{\beta}_{0}$ is well defined and unique, von Renesse and Sturm, in \cite{RenesseSturm:EntMeasDiff}, make use of the Kolmogorov extension theorem. We give a more direct proof by first showing that there can be at most one measure satisfying \eqref{eq:def-ent-meas} and then showing that the pushforward of the $\mu_{N}$ forms a Cauchy sequence in $(\cP(\cG_{0}),d^{W})$ (note that here, in an abuse of notation, $d^{W}$ is Wasserstein distance on $\cP(\cG_{0})$), with its limit satisfying \eqref{eq:def-ent-meas}. This will complete the proof of the proposition, because we have already shown the (measureless) Gromov--Hausdorff convergence $(X_{N},d^{N})\xrightarrow{GH} (\cG_{0},d^{L^{2}})$ in Proposition \ref{prop:gh-conv-to-g0}.
\begin{lemm}\label{lemm:entmeas-is-unique}
There can be at most one measure satisfying \eqref{eq:def-ent-meas}.
\end{lemm}
\begin{proof}
In the proof of Proposition \ref{prop:gh-conv-to-g0}, we showed that for $g\in \cG_{0}$
\[ \| g - f^{N}(g({1/2^{N}}),\dots,g({(2^{N}-1)/2^{N}})) \|_{L^{2}}^{2} \leq \frac {1}{2^{N}} \]
where $f^{N}$ is the map defined in \eqref{eq:defi-eps-iso-f}, associated to the standard partition. This shows that for any continuous $u\in C(\cG_{0})$, $u\circ f^{N} \to u$ uniformly on $\cG_{0}$. Thus, if $\QQ^{\beta}_{0}$ exists
\begin{equation*}
\int_{\cG_{0}} u(g) d\QQ^{\beta}_{0}(g)  = \lim_{N\to\infty}  \int_{\cG_{0}} u\circ f^{N}(g) d\QQ^{\beta}_{0}(g),
\end{equation*}
and assuming that $\QQ^{\beta}_{0}$ satisfies \eqref{eq:def-ent-meas}, the right hand side of this equation is fixed for any $N$, and thus because the limit exists, the limit is uniquely specified.
\end{proof}

\begin{prop}\label{prop:push-q-cauchy-seq}
The push-forward measures associated to the standard partitions \[\{(f^N)_*\mu_N\}_{N=1,2,\dots}\] form a Cauchy sequence in $(\cP(\cG_0),d^W)$ (where $d^W$ is the Wasserstein distance on $\cP(\cG_{0})$). 
\end{prop} 
Before proving this, we recall the following integral identity, which plays a crucial role in the existence of $\QQ^{\beta}_{0}$. We follow the proof given in \cite[Section 1.5]{Lebedev:SpecialFunctions}.
\begin{lemm} [Euler's Beta Integral Identity] \label{lemm:Euler-beta-integral} For real numbers $a < b$ and $\alpha,\beta >0$, we have the following identity
\[ \int_a^b (t-a)^{\alpha-1} (b-t)^{\beta -1} dt = \frac{\Gamma(\alpha) \Gamma(\beta )}{\Gamma(\alpha+\beta)} (a-b)^{\alpha+\beta -1} . \] 
\end{lemm}
\begin{proof}[Proof of Euler's Beta Integral]
By using the change of coordinates \[  t = a + (a-b) \hat t,\] it is sufficient to show that 
\begin{equation} \int_{0}^{1} t^{\alpha-1} (1-t)^{\beta-1} dt = \frac{\Gamma(\alpha)\Gamma(\beta)}{\Gamma(\alpha+\beta)}.\end{equation}
To show this, first recall that one standard representation of the $\Gamma$ function is
\[ \Gamma(x) = \int_{0}^{\infty} t^{x-1}e^{-t}dt\] and making the change of coordinates $t\to rt$ for $r >0$, we have that 
\begin{equation}\label{eq:gamma-scaled}
\frac{\Gamma(x)}{r^{x}} = \int_{0}^{\infty} t^{x-1} e^{-rt} dt. 
\end{equation}
We introduce the variable $u= \frac{t}{1-t}$, and see that 
\begin{equation}\label{eq:beta-u-sub-for-t}
\int_{0}^{1} t^{\alpha-1} (1-t)^{\beta-1} dt = \int_{0}^{\infty} \frac{u^{\alpha-1}}{(1+u)^{\alpha+\beta}} du
\end{equation}
Using \eqref{eq:gamma-scaled} with $r=1+u$ in \eqref{eq:beta-u-sub-for-t} gives
\begin{align*}
\int_{0}^{1} t^{\alpha-1} (1-t)^{\beta-1} dt & = \frac{1}{\Gamma(\alpha+\beta)}\int_{0}^{\infty} \left( u^{\alpha-1} \int_{0}^{\infty} t^{\alpha+\beta-1} e^{-(1+u) t} dt  \right) du\\
& = \frac{1}{\Gamma(\alpha+\beta)} \int_{0}^{\infty} \left( t^{\alpha+\beta-1} e^{-t}\int_{0}^{\infty} u^{\alpha-1} e^{-ut} du \right) dt\\
& = \frac{1}{\Gamma(\alpha+\beta)} \int_{0}^{\infty} \left( t^{\beta-1} e^{-t}\int_{0}^{\infty} u^{\alpha-1} e^{-u} du \right) dt\\
& = \frac{\Gamma(\alpha)\Gamma(\beta)}{\Gamma(\alpha+\beta)} 
\end{align*}
as desired. 
\end{proof}

The Euler beta integral shows the following consistency relation: if $u(x_{1},\dots,x_{N})$ in \eqref{eq:def-ent-meas} does not depend on $x_{k}$, then 
\begin{align*}
\int_{\cG_{0}}  & u(g(t_{1}),\dots,g(t_{N})) d\QQ^{\beta}_{0}  \\
& := \frac{\Gamma(\beta)}{\prod_{i=0}^{N} \Gamma(\beta(t_{i+1}-t_{i}))} \int_{\Sigma_{N}} u(x_{1},\dots,x_{N}) \prod_{i=0}^{N} (x_{i+1} - x_{i})^{\beta(t_{i+1}-t_{i})-1} dx_{1}\cdots dx_{N}\\
 & = \frac{\Gamma(\beta)}{\prod_{i=0}^{N} \Gamma(\beta(t_{i+1}-t_{i}))} \int_{\Sigma_{N}'} \int_{x_{k-1}}^{x_{k+1 }}u(x_{1},\dots,x_{N}) \\
 & \qquad \qquad \qquad \qquad  \qquad  \times\prod_{i=0}^{N} (x_{i+1} - x_{i})^{\beta(t_{i+1}-t_{i})-1} dx_{k} dx_{1}\cdots \hat{dx_{k}} \cdots dx_{N}\\
  & = \frac{\Gamma(\beta)}{\prod_{i=0}^{N} \Gamma(\beta(t_{i+1}-t_{i}))} \int_{\Sigma_{N}'} u(x_{1},\dots,x_{N})  \prod_{i\neq k-1,k} (x_{i+1} - x_{i})^{\beta(t_{i+1}-t_{i})-1}  \\
 &\qquad \qquad \times \int_{x_{k-1}}^{x_{k+1 }} (x_{k}-x_{k-1})^{\beta(t_{k}-t_{k-1}) - 1} (x_{k+1}-x_{k})^{\beta(t_{k+1}-t_{k}) - 1}dx_{k} dx_{1}\cdots \hat{dx_{k}} \cdots dx_{N}\\
  & = \frac{\Gamma(\beta)}{\Gamma(\beta(t_{k+1}-t_{k-1})) \prod_{i\neq k-1,k} \Gamma(\beta(t_{i+1}-t_{i}))} \int_{\Sigma_{N}'} u(x_{1},\dots,x_{N})  \\
  & \qquad \qquad \qquad\qquad  \qquad \times \prod_{i\neq k-1,k}(x_{i+1} - x_{i})^{\beta(t_{i+1}-t_{i})-1} dx_{1}\cdots dx_{N}\\
  & = \int_{\cG_{0}} \tilde u(g(t_{1}),\dots, g(t_{k-1}),  g(t_{k+1}),\dots,g(t_{N})) d\QQ^{\beta}_{0}(g)
\end{align*}
where $\Sigma_{N}' := \{0<x_{1}<\dots<x_{k-1}<x_{k+1}<\dots<x_{N}<1\}$ and $\tilde  u$ is $u$ with the $x_{k}$ variable not written (because we $u$ did not depend on it anyways). We used the Euler beta integral in the second to last line. 

\begin{proof}[Proof of Proposition \ref{prop:push-q-cauchy-seq}]
We bound $d^W((f^N)_* \mu_N, (f^{N+1})_*\mu_{N+1})$ as follows. Let $\pi_N \in \cP(\cG_0\times \cG_0)$ be defined by 
\[ \int_{\cG_0\times \cG_0} u(g,h) d\pi_N (g,h) = \int_{ X(T_{N+1})}u(f^N(\tilde x), f^{N+1} (x) )  d\mu_{N+1}(x)\] where $\tilde x = (x_2,x_4,x_6,\dots,x_{2^N-2})$ and $u \in C(\cG_{0} \times \cG_{0})$. We claim that this has marginals $(f^N)_* \mu_N$ and $(f^{N+1})_*\mu_{N+1}$. The second claim is obvious and the first follows from Euler's beta integral, stated in Lemma \ref{lemm:Euler-beta-integral}. Thus,
\begin{align*} d^W((f^N)_* \mu_N, (f^{N+1})_*\mu_{N+1})^2 &   \leq \int_{\cG_0\times\cG_0} \Vert g - h\Vert_{L^2}^2 d\pi_N(g,h) \\
& = \int_{X_{T_{N+1}}}\Vert f^N(\tilde x) - f^{N+1}(x)\Vert_{L^2}^2 d\mu_{N+1}(x).\end{align*}
We have that\footnote{This alternatively follows from the calculation proving the Gromov--Hausdorff convergence of $X(T_N^{\text{std}})$ to $\cG_0$ with $g = f^{N+1}(x)$ compared to $f^N(g_{1/2^N},\dots, g_{(2^N-1)/2^N})$. }
\begin{align*}
\Vert f^N(\tilde x) - f^{N+1}(x)\Vert_{L^2}^2 & = \frac{1}{2^{N+1}} \sum_{i=0}^{2^{N-1}-1} (x_{2i+1}-x_{2i})^2\\
& \leq \frac{1}{2^{N+1}} \sum_{i=0}^{2^{N-1}-1} (x_{2i+1}-x_{2i})\\
&  \leq \frac{1}{2^{N+1}} \sum_{i=0}^{2^N-1} (x_{i+1}-x_{i})\\
& \leq \frac{1}{2^{N+1}}
\end{align*}
Inserting this into the above shows that $(f^N)_*\mu_N$ is a Cauchy sequence as claimed. 
\end{proof}
Thus $(f^N)_* \mu_N \to q^\beta_0$ for some measure, which we suggestively write $q^\beta_0 \in \cP(\cG_0)$. Now, it remains to show that $q^{\beta}_{0}$ satisfies \eqref{eq:def-ent-meas}. 

\begin{prop}
The measure $q^{\beta}_{0}$ satisfies \eqref{eq:def-ent-meas}. More precisely,
\begin{multline}
 {\int_{\cG_{0}} u(g(t_{1}),\dots, g(t_{M}) )d q^{\beta}_{0}}(g) \\
 = \frac{\Gamma(\beta)}{\prod_{i=0}^{M} \Gamma(\beta(t_{i+1}-t_{i}))} \int_{\Sigma_{M}} u(x_{1},\dots,x_{N}) \prod_{i=0}^{M} (x_{i+1} - x_{i})^{\beta(t_{i+1}-t_{i})-1} dx_{1}\cdots dx_{M}
 \end{multline}
 for all partitions $\{t_{i}\}_{i=1}^{M}$ and continuous functions $u : \RR^{M} \to \RR$. Thus, $\QQ^{\beta}_{0}$ exists and is equal to $q^{\beta}_{0}$.
\end{prop}

The proof of this is somewhat subtle. The main idea is that in order to show that $q^{\beta}_{0}$ satisfies \eqref{eq:def-ent-meas}, for some fixed partition, it is enough to show it after we have added more points to the partition. This follows from Euler's beta integral, because the additional terms simply integrate out to give the desired integrand, assuming that $u$ does not depend on these coordinates. Thus for a given partition $0< t_{1}<\dots<t_{M}$, we can consider the standard partition $T_{N}^{\text{std}}$, for a very large $N$, and adjust the appropriate points so that they agree with the points of the partition we care about, $\{t_{i}\}$. We then show that the pushforward measure associated to this partition converges to $q_{0}^{\beta}$ weakly as $N\to\infty$, which is done by constructing a rather complicated transport plan between this pushforward measure, and that associated to the standard partition (the intuition behind the plan is that we have changed only $M$ of the elements of the partition, which is very small compared to $N$, the total number of elements, so in the transport plan, we should consider the unchanged partition elements to be the same in each, while a small number of elements that have changed do not affect things much). Because the adjusted partition satisfies \eqref{eq:def-ent-meas} for all $N$, it is not hard to show that it satisfies it in the limit, showing the desired result. 

\begin{proof} By Lemma \ref{lemm:entmeas-is-unique}, it is enough to show that for any partition $0<t_1<\dots<t_M<1$,
\[ q^\beta_0 (g_{t_1} \in dx_1,\dots, g_{t_M} \in dx_M) = \frac{\Gamma(\beta)}{\prod_{i=0}^M \Gamma(\beta(t_{i+1}-t_i)) } \prod_{i=0}^M (x_{i+1}-x_i)^{\beta(t_{i+1}-t_i) - 1}dx_1\cdots dx_M.\]
To show this, first fix the partition $0<t_1<\dots<t_M<1$ and let $\delta = \sup_{0\leq i \leq N} (t_{i+1}-t_i)$ and take any $N$ so that $1/2^N < \delta/2$. By this choice, for each $t_i$, there is a unique $j(i)$ such that \[ t_i \in [j(i)/2^N, (j(i)+1)/2^N).\] Clearly, $j(i)$ is strictly increasing and $j(i+1) \geq j(i)  +2$. Now, we modify the ``standard'' partition $T_N^{\text{std}}$ by replacing $j(i)/2^N$ by $t_i$ for each $i \in\{1,\dots,M\}$. We call this new partition $R_N$. We define a coupling $\pi$ between $(f^{R_N})_*\mu_{R_N}$ and $(f^{T_N^{\text{std}}})_*\mu_{T_N^{\text{std}}}$ by 
\[ \pi = (f^{R_N}\times f^{T_N^{\text{std}}})_* \tilde \pi\] for $\tilde \pi \in \cP(X(\{R_N\})\times X(\{T_N^{\text{std}}\})$ given by 
\begin{align*}
 & \int_{X(\{R_N\})\times X(\{T_N\})}  u(x,y) d \tilde \pi(x,y)\\
 &  = C \int_{\tilde X} u(x',x)  \prod_{i \not\in S} (x_{i+1}- x_i)^{\beta/2^N -1} \\
 & \qquad \qquad \qquad \qquad \times \prod_{i=0}^M[(y_i - x_{j(i)})^{\beta(t_i - j(i)/2^N)-1}(x_{j(i)+1} - y_i)^{\beta((j(i)+1)/2^N - t_i) -1}] dxdy
 \end{align*}
 where $S = \{ j(i) : 1\leq i \leq M\}$ and $x'$ is $x$ with the $x_{j(i)}$ terms replaced by $y_{i}$. Furthermore, we have written the normalizing constant as $C$ where
 \[ C = \frac{\Gamma(\beta)}{\prod_{i \not\in S} \Gamma(\beta/2^N) \prod_{i=1}^M [ \Gamma(\beta(t_i-j(i)/2^N)) \Gamma(\beta((j(i)+1)/2^N-t_i))]}\]
 and we are integrating over the set
 \[ \tilde X = \{ (x_1,\dots,x_N, y_1,\dots, y_M): (x) \in X(R_N), y_i \in (x_{j(i)},x_{j(i)+1})\}\subset [0,1]^{N+M}.\]  
 Again, by Euler's beta integral given in Lemma \ref{lemm:Euler-beta-integral}, it is not hard to see that $\pi$ has the desired moments. Given this, we have the bound
 \[ d^W((f^{R_N})_*\mu_{R_N},(f^{T^{\text{std}}_N})_*\mu_{T^{\text{std}}_N})^2 \leq \int_{\cG_0\times \cG_0} \Vert g - h \Vert_{L^2}^2 d\pi(g,h) = \int_{\tilde X} \Vert f^{R_N}(x') - f^{T_N}(x)\Vert_{L^2}^2 d\tilde \pi(x,y). \]
 As before, we compute
 \begin{align*}
\Vert f^{R_N}(x') - f^{T_N}(x)\Vert_{L^2}^2 & = \sum_{i=1}^M [(y_i - x_{j(i)})^2(t_i - j(i)/2) + (x_{j(i)+1}-y_i)^2 ((j(i)+1)/2-t_i)]\\
& \leq \frac {1}{2^{2N}} \sum_{i=1}^M [(t_i - j(i)/2) + ((j(i)+1)/2-t_i)]\\
& = \frac{M}{2^{2N+1}}.
 \end{align*}
 This gives 
 \[ d^W((f^{R_N})_*\mu_{R_N},(f^{T_N})_*\mu_{T_N})^2 \leq \frac{M}{2^{2N+1}}\] and thus $(f^{R_N})_*\mu_{R_N} \to q^\beta_0$ weakly as $N\to \infty$. Finally, by Euler's beta integral, it is clear that for the original partition $0<t_1<\dots<t_M<1$, we have that  
 \begin{multline*} (f^{R_N})_*\mu_{R_N} (g_{t_1} \in dx_1,\dots,g_{t_M} \in dx_M)\\ = \frac{\Gamma(\beta)}{\prod_{i=0}^M \Gamma(\beta(t_{i+1}-t_i)) } \prod_{i=0}^M (x_{i+1}-x_i)^{\beta(t_{i+1}-t_i) - 1}dx_1\cdots dx_M.\end{multline*}
Passing to the limit, this proves that $q^\beta_0$ satisfies \eqref{eq:def-ent-meas} and thus by Lemma \ref{lemm:entmeas-is-unique}, $\QQ^{\beta}_{0}$ exists and $q^{\beta}_{0} = \QQ^{\beta}_{0}$. 
\end{proof} 

Thus, we see that $\QQ^{\beta}_{0}$ exists and is uniquely specified by \eqref{eq:def-ent-meas}, and that we have the convergence 
\begin{equation*}
 (X_{N}^{\text{std}},d^{N},\mu_{N}) \xrightarrow{GH} (\cG_{0},d^{L^{2}},\QQ^{\beta}_{0}).
 \end{equation*}

\end{proof} 

Now that we have constructed $\QQ^{\beta}_{0}$, we will give some indication why one might expect that $(\cG_{0},d^{L^{2}},\QQ^{\beta}_{0})$ admits some Ricci lower bound. The first is that in \cite{RenesseSturm:EntMeasDiff}, von Renesse and Sturm give a heuristic argument that the measure $\PP^{\beta}_{0} := \Psi_{*}\QQ^{\beta}_{0} \in \cP(\cP_{0})$ (which makes $\Psi:(\cG_{0},d^{L^{2}},\QQ^{\beta}_{0}) \to (\cP_{0},d^{W},\PP^{\beta}_{0})$ into a metric measure isomorphism, so Ricci lower bounds on one space are equivalent to bounds on the other space) can formally be written in the form
\[ d\PP^{\beta}_{0}(\mu) = \frac{1}{Z_{\beta}} e^{-\beta \Ent(\mu|m)} d\PP_{0} \]
where $d\PP_{0}$ is some ``uniform'' measure on $\cP_{0}$. Because we can think of $\Ent(\mu|m)$ as a convex function on $\cP_{0}$ by Proposition \ref{coro:ricci-curv-01}, as well a fact which we have not discussed, which is that multiplying a measure by $e^{-V}$ for some convex function $V$ can only \emph{improve} Ricci bounds; cf. \cite{Sturm:MetMeas1} Proposition 4.14. Thus, from this formal point of view, we expect that $(\cG_{0},d^{L^{2}},\QQ^{\beta}_{0})$ admits some Ricci lower bound.

The second reason that one might expect such lower bounds is that von Renesse and Sturm have constructed a symmetric Dirichlet form in \cite{RenesseSturm:EntMeasDiff}, given as the closure in $L^{2}(\cG_{0})$, $(\EE,D(\EE))$, of the quadratic form 
\[ 
\EE(F) : = \int_{\cG_{0}} |\DD F(g)|^{2}_{L^{2}([0,1])} d\QQ^{\beta}_{0}(g)
\]
with domain 
\[\left\{F(g) = \varphi \left(\bangle{f_{1},g}_{L^{2}([0,1])}, \dots, \bangle{f_{m},g}_{L^{2}([0,1])} \right) : m \geq 1, \varphi \in C^{1}_{b}(\RR^{m}), f_{k} \in L^{2}([0,1]) \right\}\]
and where $\DD F(g)$ is the $L^{2}$-Fr\'echet derivative of $F$ at $g$, which for $F$ in the domain described above is 
\[ 
\DD F(g) (x) = \sum_{i=1}^{m} \partial_{i}\varphi \left(\bangle{f_{1},g}_{L^{2}([0,1])}, \dots, \bangle{f_{m},g}_{L^{2}([0,1])} \right) f_{i}(x).
\]
The existence of such a Dirichlet form is interesting for various reasons (e.g.\ see \cite{FOT:DirichletForms}) but in our case, it is relevant because D\"oring and Stannat have shown that $\EE$ satisfies a Poincar\'e inequality
 \begin{theo}[\cite{Stannat:logsobWass} Theorem 1.2]
The Dirichlet form constructed in \cite{RenesseSturm:EntMeasDiff}, $\EE$ satisfies a Poincar\'e inequality with constant less than $\frac 1\beta$, i.e.\ for all $F \in D(\EE)$
 \[ \Var_{\QQ^{\beta}_{0}}(F) \leq  \frac 1 \beta \EE(F). \]
 \end{theo}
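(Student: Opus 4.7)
My plan is to reduce to a finite-dimensional Poincar\'e inequality on the simplex $\Sigma_N$ equipped with the Dirichlet distribution $\mu_N$ (which appears as the push-forward measure in Proposition~\ref{prop:exist-entrop-meas}), establish the sharp spectral gap $1/\beta$ there, and then pass to the limit using the measured Gromov--Hausdorff convergence $(X_N^{\text{std}}, d^N, \mu_N) \xrightarrow{GH} (\cG_0, d^{L^2}, \QQ^\beta_0)$ together with density of cylinder functions in $D(\EE)$.

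First, by the definition of $D(\EE)$ it suffices to prove the inequality for the dense subclass of cylinder functions $F(g) = \varphi(\langle f_1, g\rangle, \ldots, \langle f_m, g\rangle)$, and in fact (by approximating the $f_k$ by step functions and using Lemma~\ref{prop:G0-P0-iso}/Proposition~\ref{prop:gh-conv-to-g0}) I would further reduce to functions depending only on finitely many evaluations $F(g) = \varphi(g(t_1), \ldots, g(t_N))$ for a partition $0 < t_1 < \cdots < t_N < 1$. On such $F$, both $\Var_{\QQ^\beta_0}(F)$ and $\EE(F)$ become explicit integrals against the Dirichlet density $C_{\{t_i\}} \prod_{i=0}^N (x_{i+1}-x_i)^{\beta(t_{i+1}-t_i)-1}$ on $\Sigma_N$, with the Dirichlet form rewritten (via $|\DD F|^2_{L^2} = \sum_{i,j} \partial_i\varphi\,\partial_j\varphi\,(t_{i+1}\wedge t_{j+1} - t_i \vee t_j)_+$-type expressions) as a quadratic form in $\nabla \varphi$ with the corresponding Gram matrix of indicator functions.

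The core analytic step is to prove the Poincar\'e inequality
\[
\mathrm{Var}_{\mu_N}(\varphi) \;\leq\; \frac{1}{\beta}\, \EE_N(\varphi)
\]
for smooth $\varphi$ on $\Sigma_N$, where $\EE_N$ is the finite-dimensional quadratic form obtained above. The natural route is to recognize $\EE_N$ as the Dirichlet form of a Wright--Fisher / Fleming--Viot-type diffusion on $\Sigma_N$ whose invariant measure is exactly the Dirichlet distribution with parameters $\alpha_i = \beta(t_{i+1}-t_i)$ (so $\sum \alpha_i = \beta$). For the associated generator, one can compute the action on monomials and show, by an eigenfunction/orthogonal-polynomial decomposition (the Jacobi-type polynomials diagonalize this generator), that the bottom of the non-trivial spectrum is $\beta$. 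This gives the sharp constant $1/\beta$ independent of $N$ and of the partition, which is the key feature enabling the limit.

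Finally, I would pass to the limit. Given an arbitrary cylinder $F$, write $F = F \circ f^N$ for $N$ large enough that the evaluation points $t_i$ lie in the standard partition $T_N^{\text{std}}$; this realizes both $\Var_{\QQ^\beta_0}(F)$ and $\EE(F)$ as limits of the corresponding finite-dimensional quantities on $(X_N^{\text{std}}, \mu_N)$ (using $(f^N)_* \mu_N \to \QQ^\beta_0$ weakly and the explicit form of $\DD F$ on cylinders). The uniform-in-$N$ Poincar\'e bound with constant $1/\beta$ then transfers to the limit, yielding the claim for all cylinder $F$ and hence, by density in $D(\EE)$ and lower semicontinuity of variance and the Dirichlet form, for all $F \in D(\EE)$. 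The main obstacle I anticipate is the sharp spectral gap computation for the multi-parameter Dirichlet distribution: getting the constant exactly $\beta$ (rather than a dimension-dependent bound) requires care in choosing orthogonal polynomials adapted to the Dirichlet weight, but the additivity $\sum \alpha_i = \beta$ of the parameters is precisely what makes the gap collapse onto $\beta$ in all partitions.
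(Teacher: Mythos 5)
This theorem is cited from D\"oring--Stannat and is not proved in the paper, so there is no in-paper argument to compare against; I can only assess the internal soundness of your plan.

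Your overall strategy (reduce to cylinder functions, establish a finite-dimensional Poincar\'e inequality with a constant $1/\beta$ uniform in $N$ and the partition, then pass to the limit) is a reasonable outline. However, there is a genuine gap in the ``core analytic step,'' namely the identification of the restricted form $\EE_N$ with the Dirichlet form of a Wright--Fisher diffusion. The Dirichlet form $\EE$ is an $L^2$-gradient form: on a cylinder $F(g)=\varphi(\langle f_1,g\rangle,\dots,\langle f_m,g\rangle)$ one has $\EE(F)=\int \sum_{i,j}\partial_i\varphi\,\partial_j\varphi\,\langle f_i,f_j\rangle_{L^2}\,d\QQ^\beta_0$, so the carr\'e du champ is a \emph{constant-coefficient} quadratic form determined solely by the Gram matrix $\langle f_i,f_j\rangle$ (no $g$-dependence). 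After diagonalizing, the finite-dimensional reduction is a \emph{flat} gradient form on $\RR^m$ weighted by a pushforward of $\QQ^\beta_0$. The Wright--Fisher/Fleming--Viot form, by contrast, has the state-dependent, degenerate diffusion matrix $a_{ij}(p)=p_i(\delta_{ij}-p_j)$; this is the natural form when one views the Dirichlet distribution as invariant for the neutral Wright--Fisher diffusion, but it is \emph{not} what the $L^2$-gradient structure of \cite{RenesseSturm:EntMeasDiff} restricts to on cylinders. Consequently the Jacobi-polynomial eigenvalue computation you invoke (which indeed gives gap $|\alpha|=\beta$, but \emph{for the Wright--Fisher generator}) does not directly bound the form $\EE_N$ you actually need to control, and the uniform-in-$N$ spectral gap claim for the correct (flat) finite-dimensional forms is left unestablished. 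Note also that the text points out that the finite-dimensional approximations $(X_N^{\text{std}},d^N,\mu_N)$ fail to satisfy any Bakry--\'Emery-type curvature bound, so a uniform finite-dimensional Poincar\'e constant, if it holds, must come from a sharper spectral argument rather than a curvature criterion.

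A secondary issue is the further reduction to functions of point evaluations $g(t_1),\dots,g(t_N)$: the maps $g\mapsto g(t_i)$ are not of the form $g\mapsto\langle f_i,g\rangle$ for $f_i\in L^2([0,1])$, so such $F$ lie outside the prescribed core of $D(\EE)$, and approximating point evaluations by $L^2$ pairings (e.g.\ with mollified indicators) makes the Gram matrix degenerate or blow up. The reduction should instead be phrased in terms of functionals $\langle f_i,g\rangle$ with $f_i$ a fixed orthonormal (or step-function) family, i.e.\ in terms of averaged, not pointwise, increments. To close the argument you would need either (i) a direct spectral computation for the $L^2$-gradient generator on $L^2(\cG_0,\QQ^\beta_0)$ (this is essentially what D\"oring--Stannat do, via a Jacobi-type orthogonal decomposition adapted to $\QQ^\beta_0$ rather than to Wright--Fisher), or (ii) a proof that the flat finite-dimensional forms with Dirichlet-marginal pushforward measures satisfy Poincar\'e with constant $1/\beta$ uniformly, which is a nontrivial and as-yet-unjustified claim.
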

 as well as a log-Sobolev inequality
 \begin{theo}[\cite{Stannat:logsobWass} Theorem 1.4] 
 There exists a constant $C$ (independent of $\beta$) such that for $F \in D(\EE)$
 \[ \int_{\cG_{0}} F(g)^{2} \log\left(\frac{F(g)^{2}}{ \| F \|^{2}_{L^{2}(\QQ^{\beta}_{0})}} \right) d\QQ^{\beta}_{0}(g)\leq \frac C \beta \EE(F).\]
 \end{theo}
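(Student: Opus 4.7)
The plan is to reduce the infinite-dimensional log-Sobolev inequality to a uniform finite-dimensional one for Dirichlet distributions and then pass to the limit using the measured Gromov--Hausdorff approximation of $(\cG_0, d^{L^2}, \QQ^\beta_0)$ by $(X_N^{\mathrm{std}}, d^N, \mu_N)$ established in Proposition \ref{prop:exist-entrop-meas}. First I would invoke closability of $\EE$ and the density of its cylindrical core in $D(\EE)$ to reduce the claimed inequality to functions of the form $F(g) = \varphi(g(t_1),\ldots, g(t_N))$ with $\varphi \in C^1_b(\RR^N)$, along an exhausting family of partitions. For such $F$, formula \eqref{eq:def-ent-meas} identifies both sides of the inequality with integrals against the Dirichlet distribution $\mathrm{Dir}(\alpha)$ on $\Sigma_N$ having parameters $\alpha_i = \beta(t_{i+1}-t_i)$: the left-hand side becomes the usual relative entropy of $\varphi^2/\|\varphi\|_2^2$ against $\mathrm{Dir}(\alpha)$, while $\EE(F)$ becomes a weighted $L^2$ gradient energy whose weights come from the inner products $\langle \chi_{[t_i,t_{i+1}]}, \chi_{[t_j,t_{j+1}]}\rangle_{L^2([0,1])}$.

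The heart of the proof is then a log-Sobolev inequality for $\mathrm{Dir}(\alpha)$ on $\Sigma_N$ with constant $C/\beta$ uniform in $N$ and the partition. My approach would be to lift the Dirichlet distribution to a product of independent Gammas via the representation $(Y_i/S)_{i=1}^{N+1} \sim \mathrm{Dir}(\alpha)$, where $Y_i \sim \Gamma(\alpha_i)$ are independent and $S = \sum_j Y_j \sim \Gamma(\beta)$ is independent of the vector of ratios. One then views the product Gamma measure as fibered over the values of $S$, with scaled simplices as fibers, and hopes to combine a one-dimensional log-Sobolev inequality for $\Gamma(\beta)$ on the base (whose constant scales like $\beta^{-1}$ via log-concavity of the density of $\log Y$ for $Y \sim \Gamma(\beta)$) with a conditional inequality on each fiber using a Bakry--Ledoux style fibration argument.

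The main obstacle I anticipate is ensuring the finite-dimensional constant genuinely depends on $\beta^{-1}$ alone and not on $\min_i \alpha_i^{-1}$, since naive tensorization of LSIs for the individual factors $\Gamma(\alpha_i)$ degenerates as the partition refines and individual $\alpha_i$ shrink to zero. To avoid this, my fallback would be a direct Bakry--Emery computation on the interior of $\Sigma_N$: writing $d\mathrm{Dir}(\alpha) = e^{-V}\, dx$ with $V(x) = -\sum_i (\alpha_i - 1)\log(x_{i+1}-x_i)$, one computes the carr\'e du champ and $\Gamma_2$ operator for the diffusion generator $L = \Delta - \nabla V\cdot \nabla$ adapted to the weighted inner product inherited from $\EE$, and establishes a curvature-dimension lower bound of order $\beta$ up to boundary singularities, which can be handled by replacing $\alpha_i$ with $\alpha_i + \epsilon$ and passing $\epsilon \downarrow 0$ using monotone convergence of both sides. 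The final step, passing from the uniform finite-dimensional inequality to the full infinite-dimensional statement, follows from $L^2$-lower semicontinuity of the relative entropy functional combined with the definition of $\EE$ as the $L^2$-closure of its restriction to the cylindrical core.
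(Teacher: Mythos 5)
This statement is quoted verbatim from D\"oring--Stannat and the paper you are reading gives no proof of it at all; it is cited as a theorem from \cite{Stannat:logsobWass}. So there is no ``paper's own proof'' to compare against --- you are reconstructing an argument for an external result.

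Your outline is reasonable in broad strokes (reduction to cylindrical functions, identification with a finite-dimensional LSI for Dirichlet distributions, passage to the limit), and you correctly put your finger on the real obstruction: any finite-dimensional LSI constant that scales like $\min_i \alpha_i^{-1}$ is useless, because $\alpha_i = \beta(t_{i+1}-t_i) \to 0$ as the partition refines. The problem is that your proposed fallback --- a direct Bakry--\'Emery $\Gamma_2$ computation on $\Sigma_N$ --- does not actually resolve this; in fact it runs head-on into it. The potential $V(x) = -\sum_i(\alpha_i - 1)\log(x_{i+1}-x_i)$ has $\alpha_i - 1 < 0$ for all $i$ once the partition is fine enough (since $\alpha_i \to 0 < 1$), so the terms $-(\alpha_i - 1)\log(x_{i+1}-x_i)$ are \emph{concave} rather than convex, and $\Hess V$ is unbounded below near the boundary of $\Sigma_N$. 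You therefore cannot get a curvature-dimension lower bound of order $\beta$, or of any order, by this route; the singularities are not a technicality to be softened by $\alpha_i \mapsto \alpha_i + \epsilon$, they reflect a genuine failure of log-concavity. This paper itself makes exactly this observation when it remarks that $(X_N^{\text{std}}, d^N, \mu_N)$ admits \emph{no} generalized Ricci lower bound (by showing $\Hess V$ is unbounded below), which is precisely the failure of the Bakry--\'Emery criterion you would need. The eventual Theorem~\ref{theo:QQbeta-no-ric-geqK} --- that $(\cG_0, d^{L^2}, \QQ^\beta_0)$ has no generalized Ricci lower bound --- makes it clear that no curvature-dimension argument, finite- or infinite-dimensional, can underlie this LSI; the D\"oring--Stannat result is interesting precisely because it holds despite the failure of Ricci bounds, so it must exploit some structure other than convexity. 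Your first idea (the Gamma/size-biasing decomposition with a fibration argument) is the more promising direction, but as you note it is incomplete, and the burden is to show that the conditional fiber inequalities do not introduce $\min_i \alpha_i^{-1}$; that is where the actual work lies and your sketch stops short of it.
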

 
Thus, while Theorems \ref{theo:log-sob} and \ref{theo:poinc} certainly do not show that such inequalities imply lower Ricci bounds, they do suggest that $(\cG_{0},d^{L^{2}},\QQ^{\beta}_{0})$ has some Ricci lower bounds (in fact, examining these theorems, they suggest the tantalizingly simple result that the Ricci curvature is bounded from below by $\beta$). Upon trying to prove this, the first indication that this might not be true is from the fact that the spaces $(X_{N}^{\text{std}},d^{N},\mu_{N})$ can be seen not to have any generalized Ricci lower bounds, by considering the function $V$, defined by $\mu_{N} = e^{-V} \Leb_{[0,1]^{2^{N}-1}}$. It is then a relatively straightforward calculation to show that $\Hess V$ cannot be bounded from below as a quadratic form on $X_{N}^{\text{std}}$, and by examining \cite{Sturm:MetMeas1} Theorem 4.9, this shows there are no generalized Ricci lower bounds on $(X^{\text{std}}_{N},d^{N},\mu_{N})$. This does not prove that $(\cG_{0},d^{L^{2}},\QQ^{\beta}_{0})$ does not have lower Ricci bounds.

 However, we have managed to show that there are no lower Ricci bounds using intuition based on the fact that a Hilbert space (in our case $L^{2}([0,1])$) equipped with a measure (in our case $\QQ^{\beta}_{0}$) having nonnegative Ricci curvature must be log-concave (and in fact this is also sufficient). Further details about this can be found in \cite[Chapter 9]{AGS:GradFlow}. These ideas allow us to prove

\begin{theo}\label{theo:QQbeta-no-ric-geqK}
There is no $K \in \RR, \beta>0$ such that $(\cG_{0},d^{L^{2}},\QQ^{\beta}_{0})$ has generalized $\Ric \geq K$. 
\end{theo}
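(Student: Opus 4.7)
I would argue by contradiction: assume $(\cG_{0},d^{L^{2}},\QQ^{\beta}_{0})$ has generalized $\Ric \geq K$ for some $K \in \RR$, and derive from this a Pr\'ekopa--Leindler-type inequality for $\QQ^{\beta}_{0}$ that can then be tested against an explicit finite-dimensional marginal. The strategy exploits the fact that although $\cG_{0}$ is infinite-dimensional, the finite-dimensional marginals of $\QQ^{\beta}_{0}$ are Dirichlet distributions with parameter $\beta/N$, which are very far from log-concave once $N>\beta$.

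For the reduction, given Borel sets $A,B \subset \cG_{0}$ of positive $\QQ^{\beta}_{0}$-measure, set $\mu_{0} = \QQ^{\beta}_{0}|_{A}/\QQ^{\beta}_{0}(A)$ and $\mu_{1} = \QQ^{\beta}_{0}|_{B}/\QQ^{\beta}_{0}(B)$, so $\Ent(\mu_{i}|\QQ^{\beta}_{0}) = -\log \QQ^{\beta}_{0}(A)$ and $-\log\QQ^{\beta}_{0}(B)$ respectively. The weak a.c.\ $K$-displacement convexity hypothesis supplies a Wasserstein geodesic $\mu_{t}$ in $\cP(\cG_{0})$ satisfying \eqref{eq:k-disp-covex}. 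By Lemma \ref{lemm-g0-geo} every geodesic in $\cG_{0}$ is a straight line in $L^{2}$, so via Proposition \ref{prop:dynamic-trans-plan} the corresponding dynamical plan is concentrated on straight segments and $\mu_{t} = (F_{t})_{*}\pi$ with $F_{t}(g_{0},g_{1}) := (1-t)g_{0} + tg_{1}$ and $\pi \in \Pi(\mu_{0},\mu_{1})$ an optimal coupling supported in $A \times B$; in particular $\supp(\mu_{t}) \subset (1-t)A + tB$. Applying Jensen's inequality to the convex function $x \log x$ against the normalized restriction of $\QQ^{\beta}_{0}$ to this set yields $\Ent(\mu_{t}|\QQ^{\beta}_{0}) \geq -\log \QQ^{\beta}_{0}((1-t)A + tB)$ (and trivially if $\mu_{t}$ fails to be absolutely continuous). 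Since $\cG_{0} \subset L^{2}$ has diameter at most $1$, $d^{W}(\mu_{0},\mu_{1})^{2} \leq 1$, and combining these facts gives
\[
\QQ^{\beta}_{0}\bigl((1-t)A+tB\bigr) \geq \QQ^{\beta}_{0}(A)^{1-t}\,\QQ^{\beta}_{0}(B)^{t}\, e^{-|K|/8}
\]
for all Borel $A,B$ of positive $\QQ^{\beta}_{0}$-measure.

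I would then push this inequality to a simplex. Choose $N > \beta$ so that $\alpha := \beta/N < 1$, and let $\pi_{N}: L^{2} \to \RR^{N-1}$ be the linear evaluation $g \mapsto (g(1/N),\dots,g((N-1)/N))$. Linearity of $\pi_{N}$ gives $(1-t)\pi_{N}^{-1}(\tilde A) + t\pi_{N}^{-1}(\tilde B) \subset \pi_{N}^{-1}((1-t)\tilde A + t\tilde B)$, so applying the above inequality to $A = \pi_{N}^{-1}(\tilde A) \cap \cG_{0}$ and $B = \pi_{N}^{-1}(\tilde B) \cap \cG_{0}$ shows that the pushforward $(\pi_{N})_{*}\QQ^{\beta}_{0}$ on $\Sigma_{N-1}$---which by \eqref{eq:def-ent-meas} has density proportional to $\prod_{i=0}^{N-1}(x_{i+1}-x_{i})^{\alpha-1}$---satisfies the same Pr\'ekopa--Leindler inequality with the same constant $e^{-|K|/8}$. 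In coordinates $u_{i} = x_{i}-x_{i-1}$ (with $\sum u_{i}=1$), take small neighborhoods of two opposite simplex vertices: $\tilde A_{\epsilon} = \{u_{2},\dots,u_{N} < \epsilon\}$ and $\tilde B_{\epsilon} = \{u_{1},\dots,u_{N-1} < \epsilon\}$. Using $\int_{0}^{\epsilon} u^{\alpha-1}\,du = \epsilon^{\alpha}/\alpha$, direct integration gives $(\pi_{N})_{*}\QQ^{\beta}_{0}(\tilde A_{\epsilon})$ and $(\pi_{N})_{*}\QQ^{\beta}_{0}(\tilde B_{\epsilon})$ each comparable to $\epsilon^{(N-1)\alpha}$; on the other hand, the midpoint set $\tfrac{1}{2}(\tilde A_{\epsilon} + \tilde B_{\epsilon})$ is contained in $\{u_{1},u_{N} \in (\tfrac12 - C\epsilon,\tfrac12+C\epsilon),\ u_{i} \in (0,\epsilon)\text{ for }1<i<N\}$ and has measure at most of order $\epsilon^{(N-2)\alpha+1}$. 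At $t=1/2$ the Pr\'ekopa--Leindler inequality thus rearranges to $\epsilon^{1-\alpha} \geq c\, e^{-|K|/8}$ for a positive constant $c$; since $\alpha<1$, letting $\epsilon\to 0$ produces the desired contradiction.

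The main obstacle I anticipate is confirming that the Wasserstein geodesic $\mu_{t}$ supplied by the displacement convexity hypothesis really is the straight-line pushforward of an optimal coupling with support in $A \times B$; this uses both the total convexity of $\cG_{0}$ in $L^{2}$ and the uniqueness of Hilbert-space geodesics (Lemma \ref{lemm-g0-geo}). The remaining steps---Jensen, pushforward under $\pi_{N}$, and the Dirichlet integral estimates---are routine once the reduction to Pr\'ekopa--Leindler is in hand.
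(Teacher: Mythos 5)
Your argument is correct, and the overall skeleton mirrors the paper's: both proofs first derive a log-concavity inequality for $\QQ^{\beta}_{0}$ from the hypothesized generalized $\Ric \geq K$ (using that geodesics in $\cG_{0}$ are straight lines, that optimal transport moves mass along geodesics so $\supp\mu_{t}\subset (1-t)A+tB$, Jensen applied to $x\log x$ to get $\Ent(\mu_{t}|\QQ^{\beta}_{0}) \geq -\log\QQ^{\beta}_{0}((1-t)A+tB)$, and $\diam(\cG_{0})\leq 1$), and then exhibit sets that violate it. Where the two proofs genuinely diverge is in the choice of test sets and the source of the contradiction. The paper works directly on $\cG_{0}$ with $A_{s}=\{g:g(s)>1/2\}$, $B=\{g:g(s)>0\}$, and fixed $t$, and the violation comes from sending $s\to 0$, using $\Gamma(\beta s)\to\infty$ so that $\QQ^{\beta}_{0}(C_{s}(t))/\QQ^{\beta}_{0}(A_{s})^{1-t}\to 0$. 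You instead push the inequality forward under a linear evaluation map $\pi_{N}$ to the $N$-increment Dirichlet simplex, choose $N>\beta$ so that the exponent $\alpha-1=\beta/N-1<0$, and test it against $\epsilon$-neighborhoods of two opposite vertices, obtaining $\epsilon^{1-\alpha}\gtrsim e^{-|K|/8}$ and letting $\epsilon\to 0$. Your version has the virtue of making explicit the connection to the finite-dimensional approximations $(X_{N}^{\text{std}},d^{N},\mu_{N})$ that the paper already observed fail to have uniform Ricci lower bounds, and it isolates the mechanism --- failure of log-concavity of Dirichlet marginals with parameters $<1$ --- rather cleanly. The paper's version avoids the pushforward step by working with a one-parameter family of cylinder sets and taking the singular limit directly in the Beta integral.

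One small remark: the "main obstacle" you flag, namely that the geodesic $\mu_{t}$ provided by the $K$-convexity hypothesis is the straight-line interpolation of an optimal coupling with support in (the closure of) $A\times B$, is indeed the one nontrivial input; the paper handles exactly this point in a footnote citing \cite[Proposition 2.10]{LottVillani:RicciViaTspt} together with the total geodesicity of $\cG_{0}$ in $L^{2}$ and uniqueness of Hilbert-space geodesics (Lemma \ref{lemm-g0-geo}). With that citation supplied, your proof is complete.
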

\begin{proof}
Suppose otherwise, so there is some $K$ such that $\Ric \geq K$. Let, for $s \in (0,1)$
\[ A_{s}: = \{g \in \cG_{0} : g(s) > 1/2 \}\]
\[ B := \{g \in\cG_{0} : g(s) > 0 \}.\]
It is clear that
\begin{equation} \label{eq:AB-convex-comb-C-def} t A_{s} + (1-t) B = \{ g\in \cG_{0} : g(s) > t/2\} := C_{s}(t). \end{equation} 
Notice that 
\[ \QQ^{\beta}_{0}(C_{s}(t)) = \frac{\Gamma(\beta)}{\Gamma(\beta s) \Gamma(\beta (1-s))} \int_{t/2}^{1} x^{\beta s - 1}(1-x)^{\beta (1-s) -1} dx.\]
In particular, $\QQ^{\beta}_{0}(B) = 1$, and for $s \in (0,1)$ and $t \in[0,1]$, $\QQ^{\beta}_{0} (C_{s}(t)) > 0$. Thus, define 
\[ \mu({s}) := \frac{1}{\QQ^{\beta}_{0}(A_{s})} \chi_{A_{s}} \QQ^{\beta}_{0} \in \cP(\cG_{0}) \]
and by assumption, there is a geodesic $\mu(s)_{t}$ between $\mu(s)$ and $\QQ^{\beta}_{0}$ such that $\Ent(\mu(s)_{t}|\QQ^{\beta}_{0})$ is $K$-convex, as in the definition of generalized $\Ric \geq K$.  Because \[\Ent(\mu(s) | \QQ^{\beta}) = \int_{\cG_{0}}  \frac{1}{\QQ^{\beta}_{0}(A_{s})} \chi_{A_{s}}  \log\left( \frac{1}{\QQ^{\beta}_{0}(A_{s})} \chi_{A_{s}} \right) d\QQ^{\beta}_{0} = - \log(\QQ^{\beta}_{0}(A_{s})) < \infty\]
(and clearly $\Ent(\QQ^{\beta}_{0},\QQ^{\beta}_{0}) = 0$) so by assumptions of $K$-convexity, we must have that \[\Ent(\mu(s)_{t}|\QQ^{\beta}_{0}) <\infty,\] in particular implying that $\mu(s)_{t} \ll \QQ^{\beta}_{0}$. Thus, we can write $\mu(s)_{t} = \rho(s)_{t} \QQ^{\beta}_{0}$, and we have that by \eqref{eq:AB-convex-comb-C-def}, we see that $\mu(s)_{t}$ is supported on $C_{s}(t)$\footnote{This follows from the fact that optimal transport maps mass along geodesics of the underlying space. This is intuitively obvious, as if not, we could move along a geodesic between the endpoints, reducing the total distance traveled, and it follows rigorously from \cite[Proposition 2.10]{LottVillani:RicciViaTspt}. Thus, the support of $\mu(s)_{t}$ is contained in $\cup_{\gamma} \gamma(t)$ where the union is over all geodesics $\gamma:[0,1] \to \cG_{0}$ with $\gamma(0) \in A_{s}$ and $\gamma(1) \in B$. Because $\cG_{0}$ is a totally geodesic subset of a Hilbert space, we have that this union is just $C_{s}(t)$. }, implying that
\begin{align*} \Ent(\mu(s)_{t}| \QQ^{\beta}_{0}) & = \int_{\cG_{0}} \rho(s)_{t}\log\rho(s)_{t} d\QQ^{\beta}_{0}\\
& = \int_{\cG_{0}} \log \rho(s)_{t} d\mu(s)_{t}\\
& = \int_{\cG_{0}} \log\left( \frac{d\mu(s)_{t}}{d\QQ^{\beta}_{0}|_{C_{s}(t)}} \frac{1}{\QQ^{\beta}_{0}(C_{s}(t)) }\right) d\mu(s)_{t}\\
& = \int_{\cG_{0}} \log\left( \frac{d\mu(s)_{t}}{d\QQ^{\beta}_{0}|_{C_{s}(t)}}\right) d\mu(s)_{t} -  \int_{\cG_{0}} \log\left({\QQ^{\beta}_{0}(C_{s}(t)) }\right) d\mu(s)_{t} \\
& = \Ent(\mu(s)_{t}|\QQ^{\beta}_{0}|_{C_{s}(t)}) - \log\left({\QQ^{\beta}_{0}(C_{s}(t)) }\right)\\
& \geq  - \log\left({\QQ^{\beta}_{0}(C_{s}(t)) }\right)
\end{align*}
where $\QQ^{\beta}_{0}|_{C_{s}(t)} = \frac{1}{\QQ^{\beta}_{0}(C_{s}(t))} \chi_{C_{s}(t)} \QQ^{\beta}_{0}$.
Combining this with the assumed $K$-convexity of the entropy functional along the path $\mu(s)_{t}$, we thus have that 
\begin{align*}
- \log \left(\QQ^{\beta}_{0}(C_{s}(t))\right) & \leq \Ent( \mu(s)_{t}|\QQ^{\beta}_{0})\\
&  \leq (1-t)\Ent(\mu(s)|\QQ^{\beta}_{0}) + t\underbrace{\Ent(\QQ^{\beta}_{0}|\QQ^{\beta}_{0})}_{=0} - \frac K 2 t(1-t) d^{W}(\mu(s),\QQ^{\beta}_{0})^{2}.
\end{align*}

This implies that, because $\Ent(\mu(s)|\QQ^{\beta}_{0}) = -\log\left( \QQ^{\beta}_{0}(A_{s})\right)$
\begin{equation}\label{eq:log-convex-gen-K} \log\left(\QQ^{\beta}_{0} (C_{s}(t)) \right) \geq (1-t) \log \left( \QQ^{\beta}_{0}(A_{s})\right) + \frac K 2 t(1-t) d^{W}(\mu(s),\QQ^{\beta}_{0})^{2} .\end{equation}
Because $\diam(\cG_{0}) = 1$, we must have that $d^{W}(\mu(s),\QQ^{\beta}) \in[0,1]$, so rearranging \eqref{eq:log-convex-gen-K} and using this observation, we have that 
\begin{equation}
 \log \left( \frac{\QQ^{\beta}_{0}(C_{s}(t))}{(\QQ^{\beta}_{0}(A_{s}))^{1-t}} \right) \geq \frac K 2 t(1-t),
 \end{equation}
 implying that for all $s,t \in (0,1)$
 \begin{equation}\label{eq:final-cond-from-log-conc}
  \frac{\QQ^{\beta}_{0}(C_{s}(t))}{(\QQ^{\beta}_{0}(A_{s}))^{1-t}} \geq \exp\left({\frac K 2 t(1-t)}\right).
 \end{equation}
 We will show that for a fixed $t\in(0,1)$ 
 \begin{equation} \label{eq:lim-final-cond-from-log-conc}
  \lim_{s\to 0}   \frac{\QQ^{\beta}_{0}(C_{s}(t))}{(\QQ^{\beta}_{0}(A_{s}))^{1-t}} = 0,
 \end{equation}
  contradicting \eqref{eq:final-cond-from-log-conc}. To see this, note that 
 \begin{equation}
  \frac{\QQ^{\beta}_{0}(C_{s}(t))}{(\QQ^{\beta}_{0}(A_{s}))^{1-t}}  = \left( \frac{\Gamma(\beta)}{\Gamma(\beta s)\Gamma(\beta (1-s))}\right)^{t} \frac{\int_{t/2}^{1} x^{\beta s - 1}(1-x)^{\beta(1-s) -1} dx}{\left( \int_{1/2}^{1} x^{\beta s -1} (1-x)^{\beta(1-s) -1} \right)^{1-t}}.
 \end{equation}
 It is not hard to see that because we have fixed $t\in(0,1)$, all of the terms have finite and nonzero limits, except for $\Gamma(\beta s)$ which approaches $\infty$, showing \eqref{eq:lim-final-cond-from-log-conc}, and thus showing that there cannot be any generalized Ricci lower bounds on $(\cG_{0},d^{L^{2}},\QQ^{\beta}_{0})$. 
\end{proof}

\subsection{Other Measures}

The failure of the entropic measure to admit generalized Ricci bounds leads one to the following 
\begin{quest}
Are there any reasonable measures on $(\cG_{0},d^{L^{2}})$ with lower Ricci bounds? Here, ``reasonable'' is certainly open to interpretation, but one would at least want a measure supported on all of $\cG_{0}$.
\end{quest}
 By repeating the proof of Theorem \ref{theo:QQbeta-no-ric-geqK}, and appropriate modification of Theorems 9.4.10 and 9.4.11 in \cite{AGS:GradFlow} (the stated theorems are given for $K=0$, but the proof for general $K$ goes through with basically no change) we have the following necessary and sufficient condition

\begin{theo}
For a probability measure $\MM \in \cP(\cG_{0})$, the space $(\cG_{0},d^{L^{2}},\MM)$ has generalized $\Ric \geq K$ if and only if $\MM$ is $K$-log-concave, in the sense that for open sets $A,B \subset \cG_{0}$ 
\begin{equation}\label{eq:K-log-concave}
\log \left( \MM((1-t) A + tB) \right) \geq (1-t) \log \left( \MM(A)\right)  + t \log\left( \MM (B)\right) + \frac K 2 t(1-t) d^{W}(\MM|_{A},\MM|_{B})^{2}
\end{equation}
where $\MM|_{A} : = \frac{1}{\MM(A)} \chi_{A} \MM$, and similarly for $\MM|_{B}$, and $d^{W}$ is the Wasserstein distance on $\cP(\cG_{0})$. In fact, it is enough to check \eqref{eq:K-log-concave} for the balls $A = B_{r}(f)$, $B = B_{r}(g)$ for all $0<r<r_{0}$ for any $r_{0}>0$ and $f,g \in \cG_{0}$. 
\end{theo}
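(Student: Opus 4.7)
The plan is to prove both directions of the equivalence, with the harder direction being the passage from $K$-log-concavity back to generalized $\Ric \geq K$.

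\textbf{Forward direction.} First, suppose $(\cG_0, d^{L^2}, \MM)$ has generalized $\Ric \geq K$. Fix open sets $A, B \subset \cG_0$ of positive $\MM$-measure and consider the normalized restrictions $\MM|_A$ and $\MM|_B$, which are absolutely continuous with respect to $\MM$. By hypothesis there is a geodesic $\mu_t$ in $(\cP(\cG_0), d^W)$ between them along which entropy is $K$-convex. By Lemma \ref{lemm-g0-geo}, $\cG_0$ is totally convex in the ambient Hilbert space $L^2([0,1])$, so geodesics are straight lines, and hence (exactly as in the proof of Theorem \ref{theo:QQbeta-no-ric-geqK}) optimal transport plans move mass along straight-line segments, giving $\supp(\mu_t) \subseteq C_t := (1-t)A + tB$. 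The rest mirrors the entropy calculation in that same proof: decomposing
\[
\Ent(\mu_t \mid \MM) = \Ent(\mu_t \mid \MM|_{C_t}) - \log \MM(C_t) \geq -\log \MM(C_t),
\]
and using $\Ent(\MM|_A \mid \MM) = -\log \MM(A)$ together with the analogous identity for $B$, the assumed $K$-convexity inequality rearranges precisely into the claimed $K$-log-concavity inequality.

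\textbf{Reverse direction.} Next, assume $\MM$ is $K$-log-concave. Following the template of \cite[Theorems~9.4.10 and 9.4.11]{AGS:GradFlow} (whose $K=0$ proofs go through without essential change for general $K$), the strategy is to approximate arbitrary absolutely continuous $\mu_0 = \rho_0 \MM$, $\mu_1 = \rho_1 \MM$ by finite ``ball-sums'' of the form $\sum_i \lambda_i \MM|_{B_{r_i}(f_i)}$. For such piecewise constant densities the $K$-log-concavity hypothesis applied to pairs of balls directly yields the entropy inequality along the straight-line (hence geodesic, by Lemma \ref{lemm-g0-geo}) displacement interpolation obtained by gluing the pairwise interpolations through an optimal coupling of the two ball-sum decompositions. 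One then shows that these approximate curves converge to a geodesic in $\cP(\cG_0)$ between $\mu_0$ and $\mu_1$, and uses lower semicontinuity of the entropy functional (established via the Legendre-transform representation in the proof of Theorem \ref{theo:stab-gh-conv}) to transfer the $K$-convexity inequality to the limit.

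\textbf{Reduction to balls.} For the final claim, suppose the $K$-log-concavity inequality holds for all pairs $A = B_r(f)$, $B = B_r(g)$ with $0 < r < r_0$. Arbitrary open sets can be approximated from within by disjoint countable unions of small balls, and one transfers the pairwise ball inequalities to the inequality for the union by decomposing the relative densities of $\MM|_A, \MM|_B$ over the ball partition, taking convex combinations, and again invoking lower semicontinuity of entropy, together with the linearity of Minkowski sums $(1-t)A + tB$ in their arguments.

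The main obstacle will be the reverse direction: constructing genuine geodesics in $\cP(\cG_0)$ along which entropy is $K$-convex requires optimal transport theory on the infinite-dimensional space $L^2([0,1])$, in particular existence of Brenier-type maps between absolutely continuous measures, stability of displacement interpolations under weak convergence of their endpoints, and continuity/semicontinuity properties of entropy. These are precisely the technical points treated in the Hilbert-space generality of \cite{AGS:GradFlow}, and what makes their machinery applicable here is the total convexity of $\cG_0 \subset L^2([0,1])$ (Lemma \ref{lemm-g0-geo}), which guarantees that geodesics and Minkowski sums stay in $\cG_0$ and that the ambient Hilbert-space arguments descend cleanly.
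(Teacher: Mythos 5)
Your proposal is correct and takes the same approach as the paper. The paper's own ``proof'' is essentially a citation --- the forward direction repeats the argument of Theorem~\ref{theo:QQbeta-no-ric-geqK} (entropy decomposition over $C_t=(1-t)A+tB$ using total convexity of $\cG_0$), while the converse and the reduction to balls are delegated to \cite[Theorems 9.4.10 and 9.4.11]{AGS:GradFlow} with the remark that the $K=0$ arguments there generalize --- and your sketch (ball-sum approximation, glued displacement interpolations, lower semicontinuity of entropy) is a faithful reconstruction of what those references accomplish.
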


This seems to be a difficult condition to verify, but as in Theorem \ref{theo:QQbeta-no-ric-geqK}, it seems possible that it could rule out Ricci lower bounds for a given $\MM$. Finally, we mention that Theorem \ref{theo:stab-gh-conv} gives a possible method for constructing measures $\MM \in \cP(\cG_{0})$ with lower Ricci bounds, by constructing measures on $X_{N}$ with lower Ricci bounds, whose pushforwards converge to $\MM$. This is certainly a good way to construct {a} measure with lower Ricci bounds, because for  any sequence of measures $\MM_{N}$ on $X_{N}^{\text{std}}$, by Prokhorov's theorem, we can extract a subsequence such that $(X_{N'}^{\text{std}},d_{N},\MM_{N'}) \xrightarrow{GH} (\cG_{0},d^{L^{2}},\MM)$ for some $\MM \in \cP(\cG_{0})$. However, it is quite tricky to choose the $\MM_{N}$ appropriately, so that the limit $\MM$ is useable in any way. For example, it is easy to show that by taking normalized Lebesgue measure on $X_{N}^{\text{std}}$ we have nonnegative Ricci curvature bounds, so some subsequence converges to a measure $\MM \in \cP(\cG_{0})$ with nonnegative Ricci curvature. However, we cannot find a reasonably nice description of this measure, and have not been able to prove any interesting properties about it, other than the fact that it has nonnegative Ricci curvature. For example, we are unable determine the support, which is quite unsettling, as if the support of $\MM$ is not all of $\cG_{0}$, then this could be a highly trivial statement (for example $\delta_{1} \in \cP(\cG_{0})$ where $1$ is the function $t\mapsto 1$ has nonnegative Ricci curvature, but this statement means absolutely nothing, as the only measure in $\cP(\cG_{0})$ which is absolutely continuous with respect to $\delta_{1}$ is $\delta_{1}$ {itself}.)

As a final remark, we note that further exploration of this topic could be aided by the following description of geodesics in $\cP(X)$ where $X$ is a Hilbert space.\footnote{We have not  discussed noncompact base spaces in our treatment of Wasserstein space, but the reader should be reassured that most things discussed above are true, and the biggest changes is that we must restrict to measures with finite second moment, $\int_{X} d(x,x_{0})^{2} d\mu(x) < \infty$ for all $x_{0}\in X$ and the weak* topology is replaced by saying that $\mu_{k}\to \mu$ if it converges in the dual topology of continuous bounded functions on $X$, i.e.\ $C_{b}(X)^{*}$ and furthermore that $\int_{X} d(x_{0},x)^{2} d\mu_{k}(x) \to \int_{X} d(x_{0},x)^{2} d\mu(x)$. Anyways, as the reader will soon see, we will not actually be interested in geodesics in $\cP(L^{2}([0,1]))$, just of those in the totally geodesic subset $\cP(\cG_{0})$, which certainly fits in our framework discussed previously. See the conclusion (Section \ref{sect:conclusion}) for further bibliographic references. } We explain the notation in the subsequent paragraph.
\begin{theo}[\cite{AGS:GradFlow} Theorem 6.2.10] 
For $X$ a separable Hilbert space, suppose that $\mu \in \cP_{2}^{r}(X)$, and $\nu \in \cP_{2}(X)$ with bounded support, then there exists a locally Lipschitz, ${d^{2}}/{2}$-concave, maximal Kantorovich potential $\varphi$ such that the unique geodesic in $\cP_{2}(X)$ between $\mu$ and $\nu$ is given by $(r_{t})_{*} \mu$, where \[ r_{t}(x) = x - t \nabla \varphi(x) .\]
\end{theo}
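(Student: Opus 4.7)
The plan is to adapt the McCann--Brenier polar factorization argument, which we sketched in the Riemannian setting as Theorem \ref{theo:polar-fact-riem-mflds}, to the Hilbert space $X$. The strategy has five steps.

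First, I would establish Kantorovich duality: by a Hahn--Banach / minimax argument (in spirit the same as the one used to prove the gluing lemma, Lemma \ref{lemm:gluing}) one obtains
\[
 \tfrac{1}{2}\, d^{W}(\mu,\nu)^{2}
 \;=\; \sup_{\varphi}\Bigl( \int_{X} \varphi\, d\mu + \int_{X} \varphi^{d^{2}/2}\, d\nu \Bigr),
\]
the supremum running over $d^{2}/2$-concave $\varphi$. The boundedness of $\supp\nu$ combined with standard uniform estimates on $d^{2}/2$-concave functions on bounded sets lets one extract a maximizer $\varphi$ by Arzel\`a--Ascoli; after replacing $\varphi$ by its double $d^{2}/2$-transform one can take $\varphi$ itself to be $d^{2}/2$-concave.

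Next I would derive regularity of $\varphi$ from $d^{2}/2$-concavity. The identity $\varphi(x) = \inf_{y}\bigl[\tfrac{1}{2}|x-y|^{2} - \psi(y)\bigr]$ shows that $\tfrac{1}{2}|x|^{2} - \varphi(x)$ is (after suitable truncation) convex and locally Lipschitz on bounded sets; this is the exact analogue of Proposition \ref{prop:c-conc-funct-prop}. The key next step is to argue that $\varphi$ is differentiable $\mu$-a.e. In finite dimensions this would be Rademacher's theorem, but in an infinite-dimensional Hilbert space $X$ this is precisely the reason for the regularity hypothesis $\mu \in \cP_{2}^{r}(X)$: by definition, measures in $\cP_{2}^{r}(X)$ give zero mass to the exceptional set where a $d^{2}/2$-concave function fails to be differentiable. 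With this in hand, $\nabla\varphi$ is $\mu$-a.e.\ defined and the map $r_{t}(x) = x - t\nabla\varphi(x)$ is well defined $\mu$-a.e.

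Third, I would use the $c$-cyclical monotonicity of the support of an optimal plan $\pi \in \Pi(\mu,\nu)$ (cf.\ \cite[Theorem 5.20]{Villani:OptTspt}) to show that $\supp\pi$ lies in the graph of $r_{1}$, so $\pi = (\mathrm{Id},r_{1})_{*}\mu$. Fourth, I would verify that $\mu_{t} := (r_{t})_{*}\mu$ is a constant-speed geodesic by the same computation used in the corollary after Proposition \ref{prop:dynamic-trans-plan}: the plan $(r_{s},r_{t})_{*}\mu$ is admissible and yields
\[
 d^{W}(\mu_{s},\mu_{t})^{2} \;\leq\; (t-s)^{2} \int_{X} |\nabla\varphi|^{2}\, d\mu \;=\; (t-s)^{2}\, d^{W}(\mu,\nu)^{2},
\]
and the triangle inequality forces equality. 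Finally, uniqueness of the geodesic follows because any geodesic $t \mapsto \mu_{t}'$ comes from a dynamical optimal plan $\Theta$ by Proposition \ref{prop:dynamic-trans-plan}, hence from an optimal static plan $(e_{0},e_{1})_{*}\Theta$; but regularity of $\mu$ together with strict convexity of $\tfrac{1}{2}|\cdot|^{2}$ forces the optimal plan to be unique and equal to $(\mathrm{Id},r_{1})_{*}\mu$, from which $\mu_{t}' = (r_{t})_{*}\mu$.

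The hard part will be the differentiability step, namely justifying that $\nabla\varphi$ exists $\mu$-almost everywhere. This is the entire reason one restricts to $\mu \in \cP_{2}^{r}(X)$ rather than to arbitrary absolutely continuous $\mu$: no general infinite-dimensional Rademacher theorem is available, so one must build the class $\cP_{2}^{r}(X)$ precisely to annihilate the ``small sets'' (in the sense of $c$-concave exceptional sets, or Gauss-null sets, depending on the formulation) on which differentiability of a $d^{2}/2$-concave function can fail. Once that technical point is absorbed into the definition, the remaining steps proceed in close parallel to the Riemannian argument underlying Theorem \ref{theo:polar-fact-riem-mflds}.
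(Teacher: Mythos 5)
The paper does not prove this theorem --- it is quoted from \cite[Theorem 6.2.10]{AGS:GradFlow} as a tool for possible future work --- so there is no in-text argument to compare your proposal against. Your sketch tracks the standard AGS/McCann strategy closely, but the step you rightly flag as the hard part is more delicate than your phrasing suggests. You write that ``by definition'' measures in $\cP_2^r(X)$ annihilate the set where a $d^2/2$-concave function fails to be differentiable. The definition in fact makes no reference to differentiability: $\cP_2^r(X)$ consists of measures vanishing on Gaussian null sets. The bridge is a genuine theorem of infinite-dimensional analysis, due to Aronszajn, Mankiewicz, Phelps and others, that in a separable Hilbert (or Banach) space a continuous convex --- hence locally Lipschitz --- function is Gateaux differentiable outside a Gaussian null set. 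Applied to the convex function $\frac{1}{2}|x|^2 - \varphi(x)$, it is this theorem that makes $\cP_2^r(X)$ the right hypothesis; your hedge at the end points in the correct direction, but ``by definition'' elides the main content of the step, which is precisely where the dimension dependence of the whole theory lives.

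A smaller issue is the appeal to Proposition~\ref{prop:dynamic-trans-plan} for uniqueness. That proposition, stated for compact $X$, builds a dynamical plan realizing a \emph{given} optimal static plan, whereas your uniqueness argument needs the converse --- that every geodesic in $\cP_2(X)$ is induced by some optimal dynamical plan. This is true (cf.\ the discussion of Wasserstein geodesics in \cite{AGS:GradFlow} and \cite[Corollary 7.22]{Villani:OptTspt}), but it is a separate statement, and in the noncompact Hilbert-space setting one has to invoke the finite-second-moment hypothesis to get tightness before the measurable-selection machinery applies. With those two points made explicit, the overall architecture is sound.
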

In the above, $\cP_{2}(X)$ is the set of probability measures $\mu$ with $\int_{X} |x-x_{0}|^{2} d\mu(x) < \infty$ for some (and thus all) $x_{0}$, $\cP_{2}^{r}(X)$ is the set of ``regular measures'' with finite second moments, where a measure $\mu$ is ``regular'' if $\mu(B) = 0$ for for all Borel set $B$ with the property that $\gamma(B) = 0$ for any measure $\gamma \in \cP(X)$ such that \[L_{*} \gamma = \frac{1}{\sqrt{2\pi\sigma }} e^{-|t-m|^{2}/2\sigma^{2}} dt\] for all functionals $L \in X^{*}$. (Such measures $\gamma$ are called Gaussian measures, and such sets $B$ are called Gaussian null sets). We furthermore note that $\nabla\varphi$ is the ``Gateaux differential of $\varphi$,'' defined by 
\[ \varphi(x+y) = \varphi(x) +  \bangle{\nabla\varphi(x), y} + o(\|y\|) .\]
Finally, the requirement that $\varphi$ is a maximal Kantorovich potential means that $(\varphi,\varphi^{c})$ is a maximizing pair, achieving the value of
\[ \sup_{(\varphi, \psi) \in C_{b}(X)\times C_{b}(X)} \left\{ \int_{X} \varphi(x) d\mu(x) + \int_{X} \psi(y) d\nu(y) \right\}. \]

Thus, because $\cG_{0} \subset L^{2}([0,1])$ is a totally geodesic subset, this theorem gives a characterization of some of the geodesics in $\cP(\cG_{0})$. We note, however, that it seems rather difficult to check the regularity of a measure in $\cP(\cG_{0})$, in the above sense, so this theorem may not actually be of much help in verifying displacement convexity for a chosen reference measure $\MM$. 

%\section{Kinetic Equations in $\cP_{0}$} \label{sect:kin-eq-P0}

\section{Conclusion}\label{sect:conclusion}

In this section, we briefly mention in what ways the results presented in this paper are simplifications of the full theory. There is no need to restrict to compact base spaces, as there is a perfectly good notion of Wasserstein distance and optimal transport on the space of probability measures on a (complete) geodesic space $X$ with finite second moments (i.e.\ $\int_{X} d(x_{0},x)^{2} d\mu(x) < \infty$ for some $x_{0}\in X$). This allows us to extend the definition of generalized Ricci bounds to complete geodesic spaces, which may not be compact. In fact, this is preserved under the weaker notion of pointed Gromov--Hausdorff convergence, which is better suited to the noncompact setting. In another direction, it turns out that it is beneficial to consider $n$-dimensional manifolds equipped with measures of the form $e^{-V} m$ (where $m$ is the normalized volume measure), for some function $V$. In this case, it is advantageous to consider the $N$-Ricci tensor for $N \in [1,\infty]$
\[Ric_{N} = \begin{cases}
\Ric + \Hess V & N =\infty\\
\Ric + \Hess V - \frac{1}{N-n} (dV\otimes dV)  & N \in (n,\infty)\\
\Ric + \Hess V - \infty (dV \otimes dV) & N = n\\
-\infty & N < n
\end{cases} \]
In the $N=\infty$ case, it is not hard to generalize the result of Theorem \ref{theo:riem-mfld-equiv} and show that weak a.c. displacement $K$-convexity of $\Ent(\cdot|e^{-V} m)$ is equivalent to $\Ric_{\infty} \geq K$. However, it turns out that there are displacement convexity characterizations of $\Ric_{N} \geq K$ as well, which we have not discussed. Lott and Villani, in \cite{LottVillani:RicciViaTspt} show that for $N < \infty$, $\Ric_{N} \geq 0$ is equivalent to displacement $0$-convexity of the R\'enyi functional, defined by 
\[ S(\rho m | m ) := - \int_{X}\rho^{-1/N}dm.\]
In \cite{Sturm:MetMeas2}, Sturm discusses this notion, as well as discussing a notion of generalized $\Ric_{N} \geq K$ for arbitrary $N \in [1,\infty]$ and $K \in \RR$, which is more involved. Sturm proves that under this assumption, the metric measure space satisfies the Bishop-Gromov volume inequality (cf.\ \cite[Theorem 2.3]{Sturm:MetMeas2}), that such bounds are preserved under Gromov--Hausdorff convergence, as well as discussing analytic consequences of such bounds (and other, weaker variants). We note that in terms of relationship with $\cP_{0}$, these notions are not useful, however, because such generalized bounds imply that the underlying space has Hausdorff dimension less than $N$ (cf.\ \cite[Corollary 2.5]{Sturm:MetMeas2}). Discussion of all of these results can be found in great generality in Villani's book \cite{Villani:OptTspt}.

Finally, we note that in \cite{Sturm:MultEnt}, Sturm has given a definition of the entropic measure over a general compact Riemannian manifold, denoted $\PP^{\beta}\in \cP(\cP(M))$. We have not discussed this measure, because our discussion of $\QQ^{\beta}_{0}\in\cP(\cG_{0})$ has heavily relied on the existence of the isometry $\Psi : \cG_{0}\to \cP_{0}$. Such an isometry does not exist in higher dimensions\footnote{There is still a homeomorphism from $\cG\to \cP(M)$ given by $g\mapsto g_{*}m$, where $\cG$ is the set of functions on $M$ of the form $\exp_{x}(-\nabla\varphi)$ for $\varphi$ a ${d^{2}}/{2}$-concave $\varphi$, but this is not an isomorphism for dimension $n>1$.}, and thus we cannot make use of the Hilbert space setting that proved crucial in, e.g.\ the proof of Theorem \ref{theo:QQbeta-no-ric-geqK}. However, it seems possible that if one carefully examined the proof of Theorem \ref{theo:QQbeta-no-ric-geqK}, it might be possible to translate it back into a proof entirely on $\cP_{0}$, which could then be generalized to show that
\begin{conj}
For $M$ a compact Riemannian manifold, and $\beta >0$, using the higher dimensional entropic measure $\PP^{\beta} \in \cP(\cP(M))$, as constructed in \cite{Sturm:MultEnt} as a background measure, $(\cP(M),d^{W},\PP^{\beta})$ does not admit generalized $\Ric \geq K$ for any $K \in \RR$. 
\end{conj}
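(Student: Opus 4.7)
The plan is to mimic the argument of Theorem \ref{theo:QQbeta-no-ric-geqK}, translating its Hilbert-space core into the optimal-transport language of $\cP(M)$. That proof rested on three pillars: (i) the description of the support of a displacement interpolation as a Minkowski sum $tA_{s}+(1-t)B$ in $L^{2}([0,1])$; (ii) the general lower bound $\Ent(\mu(s)_{t}|\PP)\geq -\log\PP(\supp\mu(s)_{t})$, valid in any metric measure space by conditioning; and (iii) matched asymptotic vanishing $\QQ^{\beta}_{0}(A_{s})\asymp\QQ^{\beta}_{0}(C_{s}(t))\asymp s$, which forced the $K$-convexity ratio $\QQ^{\beta}_{0}(C_{s}(t))/\QQ^{\beta}_{0}(A_{s})^{1-t}\sim s^{t}\to 0$. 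Pillar (ii) transfers to $\cP(\cP(M))$ verbatim, and pillar (iii) should hold in the form $\PP^{\beta}(\{\nu(U)>1/2\})\asymp m(U)$ as $m(U)\to 0$, by analogy with the Dirichlet-like finite-dimensional marginals enjoyed by Sturm's entropic measure \cite{Sturm:MultEnt}. The heart of the argument is therefore the higher-dimensional replacement for pillar (i).

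Concretely, fix $x_{0}\in M$ and for small $s>0$ let $U_{s}\subset M$ be a Riemannian ball around $x_{0}$ of normalized volume $s$. Set $A_{s}:=\{\nu\in\cP(M):\nu(U_{s})>1/2\}$, $B:=\cP(M)$, $\mu(s):=\PP^{\beta}(A_{s})^{-1}\chi_{A_{s}}\PP^{\beta}$, and assume for contradiction that $\Ent(\cdot|\PP^{\beta})$ is weakly a.c.\ $K$-displacement convex. Choose a geodesic $\mu(s)_{t}$ in $\cP(\cP(M))$ from $\mu(s)$ to $\PP^{\beta}$ along which the displacement convexity inequality holds. By Proposition \ref{prop:dynamic-trans-plan} and Theorem \ref{theo:polar-fact-riem-mflds}, $\supp\mu(s)_{t}\subset C_{s}(t)$, where $C_{s}(t)$ denotes the set of measures of the form $(F_{t}^{\varphi})_{*}\nu_{0}$ with $F_{t}^{\varphi}(x):=\exp_{x}(-t\nabla\varphi(x))$, $\nu_{0}\in A_{s}$, and $\varphi$ a $d^{2}/2$-concave potential. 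Combining pillar (ii), the $K$-convexity, and the a priori bound $d^{W}(\mu(s),\PP^{\beta})\leq\diam M$ exactly as in Theorem \ref{theo:QQbeta-no-ric-geqK} yields $\PP^{\beta}(C_{s}(t))\geq c_{K,t}\,\PP^{\beta}(A_{s})^{1-t}$ for a positive constant $c_{K,t}>0$ depending only on $K$, $t$, and $\diam M$. The conjecture thus reduces to showing that $\PP^{\beta}(C_{s}(t))/\PP^{\beta}(A_{s})^{1-t}\to 0$ as $s\to 0$ for some $t\in(0,1)$.

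The main obstacle is that the naive inclusion $C_{s}(t)\subset\{\nu:\nu(U_{s}^{t\diam M})>1/2\}$, coming from the universal displacement estimate $d(x,F_{t}^{\varphi}(x))\leq t\diam M$, is far too lossy: the fattened set $U_{s}^{t\diam M}$ has $m$-measure bounded below by a positive constant as $s\to 0$, so its $\PP^{\beta}$-event does not vanish. In one dimension this loss does not occur, because $\cG_{0}\subset L^{2}$ is totally convex and the event $\{g(s)>1/2\}$ deforms along the linear interpolation into $\{g(s)>t/2\}$ with the \emph{same} parameter $s$. Recovering this scale preservation in higher dimensions appears to require a finer analysis of $F_{t}^{\varphi}$ via the Alexandrov Hessian control from Proposition \ref{prop:c-conc-funct-prop} and the Jacobian bounds of Lemma \ref{lemm:diff-opt-map}, in order to show that when $\nu_{0}$ concentrates on the small ball $U_{s}$, the pushforward $(F_{t}^{\varphi})_{*}\nu_{0}$ concentrates on a set of $m$-measure $\asymp s$ rather than on the full $t\diam M$-neighborhood. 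A potentially cleaner alternative is to fix an isometrically embedded geodesic segment $\iota:[0,1]\hookrightarrow M$ and pull the one-dimensional contradiction back via the $1$-Lipschitz closest-point-projection pushforward $(\pi_{\iota})_{*}:(\cP(M),d^{W})\to(\cP([0,1]),d^{W})$; this reduces the conjecture to a compatibility statement between the small-mass asymptotics of $(\pi_{\iota})_{*}\PP^{\beta}$ and those of $\QQ^{\beta}_{0}$, which is where we expect the genuine technical work to lie.
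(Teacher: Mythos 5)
The statement you are addressing is stated in the paper explicitly as a \emph{Conjecture}, with no proof offered; the paper only remarks that ``it seems possible'' the argument of Theorem~\ref{theo:QQbeta-no-ric-geqK} could be translated from $\cG_0$ to $\cP_0$ and then generalized. So there is no paper proof to compare against, and your outline is in the spirit of what the paper envisions. There is, however, a genuine gap beyond the one you yourself flag, and it appears already in the one-dimensional sanity check.

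You take $A_s=\{\nu\in\cP(M):\nu(U_s)>1/2\}$ with $m(U_s)=s$, presenting it as the analogue of the paper's $A_s=\{g\in\cG_0:g(s)>1/2\}$. But that is not the $\Psi$-image of the paper's set. Under $\Psi(g)=g_*\Leb$, the function $g=g_\mu$ is the \emph{quantile} function of $\mu$, so $g(s)>1/2$ is equivalent to $\mu([0,1/2])\leq s$: a measure placing mass at most $s$ on a \emph{fixed} half-size region. Your set, pulled back to $\cG_0$ in one dimension (taking $U_s=[0,s]$), is instead $\{g:g(1/2)<s\}$ --- a genuinely different event, related to the paper's by the $g\leftrightarrow g^{-1}$ duality. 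This difference matters: for your $A_s$ the Minkowski sum $(1-t)A_s+tB$ equals $\{h:h(1/2)<(1-t)s+t\}$, whose $\QQ^\beta_0$-measure stays bounded below by $\QQ^\beta_0(\{g(1/2)<t\})>0$ as $s\to 0$, while $\QQ^\beta_0(A_s)^{1-t}\to 0$. Hence the ratio you need to send to $0$ in fact diverges, and the $K$-convexity inequality produces no contradiction. The right higher-dimensional analogue is $A_s=\{\nu:\nu(V)\leq s\}$ for a \emph{fixed} $V\subset M$ with $m(V)=1/2$, not a shrinking $U_s$.

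On the Minkowski-sum gap you correctly identify: neither remedy you propose appears to close it. Remedy (a) aims at the wrong quantity --- one must estimate $\PP^\beta$ of the union $C_s(t)=\bigcup_\gamma\gamma(t)$ over all geodesics emanating from $A_s$, not the $m$-concentration of individual pushforwards $(F_t^\varphi)_*\nu_0$; and the concentration you ask for (pushforward of a measure on $U_s$ landing on $m$-measure $\asymp s$) is false, since at time $t$ the mass generically spreads over a region whose $m$-measure is of order $1$ independent of $s$. Remedy (b) needs weak a.c.\ $K$-displacement convexity of $\Ent(\cdot|\PP^\beta)$ on $\cP(\cP(M))$ to descend under $((\pi_\iota)_*)_*$ to $\cP(\cP([0,1]))$; but a $1$-Lipschitz map pushes Wasserstein geodesics to curves that are generally not geodesics, so the convexity hypothesis does not transfer, quite apart from the fact that closest-point projection onto a geodesic segment is not $1$-Lipschitz on a general compact $M$.
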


As in the one dimensional setting, the question of a natural ``volume measure'' in $\cP(\cP(M))$ with lower Ricci bounds certainly deserves further invesigation. 

\appendix

\section{Nonsmooth Change of Variables}\label{app:non-smooth-cov}

In this section, we briefly recall some facts about changing variables under a function that is only differentiable almost everywhere. This is particularly important to the theory of optimal transport on Riemannian manifolds, because by Theorem  \ref{theo:polar-fact-riem-mflds}, we know that for $\mu,\nu \ll \vol_{M}$, the unique geodesic between them is given by $(F_{t})_{*}\mu$ where $F_{t} (x) = \exp_{x}(-t\nabla \phi(x))$ for $\phi$ a $\frac{d^{2}}{2}$-concave function. Thus, in general $F_{t}$ is differentiable almost everywhere, and it will be important to understand the density of the pushforward measure $(F_{t})_{*}\mu$. Our discussion below will mostly deal with functions on $\RR^{n}$, but by working locally, can be easily extended to manifolds. 

To begin with, we recall that by the area formula, proven in e.g.\ \cite[Section 3.2.3]{Federer:GMT}. For $f:\RR^{n}\to\RR^{n}$ a Lipschitz function, and $h:\RR^{n} \to [0,\infty]$, a nonnegative Borel function, we have that
\begin{equation}\label{eq:area-formula}
\int_{\RR^{n}} h(x) |\det \nabla f(x)| dx = \int_{\RR^{n}} \sum_{x\in f^{-1}(y)} u(x) dy.
\end{equation}
The following lemma and proof comes from \cite[Section 5.5]{AGS:GradFlow}. 
\begin{lemm}\label{lemm:change-of-var}
Suppose that $\rho\in L^{1}(\RR^{n})$ a nonnegative function with $\int_{\RR^{n}} \rho(x) dx = 1$. Furthermore, suppose $f:\RR^{n}\to\RR^{n}$ is a Lipchitz function such that there is a Borel set $A \subset \RR^{n}$ such that $\{\rho>0\} \setminus A$ has $\Leb$-measure zero and $f|_{A}$ is injective. Then $f_{*} (\rho\Leb) \ll \Leb$ if and only if $|\det \nabla f | > 0$ $\Leb$-a.e. on $A$, and in this case
\begin{equation}
f_{*}(\rho \Leb) =\left[ \frac{\rho}{|\det \nabla f| } \circ (f|_{A})^{-1}\right] \Bigg|_{f(A)} \Leb.
\end{equation}
\end{lemm}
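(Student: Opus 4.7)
The plan is to deduce both implications directly from Rademacher's theorem (which yields existence of $\nabla f$ $\Leb$-a.e., since $f$ is Lipschitz) combined with the area formula \eqref{eq:area-formula}; no further analytic input should be needed, only bookkeeping with the hypothesis that $\{\rho>0\}\setminus A$ is $\Leb$-null and that $f|_A$ is injective.

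For the ``if'' direction I would fix a Borel set $B\subset\RR^n$ and rewrite
\[
f_*(\rho\Leb)(B)=\int_{f^{-1}(B)}\rho\,dx=\int_{A\cap f^{-1}(B)}\rho\,dx,
\]
the second equality using that $\{\rho>0\}\setminus A$ is null. Next I would apply the area formula to the Borel function
\[
h(x):=\frac{\rho(x)\,\chi_{A\cap f^{-1}(B)}(x)}{|\det\nabla f(x)|},
\]
which is well-defined $\Leb$-a.e.\ on $\{\rho>0\}$ precisely because of the hypothesis $|\det\nabla f|>0$ on $A\cap\{\rho>0\}$ (set $h(x)=0$ on the exceptional null set, where it is multiplied by zero anyway). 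The left-hand side of \eqref{eq:area-formula} reproduces $f_*(\rho\Leb)(B)$, while the injectivity of $f|_A$ collapses $\sum_{x\in f^{-1}(y)}h(x)$ to a single term, namely the value at $x=(f|_A)^{-1}(y)$ when $y\in f(A)\cap B$ and $0$ otherwise. Reading off the resulting integral gives
\[
f_*(\rho\Leb)(B)=\int_B\Bigl[\tfrac{\rho}{|\det\nabla f|}\circ(f|_A)^{-1}\Bigr]\Big|_{f(A)}(y)\,dy,
\]
which is the claimed density formula, and in particular shows $f_*(\rho\Leb)\ll\Leb$.

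For the converse, I would argue by contradiction: suppose $f_*(\rho\Leb)\ll\Leb$ but $N:=A\cap\{\rho>0\}\cap\{|\det\nabla f|=0\}$ has $\Leb(N)>0$. Applying \eqref{eq:area-formula} to $h=\chi_N$ gives
\[
0=\int_N |\det\nabla f|\,dx=\int_{\RR^n}\#(f^{-1}(y)\cap N)\,dy=\Leb(f(N)),
\]
where the last equality uses that $f|_A\supset f|_N$ is injective. Absolute continuity then forces $f_*(\rho\Leb)(f(N))=0$, which contradicts $f_*(\rho\Leb)(f(N))\geq\int_N\rho\,dx>0$.

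The step I expect to require the most care is the measurability bookkeeping: ensuring that $f(N)$ is a Lebesgue-measurable set (which follows from $f$ being Lipschitz once $N$ is approximated from inside by $F_\sigma$-sets) and clarifying that the ``$\Leb$-a.e.\ on $A$'' in the statement is effectively ``$\Leb$-a.e.\ on $A\cap\{\rho>0\}$'' -- the only portion of $A$ that the forward computation and the contradiction in the converse actually see. After these routine issues are dispatched, the area formula does all the substantive work.
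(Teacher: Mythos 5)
Your proof is correct and takes essentially the same route as the paper's: both directions rely on the area formula \eqref{eq:area-formula}, with the same choice of test function $h = \rho\,\chi_{A\cap f^{-1}(B)}/|\det\nabla f|$ for the forward implication and the same contradiction via $h=\chi_{N}$ (the paper writes $\chi_{B}$) for the converse. Your observation that the ``$\Leb$-a.e.\ on $A$'' condition is only ever tested on $A\cap\{\rho>0\}$ is an accurate reading of what the statement needs -- the paper silently makes the same assumption when it asserts $\Leb(B\cap\{\rho>0\})>0$ at the end of the converse.
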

\begin{proof}
If $|\det \nabla f(x)| > 0$ for a.e. $x\in \RR^{d}$, then from the area formula \eqref{eq:area-formula} with 
\[ 
h =\frac{ \rho \chi_{(f|_{A})^{-1}(B) \cap A} }{|\det \nabla f|}
\]
for $B\subset\RR^{d}$ an arbitrary Borel set, then
\begin{align*} f_{*}(\rho \Leb) (B) & = \int_{(f|_{A})^{-1}(B)} \rho(x) dx \\
& = \int_{(f|_{A})^{-1}(B) \cap A} \rho(x) dx\\
& = \int_{\RR^{n}} h(x) |\det \nabla f(x) | dx\\
& = \int_{\RR^{n}} \frac{\rho  \chi_{(f|_{A})^{-1}(B) \cap A}}{ |\det\nabla f|}\circ (f|_{A})^{-1}(y) dy\\
& = \int_{B \cap f(A)} \frac{\rho }{ |\det\nabla f|}\circ (f|_{A})^{-1}(y) dy,
\end{align*}
as desired. 

Conversely, if there is a Borel set $B$ with $\Leb(B) > 0$ but $|\det \nabla f(x)| = 0$ for $x \in B$, then the area formula \eqref{eq:area-formula}, with $h = \chi_{B}$, gives 
\[ 0 = \int_{B} |\det \nabla f(x)| dx = \int_{\RR^{n}} \chi_{B}\circ (f|_{A})^{-1}(y) dy = \Leb(f|_{A}(B)). \] However, we have that 
\[ f_{*} (\rho\Leb) (f|_{A}(B)) = \int_{f^{-1}(f|_{A}(B))} \rho(x) dx  \geq \int_{f^{-1}(f|_{A}(A \cap B))} \rho(x) dx = \int_{A\cap B} \rho (x) dx > 0  \] because $\Leb(A\cap B \cap \{\rho > 0\} ) = \Leb(B\cap \{\rho>0\}) > 0 $. This shows that $f_{*}(\rho\Leb)$ is not absolutely continuous with respect to Lebesgue measure. 
\end{proof}

This easily gives the following (\cite{Villani:OptTspt}, Chapter 1, ``change of variables formula'')
\begin{lemm}\label{lemm:change-of-var-mfld}
For $M$ a $n$-dimensional (compact) Riemannian manifold, with $m = \frac{\vol_{M}}{\vol(M)}$ the normalized volume measure, if $\mu_{0},\mu_{1} \ll m$ with $\mu_{i} = \rho_{i} m$ and $F : M \to M$ is Lipschitz and almost everywhere injective (i.e.\ when restricted to a set of full measure is injective) with $F_{*}\mu_{0} = \mu_{1}$, then 
\[ \rho_{0} (x) = \rho_{1}(F(x)) \Jac F. \]
\end{lemm}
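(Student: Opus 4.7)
The plan is to reduce the statement to the Euclidean change of variables formula (Lemma \ref{lemm:change-of-var}) by means of a partition of unity and the local expression of the volume form, and then to exploit the a.e.\ injectivity of $F$ to recover the density identity from the identity $F_*\mu_0=\mu_1$.

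First, I would establish a manifold version of the area formula: for any Lipschitz $F:M\to M$ and nonnegative Borel $h$,
\[
\int_M h(x)\,\Jac F(x)\,dm(x)=\int_M\!\!\sum_{x\in F^{-1}(y)}h(x)\,dm(y).
\]
This is proved by choosing a finite cover of $M$ by coordinate charts $(U_\alpha,\varphi_\alpha)$ together with a subordinate partition of unity $\{\chi_\alpha\}$. In each chart $m$ is represented by $\vol(M)^{-1}\sqrt{\det g}\,dx$, and if $\tilde F_{\alpha\beta}$ denotes the coordinate representative of $F\big|_{U_\alpha\cap F^{-1}(U_\beta)}$, then the invariant Jacobian $\Jac F(x)=|\det dF_x|$ (where $dF_x$ is viewed as a linear map between the inner product spaces $T_xM$ and $T_{F(x)}M$) satisfies
\[
\Jac F(x)=\frac{\sqrt{\det g(F(x))}}{\sqrt{\det g(x)}}\bigl|\det\nabla\tilde F_{\alpha\beta}(\varphi_\alpha(x))\bigr|.
\]
Applying the Euclidean area formula \eqref{eq:area-formula} in each chart and reassembling with the partition of unity yields the stated manifold area formula.

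Next, let $A\subset M$ be a Borel set of full $m$-measure on which $F|_A$ is injective, which exists by hypothesis. For an arbitrary Borel $B\subset M$, apply the manifold area formula with $h(x)=\rho_1(F(x))\chi_{A\cap B}(x)$ to obtain
\[
\int_B\rho_1(F(x))\,\Jac F(x)\,dm(x)=\int_{F(A\cap B)}\rho_1(y)\,dm(y)=\mu_1\bigl(F(A\cap B)\bigr).
\]
Here I have used that $A\cap B$ and $B$ differ by an $m$-null set, so also by a $\mu_0$-null set since $\mu_0\ll m$. The hypothesis $F_*\mu_0=\mu_1$ then gives $\mu_1(F(A\cap B))=\mu_0(F^{-1}(F(A\cap B)))$, and the injectivity of $F|_A$ together with $\mu_0(M\setminus A)=0$ reduces the right side to $\mu_0(B)=\int_B\rho_0\,dm$. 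Since $B$ was arbitrary,
\[
\rho_0(x)=\rho_1(F(x))\,\Jac F(x)\qquad\text{for $m$-a.e.\ $x\in M$.}
\]

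The main obstacle is the manifold area formula itself: one must verify that the product of Euclidean Jacobian and volume-density factors that appears locally really does patch together to give the invariant quantity $\Jac F$, and that the a.e.\ nonsingularity of $dF$ needed to apply Lemma \ref{lemm:change-of-var} in each chart follows from the set-up (in our eventual applications this comes from \cite[Theorem 4.2]{CMS:RiemInterpolationIneq}). Once the area formula is in hand, the remainder of the argument is a short manipulation of the defining identity $F_*\mu_0=\mu_1$.
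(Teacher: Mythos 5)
Your proof is correct and is precisely the natural unwinding of what the paper leaves implicit: the paper states that Lemma~\ref{lemm:change-of-var-mfld} ``easily'' follows from the Euclidean case (Lemma~\ref{lemm:change-of-var}) without giving details, and your route — establishing the manifold area formula via charts and a partition of unity, then using a.e.\ injectivity and $F_*\mu_0=\mu_1$ to pin down the density — is exactly the intended reduction. One small remark: you flag a.e.\ nonsingularity of $dF$ as a worry, but your argument via the area formula does not actually require it (nonsingularity on $\{\rho_0>0\}$ is instead a consequence of the hypothesis $F_*\mu_0\ll m$ together with the identity you derive).
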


\section{A Brief Overview of Riemannian Geometry and Curvature}\label{app:riem-mfld-intro}

\subsection{Riemannian Metrics and Connections}
In this section, we give an incredibly brief overview of the basics of Riemannian geometry needed to understand the ideas presented in this text. The material presented here is certainly not enough for an appropriate introduction to the vast subject of Riemannian geometry. The interested reader could consult any of \cite{Petersen:RiemGeo}, \cite{GHL:RiemGeo}, \cite{Milnor:MorseTheory}, or \cite{Lee:RiemGeo}, for a proper introduction. Suppose that $M$ is a smooth manifold.\footnote{For an introduction to smooth manifolds, see \cite{Lee:SmMflds}.} A \emph{Riemannian metric} $g$ is a (smooth) symmetric nondegenerate bilinear form on the tangent space $TM$. By this, we mean that for $p\in M$, $g_{p}$ is an inner product on the vector space $T_{p}M$ and for $X$ and $Y$ any smooth vector fields defined in the neighborhood of $p$, the function $g(X,Y)$ defined by $g(X,Y)|_{q} = g_{q}(X_{q},Y_{q})$ is smooth in some neighborhood of $p$. We write $(M,g)$ for a smooth manifold $M$ equipped with a metric $g$. The Riemannian metric allows us to define a preferred notion of directional derivative on $(M,g)$, called the \emph{Levi-Civita connection}. This is a special choice of \emph{affine connection}, which is a map (writing $\Gamma(TM)$ for smooth vector  fields) 
\[ \nabla : \Gamma(TM) \times \Gamma(TM) \to \Gamma(TM), \qquad (X,Y) \mapsto \nabla_{X} Y\] 
obeying the following properties:
\begin{enumerate}
\item It is linear over $C^{\infty}$ (i.e.\ tensorial) in the first slot. That is, for $f,g \in C^{\infty}(M)$, and $X,Y,Z \in \Gamma(TM)$
\[ \nabla_{f X + gZ } Y = f\nabla_{X}Y + g \nabla_{Z} Y .\]
\item It obeys the Leibniz rule and is linear over $\RR$ in the second slot. That is, for $f \in C^{\infty}$, $\alpha,\beta \in \RR$ and $X,Y, Z \in \Gamma(TM)$ 
\[  \nabla_{X} (\alpha Y + \beta Z) = \alpha \nabla_{X}Y +\beta \nabla_{X}Z\]
\[ \nabla_{X} (fY) = f \nabla_{X}Y + X(f) Y.\]
\end{enumerate}
The Levi-Civita connection is the unique affine connection which is torsion free and compatible with the metric $g$. Being \emph{torsion free} means that for $X,Y\in\Gamma(TM)$ 
\[ \addlabel{eq:torsion-free} \nabla_{X}Y - \nabla_{Y}X = [X,Y] \]
and \emph{compatibility with the metric} means that for $X,Y,Z\in \Gamma(TM)$
\[\addlabel{eq:met-compat} X(g(Y,Z)) = g(\nabla_{X}Y,Z) + g(Y,\nabla_{X}Z).  \]
To see that the Levi-Civita connection is unique, it is possible to use \eqref{eq:torsion-free} and \eqref{eq:met-compat} to get an expression in local coordinates for what the connection must be, but on the other hand, it is possible to see that this gives a connection satisfying \eqref{eq:torsion-free} and \eqref{eq:met-compat}. 

A connection also allows us to define the notion of the \emph{covariant derivative} along a curve $\gamma$ which is a $\RR$-linear map $V\to DV/dt$ for $V$ a vector field along $\gamma$ satisfying the Leibniz rule 
\[ \frac{D}{dt} (fV) = \pa ft V + f \frac {DV}{dt}\] and it is induced by the connection: if $V$ is extendible to $\tilde V$, a vector field in a neighborhood of $\gamma$, then
\[ \frac{DV}{dt} = \nabla_{\gamma'} V.\]  
For a curve $\gamma(t)$ and a vector $V \in T_{\gamma(0)}M$, it turns out that there is a unique vector field along $\gamma$, $\tilde V$ such that 
\[ \frac{D\tilde V}{dt} = 0,\] and $\tilde V(0) = V$. In this case, we say that $V$ has been \emph{parallely transported} along $\gamma$.

Covariant differentiation also allows us to give a nice description of \emph{geodesics} which are locally length minimizing curves (notice that this is slightly different than our definition of geodesics in a metric space, which we required to be globally length minimizing). It can be shown that a curve $\gamma:[0,1] \to M$ is a geodesic if and only if 
\[ \nabla_{\dot\gamma} \dot\gamma = 0 .\] Assuming that $(M,g)$ is a compact manifold (in the topology determined by the induced length metric coming from the definition of the length of a smooth curve being $L(\gamma) : = \int_{0}^{1} |\dot\gamma|_{g} dt$), one can show that geodesics exist for all time, and are uniquely determined by their initial position and velocity. This allows us to define the \emph{exponential map} $\exp_{p}(X)$, to be $\gamma(1)$, where $\gamma(t)$ is the unique geodesic starting at $p\in M$ with initial tangent vector $X \in T_{p}M$. All of the following works (sometimes with slight modification, because in some non-compact manifolds, geodesics need not exist for all time) for non-compact manifolds, but since all we consider in the above is compact manifolds, we will assume that $(M,g)$ is compact. 

\subsection{Curvature} The Levi-Civita connection allows us to then define the Riemann curvature tensor
\[ \addlabel{eq:riem-tensor-def} \Riem(X,Y)Z = \nabla_{X}\nabla_{Y}Z - \nabla_{Y}\nabla_{X} Z - \nabla_{[X,Y]}Z.  \] 
This tensor is very complicated, so it is beneficial to consider the following tensor and scalar, the \emph{Ricci tensor} defined as (for $X,Y\in T_{p}M$)
\[ \addlabel{eq:ric-curv-defi} \Ric(X,Y) : = \sum_{i=1}^{n} g(\Riem(X,E_{i})E_{i},Y)\]
and \emph{scalar curvature} 
\[ \addlabel{eq:scal-curv-defi}S: = \sum_{i,j=1}^{n} g(\Riem(E_{j},E_{i})E_{i},E_{j})\]
where $E_{i}$ is an orthonormal basis for the tangent space at the point of interest (it is not hard to see that this is independent of the choice of orthonormal basis). Finally, for any two $g$-orthogonal vectors $X,Y \in T_{p}M$, we define the sectional curvature of the plane spanned by $X$ and $Y$ to be 
\[\addlabel{eq:sec-curv-defi} \sec(X,Y) : =  g(\Riem(X,Y)Y,X). \]
It turns out that these are much more manageable things to study and allow for simpler geometric interpretation than the full Ricci tensor. We give some geometric interpretations of sectional curvature bounds and Ricci curvature bounds in Sections \ref{sect:synth-ricci-curv} and \ref{sect:synth-sect-curv}, additionally discussing Ricci bounds in Appendix \ref{app:mfd-lower-ric}. We will often write $\Ric \geq K$ or $\sec \geq K$ to mean $\Ric(X,Y) \geq K g(X,Y)$ or $\sec(X,Y) \geq K g(X,Y)$ respectively. 

\subsection{Jacobi Fields} Examining the proof of Lemma \ref{lemm:diff-opt-map}, it is apparent that in the theory of optimal transport, a type of vector fields known as Jacobi fields are the link between geometrical properties and Ricci curvature. The classical reference for these is Milnor's book \cite{Milnor:MorseTheory}\footnote{N.B.\ our conventions about the sign of the curvature tensor are different from Milnor's.}, but any of the books mentioned in the introduction to this appendix should also more than suffice as references. A \emph{Jacobi field} arises from a \emph{variation through geodesics}, which is a smooth map
\begin{equation}\label{eq:var-thru-geo}
\alpha: (-\epsilon,\epsilon)\times[0,1] \to M
\end{equation}
so that for a fixed $s\in (-\epsilon,\epsilon)$, $t\mapsto \alpha(s,t)$ is a geodesic. From the map $\alpha$, we get two covariant derivatives $D/ds$ and $D/dt$ for vector fields along $\alpha$, i.e.\ $V: (-\epsilon,\epsilon)\times[0,1] \to TM$ such that $V_{s,t} \in T_{\alpha(s,t)}M$. Using the definition of Riemannian curvature, \eqref{eq:riem-tensor-def}, and computing in local coordinates, it is possible to show (cf. \cite[Lemma 9.2]{Milnor:MorseTheory}) that for a vector field $V$ along $\alpha$
\begin{equation}\label{eq:ric-comm-cov-der}
\frac{D}{ds} \frac{D}{dt} V -\frac{D}{dt} \frac{D}{ds} V = \Riem \left(\pa \alpha s,\pa\alpha t\right) V. 
\end{equation}
Setting $V = \partial\alpha/\partial t$, the tangent vector along the geodesics, \eqref{eq:ric-comm-cov-der} gives that 
\begin{equation}
\frac{D}{ds} \frac{D}{dt} \pa\alpha t -\frac{D}{dt} \frac{D}{ds} \pa\alpha t = \Riem \left(\pa \alpha s,\pa\alpha t\right) \pa\alpha t. 
\end{equation}
Because $\alpha$ is a variation through geodesics, we have that 
\[  \frac{D}{dt} \pa\alpha t = 0 \]
and by the torsion free property of the connection, it is not hard to see that we have
\[ \frac{D}{ds} \pa \alpha t = \frac{D}{dt}\pa \alpha s. \] 
Combining these equations 
%with the fact that $R(X,Y) = - R(Y,X)$ (which follows readily from its definition \eqref{eq:riem-tensor-def}), we then have that 
\[ \frac{D}{dt}\frac{D}{dt} \pa \alpha s + \Riem \left(\pa \alpha s,\pa\alpha t\right) \pa\alpha t = 0. \]
Evaluating this at $s = 0$ and letting $J = \partial\alpha/\partial s |_{s=0}$ be the vector field along the geodesic $\alpha(0,t)$, we have
\begin{equation}\label{eq:jacobi-eqns}
 \frac{D}{dt}\frac{D}{dt} J + \Riem \left(J,\pa\alpha t\right) \pa\alpha t = 0. 
\end{equation}
This is known as \emph{Jacobi's equation} and any $J$ satisfying it is called a \emph{Jacobi field}. Writing \eqref{eq:jacobi-eqns} in local coordinates, it is possible to show that this is a linear second order ODE, and thus it has a unique solution existing as long as $\gamma$ is defined, as long as we specify $J(0)$ and $\dot J(0)$ (where $\dot J = DJ/dt$). It is possible to show that any $J$ along a geodesic $\gamma$ satisfying \eqref{eq:jacobi-eqns} comes from some variation through geodesics (see \cite[Lemma 14.4]{Milnor:MorseTheory}). 

The reason which we are interested in Jacobi fields is that they allow us to compute the derivative of the exponential map, as follows. Notice that $\exp_{p}:T_{p} \to M$, so the derivative is a map $d\exp_{p}:T(T_{p}M) \to TM$. Because $T_{p}M$ is a vector space, for $V \in T_{p}M$, $T_{V}(T_{p}M)$ is naturally identified with $T_{p}M$. For $W \in T_{V}( T_{p}M)$, we have that
\[ d\exp_{p}(W) = \frac{d}{ds}\Big|_{s=0}\exp_{p}(V + sW) . \] However, notice that defining
\[ \addlabel{eq:defi-jacobi-alpha}\alpha(s,t) = \exp_{p}(t(V + sW)),\]
this is a variation through geodesics, so 
\[J(t) = \pa\alpha s(0,t)\] 
is a Jacobi field along $\gamma(t) = \exp_{p}(tV)$ with $J(1) = d\exp_{p}(W)$. Furthermore, notice that because it is a Jacobi field, $J(t)$ is uniquely described by its initial value and covariant derivative. Examining the definition of $\alpha$, \eqref{eq:defi-jacobi-alpha}, it is easy to see that $J(0) = 0$ and $\dot J(0) = W$ (regarded as an element of $T_{p}M$). Thus $d\exp_{p}(W) = J(1)$, where $J$ is the unique Jacobi field with these initial conditions. A similar calculation, in a more specialized case is done in the proof of Lemma \ref{lemm:diff-opt-map}. There, we calculate the derivative of a map of the form $y \mapsto \exp_{y}( X(y) )$ where $X(y)$ is a vector field on $M$, which follows in roughly the same manner as the above.

\section{Manifolds with Lower Ricci Bounds}\label{app:mfd-lower-ric}

In this section we give an overview of consequences of Ricci lower bounds in the classical sense, for Riemannian manifolds. Recall that if $(M,g)$ is a Riemannian manifold, we say that it has \emph{Ricci curvature bounded below by $K \in \RR$} (which we will write $\Ric \geq K$) if $\Ric(\xi,\xi) \geq K |\xi|^2$ for all $\xi \in TM$.

\subsection{Topological and Geometric Consequences} It is natural to ask what sort of topological conditions must be met so that a manifold admits a metric of Ricci bounded below by $K$. If $K > 0$, we have a classical theorem giving strong topological and geometric restrictions on the manifold
\begin{theo}[Bonnet-Myers]
For a $n$-dimensional Riemannian manifold $(M,g)$, if $\Ric \geq (n-1)K > 0$ then $M$ has a finite fundamental group, i.e.\ $|\pi_1(M)| < \infty$, and \[ \diam(M,g) \leq \frac{\pi}{ \sqrt K}.\]
\end{theo}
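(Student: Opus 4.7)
The plan is to deduce both conclusions from the classical second-variation argument along a length-minimizing geodesic. Fix $p, q \in M$ with $d(p,q) = L$, and let $\gamma:[0,L]\to M$ be a unit-speed minimizing geodesic joining them (such a $\gamma$ exists since the compact manifold $M$ is complete, by Hopf--Rinow). Choose an orthonormal basis $e_{1}(0), \dots, e_{n-1}(0)$ of the hyperplane $\dot\gamma(0)^{\perp} \subset T_{p}M$ and parallel-transport it along $\gamma$ to a parallel orthonormal frame $e_{1}(t), \dots, e_{n-1}(t)$ with $e_{i}(t) \perp \dot\gamma(t)$ for all $t$. For each $i$, I would then take the variation field $V_{i}(t) := \sin(\pi t/L)\, e_{i}(t)$, which vanishes at $t = 0$ and $t = L$ and hence induces a proper variation of $\gamma$.

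The key computation is the following. Since $\gamma$ is minimizing, each second variation $L_{i}''(0)$ must be nonnegative. Parallelism of $e_{i}$ gives $\dot V_{i} = (\pi/L)\cos(\pi t/L)\, e_{i}$, and the second variation of arc length formula (applied to a variation normal to the geodesic) yields
\[
L_{i}''(0) \;=\; \int_{0}^{L} \left[ \frac{\pi^{2}}{L^{2}}\cos^{2}\!\left(\tfrac{\pi t}{L}\right) \;-\; \sin^{2}\!\left(\tfrac{\pi t}{L}\right)\, \bangle{\Riem(e_{i}, \dot\gamma)\dot\gamma,\, e_{i}} \right] dt.
\]
Summing over $i = 1, \dots, n-1$ and using pair symmetry of $\Riem$ together with the definition of $\Ric$ in \eqref{eq:ric-curv-defi} (with the missing $i = n$ term vanishing by antisymmetry of $\Riem$), one obtains $\sum_{i=1}^{n-1}\bangle{\Riem(e_{i}, \dot\gamma)\dot\gamma,\, e_{i}} = \Ric(\dot\gamma,\dot\gamma) \geq (n-1)K$. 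Combined with $\int_{0}^{L}\cos^{2}(\pi t/L)\,dt = \int_{0}^{L}\sin^{2}(\pi t/L)\,dt = L/2$, this produces
\[
0 \;\leq\; \sum_{i=1}^{n-1} L_{i}''(0) \;\leq\; \frac{(n-1)L}{2}\left( \frac{\pi^{2}}{L^{2}} - K \right),
\]
which forces $L \leq \pi/\sqrt{K}$, establishing $\diam(M,g) \leq \pi/\sqrt{K}$.

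To conclude finiteness of $\pi_{1}(M)$, I would pass to the Riemannian universal cover $\tau : \tilde M \to M$. Since $\tau$ is a local isometry, $\tilde M$ inherits the pointwise bound $\Ric \geq (n-1)K$, and completeness of $M$ transfers to $\tilde M$. The diameter argument above used only completeness and the Ricci lower bound, so it applies verbatim on $\tilde M$, giving $\diam(\tilde M) \leq \pi/\sqrt{K}$; by Hopf--Rinow this makes $\tilde M$ compact. Each fiber of $\tau$ is in bijection with $\pi_{1}(M)$ and is a discrete closed subset of the compact manifold $\tilde M$, hence finite. The main technical obstacle will be carefully matching the sign of the curvature term in the second variation formula to the paper's convention \eqref{eq:riem-tensor-def} for $\Riem$; once this is pinned down, the remainder is a direct calculation together with the standard covering-space argument for $\pi_{1}$.
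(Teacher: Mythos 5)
Your argument is correct and is the classical Myers second-variation computation, which is precisely the approach the paper defers to: the paper does not prove Bonnet--Myers itself but cites Petersen, Chapter 4, which runs this same index-form argument with the test fields $\sin(\pi t/L)\,e_i$. Two small points worth noting: the calculation already matches the paper's sign conventions (with $\Riem(X,Y)Z = \nabla_X\nabla_Y Z - \nabla_Y\nabla_X Z - \nabla_{[X,Y]}Z$ and the Jacobi equation $\ddot J + \Riem(J,\dot\gamma)\dot\gamma = 0$, the second-variation integrand $|\dot V|^2 - \bangle{\Riem(V,\dot\gamma)\dot\gamma,V}$ has the sign you used, so the ``technical obstacle'' you flagged is already resolved), and you invoke compactness of $M$ to get Hopf--Rinow, whereas strictly one only needs completeness (which the theorem statement tacitly assumes and which indeed passes to $\tilde M$ as you say) — this is a standard implicit hypothesis in the paper's setting and not a gap.
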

The standard proof uses Jacobi fields to study minimizing geodesics, and can be found in \cite[Chapter 4]{Petersen:RiemGeo}. If $K = 0$, then the example $(\RR^n, d_{\RR^n})$ shows that $(M,g)$ need not be compact, but there is the following theorem which 	shows that this is in some sense essentially the only non-compact (simply connected) example. 
\begin{theo}[Cheeger--Gromoll]\label{theo:cheeger-gromoll-splitting}
If $(M,g)$ is a Riemannian manifold with $\Ric \geq 0$, then the universal cover, equipped with the pullback metric $(\tilde M,\tilde g)$ is isometric to the product $(N,g_N) \times (\RR^q, g_{\RR^q})$ for some compact $(N,g_N)$ and $q \geq 0$. 
\end{theo}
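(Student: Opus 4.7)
The plan is to reduce this to the Cheeger--Gromoll splitting theorem in its standard form: if $(\tilde M, \tilde g)$ is complete with $\Ric \geq 0$ and contains a \emph{line} (a geodesic $\gamma: \mathbb{R} \to \tilde M$ minimizing between any two of its points), then $\tilde M$ is isometric to $N' \times \mathbb{R}$ for some complete $(N', g_{N'})$ with $\Ric \geq 0$. Granted this splitting lemma, the theorem follows by iteration, provided we can show the process terminates with a compact factor.

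To prove the splitting lemma, I would work with the Busemann functions associated to the two rays making up $\gamma$: $b^{\pm}(x) := \lim_{t\to\infty}\bigl(t - d(x,\gamma(\pm t))\bigr)$. These limits exist and are $1$-Lipschitz by monotonicity of $t \mapsto t - d(x,\gamma(\pm t))$ and the triangle inequality. The Laplacian comparison theorem (a Ricci-curvature consequence proved by analyzing Jacobi fields along radial geodesics, in the same spirit as Lemma \ref{lemm:diff-opt-map}) shows that each $b^{\pm}$ is \emph{superharmonic in the barrier sense}: at every point there are smooth upper barriers with Laplacian at most $\varepsilon$ for every $\varepsilon>0$. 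The triangle inequality gives $b^+ + b^- \geq 0$ with equality along $\gamma$, so by Calabi's strong maximum principle applied to the barrier-superharmonic function $b^+ + b^-$, one concludes $b^+ \equiv -b^-$ globally. Hence $b^+$ is barrier-harmonic; elliptic regularity makes it smooth and harmonic. Bochner's formula, combined with $\Ric \geq 0$ and $|\nabla b^+| \equiv 1$ (from the eikonal nature of the Busemann function), forces $\Hess b^+ \equiv 0$. Thus $\nabla b^+$ is a parallel unit vector field, its flow yields a one-parameter group of isometries, and the de~Rham-type argument (level sets of $b^+$ are totally geodesic, orthogonal to the $\nabla b^+$ flow) produces an isometry $\tilde M \cong N' \times \mathbb{R}$.

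Now I would iterate on $\tilde M$. After splitting $\tilde M = N_k \times \mathbb{R}^k$, the factor $N_k$ again has $\Ric \geq 0$, so either $N_k$ is compact (and we stop with $q = k$, $N = N_k$) or I must exhibit a line in $N_k$ to split off another $\mathbb{R}$. The essential input is that $\pi_1(M)$ acts on $\tilde M$ by isometries with compact quotient. A standard line-extraction argument shows that if the remaining non-line-free factor is non-compact while admitting a cocompact discrete isometric action, one can produce a line: take a sequence of points $p_i$ in the non-compact factor with $d(p_i,p_0) \to \infty$, translate minimizing segments from $p_0$ to $p_i$ back to a fundamental domain using the deck transformation action, and extract a subsequential limit geodesic, which a diagonal argument shows is minimizing on all of $\mathbb{R}$. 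Since $q \leq \dim M$, the iteration terminates, yielding $\tilde M \cong N \times \mathbb{R}^q$ with $N$ compact.

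The main obstacle will be the first step: rigorously justifying the barrier/viscosity Laplacian bounds on the Busemann function and then the rigidity in the Bochner formula, which together convert the purely metric input ``$b^+ + b^- = 0$ along $\gamma$'' into the pointwise conclusion $\Hess b^+ \equiv 0$. The second obstacle, milder but nontrivial, is the cocompact line-extraction step, which is what forces the statement to be about the universal cover rather than $M$ itself (e.g.\ the paraboloid has $\Ric > 0$, is non-compact, yet contains no line).
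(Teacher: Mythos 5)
The paper does not actually prove this theorem --- it states it and refers the reader to Petersen's book --- so there is no in-paper argument to compare against. Your sketch is the classical Cheeger--Gromoll proof (Busemann functions, barrier/viscosity Laplacian comparison, Calabi's strong maximum principle to force $b^+ + b^- \equiv 0$, then Bochner to get $\Hess b^+ \equiv 0$, parallel vector field, de~Rham splitting, iterate), and the overall structure is sound. This is also the route taken in the reference the paper cites.

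There is, however, a real gap, and your own final remark points at it without resolving it. The statement as written --- and as you set out to prove it --- is \emph{false} without a compactness hypothesis on $M$. The paraboloid $P \subset \RR^3$ has $\Ric > 0$, is non-compact and simply connected, so $\tilde P = P$; but $P$ is not isometric to any product $N \times \RR^q$ with $N$ compact (it has no line and no compact factor). You write that the paraboloid ``is what forces the statement to be about the universal cover rather than $M$ itself,'' but since $P$ is its own universal cover, the example shows the theorem genuinely fails for non-compact $M$, not merely that one must pass to a cover. The correct hypothesis --- which both the paper's statement and your proof omit but which your line-extraction step silently uses --- is that $M$ is compact. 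That is exactly what makes the deck-group action on $\tilde M$ cocompact, which is the engine that produces a line whenever the non-Euclidean factor is non-compact. You should state this hypothesis explicitly at the outset.

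A smaller point: in the iteration you say you will ``exhibit a line in $N_k$'' using the cocompact action, but $\pi_1(M)$ acts on $\tilde M = N_k \times \RR^k$, not a priori on $N_k$. One must argue that once $N_k$ contains no line, $\mathrm{Isom}(N_k \times \RR^k)$ splits as $\mathrm{Isom}(N_k) \times \mathrm{Isom}(\RR^k)$ (true because $\RR^k$ is the maximal Euclidean de~Rham factor and is therefore preserved by every isometry), so the deck group projects to a discrete cocompact action on $N_k$. Without that intermediate step, the line-extraction argument in $N_k$ is not licensed. With the compactness hypothesis restored and this projection argument spelled out, the proof is correct.
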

Even negative Ricci lower bounds place topological restrictions on the underlying manifold, as the following theorem  shows
\begin{theo}[Gallot--Gromov]
For $(M,g)$ a Riemannian manifold of dimension $n$ and $\diam(M) \leq D$ with $\Ric \geq (n-1)K$, then there is a constant $C = C\left(n ,K D^2\right)$ such that \[ b_1(M) \leq C,\] where $b_1(M)$ is the first Betti number, i.e.\ the dimension of the homology group $H_1(M,\RR)$. Moreover, there is $\epsilon(n) > 0$ such that if $KD^2 \geq -\epsilon(n)$ then \[ b_1(M) \leq n.\]
\end{theo}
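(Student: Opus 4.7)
\emph{Overall strategy.} The plan is to prove the two conclusions by complementary techniques: the sharp bound $b_{1}(M) \leq n$ in the small $|KD^{2}|$ regime by a Bochner/Hodge-theoretic argument due to Gallot, and the general bound $b_{1}(M) \leq C(n,KD^{2})$ by a covering-space volume-comparison argument due to Gromov. Both exploit that $\Ric \geq (n-1)K$ is a pointwise tensorial condition, and hence lifts to any Riemannian cover equipped with the pullback metric.

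\emph{Bochner step.} By Hodge theory on the compact manifold $M$, $b_{1}(M)$ equals the dimension of the space $\cH^{1}$ of harmonic $1$-forms. Bochner's formula for a harmonic $\omega \in \cH^{1}$ reads
\[ \tfrac{1}{2}\Delta |\omega|^{2} = |\nabla \omega|^{2} + \Ric(\omega^{\#},\omega^{\#}), \]
where $\omega^{\#}$ is the metric dual of $\omega$. Integrating over $M$ and using $\int_{M} \Delta |\omega|^{2}\,d\vol = 0$ yields
\[ \int_{M} |\nabla \omega|^{2}\,d\vol + \int_{M} \Ric(\omega^{\#},\omega^{\#})\,d\vol = 0. \]
If $K \geq 0$, the Ricci term is nonnegative, so both terms vanish; every harmonic $1$-form is then parallel, and parallel sections of $T^{*}M$ are determined by their value at a single point, giving $\dim \cH^{1} \leq n$. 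For $K<0$ the Ricci term is an obstruction, but it can be absorbed by a sharp $1$-form Poincar\'e inequality of the form $\int_{M} |\omega|^{2} \leq C(n,KD^{2}) \int_{M} |\nabla \omega|^{2}$, obtained from spectral estimates for the Hodge Laplacian on coexact $1$-forms (together with the fact that harmonic forms are orthogonal to the exact part). When $-KD^{2}$ is less than a dimensional constant $\epsilon(n)$, this absorbs the bad sign and still forces $\omega$ to be parallel, yielding $b_{1} \leq n$.

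\emph{Covering argument.} For the general bound, pass to the maximal free-abelian cover $\pi: \hat M \to M$, whose deck group is $\Gamma := H_{1}(M,\ZZ)/\mathrm{torsion} \cong \ZZ^{b_{1}}$. The pulled-back metric on $\hat M$ satisfies the same Ricci lower bound, and any fundamental domain for the $\Gamma$-action has diameter at most $D$ and volume $\vol(M)$. Fix $\hat x \in \hat M$ over a basepoint. A Gromov short-basis argument produces generators $\gamma_{1},\dots,\gamma_{b_{1}}$ of $\Gamma$ with $d_{\hat M}(\hat x, \gamma_{i}\hat x) \leq 2D$. Then for $N \in \NN$, the $(2N+1)^{b_{1}}$ distinct elements $\sum_{i} k_{i}\gamma_{i}$ with $|k_{i}| \leq N$ translate $\hat x$ into $B_{2Nb_{1}D}(\hat x)$, so packing disjoint fundamental domains gives
\[ (2N+1)^{b_{1}} \cdot \vol(M) \;\leq\; \vol\bigl(B_{(2Nb_{1}+1)D}(\hat x)\bigr). \]
Bishop--Gromov comparison on $\hat M$ gives $\vol B_{R}(\hat x) \leq V_{K}^{n}(R)$, where $V_{K}^{n}(R)$ denotes the volume of an $R$-ball in the simply connected $n$-manifold of constant sectional curvature $K$. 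Applying it also at a small radius $r$ (where a single fundamental domain dominates $B_{r}(\hat x)$) cancels the factor $\vol(M)$ in favor of a purely comparison-geometric quantity, and the resulting inequality solves (for $N$ of order $1$) to yield $b_{1} \leq C(n,KD^{2})$.

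\emph{Main obstacle.} The subtlest piece is the Gallot refinement: one must produce a quantitatively sharp spectral estimate for the Hodge Laplacian on $1$-forms whose constant has the correct dependence on $(n,KD^{2})$ to beat the Ricci term uniformly down to $\epsilon(n)$. This requires Lichnerowicz--Obata-type arguments, carefully decomposing forms into exact, coexact, and harmonic parts, and is considerably more delicate than the scalar-function Bochner technique used in Bonnet--Myers or the Cheeger--Gromoll splitting theorem. In the covering step the main technical point is arranging Bishop--Gromov at two scales simultaneously so that the unknown volume $\vol(M)$ cancels and the final inequality depends only on $(n,KD^{2})$.
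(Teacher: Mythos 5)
The paper does not prove this theorem; it is stated as background in Appendix \ref{app:mfd-lower-ric} with a citation to Petersen's book for the proof, so there is no in-paper proof to compare against. Your covering argument is essentially the Gromov short-basis approach found in that reference and is sound in outline. The ``two-scale'' cancellation you invoke is the relative Bishop--Gromov comparison $\vol B_{R}(\hat x)/\vol B_{r}(\hat x)\le V_{K}^{n}(R)/V_{K}^{n}(r)$ together with $\vol B_{D}(\hat x)\ge\vol(M)$ (the Dirichlet fundamental domain for $\Gamma$ lies inside $\bar B_{D}(\hat x)$). One should also note that, carried out carefully, the covering argument by itself yields \emph{both} conclusions: writing $|\gamma| := d_{\hat M}(\hat x,\gamma\hat x)$, when $KD^{2}$ is small the model volume ratio is nearly Euclidean, so the packing bound on $\#\{\gamma : |\gamma|\le R\}$ grows at most like $R^{n}$, which is incompatible with a free-abelian deck group of rank greater than $n$; so the Bochner route is optional.

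Your Bochner step as written, however, has a genuine gap. You propose to absorb the Ricci term by a ``sharp $1$-form Poincar\'e inequality'' $\int_{M}|\omega|^{2}\le C(n,KD^{2})\int_{M}|\nabla\omega|^{2}$ for harmonic $\omega$. No inequality of that form can hold: a nonzero parallel $1$-form --- exactly the object the Bochner argument produces when $K\ge 0$ --- has $\nabla\omega\equiv 0$ and $\int_{M}|\omega|^{2}>0$, so any such constant would have to be infinite. The actual Gallot mechanism is of a different kind. From the Bochner differential inequality $\tfrac12\Delta|\omega|^{2}\ge (n-1)K|\omega|^{2}$ for harmonic $\omega$, combined with a Sobolev or heat-kernel estimate whose constant depends only on $(n,KD^{2})$, one deduces a $C^{0}$ bound
\[ \sup_{M}|\omega|^{2}\ \le\ \frac{C(n,KD^{2})}{\vol(M)}\int_{M}|\omega|^{2}, \qquad C(n,KD^{2})\to 1 \text{ as } KD^{2}\to 0^{-}. \]
Fix a point $x\in M$ and an $L^{2}$-orthonormal basis $\omega_{1},\dots,\omega_{b_{1}}$ of the space of harmonic $1$-forms $\cH^{1}$. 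The evaluation operator $\cH^{1}\to T^{*}_{x}M$, $\omega\mapsto\omega(x)$, has operator norm at most $\sqrt{C/\vol(M)}$ by the $C^{0}$ bound and rank at most $n$, so its Hilbert--Schmidt norm squared obeys $\sum_{i}|\omega_{i}(x)|^{2}\le nC/\vol(M)$. Integrating over $M$ gives $b_{1}=\sum_{i}\|\omega_{i}\|_{L^{2}}^{2}\le nC(n,KD^{2})$, whence $b_{1}\le n$ once $KD^{2}\ge -\epsilon(n)$. That sup-norm estimate plus the rank count at a point is the ingredient you need in place of the nonexistent gradient Poincar\'e inequality.
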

Thus, we see that we have a good amount of topological control on manifolds admitting metrics with ``almost nonnegative'' Ricci curvature. Proofs of the above two propositions can be found in \cite[Chapter 9]{Petersen:RiemGeo}. A good overview of these theorems and more information about Ricci lower bounds can be found in \cite{Wei:Ricci}.

We now give the first indication that Ricci curvature bounds are linked to volume measurements. It turns out that lower Ricci bounds give control over the rate at which the volume of balls change with the radius compared to the rate in the standard spaces of constant curvature. 
\begin{theo}[Bishop--Gromov Inequality] 
If $(M,g)$ satisfies $\Ric \geq (n-1) K$ then for all $p\in M$, the map
\[R \mapsto \frac{\vol_M (B_R(p))}{\vol_{S_{K}^{n}} (B_R)} \]
is nonincreasing for $R \in (0,\infty)$, where $\vol_{S_{K}^{n}} (B_R)$ is the volume of a ball of radius $R$ in the simply connected space form of constant sectional curvature $K$.
\end{theo}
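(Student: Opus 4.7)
The plan is to follow the classical Jacobi field / Riccati comparison argument, reusing the calculation from the proof of Lemma \ref{lemm:diff-opt-map} almost verbatim. First, I would pass to geodesic polar coordinates centered at $p$: let $S_p \subset T_pM$ denote the unit sphere, and for each $\theta \in S_p$ let $c(\theta)>0$ denote the distance along the geodesic $\gamma_\theta(r)=\exp_p(r\theta)$ to the cut locus. Writing the Riemannian volume in these coordinates as $\cA(r,\theta)\,dr\,d\theta$ (with $d\theta$ the standard spherical measure), the area element can be represented using Jacobi fields: if $\{e_1,\dots,e_{n-1}\}$ is an orthonormal basis of $\theta^\perp$, $J_i(r)$ denotes the Jacobi field along $\gamma_\theta$ with $J_i(0)=0,\ \dot J_i(0)=e_i$, and $J(r)$ is the matrix of coefficients of the $J_i$ in a parallel orthonormal frame along $\gamma_\theta$, then $\cA(r,\theta)=\det J(r)$ for $r<c(\theta)$. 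Setting $\cA(r,\theta)=0$ for $r\geq c(\theta)$, one has $\vol_M(B_R(p))=\int_{S_p}\int_0^R \cA(r,\theta)\,dr\,d\theta$.

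Next I would carry out the Riccati comparison. Setting $U(r):=\dot J(r)J(r)^{-1}$ on $(0,c(\theta))$, the argument of Lemma \ref{lemm:diff-opt-map} applies verbatim: $U$ is self-adjoint, $\dot U+U^2+R=0$ where $R_{ij}=\langle \Riem(e_i,\dot\gamma_\theta)\dot\gamma_\theta,e_j\rangle$, and Cauchy--Schwarz in $(n-1)$ dimensions gives $\tr(U^2)\geq (\tr U)^2/(n-1)$. Taking traces and using $\Ric\geq(n-1)K$ together with $\tr U=(\log\cA)'$ yields
\[
(\log\cA)''(r,\theta)+\frac{\bigl((\log\cA)'(r,\theta)\bigr)^2}{n-1}\leq -(n-1)K.
\]
In the model space of constant curvature $K$, the area element is $\cA_K(r)=s_K(r)^{n-1}$, where $s_K$ is the solution of $s_K''+Ks_K=0$, $s_K(0)=0$, $s_K'(0)=1$, and it satisfies the corresponding identity with equality. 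A standard Sturm-type ODE comparison then shows that the quotient $\cA(r,\theta)/\cA_K(r)$ is non-increasing in $r$ on $(0,c(\theta))$, and because $\cA$ has been extended by zero across the cut locus, monotonicity persists on all of $(0,\infty)$.

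Finally I would pass from the pointwise comparison to the volume statement in two steps. The elementary lemma that, for nonnegative integrable $f$ and positive $g$ on $(0,\infty)$ with $f/g$ non-increasing, the ratio $\int_0^R f\,dr\big/\int_0^R g\,dr$ is also non-increasing in $R$, applied in each direction $\theta$, shows that $R\mapsto \int_0^R\cA(r,\theta)\,dr\big/\int_0^R\cA_K(r)\,dr$ is non-increasing. Since $\vol_{S_K^n}(B_R)=\vol(S^{n-1})\int_0^R\cA_K(r)\,dr$, integrating the ratio over $\theta\in S_p$ gives the desired monotonicity of $\vol_M(B_R(p))/\vol_{S_K^n}(B_R)$. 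The main obstacle is bookkeeping rather than conceptual: one must handle the possible presence of conjugate points in the interior of $(0,c(\theta))$ (where $J(r)$ degenerates and $U$ blows up to $-\infty$, forcing $\cA\to 0$ in a way consistent with the monotonic decrease of the ratio) and justify the measure-theoretic integration by exploiting the standard facts that the cut locus has measure zero and that $c(\theta)$ is lower semicontinuous.
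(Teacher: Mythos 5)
The paper states the Bishop--Gromov inequality without giving a proof, so there is nothing in the source to compare your argument against. Your proposal is the classical Jacobi-field/Riccati comparison derivation, and it is correct: polar coordinates with $\cA(r,\theta)=\det J(r)$, the Cauchy--Schwarz bound $\tr(U^2)\geq (\tr U)^2/(n-1)$ in the $(n-1)$ transverse directions, the resulting differential inequality $(\log\cA)''+((\log\cA)')^2/(n-1)\leq -(n-1)K$ with equality in the model $\cA_K=s_K^{n-1}$, Sturm comparison on $(0,c(\theta))$, extension of $\cA$ by zero past the cut distance, and the elementary ratio lemma (integrated over $S_p$) are exactly the standard chain, as in \cite[Chapter 9]{Petersen:RiemGeo}, and it does reuse the Riccati computation from Lemma \ref{lemm:diff-opt-map} essentially verbatim.

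One remark on your closing caveat: there are no conjugate points in the interior of $(0,c(\theta))$, because the cut distance is by definition no larger than the first conjugate distance along $\gamma_\theta$. So $J(r)$ is automatically nonsingular and $U=\dot J J^{-1}$ smooth on $(0,c(\theta))$; the only discontinuity of $r\mapsto \cA(r,\theta)$ is the downward jump to zero at $r=c(\theta)$ (when the cut point is not conjugate), which preserves monotonicity of $\cA/\cA_K$ and so is harmless. Two small points worth making explicit when writing it up: (i) the Riccati comparison needs $w:=\tfrac{1}{n-1}\tr U - s_K'/s_K\to 0$ as $r\to 0^+$ so that the integrating-factor argument closes; this follows from $J(r)=r\,\Id+O(r^3)$. (ii) Your ratio lemma should be stated and proved allowing $f$ to have a downward jump, e.g.\ by the inequality $f(R)g(r)\leq f(r)g(R)$ for $r\leq R$ integrated in $r$, since that is how it is applied to $\cA$.
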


This plays a crucial role in the proof of Theorem \ref{theo:gro-cpt-ricci}, which says that the set of manifolds with fixed dimension, upper bounds on diameter and lower bounds on Ricci curvature is precompact in the Gromov--Hausdorff topology. This is because it allows us to uniformly bound the number of disjoint $\epsilon$-balls for such manifolds, which it turns out is a sufficient condition of precompactness, by a Theorem proven by Gromov (cf.\ \cite[Theorem 7.4.15]{BBI:MetGeo} for a easy to follow proof). 

\subsection{Gromov--Hausdorff Limits of Bounded Ricci Manifolds}\label{subsect:GH-lim-bd-ricci}

Theorem \ref{theo:gro-cpt-ricci} thus leads to the natural question of whether or not we can identify any properties of metric spaces which are the limit of manifolds with lower Ricci bounds. Theorem \ref{theo:stab-gh-conv}, which says that generalized Ricci bounds are preserved under Gromov--Hausdorff convergence, combined with Theorem \ref{theo:riem-mfld-equiv}, which says that Riemannian manifolds with lower Ricci bounds also have the same generalized lower bounds, show that the limiting metric spaces have generalized Ricci lower bounds. However, as we will explain below, it turns out that not all spaces with generalized Ricci lower bounds are Gromov--Hausdorff limits of manifolds with Ricci lower bounds. As such, we expect that limits of manifolds with lower Ricci bounds enjoy a richer structure than spaces with generalized lower Ricci bounds. The study of such spaces was basically started by Cheeger and Colding, in their papers \cite{CheegerColding:Ricc1,CheegerColding:Ricc2,CheegerColding:Ricc3}. Most of their results are somewhat out of the range of this exposition, but we mention a few. They showed that if the Hausdorff dimension of the limiting space is the same as the limit manifolds, then except for a set of small codimension, the limiting space is bi-H\"older equivalent to a smooth manifold. Furthermore, in this case, they showed that the isometry group of the limit space is a Lie group. Even if the dimension of the limit decreases (they show that it cannot increase), they are able to define a Laplacian and show that eigenvalues and eigenfunctions pass to the limit under the Gromov--Hausdorff convergence in some sense. 

One thing they proved is that the limit spaces of manifolds with nonnegative Ricci curvature obey a similar sort of splitting theorem as in Theorem \ref{theo:cheeger-gromoll-splitting}. We do not give the details, but only remark that this shows that $(\RR^{n},\|\cdot\|)$ with $\|\cdot\|$ any Banach norm which is not the standard euclidean norm, cannot be a limit of Ricci nonnegative manifolds (because this space cannot split). However, it can be seen that this space has nonnegative generalized Ricci curvature, so as we claimed above, not all spaces with generalized Ricci lower bounds are limits of manifolds with Ricci lower bounds.

\subsection{Upper Ricci Bounds}

To finish this section, we make a few remarks on upper Ricci bounds. It is not unreasonable to wonder if a notion of displacement \emph{concavity} is a fruitful notion, possibly generalizing upper Ricci bounds. In fact it is really not, for a few reasons. First of all, Proposition \ref{prop:ent-01} shows that $\Ent(\cdot| \Leb)$ is not displacement concave on $\cP([0,1])$, even though in this case, $\Ric\equiv 0$. Secondly, it seems unlikely that the notion of displacement concavity will be preserved under Gromov--Hausdorff convergence, because examining the proof of Theorem \ref{theo:stab-gh-conv}, we see that we would need equality in \eqref{eq:ent-lsc-push}, i.e.
\[ \Ent(\nu_{t} |\mu) = \lim_{n\to\infty} \Ent(\tilde \nu_{t}^{(n)}|\mu_{n}) ,\] which from the proof seems unlikely to be true. Finally, we remark that the theory of lower Ricci bounds is far richer and more interesting than that of upper Ricci bounds.\footnote{Two sided bounds on Ricci curvature \emph{are} studied, however; cf. the remarks in Chapter 6 of \cite{Wei:Ricci}.} As an example of this, we mention three theorems of Lohkamp. 
\begin{theo}[\cite{Lohkamp:MetNegRic}, Corollary B]
Every smooth compact manifold $M^{n}$ of dimension $n\geq 3$ admits a metric with negative Ricci curvature, $\Ric < 0$. 
\end{theo}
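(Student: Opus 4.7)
The plan is to follow Lohkamp's strategy: start with any Riemannian metric on $M$ (one exists on any paracompact manifold by a partition-of-unity argument) and successively deform it through local perturbations until $\Ric<0$ everywhere. Since $M$ is compact, only finitely many perturbations will be needed, provided each is sufficiently localized and the strict negativity of Ricci achieved at each stage is preserved under subsequent perturbations.

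The key local model, and the first place where the hypothesis $n\geq 3$ enters, is the warped product $dr^{2}+\varphi(r)^{2}g_{S^{n-1}}$ on a cylinder. Its Ricci eigenvalues are $-(n-1)\varphi''/\varphi$ in the radial direction and $-\varphi''/\varphi + (n-2)(1-(\varphi')^{2})/\varphi^{2}$ in the spherical directions. Choosing $\varphi$ strictly convex with $|\varphi'|$ just less than $1$ forces both eigenvalues to be negative; the factor $(n-2)$ shows this only works for $n\geq 3$. (In dimension $2$ some dimensional hypothesis is necessary, since $\Ric$ reduces to the Gauss curvature and Gauss--Bonnet forbids negative Ricci on $S^{2}$.) This explicit family will provide the replacement metric in the local surgery below.

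The technical heart of the proof is the following local perturbation lemma: given a metric $g$ and a small coordinate ball $B\Subset B'\subset M$, one can produce $g'$ with $g'=g$ outside $B'$, with $g'$ $C^{0}$-close to $g$, and with $\Ric_{g'}<0$ on $B$. The obstacle is the transition annulus $B'\setminus B$: since Ricci involves second derivatives of the metric, naive interpolation between $g$ and a warped model on $B$ produces huge positive Ricci contributions in the gluing region, potentially undoing the negativity one is trying to install. Lohkamp's resolution is a ``thickening'' device: one locally embeds the problem into one extra warped direction and performs the deformation along a carefully engineered warping profile, where the sign of each curvature term can be controlled explicitly via the profile's derivatives. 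This step is essentially an $h$-principle statement for negative Ricci curvature, and it is the entire substance of Lohkamp's paper.

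Granting the lemma, globalization is a standard covering argument: cover $M$ by finitely many coordinate balls $B_{1},\dots,B_{N}\Subset B'_{1},\dots,B'_{N}$ and apply the lemma iteratively, at the $i$-th step choosing the perturbation $C^{2}$-small enough, relative to the margin of strict negativity already achieved on $B_{1}\cup\dots\cup B_{i-1}$, to preserve $\Ric<0$ there. Since $\Ric<0$ is an open condition in the $C^{2}$ topology on metrics and $M$ is compact, the procedure terminates after $N$ steps, yielding a global metric with $\Ric<0$. The main obstacle is the local perturbation lemma; the initial metric, the warped-product computation, and the covering argument are each routine in comparison.
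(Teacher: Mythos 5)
The paper does not prove this statement; it is quoted verbatim as Corollary~B of Lohkamp's paper \cite{Lohkamp:MetNegRic} and used purely as a contrast to the lower-bound theory, so there is no proof in the paper to compare your attempt against.

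As for your proposal on its own terms: it is an accurate high-level roadmap of Lohkamp's strategy (start from any metric, locally replace by a warped-product model with both Ricci eigenvalues negative, and globalize by a finite covering with perturbations chosen $C^{2}$-small relative to the negativity margin already gained), and your warped-product eigenvalue computation correctly locates where the hypothesis $n\geq 3$ enters. But as you yourself concede, the entire mathematical content lives in the local perturbation lemma --- producing $g'$ with $\Ric_{g'}<0$ on $B$, agreeing with $g$ outside $B'$, and controlled in the gluing annulus --- and you describe its existence without establishing it. Naive interpolation fails precisely because $\Ric$ is second-order in the metric, and the ``thickening'' remark gestures at Lohkamp's construction without supplying the estimates that make it work. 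So this is a survey of the proof rather than a proof; that is not a flaw in your understanding (it is exactly what the essay itself does by citing the result), but it means the key step remains an unproved black box.
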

\begin{theo}[\cite{Lohkamp:DiscontGeoExpans}, Theorem 2] For a fixed smooth  manifold $M^{n}$, any $V \in (0,\infty)$, and any sequence of positive real numbers $0=\lambda_{0} < \lambda_{1} \leq \lambda_{2}\leq \dots$, there is a sequence of metrics $g_{m}$ on $M$ such that
\begin{enumerate}
\item The $k$-th eigenvalue of the Laplacian, $\Delta_{g_{m}}$ is $\lambda_{k}$ for $k\leq m$.
\item The volumes are constant, i.e.\ $\vol(M,g_{m}) = V$.
\item The Ricci curvature is bounded from above by $-m^{2}$, i.e.\ $\Ric_{g_{m}} < -m^{2}$.
\end{enumerate}
\end{theo}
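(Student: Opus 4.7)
The plan is in three stages. First, I would produce a base metric $g_0$ on $M^n$ with $\Ric_{g_0}$ bounded above by $-C$ for a constant $C \gg m^2$, by applying the previous theorem (Lohkamp's Corollary B on the existence of negative Ricci metrics) and then rescaling: since $g \mapsto c^2 g$ sends the Ricci tensor to $c^{-2}\Ric$ and the volume to $c^n \vol$, a single rescaling parameter lets us independently adjust the magnitude of Ricci at the end. Choosing $C$ much larger than $m^2$ leaves quantitative slack for the subsequent modifications to survive.

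Second, I would engineer the first $m$ eigenvalues using a Colin de Verdi\`ere-type spectral surgery. The idea is to perform local modifications in $m$ disjoint small geodesic balls of $(M,g_0)$: in each ball one attaches a dumbbell-shaped feature consisting of two small bulbs joined by a thin neck. Standard Rayleigh-quotient estimates (using test functions supported in the bulbs) show that each such feature contributes one nearly decoupled low eigenvalue whose magnitude is governed by the length and cross-sectional size of the neck. By tuning the $m$ sets of geometric parameters, one obtains a metric $g_1$ whose first $m$ nonzero eigenvalues approximate $\lambda_1,\dots,\lambda_m$ to any prescribed accuracy.

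The main obstacle is carrying out these surgeries while preserving the upper Ricci bound: thin-neck metrics naively produce strongly positive Ricci in their transition zones. This is where Lohkamp's flexibility machinery for negative Ricci curvature (which powers Corollary B and functions as a kind of $h$-principle for the condition $\Ric < 0$) enters decisively. One needs two auxiliary constructions, both provided by Lohkamp's techniques: (i) model dumbbell metrics on a small domain with prescribed low first Dirichlet eigenvalue and $\Ric < -C$ throughout, built from sufficiently strongly warped product profiles whose warping concavity drives the radial Ricci components to be as negative as desired; and (ii) a gluing lemma inserting each such model into a ball of $(M,g_0)$ across a transition annulus on which the Ricci upper bound $-m^2$ is maintained. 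Step (ii) is the genuinely hard step and is where the ample slack $C \gg m^2$ is consumed.

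Finally, I would correct for the volume and for the exact eigenvalue targets. A single uniform rescaling simultaneously fixes $\vol = V$ and moves the spectrum; a small parametric adjustment in the finitely many dumbbell parameters, justified by the inverse function theorem applied to the (smooth, and submersive once sufficient flexibility is available) parameter-to-low-spectrum map, then pins down the first $m$ eigenvalues at exactly $\lambda_1, \dots, \lambda_m$. The final Ricci bound $\Ric < -m^2$ survives because the surgery was carried out with the stronger bound $\Ric < -C$ and the subsequent corrections are arbitrarily $C^2$-small.
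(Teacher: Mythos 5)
The statement you are asked to prove is one of the three theorems of Lohkamp quoted (without proof) at the end of Appendix C; the essay simply cites \cite{Lohkamp:DiscontGeoExpans} and does not reproduce or even sketch an argument, so there is no ``paper's own proof'' to compare your attempt against.

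Evaluated on its own merits, your outline captures the right flavor --- Colin de Verdi\`ere-style spectral surgery by attaching thin-necked dumbbells, combined with Lohkamp's $h$-principle-type flexibility for $\Ric<0$ to absorb the positive curvature that thin necks would otherwise create --- but it is not a proof. Step (ii), the gluing lemma that inserts a dumbbell into a ball of $(M,g_0)$ while keeping $\Ric$ bounded above by a fixed negative constant across the transition annulus, is precisely the hard analytic content of Lohkamp's work; stating that it ``is provided by Lohkamp's techniques'' defers the whole theorem. You would need to actually invoke and verify the hypotheses of the relevant approximation/gluing results from \cite{Lohkamp:MetNegRic,Lohkamp:Curv-h-Princ}, which are quantitative and delicate. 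Two smaller points: first, the scaling remark is slightly off as stated --- under $g\mapsto c^2g$ the Ricci tensor is invariant as a $(0,2)$-tensor, and it is the bound constant $K$ in $\Ric\leq K g$ (and the eigenvalues) that scale by $c^{-2}$; the effect you want is still correct, but the phrasing conflates $\Ric$ with that constant. Second, after fixing the volume by a single rescaling, both the spectrum and the Ricci constant move by the \emph{same} factor $c^{-2}$, so one cannot simply take $C\gg m^2$ ``for slack'' without also accounting for how far the pre-rescaling volume is from $V$; the inverse-function-theorem argument for pinning the $m$ eigenvalues therefore has to be run jointly with the volume normalization, not afterwards, and you would need to argue that the parameter-to-(volume, low spectrum) map is a submersion.
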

\begin{theo}[\cite{Lohkamp:Curv-h-Princ} Theorem B]
For a fixed $\alpha \in \RR$ and smooth compact manifold $M$, the set of Riemannian metrics on $M$ with $\Ric < \alpha$ is dense in the set of Riemannian metrics on $M$ in the $C^{0}$ topology, and in the Gromov--Hausdorff topology. That is, for any metric on $M$, $g$, we can find $g_{m}$ with $\Ric_{g_{m}} < \alpha$ and where the components of $g_{m}$ converge in the sup-norm to those of $g$, and additionally metrics $g_{k}$ such that $\Ric_{g_{k}} < \alpha$ and \[ (M,g_{k}) \xrightarrow{GH} (M,g) .\]
\end{theo}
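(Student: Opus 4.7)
The plan is to prove this via a local-to-global h-principle style argument, which is the strategy Lohkamp pioneered for flexibility results in Ricci curvature. First I would observe that the Gromov--Hausdorff assertion is an immediate corollary of the $C^{0}$ density statement: $C^{0}$ closeness of the metric tensor gives uniform closeness of the induced distance functions on the (compact) $M$, so the identity map is an $\epsilon_{k}$-isometry realising $(M, d_{g_{k}}) \xrightarrow{GH} (M, d_{g})$. Thus the entire problem reduces to producing, for given $g$ and $\epsilon > 0$, a smooth metric $\tilde g$ with $\|\tilde g - g\|_{C^{0}} < \epsilon$ and $\Ric_{\tilde g} < \alpha$ everywhere on $M$.

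The heart of the matter is a local construction, which I would isolate as the following lemma: for every $\epsilon > 0$, every $N > 0$, and every sufficiently small Euclidean ball $B \subset \RR^{n}$, there is a smooth metric $h$ on $B$ agreeing with $g_{\mathrm{eucl}}$ in a collar of $\partial B$, with $\|h - g_{\mathrm{eucl}}\|_{C^{0}} < \epsilon$, and with $\Ric_{h} < -N$ on an interior subdomain of comparable measure. The ansatz is a warped product of the form $dt^{2} + f(t)^{2}\, g_{S^{n-1}}$, where the radial profile $f$ is chosen to oscillate rapidly with very small amplitude so that $f$ stays uniformly near $1$ (keeping $\|h - g_{\mathrm{eucl}}\|_{C^{0}}$ small) but $|f''|$ is enormous. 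The radial Ricci eigenvalue of such a warped product contains a $-(n-1) f''/f$ term, which makes $\Ric$ as negative as desired on the region where $f'' > 0$. One then interpolates $f$ back to the affine profile in a collar so that $h$ matches $g_{\mathrm{eucl}}$ at the boundary.

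Given the local lemma, the global construction proceeds by iterated surgery. In geodesic normal coordinates around a point of $M$, the metric $g$ looks like $g_{\mathrm{eucl}} + O(r^{2})$, so a local warped-product perturbation of $g_{\mathrm{eucl}}$ transfers (up to error we can absorb) to a perturbation of $g$ supported in a small geodesic ball. Cover $M$ by finitely many such balls and, working one ball at a time, replace the current metric by its perturbation inside the ball and leave it untouched outside. The nonlinearity of $\Ric$ is the main obstacle: cutoff-function gluings can generate positive Ricci contributions in annular transition regions, which would ruin the pointwise bound $\Ric < \alpha$. The way I would handle this is to make the oscillation scale of the warped-product profile much finer than the cutoff scale, so that on the interpolation annulus the dominant $f''$ term remains hugely negative and swamps the bounded Hessian contributions coming from differentiating the cutoff. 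A careful choice of these scales, repeated for each ball, allows the bound $\Ric < \alpha$ to be preserved after each surgery.

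The hard part is precisely this scale-balancing in the gluing step, together with the verification that the "bad" annuli produced at each step are themselves absorbed by subsequent surgeries on overlapping balls, so that the final metric has $\Ric < \alpha$ globally. This is exactly where Lohkamp's formalism of $\sigma$-geometry and his covering / iteration arguments do the work. Once the iteration terminates after finitely many steps, the resulting $\tilde g$ has $\Ric_{\tilde g} < \alpha$ by construction and $\|\tilde g - g\|_{C^{0}} < \epsilon$ by summing the (geometric) contributions of each local perturbation; the $C^{0}$ density, and hence the Gromov--Hausdorff density, follows.
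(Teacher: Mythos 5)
The paper does not prove this theorem; it is stated verbatim as a citation of Lohkamp's Theorem B from \cite{Lohkamp:Curv-h-Princ}, invoked purely to illustrate why displacement \emph{concavity} of the entropy would be a poor synthetic analogue of upper Ricci bounds (since upper bounds place no topological restriction and are not Gromov--Hausdorff stable). There is therefore no in-text proof against which to compare your sketch. That said, a few remarks on the sketch itself: your reduction of the Gromov--Hausdorff statement to $C^{0}$ density is correct and is exactly the right first move (on a compact manifold, $C^{0}$-close metric tensors give multiplicatively close path lengths, hence uniformly close distance functions, so the identity is an $\epsilon$-isometry). Your description of the local mechanism --- a rapidly oscillating, small-amplitude radial profile making $|f''|$ large while $\|h - g_{\mathrm{eucl}}\|_{C^{0}}$ stays small, exploiting the $-(n-1)f''/f$ term in the radial Ricci eigenvalue --- is in the right spirit, though as written the normalization is off: in a ball model $dt^{2} + f(t)^{2}g_{S^{n-1}}$ one needs $f(t) \approx t$ (not $f \approx 1$) for the metric to be $C^{0}$-close to Euclidean, and the construction must be done on annuli to avoid the coordinate singularity at the center. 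The genuinely hard content --- controlling the cutoff annuli so that the positive Ricci contributions from the gluing are dominated, and iterating this finitely many times over a cover without the errors compounding --- is correctly identified as the crux, and is precisely what Lohkamp's ``$\sigma$-geometry'' machinery and his covering/iteration scheme handle; your sketch names this but does not (and realistically cannot, in this space) carry it out. As an outline of strategy your proposal is reasonable; as a proof it has the expected gaps at exactly the places Lohkamp's technical apparatus exists to fill.
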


The first two theorems shows that the existence of a Ricci-negative metric gives no topological information in direct contrast with the Ricci-positive case. The second theorem, puts further restrictions on the implications of upper Ricci bounds on the eigenvalues of the Laplacian. However, the third is probably the most striking, showing that that Ricci upper bounds cannot be preserved under Gromov--Hausdorff convergence (in contrast to lower bounds).

\nocite{*} 
\bibliography{bibliography}
\bibliographystyle{siam}
\end{document}